\newtheorem{theorem}{Theorem}%[section]
\newtheorem{lemma}[theorem]{Lemma}
\newtheorem{proposition}[theorem]{Proposition}
\newtheorem{question}[theorem]{Question}
\newtheorem{definition}[theorem]{Definition}
\newtheoremstyle{note}% <name>
{3pt}% <Space above>
{3pt}% <Space below>
{}% <Body font>
{}% <Indent amount>
{\bf}% <Theorem head font>
{:}% <Punctuation after theorem head>
{.5em}% <Space after theorem headi>
{}% <Theorem head spec (can be left empty, meaning `normal')>
\theoremstyle{note}
\newtheorem{remark}[theorem]{Remark}
\numberwithin{equation}{section}
\begin{document}

\newcommand{\ad}{\mathsf{a.d.}}
\newcommand{\cc}{\mathfrak{c}}
\newcommand{\N}{\mathbb{N}}
\newcommand{\BB}{\mathbb{B}}
\newcommand{\C}{\mathbb{C}}
\newcommand{\Q}{\mathbb{Q}}
\newcommand{\R}{\mathbb{R}}
\newcommand{\st}{*}
\newcommand{\PP}{\mathbb{P}}
\newcommand{\rin}{\right\rangle}
\newcommand{\SSS}{\mathbb{S}}
\newcommand{\forces}{\Vdash}
\newcommand{\dom}{\text{dom}}
\newcommand{\osc}{\text{osc}}
\newcommand{\F}{\mathcal{F}}
\newcommand{\A}{\mathcal{A}}
\newcommand{\B}{\mathcal{B}}
\newcommand{\I}{\mathcal{I}}
\newcommand{\X}{\mathcal{X}}
\newcommand{\Y}{\mathcal{Y}}
\newcommand{\G}{\mathcal{G}}
\newcommand{\Z}{\mathcal{Z}}
\newcommand{\CC}{\mathcal{C}}
\newcommand{\T}{{\infty,2}}

\thanks{The research of the first named author was supported by a PAPIIT grant IA102222}
\thanks{{The research of the second named author was supported by a CONACyT grant A1-S-16164 and a PAPIIT grant IN104220..}}
\thanks{The third named author was partially supported by the NCN (National Science
Centre, Poland) research grant no.\ 2020/37/B/ST1/02613.}

\subjclass[2010]{}
\title[AD families and the geometry of spheres]{Almost disjoint families and the geometry of nonseparable spheres}

\author{Osvaldo Guzm\'{a}n}
\address{Centro de Ciencas Matem\'aticas\\
UNAM\\
A.P. 61-3, Xangari, Morelia, Michoac\'an\\
58089, M\'exico}
\email{oguzman@matmor.unam.mx}

\author{Michael Hru\v{s}\'{a}k}
\address{Centro de Ciencas Matem\'aticas\\
UNAM\\
A.P. 61-3, Xangari, Morelia, Michoac\'an\\
58089, M\'exico}
\email{michael@matmor.unam.mx}

\author{Piotr Koszmider}
\address{Institute of Mathematics, Polish Academy of Sciences,
ul. \'Sniadeckich 8,  00-656 Warszawa, Poland}
\email{\texttt{piotr.koszmider@impan.pl}}

\begin{abstract} 
We consider uncountable almost disjoint families of subsets of $\N$, the
Johnson-Lindenstrauss Banach spaces $(\X_\A, \|\ \|_\infty)$ induced by them, and
their natural equivalent renormings $(\X_\A, \|\ \|_\T)$.
We introduce a partial order $\PP_\A$ and characterize some geometric properties
of the spheres of $(\X_\A, \|\ \|_{\infty})$  and of $(\X_\A, \|\ \|_\T)$ in terms of combinatorial properties 
of $\PP_\A$.  This provides a  range of independence  and absolute
results concerning the above mentioned geometric properties via
combinatorial properties of   almost disjoint families. 
Exploiting  the extreme behavior of some known and some new
almost disjoint families we show the existence of Banach spaces where the unit spheres display 
surprising geometry:
\begin{enumerate}
\item There is a Banach space of density continuum whose unit sphere 
is the union of countably many sets of diameters strictly less than $1$.
\item It is consistent that for every $\rho>0$ there is a nonseparable Banach space, where 
for every $\delta>0$ there is $\varepsilon>0$ such that 
every uncountable $(1-\varepsilon)$-separated set  
of elements of the unit sphere 
contains two elements distant by less than $1$ and two elements distant at least by $2-\rho-\delta$.
%\item There is a nonseparable Banach space $\X$ and its separable subspace $\Y$ 
%with the following properties:  
%\begin{itemize}
%\item for every $\delta>0$ there is $\varepsilon>0$ such that whenever
% $\mathcal Z\subseteq \X$ is such that $\Z/\mathcal Y$ is $(1-\varepsilon)$-separated  in $\X/\Y$, then
%$\Z=\bigcup\Z_n$  where $\mathcal Z_n$ is   $(2-\delta)$-separated for each $n\in\N$.
%\item $\X/\Y$ does not admit an uncountable $(1+\varepsilon)$-separated set for any $\varepsilon>0$.
%\end{itemize}
\end{enumerate}
It should be noted that  for every $\varepsilon>0$  every nonseparable Banach space has a plenty of  
uncountable $(1-\varepsilon)$-separated  
sets by the Riesz Lemma.

We also obtain a consistent dichotomy for the spaces of the form $(\X_\A, \|\ \|_\T)$: The Open Coloring Axiom implies that
the unit sphere of every Banach space of the form $(\X_\A, \|\ \|_\T)$ 
either is the union of countably many sets of diameter strictly less than $1$ or it contains an
 uncountable $(2-\varepsilon)$-separated set for every $\varepsilon>0$.

\end{abstract}

\maketitle

\section{introduction}

A family $\A$ of subsets of $\N$ will be called an $\ad$-family if it is uncountable and consists of infinite sets
which are pairwise almost disjoint, i.e., $A\cap B$ is finite for any two distinct $A, B\in \A$.
Such $\ad$-families induce diverse topological and abstract analytic structures which  constitute
interesting (counter)examples ( e.g., \cite{akemann-doner, tristan, sailing, oursacks, michael-ad, supermrowka})
 in various  parts of  mathematics. 
In this paper we provide applications in the geometry  of the spheres of Banach spaces.

It was W. Johnson and N. Lindenstrauss who first
considered a natural Banach space induced by an $\ad$-family $\A$. We denote it as $\X_\A$.
It is the closure of the linear span in $\ell_\infty$ with the supremum norm (denoted $\|\ \|_\infty$) of
the set
$$c_0\cup\{1_A: A\in \A\},$$
where $1_A$ stands for the characteristic function of $A\subseteq \N$. It is easy to see that $\X_\A$ is linearly isometric
to the Banach space $C_0(\Psi_\A)$ of all real-valued continuous functions on $\Psi_\A$ vanishing at infinity, where
$\Psi_\A$ is the locally compact space induced by $\A$,  first considered by Alexandroff and Urysohn,
and later often called Mr\'owka-Isbell space (\cite{michael-psi}). 
In this paper we also consider $\X_\A$ with a different, but equivalent norm $\|\ \|_\T$ defined 
for $f\in \X_\A$ by
$$\|f\|_{\T}=\|f\|_\infty+ \sqrt{\sum_{n\in \N} {f(n)^2\over 2^{n+1}}}.$$
Analyzing 
the dependence of $(\X_\A, \|\ \|_\T)$ on the combinatorial properties of the
$\ad$-family $\A$  we obtain new examples of nonseparable Banach spaces where
the geometry of the unit sphere  features completely unknown until now character.

\begin{theorem}\label{thm-sigma} There is a Banach space $(\X, \|\ \|)$ of density continuum (of the form $(\X_\A, \|\ \|_\T)$ for
some $\ad$-family $\A$), where the
unit sphere is the union of countably many sets, each of diameter strictly less than $1$.
\end{theorem}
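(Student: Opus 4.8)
The plan is to exhibit a concrete tree-like $\ad$-family and to cover the sphere by classes indexed by finite combinatorial data. Throughout write $q(f)=\sqrt{\sum_{n}f(n)^2 2^{-(n+1)}}$, so that $\|f\|_\T=\|f\|_\infty+q(f)$ and, since $\sum_n 2^{-(n+1)}=1$, one has $q(f)\le\|f\|_\infty\le\|f\|_\T\le 2\|f\|_\infty$; in particular every $f$ on the sphere satisfies $\|f\|_\infty=1-q(f)\in[\tfrac12,1]$. I would identify $\N$ with the nodes of $2^{<\omega}$ and take $\A=\{A_x:x\in 2^\omega\}$, where $A_x$ is the branch of initial segments of $x$; this is an $\ad$-family of size $\cc$, so $\X_\A$ has density $\cc$. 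The point of a tree-like family is the following rigidity, which fails for generic $\ad$-families: for each $\varepsilon>0$ only finitely many branches can be pairwise $\varepsilon$-orthogonal in the weighted inner product $\langle f,g\rangle=\sum_n f(n)g(n)2^{-(n+1)}$, because $\langle 1_{A_x},1_{A_y}\rangle$ is the $q^2$-weight of the common stem of $x,y$, and pairwise small inner product forces the branches to split below a fixed level. Consequently there is no uncountable $1$-separated subfamily of normalized branches, which is the obstruction that would kill the statement for an arbitrary family.

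The combinatorial heart is the following computation. For two branches splitting at a node $s$, the normalized elements $u_x=1_{A_x}/\|1_{A_x}\|_\T$ satisfy $\|u_x-u_y\|_\T<1$ precisely when the common stem up to $s$ carries more than half of the $q^2$-weight of the branch: if the two tails beyond $s$ have weight $t$, then by their orthogonality $\|u_x-u_y\|_\T=\alpha(1+\sqrt{2t})$ with $\alpha=\|u_x\|_\infty$, and this is $<1=\alpha(1+q(1_{A_x}))$ exactly when $t<q(1_{A_x})^2/2$. I would therefore define, for each node $s$ and each rational window for $\|f\|_\infty$, the class of those $f$ whose dominant behaviour follows branches agreeing up to $s$, where $s$ is the first node along the branch at which the accumulated weight exceeds $(\tfrac12+\eta)q(1_{A_x})^2$ for a fixed $\eta>0$. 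The accumulated weight is then bounded away from one half, so $t$ is bounded away from $q(1_{A_x})^2/2$ and the normalized distance within the class is bounded strictly below $1$.

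To cover the whole sphere I would use the structure of $\X_\A=C_0(\Psi_\A)$: for $f$ on the sphere and each threshold, only finitely many branches $A_{x_1},\dots,A_{x_k}$ carry values of $|f|$ above that threshold, and off a light tail $f$ is controlled by a $c_0$-part. The countable fingerprint of $f$ would record (i) a rational window for $\|f\|_\infty$, (ii) a rational grid approximation of $f$ on an initial heavy finite subtree carrying all but a negligible fraction of the $q^2$-weight, and (iii) for each significant peak its threshold node $s_i$ together with the sign and a rational approximation of the peak value. There are only countably many fingerprints. For two sphere elements with the same fingerprint, agreement on the heavy finite subtree together with the fast decay of the weights forces $q(f-g)$ to be small, while agreement of the peak data forces $f$ and $g$ to follow, past their threshold nodes, branches through the same stems with matching signs, so that on the light tails they nearly coincide and the dangerous opposite-sign configurations are excluded. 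Combining with the windowing of $\|f\|_\infty$, the resulting $\ell_\infty$-distance stays strictly below $1$, whence each class has diameter strictly less than $1$.

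The step I expect to be the main obstacle is precisely the control of $\|f-g\|_\infty$, rather than of $q(f-g)$, for elements whose mass sits on deep, light branches. The weighted $\ell_2$ term is blind to a single high-index sign flip, so the covering cannot be produced from $q$-data alone; the $\ell_\infty$-control must be extracted from the tree structure via the half-weight threshold computation and the fact that the significant peaks can be pinned down by finitely many stems. Making precise that finitely many peaks suffice, that their threshold nodes constitute legitimate countable index data, and that the multi-peak and $c_0$ contributions cannot conspire to push some pairwise distance back up to $1$, is where the real work lies; this is also where a careful choice of the identification $\N\cong 2^{<\omega}$ — and possibly a more delicately constructed family than the plain Cantor tree — enters, so that the weight-threshold nodes behave uniformly along branches.
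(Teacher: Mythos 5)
Your choice of family is fine --- the Cantor-tree branch family is exactly an $\R$-embeddable $\ad$-family, which is what the paper uses --- and your two-branch computation and the idea of windowing $\|f\|_\infty$ (equivalently, bounding $q(f)$ away from $0$) to create slack in the $\T$-norm are both correct and correspond to genuine ingredients of the paper's argument. But the fingerprint you propose does not close the gap you yourself flag, and that gap is fatal as stated. The obstruction to diameter $<1$ is not confined to the ``significant peaks'' (branch limits above a threshold): any single node $n$ at which $f(n)$ is near $+\|f\|_\infty$ while $g(n)$ is near $-\|g\|_\infty$ already gives $\|f-g\|_\T\geq\|f-g\|_\infty\approx 2-q(f)-q(g)\geq 1$, since $q\leq 1/2$ on the whole sphere. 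Such nodes can come from the $c_0$-part of $f$ at arbitrarily deep positions outside your ``heavy finite subtree'', or from sub-threshold stretches of branches that are not peaks of $f$ but are peaks of $g$; none of this is recorded by your data (i)--(iii), so two functions with identical fingerprints can realize this configuration. Moreover, no countable decomposition can force $\|f-g\|_\infty<1$ outright: the best a countable fingerprint can do is separate $\{f\geq\varepsilon\}$ from $\{g\leq-\varepsilon\}$, which yields only $\|f-g\|_\infty\leq 1+\varepsilon$; the strict inequality in the $\T$-norm has to be recovered afterwards by rescaling, using that $q$ is bounded below and that $T$ has separable range. Your proposal gestures at the rescaling via the $\|f\|_\infty$-window but never performs it.

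The paper's route repairs exactly these two defects and is worth comparing with. First, instead of peak data it attaches to $f$ the condition $p(f,\varepsilon)\in\PP_\A$ of Lemma \ref{cond-from-function}, whose two coordinates contain \emph{all} of $\{n: f(n)\geq\varepsilon\}$ and $\{n: f(n)\leq-\varepsilon\}$ (isolated deep nodes included), and the countable index is supplied by Proposition \ref{Rembed-centered}: under the embedding of $2^{<\omega}$ into $\Q$ the closures of the images of these two sets are disjoint compact-plus-discrete sets, so they are separated by one of countably many pairs of disjoint finite unions of rational intervals; conditions with the same separating pair are pairwise compatible, i.e.\ $\PP_\A$ is $\sigma$-centered, and same-index pairs satisfy $\|f-g\|_\infty\leq 1+\varepsilon$ after normalizing in $\|\ \|_\infty$ (Proposition \ref{sphere-sigma}). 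Second, Lemma \ref{squizing} (refining the sphere by $\|T(x)\|\geq 1/k$ and a rational window for $\|x\|_\infty$) converts the bound $1+1/k$ for the $\infty$-normalized vectors into $\|x-x'\|_\infty\leq 1-3c_n/4$, and Lemma \ref{separable} makes the $q(f-g)$ contribution at most $c_n/2$, giving diameter $\leq 1-c_n/4$ (Proposition \ref{sigma-T}). If you replace your item (iii) by the separating-rational-intervals datum and then carry out the rescaling step, your argument becomes a correct, essentially self-contained instance of the paper's proof for this particular family.
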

\begin{proof}
It is well known that there are $\ad$-families $\A$ of cardinality $\mathfrak c$
which are $\R$-embeddable (see definition \ref{def-Rembed}). By Proposition \ref{Rembed-centered}
the splitting partial order $\PP_\A$ (Definition \ref{def-P}) for $\A$ which is $\R$-embeddable
is $\sigma$-centered. 
Now Proposition \ref{sigma-T} yields the Theorem.
\end{proof}

This is a strengthening of the result of \cite{pk-kottman} where first nonseparable Banach spaces
with no uncountable $(1+)$-separated sets in the sphere (i.e., such that the distances between distinct
points are strictly bigger than $1$) were constructed.  Clearly the above space has this property as well.
As in \cite{pk-kottman} one can prove that
such spaces  do not admit uncountable Auerbach systems or uncountable equilateral sets 
(sets where distances between any two points are the same)  or the unit ball cannot be packed
with uncountably many balls of diameter $1/3$. 
Also in \cite{pk-kottman} it was shown that there are $\ad$-families (known as Luzin families)
such that the unit sphere of $(\X_\A, \|\ \|_\T)$ does contain an uncountable $(1+)$-separated set.
In fact, here we obtain the following consistent dichotomy (here $(2-\varepsilon)$-separated means that
any two distinct points are distant at least by $2-\varepsilon$):

\begin{theorem}\label{thm-oca} Assume the Open Coloring Axiom {\sf OCA}.
Suppose that $\A$ is an $\ad$-family. Then the unit sphere of 
the Banach space $(\X_\A, \|\ \|_\T)$  either is the union of countably many sets, 
each of diameter strictly less than $1$ or it contains an uncountable $(2-\varepsilon)$-separated set
for every $\varepsilon>0$.
\end{theorem}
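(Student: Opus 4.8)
The plan is to translate the two alternatives of the theorem into a single dichotomy for the splitting partial order $\PP_\A$ and then to derive that dichotomy from {\sf OCA}. By the structural results relating the geometry of the spheres to $\PP_\A$ (of which Proposition \ref{sigma-T} is the relevant instance), it suffices to prove that, under {\sf OCA}, for every $\ad$-family $\A$ the order $\PP_\A$ is \emph{either} $\sigma$-centered \emph{or} contains an uncountable antichain. Indeed, if $\PP_\A$ is $\sigma$-centered then Proposition \ref{sigma-T} gives the first alternative (the sphere is a countable union of sets of diameter $<1$), whereas an uncountable antichain in $\PP_\A$ feeds into the corresponding structural proposition to produce, for every $\varepsilon>0$, an uncountable $(2-\varepsilon)$-separated set in the sphere. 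Thus the geometric content is isolated in the earlier propositions, and the remaining work is the purely set-theoretic dichotomy for $\PP_\A$.

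To apply {\sf OCA}, I would fix a canonical condition $p_A\in\PP_\A$ for each $A\in\A$ (the natural condition determined by $A$ coming from Definition \ref{def-P}) and set $X=\{p_A:A\in\A\}$. Reading off the finite data of each condition together with the characteristic function of its $\A$-side identifies $X$ with a subset of a product of Cantor space with a Polish space, and under this identification $X$ is separable and metrizable. Colour $[X]^2=K_0\cup K_1$ by placing $\{p_A,p_B\}$ into $K_0$ exactly when $p_A$ and $p_B$ are incompatible in $\PP_\A$, and into $K_1$ when they are compatible. The key point, to be extracted from Definition \ref{def-P}, is that incompatibility in the splitting order is \emph{finitely witnessed}: whenever $p_A\perp p_B$, the conflict already appears on a finite fragment of the coordinates involved, so $K_0$ is open in the topology of $X$. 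Hence $[X]^2=K_0\cup K_1$ is an open colouring in the sense of {\sf OCA}.

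Applying {\sf OCA} to $(X,K_0)$ yields one of two outcomes. If there is an uncountable $K_0$-homogeneous $Y\subseteq X$, then $\{A:p_A\in Y\}$ indexes an uncountable antichain of $\PP_\A$, giving the second alternative. Otherwise $X=\bigcup_n X_n$ with each $X_n$ being $K_1$-homogeneous, that is, pairwise compatible; this shows only that $\PP_\A$ is $\sigma$-\emph{linked}. Upgrading $\sigma$-linked to $\sigma$-centered is the crux: I would refine each $X_n$ according to the finite stem of its conditions into countably many pieces on which all conditions share a common stem, and then use that, for conditions of $\PP_\A$ with a fixed stem, pairwise compatibility amalgamates to joint compatibility of every finite subcollection. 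This amalgamation property is again read off Definition \ref{def-P}, so each refined piece is centered and $\PP_\A$ is $\sigma$-centered; Proposition \ref{sigma-T} then delivers the first alternative.

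The two places where genuine verification is required, and where I expect the difficulty to lie, are precisely the two structural facts invoked about $\PP_\A$: that incompatibility is witnessed on a finite fragment (needed for $K_0$ to be open, hence for {\sf OCA} to apply) and that within a fixed stem pairwise compatibility amalgamates to finite compatibility (needed to pass from $\sigma$-linked to $\sigma$-centered). Both must be checked directly from Definition \ref{def-P}; granting them, the dichotomy, and hence the theorem, follows. A minor additional point is the bookkeeping that a \emph{single} uncountable antichain suffices to produce $(2-\varepsilon)$-separated sets for \emph{all} $\varepsilon>0$ simultaneously, which is handled by the relevant structural proposition rather than by a separate application of {\sf OCA} for each $\varepsilon$.
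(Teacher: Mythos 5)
Your overall architecture coincides with the paper's: everything reduces to showing that under {\sf OCA} the order $\PP_\A$ is either $\sigma$-centered or contains an uncountable antichain, after which Proposition \ref{sigma-T} gives the first alternative and Proposition \ref{sphereT-PA}(3) (in contrapositive form, together with the observation that $\PP_\A$ splits into finitely many copies of $\Q_m$ for each $m$) gives the second. The genuine gap is in the {\sf OCA} application itself: you colour the set $X=\{p_A:A\in\A\}$ consisting of \emph{one} canonical condition per element of $\A$, whereas the dichotomy you need concerns \emph{all} of $\PP_\A$. The countable-decomposition horn of {\sf OCA} applied to your $X$ shows at best that $X$ is $\sigma$-centered, which says nothing about the conditions of $\PP_\A$ outside $X$; and Proposition \ref{sigma-T} genuinely needs a decomposition of the whole of $\PP_\A$, since the conditions $p(f,\varepsilon)$ extracted from arbitrary unit vectors have $a_p,b_p$ of arbitrary finite size. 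Moreover, a condition is a \emph{pair} of disjoint finite unions, so a single $A\in\A$ determines no second coordinate; with the only canonical choice $p_A=(A,\emptyset)$ your $X$ is pairwise compatible outright, and the argument would conclude the first alternative for \emph{every} $\ad$-family --- contradicting, e.g., Luzin families (Propositions \ref{2L-notccc} and \ref{sphereT-PA}(1)) or maximal families (Proposition \ref{mad-notcentered}). The fix is exactly what the paper does in Proposition \ref{oca}: apply {\sf OCA} to $X=\{(1_{A_p},1_{B_p}):p\in\PP_\A\}\subseteq 2^\N\times 2^\N$, a coding of the entire partial order; incompatibility of $p$ and $q$ is still witnessed by a single $n\in (A_p\cap B_q)\cup(A_q\cap B_p)$, so the incompatibility colour remains open.

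Of the two verifications you flag, both go through, but the second is simpler than you anticipate: no refinement by stems is needed to pass from $\sigma$-linked to $\sigma$-centered. In $\PP_\A$ the conditions $p$ and $q$ are compatible if and only if $A_p\cap B_q=\emptyset=A_q\cap B_p$, so for any pairwise compatible finite $\PP'\subseteq\PP_\A$ the pair $\bigl(\bigcup\{A_p:p\in\PP'\},\bigcup\{B_p:p\in\PP'\}\bigr)$ is already a common extension; pairwise compatibility and centredness coincide in $\PP_\A$ (this is recorded in the proof of Proposition \ref{sphereT-PA}). Once the colouring is applied to all of $\PP_\A$, the rest of your outline is correct.
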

\begin{proof} Use proposition \ref{oca-T}.
\end{proof}

It follows from Proposition \ref{antiramsey-T} that the above dichotomy is not provable in {\sf ZFC}
for any $\varepsilon>0$.
Most of the results of this paper should be seen in the context of the Riesz lemma of 1916 (\cite{riesz}) which
says that given a Banach space $\X$,  and its closed proper subspace $\Y\subseteq \X$ and $\varepsilon>0$
there is $x$ in the unit sphere of $\X$ such that $\|x-y\|>1-\varepsilon$ for every $y\in\Y$. This allows to
construct $(1-\varepsilon)$-separated sets of cardinality $\kappa$ in any Banach space of density
equal to $\kappa$, where $\kappa$ is an infinite cardinal.  It was Kottman who showed that the unit sphere of any
separable Banach space contains an infinite $(1+)$-separated set (\cite{kottman}) and Elton and Odell
who showed that it always contains an  infinite $(1+\varepsilon)$-separated set  for some $\varepsilon>0$ (\cite{elton-odell}).
The search for a nonseparable version of Kottman's result produced many deep postitive partial results which
culminated in \cite{hajek-tams} but ended in the negative result of \cite{pk-kottman}. Theorem \ref{thm-sigma}
provides even stronger failure of any nonseparable version of Kottman's result. Theorem \ref{thm-oca} shows
that we can uniformize,  at least consistently, the behavior of $(1-\varepsilon)$-separated sets of the spheres of the spaces 
$(\X_\A, \|\ \|_\T)$ for some $\ad$-family  $\A$ obtaining uniform behavior on an uncountable set.
We consistently obtain Banach spaces
 where the behavior of the norm on the unit sphere  is far from uniform
even on each uncountable $(1-\varepsilon)$-separated set:

\begin{theorem}\label{thm-anti} It is consistent (e.g., follows from {\sf CH}) that 
for every $\rho>0$ there exist a nonseparable Banach space $(\X, \|\ \|)$ (a subspace of
a space of the form $(\X_\A, \|\ \|_\T)$ for some $\ad$-family  $\A$)
 such that for every $\delta>0$ there is $\varepsilon>0$ such that whenever and $\Y\subseteq \X$ is
a $(1-\varepsilon)$-separated subset of the unit sphere of $\X$,  then
\begin{itemize}
\item there are distinct $x, y\in \Y$ such that $\|x-y\|<1$, and
\item there are $x, y\in \Y$ such that $\|x-y\|>2-\delta-\rho$.
\end{itemize}
\end{theorem}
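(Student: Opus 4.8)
The plan is to build the space via a carefully chosen $\ad$-family and read off the two conclusions from the geometry of $(\X_\A,\|\ \|_\T)$. First I would set up the combinatorial target. The statement requires, for each fixed $\rho>0$, a space in which every large $(1-\varepsilon)$-separated subset of the sphere simultaneously exhibits a pair at distance $<1$ (so the set cannot be $(1+)$-separated, the ``anti-Ramsey'' phenomenon) and a pair at distance $>2-\delta-\rho$ (so the set is far from being confined to a region of small oscillation). Geometrically, distances between the normalized generators $1_A/\|1_A\|_\T$ and $1_B/\|1_B\|_\T$ on the sphere are governed by the size of $A\cap B$ and by the relative positions of the supports. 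I would therefore phrase both requirements as a two-color partition property of the $\ad$-family: on any uncountable subfamily there must appear both a pair that is ``close'' in the $\T$-norm and a pair that realizes oscillation close to the extremal value $2-\rho$. The parameter $\rho$ will be built into how much initial overlap the family is forced to carry, so that the maximal achievable separation is pushed down from $2$ to roughly $2-\rho$.

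Next I would construct the family under {\sf CH}. The natural mechanism is a transfinite recursion of length $\omega_1$, enumerating all potential uncountable subfamilies (coded as uncountable subsets of $\omega_1$, possible under {\sf CH}) and all relevant continuous functions, and at each stage adding a new almost disjoint set $A_\alpha$ that (i) almost contains a prescribed ``core'' forcing the $<1$ pairings, and (ii) can be split off from earlier sets so as to witness the large distance. This is exactly the sort of diagonalization that produces Luzin-type and anti-Ramsey families; the splitting behavior is precisely what the partial order $\PP_\A$ of Definition \ref{def-P} measures. I would aim to engineer $\A$ so that $\PP_\A$ has the combinatorial property (an uncountable-antichain / no-large-centered-subset condition) that, by the translation Proposition \ref{antiramsey-T}, yields the failure of the $(1+)$-separation dichotomy and the quantitative $2-\rho-\delta$ lower bound. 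The $\rho$-dependence enters by prescribing that every new set meets a fixed infinite reservoir in a set of controlled density, which caps the attainable $\T$-distance.

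The subspace $\X\subseteq(\X_\A,\|\ \|_\T)$ in the statement suggests a final localization step: rather than using the whole Johnson--Lindenstrauss space, I would restrict to the closed span of $c_0$ together with the generators $\{1_A:A\in\A\}$ modified by a fixed weight (or a fixed vector) that shifts every $\T$-norm so the diameter of the relevant slice becomes $2-\rho$ instead of $2$. Passing to this subspace is what converts the intrinsic bound $2$ into the $\rho$-perturbed bound $2-\rho$, while leaving the combinatorics of $\A$ intact. Then, given $\delta>0$, I would choose $\varepsilon>0$ small enough that any uncountable $(1-\varepsilon)$-separated set in the sphere must contain (after normalizing) an uncountable subfamily of generators up to a small $c_0$-error; here the $(1-\varepsilon)$-separation guarantees the points are genuinely ``spread out'' among the generators rather than clustering near a single $1_A$, so the combinatorial partition property applies to an honestly uncountable subfamily.

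The main obstacle, I expect, is the simultaneous control of both conclusions on \emph{every} uncountable $(1-\varepsilon)$-separated set at once: the recursion must defeat all such sets, yet the two demands pull in opposite directions, since forcing many close pairs (distance $<1$) tends to shrink the supports' independence and thereby erode the large-distance pairs (distance $>2-\rho-\delta$). Balancing these is delicate, and the correct calibration of $\varepsilon$ in terms of $\delta$ and $\rho$—ensuring that the unavoidable $c_0$-perturbation incurred in reducing an arbitrary separated set to a subfamily of generators stays below the thresholds in both inequalities—is the quantitative heart of the argument. I would handle it by first establishing the clean combinatorial dichotomy for pure generators (no perturbation), then showing the inequalities are strict with a fixed gap, and finally absorbing the perturbation into that gap by taking $\varepsilon$ sufficiently small; the strictness of the bounds, inherited from the $\rho$-shift in the subspace, is what leaves room for this absorption.
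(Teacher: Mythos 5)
Your high-level skeleton matches the paper: under {\sf CH} one builds by an $\omega_1$-recursion an ``antiramsey'' $\ad$-family (every uncountable family of essentially distinct conditions of $\PP_\A$ contains both a compatible and an incompatible pair; this is Proposition \ref{antiramsey-ch}), and then translates this two-sided partition property into the two bullet points via the geometry of a renormed subspace. However, two of your key mechanisms are wrong or missing, and they are exactly where the quantitative content lives.

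First, the subspace cannot be ``the closed span of $c_0$ together with the generators $1_A$ modified by a fixed weight (or a fixed vector).'' Adding a fixed vector to every generator changes no pairwise distance at all, and, more fundamentally, for an almost disjoint family the single generators satisfy $\|1_A-1_B\|_\infty=1$, never $2$; so no uncountable set built from single (possibly shifted) generators can realize distances near $2-\rho$. The paper instead fixes a \emph{pairing} $\phi$ (Definition \ref{pairing}) and spans the subspace by the differences $1_{\phi_1(\alpha)}-1_{\phi_{-1}(\alpha)}$ (Definition \ref{def-subspace}); these take values $\pm1$ on disjoint sets, so an incompatible pair of the associated conditions forces sup-distance exactly $2$ (Lemma \ref{cond-and-function}), while the pairing also guarantees that a $(1-\varepsilon)$-separated set in the sphere automatically produces \emph{essentially distinct} conditions (Proposition \ref{Y-antiramsey}), which is the hypothesis the antiramsey property needs. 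Second, your account of where $\rho$ comes from --- forcing every new set to overlap a fixed reservoir so as to ``cap'' the attainable separation --- is not how the bound $2-\rho-\delta$ arises and would not produce it: the loss of $\rho$ is entirely a renorming artifact. One picks $m$ with $2/(m+1)<\rho$ and arranges $\phi_1(\alpha)\cap\{0,\dots,m\}=\phi_{-1}(\alpha)\cap\{0,\dots,m\}$ so that the whole subspace is supported above $m$; then the $\ell_2$-summand $T$ of $\|\ \|_\T$ has norm at most $1/m$ there, and Lemma \ref{symptom-renorm} converts a sup-norm distance $2$ into a $\|\ \|_\T$-distance at least $2-\rho$ after normalization (Proposition \ref{antiramsey-Tphi}). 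Without the pairing/difference device and without localizing the support above $m$, neither bullet of the theorem is obtained from the combinatorics you correctly set up. Your closing worry about the two demands ``pulling in opposite directions'' is resolved not by calibrating the recursion against the geometry but already at the combinatorial level, by meeting the density requirements that make the family antiramsey.
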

\begin{proof} By either of the Propositions \ref{antiramsey-gen}, \ref{antiramsey-ch}, \ref{antiramsey-cohen}
it is consistent that antiramsey $\ad$-families exist.
Now Proposition \ref{antiramsey-Tphi} yields the theorem.
\end{proof}

Note that analogous phenomena cannot occur for infinite instead of uncountable 
$(1-\varepsilon)$-subsets of the unit spheres
since by Theorem 1 of \cite{mer-pams} every  Banach space admits an equivalent renorming arbitrarily close to the original one
which does admit $1$-equilateral subset of the sphere (i.e., such a set where distances
between any two distinct points are equal to $1$). Consequently every infinite dimensional
unit sphere in a Banach spaces admits an infinite  subset $\mathcal Y$ such that $1-\varepsilon\leq\|y-y'\|\leq 1+\varepsilon$
for any distinct $y, y'\in \Y$ (while even nonseparable Banach spaces may not admit infinite equilateral sets - see
\cite{pk-wark}).

By choosing yet another $\ad$-family
we can obtain the geometry of the sphere completely opposite to the one from Theorem \ref{thm-sigma}:

\begin{theorem}\label{thm-antisigma} There exist a 
nonseparable Banach space $(\X, \|\ \|)$ (a subspace of
a space of the form $(\X_\A, \|\ \|_\infty)$ for some $\ad$-family  $\A$) and its separable subspace $\Y$
 such that 
 \begin{itemize}
 \item for every $\delta>0$ there is $\varepsilon>0$ such that whenever $\Z\subseteq \X$ is 
a subset of the unit sphere of $\X$ such that $\{[z]_\Y: z\in\Z\}$ is a $(1-\varepsilon)$-separated subset of $\X/\Y$, then
$\Z$ is the union of countably many sets which are $(2-\delta)$-separated.
\item $\X/\Y$ does not admit an uncountable $(1+\varepsilon)$-separated set for any $\varepsilon>0$.
\end{itemize}
\end{theorem}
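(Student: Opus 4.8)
The plan is to realize $\X$ as a subspace of $\X_{\A'}$ spanned by $c_0$ together with ``difference'' generators, and to take $\Y=c_0$. Concretely, I would start from an $\ad$-family $\{A_\alpha:\alpha<\omega_1\}$, split each member into two infinite halves $A_\alpha=A_\alpha^0\cup A_\alpha^1$, let $\A'=\{A_\alpha^i:\alpha<\omega_1,\ i<2\}$ (still an $\ad$-family), and put $f_\alpha=1_{A_\alpha^0}-1_{A_\alpha^1}$ and $\X=\overline{\mathrm{span}}(c_0\cup\{f_\alpha:\alpha<\omega_1\})\subseteq\X_{\A'}$, $\Y=c_0$. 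The point of using differences rather than the indicators themselves is that a set such as $\{1_{A}:A\in\A'\}$ would already give an uncountable family in the unit sphere whose classes are $1$-separated but whose mutual $\|\cdot\|_\infty$-distances are only $1$, which would violate the first bullet; excluding single indicators from $\X$ is exactly what makes the conclusion attainable. Since the coordinates $A_\alpha^0,A_\alpha^1$ are pairwise distinct, $\X/\Y$ embeds isometrically into $c_0(\A')$ via $[f_\alpha]\mapsto e_{A_\alpha^0}-e_{A_\alpha^1}$, and $\X$ is nonseparable.

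With this setup the second bullet is essentially free: any closed subspace of $c_0(\Gamma)$ inherits from $c_0(\Gamma)$ the property of having no uncountable $(1+\varepsilon)$-separated subset of its sphere. I would prove the latter by the standard $\Delta$-system argument: for $x$ on the sphere only finitely many coordinates exceed $\varepsilon/3$ in modulus, so an uncountable family may be thinned to one whose large supports form a $\Delta$-system with a fixed root on which the (compact range of) values is nearly constant; two such elements are then within $1+\varepsilon$ of each other. Applied to $\X/\Y\subseteq c_0(\A')$ this yields the second bullet.

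The heart is the first bullet, and here I would isolate a combinatorial property of $\A'$ and feed it into the analysis as follows. Fix $\delta$ and choose $\varepsilon$ small. Given $z$ in the unit sphere of $\X$, write $z=g+\sum_\alpha c_\alpha f_\alpha$ with $g\in c_0$; since $\|z\|_\infty=1$ the coefficients tend to $0$, so the signed set of ``large'' coordinates $\{(\alpha,\mathrm{sgn}\,c_\alpha):|c_\alpha|>\eta\}$ is finite. Recording this finite pattern up to rational discretization gives a map from $\Z$ into a countable set, and I would use it to split $\Z$ into countably many pieces. For two distinct $z,z'$ in a common piece, $(1-\varepsilon)$-separation of the classes produces a coordinate $\gamma$ with $|c_\gamma^z-c_\gamma^{z'}|\ge 1-\varepsilon$; together with $\|z\|_\infty=\|z'\|_\infty=1$ and the agreement of the discretized patterns, this locates a member of $\A'$ on which $z$ is asymptotically $+1$ and a member on which $z'$ is asymptotically $-1$. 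The combinatorial property of $\A'$ is then invoked to produce a single point lying in the first set and in the second, where $z\approx+1$ and $z'\approx-1$, giving $\|z-z'\|_\infty\ge 2-\delta$; thus each piece is $(2-\delta)$-separated.

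The main obstacle is making the witnessing point \emph{clean} while simultaneously controlling the partition, and this is what the construction of $\A'$ must arrange. Because members of an $\ad$-family meet only in finite sets, the candidate witnessing set (the intersection of the ``$+1$-set'' of $z$ with the ``$-1$-set'' of $z'$) is finite and cannot be chosen freely; at such a point the remaining large coefficients of $z$ and $z'$ could in principle spoil the values $+1$ and $-1$, and a point may a priori belong to many members at once. I would therefore build $\A'$ by a recursion of length $\omega_1$ that (i) forces enough ``touchings'' of opposite halves to guarantee distance $\ge 2-\delta$ for the relevant pairs, (ii) keeps these touching points outside the other large-coefficient sets and bounds point-multiplicities so that the witness is clean, and (iii) confines the pairs that fail to be far to a countably-colorable pattern, so that only countably many $(2-\delta)$-separated pieces are needed. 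Balancing the two-colored Luzin-type inseparability needed for (i) against the cleanliness and coloring demands of (ii)--(iii) in a single $\ad$-family is the crux; once such a family is produced (or supplied by the combinatorial framework of $\PP_{\A'}$ developed earlier), the two bullets follow as above.
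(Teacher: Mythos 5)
Your setup coincides with the paper's: the space is $\X_{\A,\phi}$ in the sense of Definition \ref{def-subspace}, i.e., the closed span of $c_0$ together with the differences $1_{\phi_1(\alpha)}-1_{\phi_{-1}(\alpha)}$ of paired members of an $\ad$-family, with $\Y=c_0$; and your treatment of the second bullet (the quotient embeds isometrically into $c_0(\omega_1)$, whose sphere has no uncountable $(1+\varepsilon)$-separated set) is exactly the paper's. The outline of your first bullet --- a countable decomposition by discretized finite patterns of large coefficients, then for a pair in one piece a coordinate with coefficient difference at least $1-\varepsilon$, then a combinatorial property of the family producing a point where one function is near $+1$ and the other near $-1$ --- also matches the paper's Proposition \ref{Y-luzin}.

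The gap is that the combinatorial core is left as a wish list rather than proved. You correctly see that you need an $\ad$-family for which, after discretization, the uncountable family of ``split conditions'' $(A_\xi,B_\xi)\in\PP_{\A}$ (with $A_\xi$ the union of the $+1$-halves carrying large coefficients and $B_\xi$ the union of the $-1$-halves) decomposes into \emph{countably many pairwise incompatible families}; this is precisely the $L$-family property of Definition \ref{def-LL}, and it is much stronger than ``forcing enough touchings,'' which would only yield one far pair (an uncountable antichain, i.e., a $2$-Luzin gap) rather than the required countable partition into $(2-\delta)$-separated pieces. Your proposed recursion of length $\omega_1$ satisfying (i)--(iii) is not carried out, and you yourself flag that balancing those demands is the crux. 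The paper resolves this without any new recursion: it takes a classical Luzin family and proves in Proposition \ref{anti-centered} that every Luzin family is an $L$-family; that argument is genuinely nontrivial, using the finite-to-one functions $f_\eta(\xi)=\max(A_\xi\cap A_\eta)$, a club of indices to break the family into countable blocks, and an inductive colouring whose fibers are antichains. (By contrast, your worry about the witnessing point being ``clean'' disappears once each approximant is normalized so that its finitely many relevant sets are pairwise disjoint above a fixed $m$ and the $c_0$-part is supported below $m$; the only real difficulty is the $L$-family property, which your proposal does not establish.)
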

\begin{proof} It is well known that Luzin families exist (\cite{luzin, michael-ad}).
They are $L$-families by Proposition \ref{anti-centered}. Now 
Proposition \ref{Y-luzin} yields the first part of the theorem. 
For the second part note that $\Y$ obtained from  Proposition \ref{Y-luzin} is $c_0$
and $\X_\A/c_0$ is isometric to $c_0(\omega_1)$ where it is well known that there
are no uncountable $(1+\varepsilon)$-separated sets (\cite{elton-odell}) for any $\varepsilon>0$.
\end{proof}

As in \cite{pk-kottman} our main method to obtain the above Banach spaces is a combinatorial analysis
of underlying $\ad$-families which is greatly refined here.  It turned out that the right tool for this analysis
is the following partial order associated with an $\ad$-family:

\begin{definition}\label{def-P} Suppose that $\A$ is  an $\ad$-family. The splitting partial order $\PP_\A$ consists of
all pairs $p=(A_p, B_p)$ such that $A_p, B_p\subseteq\N$, $A_p\cap B_p=\emptyset$ and there are
finite subsets $a_p, b_p$ of $\A$ such that $A_p=^*\bigcup a_p$ and $B_p=^*\bigcup b_p$.

We say that $p\leq q$ if $A_p\supseteq A_q$ and $B_p\supseteq B_q$.

We call  $p, q\in \PP_\A$ essentially distinct if
$a_p\not=a_q$ and $b_p\not=b_q$.
\end{definition}

Although we do not force with the order $\PP_\A$, we use forcing terminology, i.e., $p, q\in \PP_\A$
are compatible if there is $r\in \PP_\A$ such that $r\leq p, q$ and otherwise they are incompatible.
Elements of $\PP_\A$ will be called conditions.
$\PP_\A$ satisfies the c.c.c. if it does not admit an uncountable pairwise incompatible subset.
A subset $\PP\subseteq\PP_\A$ is centered if for any $p_1, \dots p_k\in \PP$ there is $q\in\PP_\A$
such that $q\leq p_1, \dots p_k$.
$\PP_\A$ has precaliber $\kappa$, for $\kappa$ a cardinal, if every subset of $\PP_\A$ of cardinality $\kappa$
contains a further subset of cardinality $\kappa$ which is centered. $\PP_\A$ is said to
be $\sigma$-centered if $\PP=\bigcup_{n\in \N}\PP_n$ and each $\PP_n$ is centered.  The core
of our arguments leading to Theorems \ref{thm-sigma} and \ref{thm-oca} is based
on the following:

\begin{theorem}\label{thm-equi} Suppose that $\A$ is an $\ad$-family.
\begin{enumerate}
\item The following are equivalent:
\begin{enumerate}
\item $\PP_\A$ satisfies the c.c.c.
\item The unit sphere of $(\X_\A, \|\ \|_\infty)$ does not admit
an uncountable $(1+\varepsilon)$-separated set for some (equivalently for $\varepsilon=1$) $\varepsilon>0$.
\item The unit sphere of $(\X_\A, \|\ \|_\T)$ does not admit
an uncountable $1$-separated set (equivalently $(2-\varepsilon)$-separated for some $\varepsilon>0$).
\end{enumerate}
\item The following are equivalent for a cardinal $\kappa$ of uncountable cofinality:
\begin{enumerate}
\item $\PP_\A$  has precaliber $\kappa$.
\item For every $\varepsilon>0$ and every subset $\Y$ of the unit sphere of $(\X_\A, \|\ \|_\infty)$ 
of cardinality $\kappa$ there is $\Z\subseteq \Y$ of cardinality $\kappa$  which has
diameter less than $1+\varepsilon$.

\item For every $\varepsilon>0$ and every subset $\Y$ of the unit sphere of $(\X_\A, \|\ \|_\T)$ 
of cardinality $\kappa$ there is $\Z\subseteq \Y$ of cardinality $\kappa$  which has
diameter less than $1$.
\end{enumerate}
\item The following are equivalent
\begin{enumerate}
\item $\PP_\A$  is $\sigma$-centered.
\item For every $\varepsilon>0$ the unit sphere of $(\X_\A, \|\ \|_\infty)$ is the union of countably many sets
of diameters less than $1+\varepsilon$.
\item The unit sphere of $(\X_\A, \|\ \|_\T)$ is the union of countably many sets
of diameters less than $1$.
\end{enumerate}
\end{enumerate}
\end{theorem}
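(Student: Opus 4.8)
The plan is to set up a dictionary between the combinatorics of $\PP_\A$ and the metric geometry of the two spheres, under which incompatibility of conditions corresponds to large distance and compatibility (suitably witnessed) to small distance, and then to read off each of the three equivalences by transporting the relevant notion of largeness. I would first record two purely combinatorial facts. Compatibility is local: $p,q$ are compatible if and only if $a_p\cap b_q=\emptyset=a_q\cap b_p$, since by almost disjointness $(A_p\cup A_q)\cap(B_p\cup B_q)$ is finite exactly in that case (so that $(A_p\cup A_q,(B_p\cup B_q)\setminus(A_p\cup A_q))$ is a common extension), whereas a set $A\in a_p\cap b_q$ is forced, up to a finite set, into both coordinates of any lower bound. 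Because this obstruction is entirely pairwise, a linked (pairwise compatible) subset of $\PP_\A$ is automatically centered: for pairwise compatible $p_1,\dots,p_k$ the pair $\big(\bigcup_i a_{p_i},\bigcup_i b_{p_i}\big)$ is a condition below all of them. Hence ``centered'' and ``linked'' coincide, and this is what permits a pairwise, diameter-type invariant to capture $\sigma$-centeredness and precaliber, not merely the c.c.c.

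The easy half of the dictionary sends a condition $p$ to $g_p=1_{A_p}-1_{B_p}$, where I first replace $A_p,B_p$ by deep tails (this changes them only by finite sets, hence leaves $a_p,b_p$, and so $\PP_\A$, untouched) so that the weighted $\ell_2$ term is negligible and $\|\ \|_\T$ agrees with $\|\ \|_\infty$ up to an arbitrarily small error; for $\|\ \|_\infty$ no rescaling is needed since $\|g_p\|_\infty=1$, and for $\|\ \|_\T$ I normalize. If $p,q$ are incompatible there is $A\in\A$ with, say, $A\subseteq^* A_p$ and $A\subseteq^* B_q$; along the tail of $A$ the two functions are $+1$ and $-1$, so their difference is $2$ on an infinite set and $\|g_p-g_q\|_\infty=2$; after normalizing with deep tails the $\|\ \|_\T$-distance likewise exceeds $2-\varepsilon$. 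This single estimate yields the implications (b)$\Rightarrow$(a) and (c)$\Rightarrow$(a) of all three parts: an uncountable antichain produces an uncountable $2$-separated (for $\|\ \|_\infty$), respectively $(2-\varepsilon)$-separated (for $\|\ \|_\T$), set; a $\kappa$-sized set of conditions maps to a $\kappa$-sized set of sphere points, any $\kappa$-sized low-diameter subset of which is pairwise compatible, hence (being linked) centered, giving precaliber $\kappa$; and a countable cover of the sphere by sets of diameter below the threshold pulls back through $p\mapsto g_p$ to a countable cover of $\PP_\A$ by linked, hence centered, pieces, giving $\sigma$-centeredness.

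The hard half goes the other way. To a sphere point $f$ I attach the condition $p(f)=(a,b)$ determined by the signs of the limits $f(A)=\lim_{n\in A}f(n)$ (only finitely many of which are nonzero, as $f\in C_0(\Psi_\A)$), together with a finite rational packet recording $f\restriction[0,N]$, the large limit sets with their approximate values, and approximations to $\|f\|_\infty$ and to the weighted $\ell_2$ term; only countably many packets occur. Given an uncountable, or $\kappa$-sized, family of sphere points I would run a $\Delta$-system and pigeonhole reduction so that, on the retained subfamily, any two points have identical packet and agree up to a small error on their initial segments, their stray coordinates, and their common (root) limit sets. The crux is then the \emph{reduction lemma}: two such harmonized points with compatible conditions satisfy $\|f-g\|_\infty<1+\varepsilon$, respectively $\|f-g\|_\T<1$; equivalently, once harmonized, metric separation can only be witnessed along a private limit set $A\in\A$, where $f(A),g(A)$ have opposite signs, so that $p(f),p(g)$ are incompatible. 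Transporting this gives (a)$\Rightarrow$(b),(c): under the c.c.c.\ an uncountable separated set would, after harmonization, be an uncountable antichain; under $\sigma$-centeredness the sets indexed by a centered piece together with a packet cover the sphere in countably many pieces of diameter below the threshold; and under precaliber $\kappa$, using $\mathrm{cf}(\kappa)>\omega$ to collapse into a single packet-class of full size $\kappa$ before applying precaliber to the associated conditions, one obtains a $\kappa$-sized low-diameter subset.

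The reduction lemma is the main obstacle, and it has two independent difficulties. The combinatorial one is to guarantee, via the $\Delta$-system harmonization together with the vanishing at infinity of functions in $C_0(\Psi_\A)$, that any witness of separation is pushed onto a genuinely private $A\in\A$ rather than onto a shared coordinate or a matched stray integer. The analytic one is to pin the constants exactly: the $\|\ \|_\infty$ computation only delivers the soft level $1+\varepsilon$, while the sharp gap between diameter $<1$ and $(2-\varepsilon)$-separation for $\|\ \|_\T$ must be extracted from the interplay of the normalization with the weighted $\ell_2$ term. Controlling that term uniformly along the harmonized family — so that compatibility really forces the $\|\ \|_\T$-distance strictly below $1$ — is the delicate point on which the whole dichotomy turns; the surrounding transport of largeness notions is then routine bookkeeping.
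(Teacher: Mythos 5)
Your proposal founders on its very first ``purely combinatorial fact.'' Compatibility in $\PP_\A$ is \emph{not} equivalent to $a_p\cap b_q=\emptyset=a_q\cap b_p$. A common extension $r$ of $p$ and $q$ must satisfy $A_r\supseteq A_p\cup A_q$, $B_r\supseteq B_p\cup B_q$ and $A_r\cap B_r=\emptyset$, so $p$ and $q$ are compatible if and only if the actual sets satisfy $A_p\cap B_q=\emptyset=A_q\cap B_p$ --- exact disjointness, not disjointness modulo finite. Since distinct members of $\A$ typically meet in nonempty finite sets, conditions built from disjoint finite subfamilies of $\A$ are very often incompatible; your proposed common extension $(A_p\cup A_q,(B_p\cup B_q)\setminus(A_p\cup A_q))$ fails to lie below $q$ whenever $A_p\cap B_q\neq\emptyset$. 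This is not a repairable slip: the finite intersections are the entire mechanism by which, e.g., Luzin families yield uncountable antichains in $\PP_\A$ (Definition \ref{def-n-L}(2) and Proposition \ref{2L-notccc}), and they are why the c.c.c./precaliber/$\sigma$-centeredness of $\PP_\A$ is a nontrivial invariant of $\A$ at all. Under your reading, a $\Delta$-system argument would place an uncountable centered set inside every uncountable subset of $\PP_\A$, so conditions (a) of all three parts would hold for every $\A$ and the theorem would be vacuous. The same error infects your ``deep tails'' device: replacing $A_p,B_p$ by tails can turn incompatible pairs into compatible ones (all witnesses of incompatibility may lie below the cut), so it does not ``leave $\PP_\A$ untouched''; the paper instead first pigeonholes the traces of the conditions on an initial segment (Lemma \ref{thinning}, and the reduction to the suborder of conditions supported above $m$ in Proposition \ref{sphereT-PA}), after which truncation genuinely preserves (in)compatibility. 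Your observation that linked implies centered is correct and is indeed used tacitly in the paper, but the right reason is that pairwise compatibility gives $A_{p_i}\cap B_{p_j}=\emptyset$ for all $i,j$, whence $(\bigcup_iA_{p_i},\bigcup_iB_{p_i})$ is a common extension --- not your ``locality.''

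Separately, even granting the corrected dictionary, your hard direction is not a proof: you explicitly defer the ``reduction lemma'' as ``the main obstacle,'' and that lemma is precisely where the content lies. For $\|\ \|_\infty$ the paper proves it by attaching to $f$ a condition whose coordinates \emph{contain the level sets} $\{n:f(n)\geq\varepsilon\}$ and $\{n:f(n)\leq-\varepsilon\}$ (Lemma \ref{cond-from-function}, obtained from a finite-linear-combination approximant, not from the signs of the limits $\lim_{n\in A}f(n)$ alone) and then showing that compatibility of these conditions forces $\|f-g\|_\infty\leq1+\varepsilon$ (Lemma \ref{cond-and-function}(3)); this is what drives Propositions \ref{sphere-ccc}, \ref{sphere-precaliber}, \ref{sphere-sigma}. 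For $\|\ \|_\T$ the sharp constants (diameter $<1$ versus $(2-\varepsilon)$-separation) are extracted by first decomposing the $\T$-sphere into countably many pieces on which $\|x\|_\infty$ is pinned in a short interval and $\|T(x)\|_2$ is bounded below (Lemma \ref{squizing}), then using separability of the range of $T$ to freeze the Hilbertian term (Lemma \ref{separable}) and the normalization estimate of Lemma \ref{symptom-renorm}; this is the content of Propositions \ref{sigma-T}, \ref{precaliber-T}, \ref{ccc-T} and \ref{sphereT-PA}. Your proposal names the difficulties of this step accurately but supplies none of the estimates, so the implications from (a) to (b) and (c) in each part remain unproven.
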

\begin{proof}
For (1) use Propositions \ref{sphere-ccc},  \ref{ccc-T} and \ref{sphereT-PA}.
For (2) use Propositions \ref{sphere-precaliber},  \ref{precaliber-T} and \ref{sphereT-PA}.
For (3) use Propositions \ref{sphere-sigma},  \ref{sigma-T} and \ref{sphereT-PA}.
\end{proof}
To capture the combinatorics of $\PP_\A$ behind Theorems \ref{thm-anti} and \ref{thm-antisigma} we introduce
the following:

\begin{definition}\label{def-antiramsey} An $\ad$-family $\A$
is called antiramsey if whenever an uncountable $\PP\subseteq \PP_\A$ consist of essentially distinct conditions, then there are
distinct $p, q\in \PP$ which are compatible and there $p, q\in \PP$ which are incompatible.
\end{definition}

\begin{definition}\label{def-LL}
An $\ad$-family $\A$
is called $L$-family if whenever $\PP\subseteq \PP_\A$ consist of essentially distinct conditions, then 
$\PP=\bigcup_{n\in \N}\PP_n$
and each $\PP_n$ consists of pairwise incompatible conditions. 
\end{definition}

To be able to take advantage of the links between the spaces
$(\X_\A, \|\ \|_\infty)$ and $(\X_\A, \|\ \|_\T)$ and the splitting partial order $\PP_\A$
we need to analyze the structure of the order $\PP_\A$ for various $\ad$-families $\A$.
This is the most important ingredient of this paper  allowing to prove the first four theorems.
Besided antiramsey $\ad$-families and $L$-families we  consider  in the context
of the above properties of the partial order $\PP_\A$ known types of almost disjoint families
such Luzin families, $\R$-embedddable families and families  admitting an $n$-Luzin gap and obtain the following:

\begin{theorem}\label{thm-ad} Suppose that $\A$ is an $\ad$-family.
\begin{enumerate}
\item
\begin{enumerate}
\item $\A$ does not contain an $n$-Luzin gap for any $n\in \N$ implies that $\PP_\A$ satisfies the c.c.c., which implies that
$\A$ does not contain a $2$-Luzin gap.
\item None of the above implications can be reversed in {\sf ZFC}.
\item The  conditions of (a) are equivalent under {\sf OCA}.
\end{enumerate}
\item
\begin{enumerate}
\item   If $\A$ is $\R$-embeddable, then $\PP_\A$ is $\sigma$-centered. So such $\ad$-families exist in {\sf ZFC}.
The implication cannot be reversed.
\item If $\A$ is  a maximal $\ad$-family, then $\PP_\A$ is not $\sigma$-centered.

\end{enumerate}
\item 
\begin{enumerate}
\item {\sf (MA)} Suppose that $\A$ is an $\ad$-family of cardinality less than $\mathfrak c$. Then 
either $\PP_\A$ does not satisfy c.c.c. or  $\PP_\A$ is $\sigma$-centered.
\item {\sf (OCA)} Suppose that $\A$ is an $\ad$-family. Then 
either $\PP_\A$ does not satisfy c.c.c. or  $\PP_\A$ is $\sigma$-centered.
\end{enumerate}

 \item If $\A$ is a Luzin family, then  it is an $L$-family. In particular $L$-families exist.
 \item It is consistent (and independent) that antiramsey $\ad$-families exist, for example under {\sf CH} and
 in many models of the negation of {\sf CH}. They may be maximal.
\end{enumerate}
\end{theorem}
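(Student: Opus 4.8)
The plan is to route every part through one structural observation about $\PP_\A$. Given $p=(A_p,B_p)$ and $q=(A_q,B_q)$, any common lower bound $r$ must satisfy $A_r\supseteq A_p\cup A_q$, $B_r\supseteq B_p\cup B_q$ and $A_r\cap B_r=\emptyset$, so $p,q$ are compatible if and only if $A_p\cap B_q=\emptyset$ and $A_q\cap B_p=\emptyset$ (a single integer in $A_p\cap B_q$ already forces incompatibility, since it would lie in $A_r\cap B_r$). The same computation shows that finitely many conditions are compatible exactly when they are \emph{pairwise} compatible, so on $\PP_\A$ ``linked'' coincides with ``centered.'' Hence, writing $G$ for the incompatibility graph (an edge when $A_p\cap B_q\neq\emptyset$ or $A_q\cap B_p\neq\emptyset$), I get a dictionary: c.c.c.\ $=$ no uncountable clique; $\sigma$-centered $=$ countable chromatic number; precaliber $\kappa$ $=$ every $\kappa$-set has a $\kappa$-sized independent subset; $L$-family $=$ the complement of $G$ is countably chromatic on every set of essentially distinct conditions; antiramsey $=$ $G$ has no uncountable homogeneous set on such conditions. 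Since ``$A_p\cap B_q\neq\emptyset$'' is witnessed by one integer, $G$ is an \emph{open} graph in the topology identifying a condition with the pair $(A_p,B_p)\in 2^\N\times 2^\N$. The recurring technical engine is then a $\Delta$-system plus pressing-down reduction on an uncountable set of conditions, fixing the sizes of the finite subfamilies $a_p,b_p$, their root, and the finite errors in $A_p=^*\bigcup a_p$, $B_p=^*\bigcup b_p$, so that the edges are controlled by the non-root parts.

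For (1) and (4) I would argue as follows. Up to this reduction an $n$-Luzin gap is precisely an uncountable clique of $G$ supported on tuples of size at most $n$; a $2$-Luzin gap is then literally an uncountable antichain, giving c.c.c.\ $\Rightarrow$ no $2$-Luzin gap, while trimming an arbitrary uncountable antichain by the $\Delta$-system/pressing-down reduction yields a canonical incompatibility pattern on bounded tuples, i.e.\ an $n$-Luzin gap for the resulting bound $n$, giving (no $n$-Luzin gap for all $n$) $\Rightarrow$ c.c.c. This proves (1)(a). For (1)(c), openness of $G$ lets \textsf{OCA} supply the dichotomy ``uncountable clique or countable chromatic number,'' i.e.\ c.c.c.\ $\Leftrightarrow\sigma$-centered, and a $\sigma$-centered order carries no $n$-Luzin gap, so under \textsf{OCA} the loop ``no $2$-Luzin gap $\Rightarrow$ no $n$-Luzin gap for all $n$ $\Rightarrow$ c.c.c.\ $\Rightarrow$ no $2$-Luzin gap'' closes. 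The non-reversals in (1)(b) are the constructive point: I must exhibit in \textsf{ZFC} a c.c.c.\ family carrying a $3$-Luzin gap and a non-c.c.c.\ family with no $2$-Luzin gap, which I expect to build by recursions that plant the higher configuration while diagonalising against all configurations of the lower type. For (4), a Luzin family has large intersections for indices close in its well-ordering; partitioning essentially distinct conditions by their finite error data and the maximum of the root forces any two conditions in a common class that are close to be incompatible, so each class is a $G$-clique, i.e.\ the complement of $G$ is countably chromatic and $\A$ is an $L$-family.

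Parts (2) and (3) are about countable chromatic number of $G$. For (2)(a) an $\R$-embedding indexes $\A$ by reals so that potential edges are governed by order and separation of the indices; cutting $\R$ along rationals splits $\PP_\A$ into countably many independent sets, which is Proposition~\ref{Rembed-centered}, and the non-reversal is a $\sigma$-centered example admitting no such embedding. For (2)(b), a countable proper colouring of $G$ would assign to each colour class a consistent orientation of the members of $\A$ into an $A$-side and a $B$-side (no member can sit on both sides of two compatible conditions in one class); I would then use maximality of $\A$ to diagonalise against these countably many orientations, manufacturing a condition no class can accommodate. Part (3)(b) is \textsf{OCA} applied directly to the open graph $G$. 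For (3)(a) I would obtain the same dichotomy from \textsf{MA} when $|\A|<\cc$: consider the finite-approximation poset attempting to colour $G$ with countably many colours; openness of $G$ together with c.c.c.\ of $\PP_\A$ should make this colouring poset c.c.c., and as it has size $<\cc$, \textsf{MA} produces a total countable colouring, hence $\sigma$-centeredness, the other alternative being failure of c.c.c.

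Finally, for (5) I would establish the consistency of antiramsey families, equivalently of a.d.\ families whose $G$ has no uncountable homogeneous set on essentially distinct conditions. Under \textsf{CH} I build $\A=\{A_\alpha:\alpha<\omega_1\}$ by recursion, at each stage choosing $A_\alpha$ to defeat the next candidate uncountable family in a bookkeeping of all of them (coded by countable approximations), ensuring it is neither an antichain nor a linked set, and interleaving the usual diagonalisation against all infinite subsets of $\N$ to keep $\A$ maximal; for ``many models of $\neg$\textsf{CH}'' I instead force with finite approximations, the forcing being c.c.c.\ with genericity delivering both the antiramsey property and the desired value of $\cc$. The main obstacle throughout is twofold: making the $\Delta$-system/pressing-down reduction genuinely control incompatibility, since the finite errors cannot be discarded (one accidental integer already creates an edge); and the explicit \textsf{ZFC} constructions behind the non-reversals of (1)(b) and the simultaneous control in (5), where along a single recursion one must plant one type of configuration while forbidding another and, for maximality, capture every infinite set as well.
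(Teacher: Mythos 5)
Your structural dictionary (compatibility in $\PP_\A$ is determined pairwise, the incompatibility relation is open on $2^\N\times2^\N$, and uncountable sets of conditions are first normalized by a $\Delta$-system/pressing-down reduction as in Lemma \ref{thinning}) is exactly the paper's framework, and your outlines of (1)(a), (3)(b) and the positive half of (2)(a) follow the paper's route. But there are two genuine gaps in part (1). First, in (1)(b) you propose to \emph{exhibit in {\sf ZFC}} a c.c.c.\ family carrying a $3$-Luzin gap and a non-c.c.c.\ family with no $2$-Luzin gap. No such outright examples can exist: by your own (1)(c), {\sf OCA} makes the three conditions equivalent, so a {\sf ZFC} example separating them would refute {\sf OCA}. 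What (1)(b) asserts is only that the reverse implications are not \emph{provable} in {\sf ZFC}, and this is established by consistency results --- the paper forces with $\sigma$-centered posets to add the separating families (Propositions \ref{ccc3L} and \ref{notcccnot2L}); those forcing constructions are the real work here and your plan replaces them with an impossible task. Second, in (1)(c) your loop does not close. From ``{\sf OCA} $\Rightarrow$ (c.c.c.\ $\Leftrightarrow$ $\sigma$-centered)'' together with ``$\sigma$-centered $\Rightarrow$ no $n$-Luzin gap'' (which is indeed correct) you obtain, under {\sf OCA}, that c.c.c.\ is equivalent to having no $n$-Luzin gap for any $n$; but the remaining implication ``no $2$-Luzin gap $\Rightarrow$ c.c.c.'' does not follow from this. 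It requires a separate, finite-colour application of {\sf OCA}, either to an $n$-Luzin gap or to the thinned antichain, partitioning pairs according to which pair of indices witnesses the large intersection and extracting a homogeneous --- i.e.\ $2$-Luzin --- subgap; this is Proposition \ref{oca-L} and it is absent from your argument.

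A further soft spot is (2)(b): ``manufacturing a condition no class can accommodate'' cannot work as stated, since by hypothesis \emph{every} condition is accommodated by some class; the contradiction with maximality must come from elsewhere. The paper's argument is genuinely different: it encodes the covering pairs $(A_n,B_n)$ as a map $F:\N\to\{0,1\}^\N$, shows that the accumulation sets $X_A$ of $F[A]$ are pairwise disjoint nonempty closed sets which maximality forces to be separated by basic open sets, and contradicts second countability of $\{0,1\}^\N$. Finally, the non-reversal in (2)(a), the Luzin-to-$L$-family colouring in (4) (which needs the club/pressing-down bookkeeping and the finite-to-one functions $f_\eta$, not merely ``closeness in the well-ordering''), and the {\sf CH} and forcing constructions in (5) are named rather than carried out; these constructions constitute the bulk of the paper's Section 3, so as it stands the proposal is a correct skeleton for most parts but a proof of none of the hard ones, and is wrong in approach for (1)(b) and incomplete in logic for (1)(c).
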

\begin{proof}
1 (a): Propositions \ref{2L-notccc}, \ref{notccc-nL};1(b): Propositions  \ref{oca-Lccc}, \ref{oca-L};
1 (c): Propositions \ref{ccc3L}, \ref{notcccnot2L}; 2 (a) Propositions \ref{Rembed-centered}, \ref{notRembed-centered};
2 (b): \ref{mad-notcentered}; 3  (a): Proposition \ref{ma}; 3 (b) Proposition \ref{oca};  4: Proposition \ref{anti-centered};
5: Propositions \ref{antiramsey-gen}, \ref{antiramsey-ch}, \ref{antiramsey-cohen}.
\end{proof}

Theorems \ref{thm-equi} an \ref{thm-ad} joined together yield a myriad of corollaries. For example
under {\sf OCA} the unit sphere of the space $(\X_\A, \|\ \|_\T)$ admits a $1$-separated set if and only
if $\A$ admits a $2$-Luzin gap, or for no maximal $\ad$-family $\A$
the unit sphere of the space $(\X_\A, \|\ \|_\T)$ is the union of countably many sets
of diameters strictly less than $1$.

The structure of the paper is the following: Section 2 contains some preliminaries.
In Section 3 we investigate combinatorial properties of the forcing $\PP_\A$
in the context of  combinatorial properties of almost disjoint families $\A$,
in particular we obtain all elements needed for Theorem \ref{thm-ad}.
Section 4 is devoted to a graph which can code all the information of the
compatibility graph of $\PP_\A$ but the vertices of it are pairs of finite subsets
of the $\ad$-family $\A$.  This section is not used in other sections but provides an
alternative expression of the results of the other sections.
In Section 5 we link the geometric properties of the sphere of spaces $(\X_\A, \|\ \|_\infty)$
with the combinatorial properties of the splitting partial order $\PP_\A$. 
Equivalences like in Theorem \ref{thm-equi} are possible for antiramsey and $L$-families and the spaces
$(\X_\A, \|\ \|_\infty)$ which are
presented in Propositions \ref{C_0-antiramsey} and \ref{C_0-luzin}  and for spaces $(\X, \|\ \|_\T)$
but are less natural as one needs to
include additional technical conditions since the spheres of any infinite dimensional
Banach spaces contain uncountable sets of small diameters.  To avoid these technical conditions
in Section 6 we consider certain subspaces $\X_{\A,\phi}$ of the spaces $\X_\A$
for $\A$ being antiramsey and being an $L$-family and obtain Theorem \ref{thm-antisigma} for $\A$ being an $L$-family.
In Section 7 we investigate the spaces $\X_\A$ with the norm $\|\ \|_\T$,
in particular we provide the space needed for Theorem \ref{thm-sigma}.
In Section 8 we produce the space of Theorem \ref{thm-anti}.

\section{Preliminaries}
\subsection{Notation and terminology}

The $\ad$-families are defined in the introduction.  Antiramsey families and $L$-families are defined in
Defintions \ref{def-antiramsey} and \ref{def-LL}.

For $A, B\subseteq \N$ we write $A\subseteq^* B$ if $A\setminus B$ is finite; $A=^*B$ if $A\subseteq^*B$ and $B\subseteq^*A$.
By $1_A$ we mean the characteristic function of $A\subseteq\N$.
$[X]^{<\omega}$ denotes the family of all finite subsets of a set $X$. $f|X$ stands for the restriction of a
function $f$ to $X$.
By $2^{<\omega}$ we mean all finite $0$-$1$-sequences. 
 If $t\in 2^{<\omega}$, then $t^\frown i\in 2^{<\omega}$
for $i=0,1$ is the extension of $t$ by $i$.  The symbol $\mathfrak c$ stands for the cardinality of the continuum.
Martin's axiom is denoted {\sf MA}, the continuum hypothesis is denoted {\sf CH} and
the Zermelo-Fraenkel  theory is denoted by {\sf ZFC}.  For more information on {\sf MA}, {\sf CH}, {\sf ZFC}
see \cite{jech}.

The Open Coloring Axiom ({\sf OCA}) is the following statement: Given a metric separable $X$ and
a partition $K_0\cup K_1$ such that $\{(x, y): \{x, y\}\in K_0\}$ is open either there
is an uncountable $0$-homogeneous set  $Y\subseteq X$ or $X$ is the union of countably many $1$-homogeneous sets.
For more information on {\sf OCA} see \cite{jech}.

All Banach spaces considered in this paper are infinite dimensional and over the reals.
 $S_\X$
denotes the unit sphere of a Banach space $\X$. Sometimes we consider two norms
$\|\ \|_1$ and $\|\ \|_2$ on the same Banach spaces $\X$, then the corresponding spheres are
denoted by $S_{\X, \|\ \|_1}$ and $S_{\X, \|\ \|_2}$ respectively. $\|\ \|_\infty$
stands for the supremum norm while $\|\ \|_2$ is the standard norm in the Hilbert space $\ell_2$.
A subset $\Y$ of a Banach space $(\X, \|\ \|_\X)$ is called $\delta$-separated  ($(\delta+)$-separated, 
$\delta$-equilateral) for $\delta>0$
if $\|x-x'\|_\X\geq\delta$ ($\|x-x'\|_\X>\delta$, $\|x-x'\|_\X=\delta$) for any two distinct $x, x'\in \Y$. 
A subset $\Y$ of a Banach space $(\X, \|\ \|_\X)$ is called equilateral 
if it is $\delta$-equilateral for some $\delta>0$. Note that a Banach space $\X$
admits an uncountable (infinite) equilateral set if and only if it $S_\X$ admits an uncountable (infinite) 
$1$-equilateral set (scale the set and translate one of its elements to $0$).

An antichain in $\PP_\A$ is a set of pairwise incompatible conditions.
Some other forcing terminology is recalled below Definition \ref{def-P}.

All the undefined terminology should be standard and can be find in the following books:
\cite{engelking} for topology, \cite{fabianetal} for Banach spaces and \cite{jech} for set theory.

\subsection{The partial order $\PP_\A$}

Essentially distinct conditions are defined together with the order $\PP_\A$ in Definition \ref{def-P}.

\begin{lemma}\label{thinning} Suppose that $\A=\{A_\alpha: \alpha<\lambda\}$ is an $\ad$-family, $\lambda$ is a cardinal,  $\kappa$
 is a regular uncountable cardinal
 and $\{p_\xi: \xi<\kappa\}\subseteq\PP_\A$. Then there is $\Gamma\subseteq \kappa$ of cardinality $\kappa$
 and $\{p_\xi': \xi\in \Gamma\}\subseteq\PP_\A$ such that for every $\xi, \eta\in \Gamma$ we have
 that $p_\xi$ and $p_\eta$ are compatible in $\PP_\A$ if and only if $p_\xi'$ and $p_\eta'$ are compatible in $\PP_\A$
 and moreover  there are $k, l, m\in \N$ and $a_\xi'\in [\A]^k$, $b_\xi'\in [\A]^{l}$ for $\xi\in \Gamma$ and disjoint $E, F\subseteq m$
such that for all distinct $\xi, \eta\in \Gamma$ the following hold:
\begin{itemize}
\item $A_{p_\xi'}=(\bigcup a_\xi'\setminus m)\cup E$,
\item $B_{p_\xi'}=(\bigcup b_\xi'\setminus m)\cup F$, 
\item  $A_\alpha\cap A_{\alpha'}\subseteq m$ for any distinct $\alpha, \alpha'\in a_\xi'\cup b_\xi'$,
\item  $(a_\xi'\cup b_\xi')\cap (a_\eta'\cup b_\eta')=\emptyset=a_\xi'\cap b_\xi'$.
\end{itemize}

\end{lemma}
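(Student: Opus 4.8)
The plan is to reduce compatibility in $\PP_\A$ to a concrete statement about intersections, and then to drive the conditions into the required normal form by a $\Delta$-system argument followed by several pigeonhole thinnings that exploit the regularity of $\kappa$. The reduction I would record first is the elementary observation that two conditions $p,q\in\PP_\A$ are compatible if and only if $A_p\cap B_q=\emptyset=A_q\cap B_p$: since $A_p\cap B_p=A_q\cap B_q=\emptyset$, the pair $(A_p\cup A_q,B_p\cup B_q)$ is a legitimate element of $\PP_\A$ below both $p$ and $q$ precisely when these two intersections vanish, and conversely any common lower bound forces them to vanish. This turns the whole lemma into the task of matching, across the two families, the pattern of the sets $A_{p_\xi}\cap B_{p_\eta}$.

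Next I would carry out the standard preliminary thinnings. Since there are only countably many pairs $(|a_{p_\xi}|,|b_{p_\xi}|)$ and $\kappa$ is regular uncountable, I pass to a set of size $\kappa$ on which $|a_{p_\xi}|=k_0$ and $|b_{p_\xi}|=l_0$ are constant. Applying the $\Delta$-system lemma to $\{a_{p_\xi}\cup b_{p_\xi}:\xi\}$ yields a root $R\subseteq\A$ and a further set of size $\kappa$ on which any two supports meet exactly in $R$. A pigeonhole over the finitely many ways of splitting $R$ between the two sides then lets me assume $R=R_A\sqcup R_B$ with $R_A=a_{p_\xi}\cap R$ and $R_B=b_{p_\xi}\cap R$ independent of $\xi$; here $a_{p_\xi}\cap b_{p_\xi}=\emptyset$ holds automatically, as a member of both would be almost contained in the disjoint sets $A_{p_\xi},B_{p_\xi}$. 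Writing $a^*_\xi=a_{p_\xi}\setminus R$ and $b^*_\xi=b_{p_\xi}\setminus R$, these private parts have constant sizes $k=k_0-|R_A|$ and $l=l_0-|R_B|$, are pairwise disjoint across distinct indices, and satisfy $a^*_\xi\cap b^*_\xi=\emptyset$; setting $a'_\xi=a^*_\xi$ and $b'_\xi=b^*_\xi$ this will give the last displayed bullet.

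The heart of the matter is the choice of $m$, and the one genuine obstacle is that the relevant finite sets, namely the symmetric differences $A_{p_\xi}\triangle\bigcup a_{p_\xi}$ and the intersections $A_\alpha\cap A_{\alpha'}$ of $\A$-members inside a single support, are finite but a priori unbounded as $\xi$ ranges over $\kappa$. This is defeated by regularity alone: the function sending $\xi$ to the largest integer occurring in any of these finitely many finite sets has countable range, so some fibre has size $\kappa$; restricting to it and taking $m$ above that common value, I may assume for all surviving $\xi$ that $A_{p_\xi},B_{p_\xi}$ agree with $\bigcup a_{p_\xi},\bigcup b_{p_\xi}$ above $m$ and that $A_\alpha\cap A_{\alpha'}\subseteq m$ for distinct $\alpha,\alpha'$ in one support (this is the third bullet). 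With $m$ fixed, a further pigeonhole over the finite set $\mathcal P(m)\times\mathcal P(m)$ lets me assume $A_{p_\xi}\cap m=E$ and $B_{p_\xi}\cap m=F$ are constant; since $E,F$ are the below-$m$ parts of the disjoint sets $A_{p_\xi},B_{p_\xi}$ of a single condition, $E\cap F=\emptyset$, as required.

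Finally I would set $A_{p'_\xi}=(\bigcup a'_\xi\setminus m)\cup E$ and $B_{p'_\xi}=(\bigcup b'_\xi\setminus m)\cup F$. Each is almost equal to a finite union of $\A$-members, and $A_{p'_\xi}\cap B_{p'_\xi}=\emptyset$ because the part above $m$ vanishes by the third bullet while $E\cap F=\emptyset$; so $p'_\xi\in\PP_\A$ and the first two bullets hold. For compatibility preservation, I expand $A_{p_\xi}\cap B_{p_\eta}$ for distinct $\xi,\eta$ into its below-$m$ and above-$m$ pieces: the below-$m$ piece is $E\cap F=\emptyset$, while in the above-$m$ piece every product involving a root member is an intersection of two $\A$-members lying in a common support (recall $R$ sits in every support), hence contained in $m$ and contributing nothing; the only surviving term is $(\bigcup a^*_\xi\cap\bigcup b^*_\eta)\setminus m$, which is exactly $A_{p'_\xi}\cap B_{p'_\eta}$. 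Thus $A_{p_\xi}\cap B_{p_\eta}=A_{p'_\xi}\cap B_{p'_\eta}$ and symmetrically, so by the compatibility criterion $p_\xi,p_\eta$ are compatible if and only if $p'_\xi,p'_\eta$ are. I expect this last computation to be the crux, the two facts that make it work, that root-involving cross products stay inside a single support and that $E\cap F=\emptyset$, being precisely what the preceding uniformization was arranged to guarantee.
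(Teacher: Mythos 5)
Your proposal is correct and follows essentially the same route as the paper's proof: normalize each condition as $((\bigcup a_\xi\setminus m_\xi)\cup E_\xi,(\bigcup b_\xi\setminus m_\xi)\cup F_\xi)$, stabilize the finite data by regularity of $\kappa$, strip off a $\Delta$-system root, and check that the root's cross-terms are absorbed below $m$ so compatibility is unaffected. The only differences are cosmetic (you fix $m,E,F$ after rather than before the $\Delta$-system step, use one application of the $\Delta$-system lemma to the unions instead of two, and prove the sharper identity $A_{p_\xi}\cap B_{p_\eta}=A_{p'_\xi}\cap B_{p'_\eta}$, which also absorbs the paper's separately treated degenerate case).
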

\begin{proof}

By Definition \ref{def-P} there are $a_\xi, b_\xi\in [\A]^{<\omega}$ and $m_\xi\in \N$ and disjoint $E_\xi, F_\xi\subseteq m_\xi$
such that 
$$p_\xi=((\bigcup a_\xi\setminus m_\xi)\cup E_\xi, (\bigcup b_\xi\setminus m_\xi)\cup F_\xi).$$
  Moreover, necessarily $a_\xi\cap b_\xi=\emptyset$. By increasing $m_\xi$ and modifying $E_\xi$s and $F_\xi$s we may assume that
$A_\alpha\cap A_{\alpha'}\subseteq m_\xi$ for all distinct $\alpha, \alpha'\in a_\xi\cup b_\xi$.

By passing to a subset of cardinality $\kappa$ we may assume that there are $k', l', m\in\N$ and  $E, F, G, H\in [\N]^{<\omega}$ such that 
$m=m_\xi$, $E=E_\xi$, $F=F_\xi$ for all $\xi<\kappa$
 and $|a_\xi|=k'$ and $|b_\xi|=l'$
for any $\xi\in \kappa$. 

If either of the sets $\{a_\xi: \xi<\kappa\}$ or $\{b_\xi: \xi<\kappa\}$ is of cardinality less than $\kappa$ we may find
 $\Gamma\subseteq\kappa$
of cardinality $\kappa$ such that the set has only one element which means  that the entire $\{p_\xi: \xi\in \Gamma\}$
 is pairwise compatible.
So $p_\xi'=(\emptyset, \emptyset)$ works with $E=F=a_\xi'=b_\xi'=\emptyset$ and $k, l, m=0$.

Otherwise we may assume that all $a_\xi$'s and $b_\xi$'s are distinct.
Using the Delta System Lemma  for families of finite sets of cardinality $\kappa$ by  passing to an uncountable set
 we may assume that
$\{a_\xi: \xi<\omega_1\}$ and $\{b_\xi: \xi<\omega_1\}$ form $\Delta$-systems with roots $a, b\in [\A]^{<\omega}$ respectively.

Let $a_\xi'=a_\xi\setminus a$, $b_\xi'=b_\xi\setminus b$ and $k=|a_\xi'|$, $l=|b_\xi'|$.  Using the Delta System Lemma
again for $\{(a_\xi'\cup b_\xi'): \xi<\kappa\}$ we find $\Gamma\subseteq \kappa$ of cardinality $\kappa$ such that additionally
$(a_\xi'\cup b_\xi')\cap (a_\eta'\cup b_\eta')=\emptyset$ for every distinct $\xi, \eta\in \Gamma$. 
For $\xi\in \Gamma$ we put 
$$p_\xi'=((\bigcup a_\xi'\setminus m)\cup E,  (\bigcup b_\xi'\setminus m)\cup F).$$

Now $p_\xi$ is incompatible with $p_\eta$ if and only if
$$((\bigcup a_\xi\setminus m)\cap (\bigcup b_\eta\setminus m))\cap 
((\bigcup a_\eta\setminus m)\cap (\bigcup b_\xi\setminus m))\not=\emptyset.$$
And $p_\xi'$ is incompatible with $p_\eta'$ if and only if
$$((\bigcup a_\xi'\setminus m)\cap (\bigcup b_\eta'\setminus m))\cap 
((\bigcup a_\eta'\setminus m)\cap (\bigcup b_\xi'\setminus m))\not=\emptyset.$$
But these two conditions are equivalent
since $\bigcup a\cap \bigcup b\setminus m=\emptyset$ and 
$((\bigcup a\cup \bigcup b)\setminus m)\cap ((\bigcup a_\xi'\cup \bigcup b_\xi')\setminus m)=\emptyset$
and $((\bigcup a\cup \bigcup b)\setminus m)\cap ((\bigcup a_\eta'\cup \bigcup b_\eta')\setminus m)=\emptyset$
for any $\xi, \eta\in \Gamma$.
\end{proof}

\section{The splitting partial order $\PP_\A$ and the combinatorial properties of $\A$}
\subsection{Luzin properties of $\A$}

In this subsection we will use two ``Luzin properties'' of $\ad$-families expressed in the following two definitions.

\begin{definition}\label{def-L}  An $\ad$-family $\A$ is called a Luzin family if $\A=\{A_\xi: \xi<\omega_1\}$ and
$f_\eta:\eta\rightarrow \N$ is finite-to-one for each $\eta<\omega_1$, where $f_\eta(\xi)=\max(A_\xi\cap A_\eta)$
for each $\xi<\eta$.
\end{definition}

\begin{definition}[\cite{n-luzin}]\label{def-n-L} Let $n\in \N$ and $\A$ be an $\ad$-family. 
Let $\B_i=\{B^i_\alpha: \alpha<\omega_1\}$
 be disjoint subfamilies of $\A$ for $i < n$. We say that $(\B_i : i<n)$ is an $n$-Luzin gap if there is $m\in \N$ such that
\begin{enumerate}
\item $B^i_\alpha\cap B^j_\alpha\subseteq m$ for all $i<j<n$ and $\alpha<\omega_1$ and
\item $\bigcup_{i\not=j}(B^i_\alpha\cap B^j_\beta)\not\subseteq m$ for all distinct $\alpha, \beta<\omega_1$.
\end{enumerate}
We say that $\A$ contains an $n$-Luzin gap if there is an $n$-Luzin gap $(\B_i : i < n)$ where each $\B_i$ is a subfamily of $\A$ .
\end{definition}

The results of this subsection  clarify the relations among the above Luzin properties.
The existence of an $n$-Luzin gap in an $\ad$-family $\A$ implies the existence
of a $k$-Luzin gap in $\A$ for $k>n>1$ (Proposition \ref{monotone-L}). Being a Luzin family
is much stronger, as one can choose $n$-Luzin gaps in such a family within any $n$ many
uncountable subfamiles (Proposition \ref{hered-L}) and there are in ZFC families admitting
a $2$-Luzin gap, while not containing any Luzin subfamilies (Proposition \ref{juris-L}). 
It is shown in \cite{n-luzin} (Proposition 2.14)
that it is consistent that there are $\ad$-families which contain $3$-Luzin gaps but no $2$-Luzin gaps. 
This will also follow from several our results in the following sections.
However OCA implies that an $\ad$-family contains $n$-Luzin gap if and only if it contains a $k$-Luzin gap
for any $n, k\in \N\setminus\{0,1\}$ (proposition \ref{oca-L} which is 
implicitly included in \cite{n-luzin}: Lemma 2.2 and Proposition 2.9)

\begin{proposition}\label{hered-L} Let $\A$ be a Luzin $\ad$-family and let $\mathcal C_i\subseteq \A$
 be uncountable for each $i<n\in \N\setminus\{0,1\}$.
Then for each $i<n$ there are uncountable $\B_i\subseteq \mathcal C_i$ such that $(\B_i : i<n)$ an n-Luzin gap. 
\end{proposition}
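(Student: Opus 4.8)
The plan is to select, for each index $\gamma<\omega_1$, an increasing \emph{block} of $n$ ordinals, one from each $\mathcal C_i$, with distinct blocks occupying disjoint intervals of $\omega_1$, and then to thin out the collection of blocks twice: once by a crude pigeonhole to obtain the uniform bound $m$ demanded by clause (1) of Definition \ref{def-n-L}, and once, using the finite-to-one property of the Luzin family together with Fodor's lemma, to secure clause (2). The argument is self-contained from Definition \ref{def-L} and does not invoke the earlier propositions.

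First I would fix uncountable $S_i\subseteq\omega_1$ with $\mathcal C_i=\{A_\xi:\xi\in S_i\}$ and recursively choose blocks $\vec\xi_\alpha=(\xi_\alpha^0,\dots,\xi_\alpha^{n-1})$, $\alpha<\omega_1$, so that $\xi_\alpha^i\in S_i$, $\xi_\alpha^0<\dots<\xi_\alpha^{n-1}$, and $\xi_\alpha^{n-1}<\xi_\beta^0$ whenever $\alpha<\beta$. This is possible because every uncountable subset of $\omega_1$ is cofinal, so at stage $\alpha$ each $S_i$ meets the tail above the countably many ordinals used so far. Putting $B^i_\alpha=A_{\xi_\alpha^i}$, all chosen ordinals are distinct, so $\B_i=\{B^i_\alpha:\alpha<\omega_1\}$ are pairwise disjoint subfamilies of $\A$ with $\B_i\subseteq\mathcal C_i$, as required. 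For clause (1), by almost disjointness there is for each $\alpha$ a least $m_\alpha\in\N$ with $A_{\xi_\alpha^i}\cap A_{\xi_\alpha^j}\subseteq m_\alpha$ for all $i<j<n$; since $\alpha\mapsto m_\alpha$ maps $\omega_1$ into $\N$, some value $m$ is attained on an uncountable set. Discarding the other blocks and re-enumerating by $\omega_1$, I may assume $B^i_\alpha\cap B^j_\alpha\subseteq m$ for all $i<j<n$ and all $\alpha<\omega_1$.

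The heart of the matter is clause (2), where I would exploit the asymmetry of the Luzin condition: for a \emph{fixed} $\eta$ the map $f_\eta(\xi)=\max(A_\xi\cap A_\eta)$ is finite-to-one, hence $G_\eta:=\{\xi<\eta:A_\xi\cap A_\eta\subseteq m\}\subseteq f_\eta^{-1}(\{0,\dots,m-1\})$ is finite. Taking $\eta=\xi_\beta^{\,n-1}$ to be the top coordinate of block $\beta$, each earlier block lies entirely below $\eta$, and, the blocks being disjoint, only finitely many of them can have a coordinate in the finite set $G_{\xi_\beta^{n-1}}$. Hence
$$\mathrm{Bad}(\beta)=\bigl\{\alpha<\beta:\xi_\alpha^i\in G_{\xi_\beta^{n-1}}\text{ for all }i<n-1\bigr\}$$
is a finite subset of $\beta$; and for any $\alpha<\beta$ with $\alpha\notin\mathrm{Bad}(\beta)$ there is some $i<n-1$ with $\max(A_{\xi_\alpha^i}\cap A_{\xi_\beta^{n-1}})\geq m$, so the term $B^i_\alpha\cap B^{\,n-1}_\beta$ (note $i\neq n-1$, using $n\geq 2$) already witnesses $\bigcup_{i\neq j}(B^i_\alpha\cap B^j_\beta)\not\subseteq m$.

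Finally I would extract an uncountable \emph{free} index set from the regressive finite-valued set mapping $\mathrm{Bad}$. One of the sets $\{\beta:\mathrm{Bad}(\beta)=\emptyset\}$ and $\{\beta:\mathrm{Bad}(\beta)\neq\emptyset\}$ is stationary; in the former case it serves directly as $\Lambda$. In the latter, Fodor's lemma applied to the regressive map $\beta\mapsto\max\mathrm{Bad}(\beta)$ yields a stationary set on which it is constantly some $\gamma_0$, so that $\mathrm{Bad}(\beta)\subseteq\gamma_0+1$ there; as $[\gamma_0+1]^{<\omega}$ is countable, a further pigeonhole gives an uncountable $\Lambda$ on which $\mathrm{Bad}$ is a fixed finite set $F_0\subseteq\gamma_0+1$, and deleting the countable initial segment $\gamma_0+1$ from $\Lambda$ leaves $\alpha>\gamma_0\geq\max F_0$, hence $\alpha\notin\mathrm{Bad}(\beta)$, for all $\alpha<\beta$ in $\Lambda$. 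Since $\bigcup_{i\neq j}(B^i_\alpha\cap B^j_\beta)$ is symmetric in $\alpha,\beta$ (renaming $i\leftrightarrow j$), clause (2) holds for \emph{every} pair from $\Lambda$; re-indexing $\{\vec\xi_\alpha:\alpha\in\Lambda\}$ by $\omega_1$ then produces the required $n$-Luzin gap with bound $m$. The main obstacle is exactly this last step: the finite-to-one hypothesis only bounds the bad predecessors of each single block, and converting ``finitely many bad per $\beta$'' into an uncountable set with no bad pairs is what forces the Fodor/free-set argument rather than a naive recursion, which gets stuck.
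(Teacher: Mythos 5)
Your proof is correct and follows essentially the same route as the paper's: blocks chosen in increasing position, a pigeonhole to fix the bound $m$ for clause (1), and the finite-to-one Luzin property combined with the Pressing Down Lemma to extract an uncountable set satisfying clause (2). The only (immaterial) difference is that you witness clause (2) by a single pair $(i,n-1)$ using the top coordinate of the later block, whereas the paper's regressive function secures $A_{\xi_\alpha^i}\cap A_{\xi_\beta^j}\not\subseteq m$ for all pairs $i,j<n$ simultaneously.
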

\begin{proof} 
By passing to uncountable subsets we may assume that the $\mathcal C_i$s are pairwise disjoint.  Thinning out
further  for each $i<n$ we may find an uncountable $\B_i\subseteq \mathcal C_i$ such that (1) of Definition
\ref{def-n-L} is satisfied for some $m$.  Let $(\xi_\alpha^i: \alpha<\omega_1)$ be such that
$\B_i=\{A_{\xi_\alpha^i}: \alpha<\omega_1\}$ and $\xi_\alpha^i<\xi_\beta^j$ for each $i, j<n$ and $\alpha<\beta<\omega_1$.
Using Definition \ref{def-L} we obtain a regressive function $f:\omega_1\rightarrow\omega_1$ such that 
for every limit $\beta\in\omega_1$ we have $\max(A_{\xi_\alpha}^i\cap A_{\xi_\beta}^j)>m$
for every $\alpha\in(f(\beta), \beta)$ and every $i, j<n$. By the Pressing Down Lemma we 
obtain an uncountable $\Gamma\subseteq\omega_1$ such that $A_{\xi_\alpha}^i\cap A_{\xi_\beta}^j\not\subseteq m$
for every  $i, j<n$ and every distinct $\alpha,\beta\in \Gamma$ as required in (2) of Definition \ref{def-n-L}.
\end{proof}

\begin{proposition}\label{juris-L}
There is an almost disjoint family $\A$ of subsets of $\N$ of cardinality $\mathfrak c$
such that $\A$ contains a $2$-Luzin gap but does not contain a Luzin subfamily.
\end{proposition}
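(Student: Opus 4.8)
The plan is to realize the gap concretely on the binary tree and to defeat all Luzin subfamilies by a single uniform separation argument. Identify $\N$ with $2^{<\omega}$, and for $s\in 2^{<\omega}$ write $[s]=\{x\in 2^\omega: s\text{ is an initial segment of }x\}$ and $\Delta(x,y)=\min\{n:x(n)\neq y(n)\}$. For $x\in 2^\omega$ put $b_x=\{x|n:n\in\N\}$ (the branch of $x$) and $c_x=\{(x|n)^\frown(1-x(n)):n\in\N\}$ (the \emph{broom} of $x$, i.e.\ the off-branch siblings along $x$). Fixing distinct reals $\{x_\alpha:\alpha<\omega_1\}$, I would set
\[
\A=\{b_x:x\in 2^\omega\}\cup\{c_{x_\alpha}:\alpha<\omega_1\},\qquad \B_0=\{b_{x_\alpha}:\alpha<\omega_1\},\qquad \B_1=\{c_{x_\alpha}:\alpha<\omega_1\}.
\]
A routine check shows $\A$ is an $\ad$-family of cardinality $\cc$: distinct branches meet in a finite initial segment, two brooms share only the siblings below their splitting level, and a branch meets a broom in at most one node.

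Next I would verify that $(\B_0,\B_1)$ is a $2$-Luzin gap with $m=0$. Condition (1) is immediate since broom nodes lie off the branch, so $b_{x_\alpha}\cap c_{x_\alpha}=\emptyset$. For condition (2), if $\alpha\neq\beta$ and $\Delta=\Delta(x_\alpha,x_\beta)$, then $(x_\beta|\Delta)^\frown(1-x_\beta(\Delta))=x_\alpha|(\Delta+1)$ lies in $b_{x_\alpha}\cap c_{x_\beta}$, so the relevant union is nonempty, i.e.\ $\not\subseteq 0$.

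The heart of the matter is that $\A$ has no Luzin subfamily, which I would deduce from the following criterion: \emph{if $\mathcal D$ is uncountable and contains disjoint uncountable $\mathcal D_0,\mathcal D_1$ for which there are $S\subseteq\N$ and $M\in\N$ with $D\setminus S\subseteq M$ for $D\in\mathcal D_0$ and $D\cap S\subseteq M$ for $D\in\mathcal D_1$, then $\mathcal D$ is not Luzin under any enumeration.} Indeed these hypotheses force $D\cap D'\subseteq M$ whenever $D\in\mathcal D_0$, $D'\in\mathcal D_1$ (an element $\geq M$ of the intersection would lie both in $S$ and outside $S$); then in any enumeration $\{D_\xi:\xi<\omega_1\}$ all but countably many $\eta$ have infinitely many $\mathcal D_0$-predecessors, so I may pick $\eta$ with $D_\eta\in\mathcal D_1$ and $\{\xi<\eta:D_\xi\in\mathcal D_0\}$ infinite, whence $f_\eta(\xi)=\max(D_\xi\cap D_\eta)$ takes values below $M$ infinitely often (a Luzin enumeration makes $f_\eta$ total) and is not finite-to-one. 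To apply this to an arbitrary uncountable $\mathcal D\subseteq\A$, note that uncountably many members of $\mathcal D$ are branches or uncountably many are brooms; letting $X$ be the corresponding uncountable set of reals, I choose a splitting node $s$ with $[s^\frown0]\cap X$ and $[s^\frown1]\cap X$ uncountable and take $S=\{t:s^\frown0\subseteq t\}$. For branches this is the classical fact that $b_x\subseteq^* S$ with fixed finite error for $x\in[s^\frown0]$ while $b_x\cap S=\emptyset$ for $x\in[s^\frown1]$; for brooms the same $S$ works, since every sibling at level $\geq|s|+1$ lands in the cone above $s^\frown0$ or above $s^\frown1$ according to the side of $x$, the finitely many lower siblings forming a fixed set depending only on $s$. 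In either case the two sides are uniformly separated by $S$ at a level $M$ depending only on $s$, so the criterion yields that $\mathcal D$ is not Luzin.

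The main obstacle is conceptual rather than computational: a $2$-Luzin gap is an inseparability \emph{between} $\B_0$ and $\B_1$, whereas the absence of a Luzin subfamily demands that \emph{every} uncountable subfamily split into two \emph{uniformly} separated pieces, and these requirements pull in opposite directions. The broom is precisely the device that reconciles them: brooms cross the branches exactly at their splitting nodes, producing the cross-intersections needed for the gap, yet each broom, like each branch, still tracks a single real through the tree and is therefore uniformly separated by the tree cones. Accordingly, the one point I expect to require genuine care is establishing the uniformity of the bound (a single level $M=M(s)$, independent of the individual broom) in the broom computation above; once that is in place, the gap conditions are immediate with $m=0$ and the separation criterion does the rest.
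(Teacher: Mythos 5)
Your proof is correct and takes essentially the same route as the paper's (a construction due to Steprans): both realize the family on the binary tree $2^{<\omega}$, obtain the $2$-Luzin gap from the cross-intersections at splitting nodes, and defeat every Luzin subfamily by splitting an uncountable set of reals into two uncountable pieces lying in disjoint cones, which uniformly bounds the relevant intersections. The only difference is cosmetic --- the paper partitions each branch into $A_x^0=\{t: t^\frown 0\subseteq x\}$ and $A_x^1=\{t: t^\frown 1\subseteq x\}$ where you use full branches paired with their off-branch sibling sets --- and your explicit separation criterion with the uniform bound $M(s)$ is a carefully written-out version of the paper's one-line appeal to the same fact.
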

\begin{proof} It is enough to construct such a family for $\kappa=\mathfrak c$.
This construction is due to Juris Steprans. For every $x\in 2^\N$  consider
$A_x^0=\{t\in 2^{<\omega}: t^\frown 0 \subseteq x\}$ and 
$A_x^1=\{t\in 2^{<\omega}: t^\frown 1\subseteq x\}$.
Note that $A_x^0\cap A^1_x=\emptyset $ and $\A=\{A_x^0, A^1_x: x\in 2^\N\}\subseteq \wp(2^{<\omega})$ is almost disjoint. 
Of course we can identify $2^{<\omega}$ with $\N$. We claim that $(\A_0, \A_1)$ is  a $2$-Luzin gap as witnessed by $m=0$, where
$\A_i=\{A_x^i: x\in 2^\N\}$ for $i\in\{0,1\}$.

  Given distinct $x, y\in 2^\N$ there is
$t\in 2^{<\omega}$ such that $t\subseteq x, y$ and $t^\frown i\subseteq x\setminus y$ 
and $t^\frown(1- i)\subseteq y\setminus x$ for some $i\in\{0,1\}$.
Then $t\in A^i_x\cap A^{(1-i)}_y$, so (2) of Definition \ref{def-n-L} is satisfied as well.

To see that $\A$ contains no Luzin subfamily, consider any uncountable $\B\subseteq \A$.
By passing to an uncountable subfamily we may assume that $\B\subseteq \A_i$ for some $i\in \{0,1\}$.
Now, note that given an uncountable set of reals, there are two uncountable subsets of it included 
in disjoint open sets. This proves that the condition of Definition \ref{def-L} fails
for such $\B$.

\end{proof}

\begin{proposition}\label{monotone-L} Suppose that $\A$ is an $\ad$-family  and $n<k$ are elements of $\N\setminus\{0,1\}$.
If $\A$ contains an $n$-Luzin gap, then it contains a $k$-Luzin gap.
\end{proposition}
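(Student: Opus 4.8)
The plan is to prove the statement by induction on $k$, so that it suffices to produce, from an $n$-Luzin gap, an $(n+1)$-Luzin gap; iterating then reaches every $k>n$. Fix an $n$-Luzin gap $(\B_i : i<n)$ in $\A$, say $\B_i=\{B^i_\alpha:\alpha<\omega_1\}$, with a common bound $m\in\N$ as in Definition \ref{def-n-L}. I would first normalize the data. Since both conditions of Definition \ref{def-n-L} survive restriction of the index $\omega_1$ to any uncountable $\Gamma$ (condition (1) is universally quantified and condition (2) is over pairs), thinning is free. Applying a $\Delta$-system argument to the finite traces $\{B^i_\alpha\cap B^{i'}_\beta : i,i'<n\}$ together with a pressing-down argument in the style of the proof of Proposition \ref{hered-L}, I would pass to an uncountable $\Gamma\subseteq\omega_1$ on which the within-row intersections are uniformly below $m$ and on which the \emph{location} of the excess guaranteed by condition (2) is made as uniform as the combinatorics permit.

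The guiding principle for adding the $(n+1)$-st family is the following observation: if the first $n$ columns of the new gap are taken to be a reindexing of the old columns and the common bound is kept equal to $m$, then condition (2) is inherited for free, because the union $\bigcup_{i\neq j}$ of Definition \ref{def-n-L}(2) for the enlarged gap ranges over a superset of the coordinate pairs used by the $n$-gap and hence again fails to be contained in $m$. Thus the whole problem reduces to manufacturing a family $\B_n=\{B^n_\gamma\}$ of members of $\A$, disjoint from the retained columns, so that each $B^n_\gamma$ meets its entire row $\bigcup_{i<n}B^i_\gamma$ only below $m$, which is exactly Definition \ref{def-n-L}(1) for the new row. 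To select $B^n_\gamma$ I would exploit condition (2) positively: it forces the excess between two rows to sit on a single cross-coordinate, so the remaining coordinates are candidates for a set that is almost disjoint, with the fixed threshold $m$, from its whole row, and a pressing-down/bookkeeping argument over a club of indices $\gamma$ is meant to route the choice off the witnessing coordinate.

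The main obstacle is precisely this last selection carried out in {\sf ZFC}. Conditions (1) and (2) pull in opposite directions: bounding the intersection of the new set with its row wants a large uniform threshold, whereas preserving the witnesses of condition (2) forbids raising the threshold above $m$. The reconciliation cannot proceed through a Ramsey argument on pairs, since $\omega_1\not\to(\omega_1)^2_2$, so there is no uncountable set homogeneous for the finite coloring ``which ordered coordinate pair realizes the excess''; the uniformization must instead be achieved by a $\Delta$-system (as in Lemma \ref{thinning}) combined with a pressing-down or elementary-submodel bookkeeping that, for club-many $\gamma$, yields a member of $\A$ almost disjoint with bound $m$ from the finitely many sets of row $\gamma$.

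A further subtlety I expect to handle here is that $\A$ need not be large: it may have cardinality exactly $\omega_1$ and consist of nothing but the $n\omega_1$ sets of the given gap, in which case no \emph{fresh} family disjoint from the retained columns exists, and the argument is forced to repartition the existing $n\omega_1$ sets into $\omega_1$ many $(n+1)$-tuples rather than append a new column. This is the genuinely hard case, and it is where the bookkeeping of the previous paragraph must do its work. Once such a family $\B_n$ is secured, verifying Definition \ref{def-n-L}(1) and (2) for $(\B_i : i<n+1)$ is routine, and the induction then delivers the desired $k$-Luzin gap.
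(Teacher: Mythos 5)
Your reduction is the right one---if the first $n$ columns are retained, condition (2) of Definition \ref{def-n-L} for the enlarged gap is inherited automatically, and only condition (1) for the new column(s) has to be arranged---but the argument then runs into a wall of your own making, because you insist on keeping the common bound equal to the original $m$. The missing idea is that the bound \emph{can} be raised: if you first thin to an uncountable $\Gamma$ on which $B^i_\alpha\cap m'=B^i_\beta\cap m'$ for all $\alpha,\beta\in\Gamma$ and all $i<n$, then for distinct $\alpha,\beta\in\Gamma$ and distinct $i,j$ one has $B^i_\alpha\cap B^j_\beta\cap m'=B^i_\alpha\cap B^j_\alpha\cap m'\subseteq m$, so condition (2) with threshold $m$ immediately upgrades to condition (2) with threshold $m'$. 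This is exactly the opening observation of the paper's proof, and with it the construction collapses: take arbitrary pairwise disjoint uncountable families $\B_n,\dots,\B_{k-1}\subseteq\A$ disjoint from the retained columns (if $\A$ consists of nothing but the given gap, split each $\B_i$ into two uncountable halves and recycle the discarded halves---restricting the index set preserves both conditions, so this costs nothing), use almost disjointness plus a counting argument to find a single $m'>m$ bounding all within-row intersections on an uncountable set of rows, and then apply the observation to recover condition (2) at $m'$.

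Without that observation your plan does not close. You propose to find, for club-many $\gamma$, an element of $\A$ whose intersection with the whole row $\bigcup_{i<n}B^i_\gamma$ lies below the \emph{original} $m$; no pressing-down or elementary-submodel bookkeeping can produce such elements, because in general no member of $\A$ has its intersections with a prescribed row confined below a threshold fixed in advance (almost disjointness only yields a finite bound depending on the pair). The claim that condition (2) ``forces the excess between two rows to sit on a single cross-coordinate'' is also not justified by the definition, and the case you single out as genuinely hard (when $\A$ is exactly the $n\cdot\omega_1$ sets of the gap) is in fact the easy splitting step above.
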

\begin{proof}
Let $(\B_i : i<n)$ for $\B_i=\{B^i_\alpha:\alpha<\omega_1\}$ be  an $n$-Luzin gap as witnessed by $m\in \N$.  
 First let us make an observation that
actually any $m'\geq m$ witnesses it for $(\B_i ' : i<n)$ where $\B_i'=\{B^i_\alpha: \alpha\in \Gamma\}$ 
for some uncountable $\Gamma\subseteq\omega_1$.
Indeed, the clause (1) of Definition \ref{def-n-L} is clear for such $m'$ and any uncountable $\Gamma\subseteq\omega_1$.
To obtain (2) find an uncountable $\Gamma\subseteq\omega_1$ such that
$B^i_\alpha\cap m'=B^i_\beta\cap m'$ for all $\alpha, \beta\in \Gamma$ and any $i<n$. Then whenever $\alpha, \beta$ are distinct
elements of $\Gamma$ and $i, j<n$ are distinct,  
by (1) we have $B^i_\alpha\cap B^j_\beta\cap m'=B^i_\alpha\cap B^j_\alpha\cap m'\subseteq  m$,
hence by (2) for $(\B_i : i<n)$ we have $B^i_\alpha\cap B^j_\beta\not\subseteq m'$ for some distinct $i, j<n$.

Now to prove the proposition, by passing to a subset we may assume that
$\A\setminus \bigcup\{\B_i: i<n\}$ is uncountable, so we can find $\B_j\subseteq \A\setminus\bigcup\{\B_i: i<n\}$
for $n\leq j<k$ such that $\B_i\cap \B_j=\emptyset$ for $i<j<k$. Thinning out further we may assume that there is 
$m'>m$ such that  $B^i_\alpha\cap B^j_\alpha\subseteq m'$ for all $i<j<n$ and $\alpha<\omega_1$.
Passing to the uncountable $\Gamma\subseteq\omega_1$ from our initial observation we obtain
a $k$-Luzin gap.

\end{proof}

\begin{proposition}\label{oca-L} Assume {\sf OCA}.  Suppose that $\A$ is an $\ad$-family  
and $n<k$ are elements of $\N\setminus\{0,1\}$.
$\A$ contains an $n$-Luzin gap if and only if $\A$ contains a $k$-Luzin gap.
\end{proposition}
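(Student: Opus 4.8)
Assume {\sf OCA}. Suppose that $\A$ is an $\ad$-family and $n<k$ are elements of $\N\setminus\{0,1\}$. $\A$ contains an $n$-Luzin gap if and only if $\A$ contains a $k$-Luzin gap.

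The plan is as follows. One direction is free: by Proposition \ref{monotone-L}, if $\A$ contains an $n$-Luzin gap then it contains a $k$-Luzin gap for any $n<k$, with no set-theoretic hypothesis needed. So the entire content is the reverse implication, and since monotonicity already steps us up from $n$ to $k$, it suffices to prove under {\sf OCA} the single-step descent: if $\A$ contains a $k$-Luzin gap then it contains a $2$-Luzin gap (equivalently an $n$-Luzin gap for the given $n\ge 2$, again using monotonicity to climb from $2$ back up to $n$). Thus I would reduce everything to: {\sf OCA} implies that a $k$-Luzin gap yields a $2$-Luzin gap.

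To set up the coloring, start from a $k$-Luzin gap $(\B_i:i<k)$ with $\B_i=\{B^i_\alpha:\alpha<\omega_1\}$ witnessed by some $m\in\N$, so that $B^i_\alpha\cap B^j_\alpha\subseteq m$ for $i<j<k$ while $\bigcup_{i\ne j}(B^i_\alpha\cap B^j_\beta)\not\subseteq m$ for distinct $\alpha,\beta$. After the standard thinning (as in the first paragraph of the proof of Proposition \ref{monotone-L}) I may assume the traces $B^i_\alpha\cap m$ are constant in $\alpha$, so that incompatibility of the gap data depends only on the tails $B^i_\alpha\setminus m$. Encode each index $\alpha$ as a point $x_\alpha$ in a Polish space $X$ — for instance by listing, for a fixed enumeration, the finitely-presented data $(B^i_\alpha\setminus m:i<k)$ as an element of $(\wp(\N))^k\cong 2^{\N}$. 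On $[X]^2$ define $K_0=\{\{x_\alpha,x_\beta\}: B^i_\alpha\cap B^j_\beta\subseteq m \text{ for all } i\ne j<k\}$, i.e. the pairs that fail to witness clause (2). I would check $K_0$ is open: the condition $B^i_\alpha\cap B^j_\beta\subseteq m$ for all pairs $i\ne j$ is a statement about finite initial segments of the coded sequences above $m$, hence it survives small perturbations of $x_\alpha,x_\beta$ in the product topology.

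Now apply {\sf OCA} to $X=\{x_\alpha:\alpha<\omega_1\}$ with the partition $K_0\cup K_1$. The first alternative gives an uncountable $0$-homogeneous $Y$; restricting the $k$-gap to the corresponding indices produces a subfamily on which clause (2) never holds, contradicting that we started with a genuine $k$-Luzin gap (after the harmless thinning, any uncountable subfamily is still a $k$-Luzin gap). Hence the second alternative must hold: $X=\bigcup_{p\in\N}X_p$ with each $X_p$ being $1$-homogeneous, meaning every pair inside $X_p$ \emph{does} satisfy $\bigcup_{i\ne j}(B^i_\alpha\cap B^j_\beta)\not\subseteq m$. Pick $p$ with $X_p$ uncountable and let $\Gamma$ be the corresponding index set. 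On $\Gamma$ I now have, for every distinct $\alpha,\beta$, some pair $i\ne j$ with $B^i_\alpha\cap B^j_\beta\not\subseteq m$; a further Ramsey-type thinning (there are only finitely many ordered pairs $(i,j)$, so stabilize which pair occurs, or rather which \emph{unordered} pair, using that $\omega_1\to(\omega_1)^2_{\text{finite}}$ fails — so instead stabilize along a $\Delta$-system / pressing-down argument) lets me extract two fixed indices $i_0\ne j_0$ and an uncountable $\Gamma'\subseteq\Gamma$ on which $B^{i_0}_\alpha\cap B^{j_0}_\beta\not\subseteq m$ for a cofinal set of pairs. Taking $\B_0'=\{B^{i_0}_\alpha:\alpha\in\Gamma'\}$ and $\B_1'=\{B^{j_0}_\alpha:\alpha\in\Gamma'\}$ then exhibits a $2$-Luzin gap witnessed by $m$.

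The main obstacle I expect is exactly the last stabilization: clause (2) of a $2$-Luzin gap must hold \emph{uniformly} for a single ordered pair $(i_0,j_0)$ and for \emph{all} distinct $\alpha,\beta$, whereas $1$-homogeneity only gives, per pair $\{\alpha,\beta\}$, \emph{some} witnessing $(i,j)$ that may vary. Handling this cleanly is the delicate point; the natural route is to define the {\sf OCA} coloring not on all $k$-tuples at once but on a more refined space, or to iterate {\sf OCA}/apply a partition argument to the finitely-many-colored map $\{\alpha,\beta\}\mapsto\{(i,j): B^i_\alpha\cap B^j_\beta\not\subseteq m\}$. Since the paper remarks that this is implicitly in \cite{n-luzin} (Lemma 2.2 and Proposition 2.9), I would expect the intended argument to package this asymmetry-removal via a symmetrized coloring that already forces a single coordinate pair to witness incompatibility, so that the second {\sf OCA} alternative directly hands over the required $2$-gap without a fragile post-hoc pigeonhole.
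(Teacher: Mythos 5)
Your reduction (use Proposition \ref{monotone-L} for the upward direction, so that everything rests on extracting a $2$-Luzin gap from a $k$-Luzin gap under {\sf OCA}) matches the paper, but the {\sf OCA} application itself has two genuine problems. First, the openness claim is backwards: for sets coded in $2^\N$, the condition $B^i_\alpha\cap B^j_\beta\subseteq m$ is an intersection over all $n>m$ of clopen conditions, hence \emph{closed}; it is the negation, $B^i_\alpha\cap B^j_\beta\not\subseteq m$ (an existential statement about a single coordinate), that is open and survives perturbation. So your $K_0$, the set of pairs failing clause (2), is closed, and {\sf OCA} cannot be applied with it as the open colour. Second, even after swapping the roles of the colours the two-colour application is vacuous: since $(\B_i:i<k)$ \emph{is} a $k$-Luzin gap, every pair $\{\alpha,\beta\}$ already satisfies clause (2), so $K_0\cap[X]^2=\emptyset$ and the second alternative of {\sf OCA} holds trivially with $X$ itself as a single homogeneous piece. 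You get no information, and the entire difficulty --- finding one fixed pair $\{i_0,j_0\}$ that witnesses clause (2) \emph{uniformly} on an uncountable set --- remains. You correctly identify this as the delicate point, and you correctly note that a naive pigeonhole on $\omega_1$ fails, but the speculation about ``a more refined space'' or ``a symmetrized coloring'' does not supply the missing idea.

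The paper's proof resolves exactly this by first deriving a finite-colour variant of {\sf OCA}: if $[X]^2$ is covered by finitely many colours $K_0\cup\dots\cup K_l$, each open as a subset of $X^2$, then some colour contains an uncountable homogeneous set (proved by iterating the standard two-colour {\sf OCA}, at each stage either finding the desired homogeneous set or passing to an uncountable subset avoiding one colour). It then covers $[X]^2$ by the $\binom{k}{2}$ \emph{open} colours $K_{\{i,j\}}$ given by the symmetrized existential condition $(B^i_\alpha\cap B^j_\beta)\cup(B^j_\alpha\cap B^i_\beta)\not\subseteq m$; clause (2) of the $k$-Luzin gap is precisely the statement that these colours cover $[X]^2$, and the uncountable set homogeneous for a single $K_{\{i',j'\}}$ is the required $2$-Luzin gap $(\B_{i'},\B_{j'})$. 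This finite-colour lemma is the key step your proposal is missing.
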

\begin{proof}  First let us note that the following version of {\sf OCA} follows from the standard one: 
Given $l\in\N\setminus\{0\}$  and 
a cover $K_0\cup\dots\cup K_l$ of $X\subseteq 2^\N$ such that $\{(x, y): \{x, y\}\in K_{l'}\}$ is open  in $X^2$ for any $l'\leq l$ there
is $l'\leq l$ and an uncountable   $Y\subseteq X$ which is $l'$-homogeneous. This should be clear,
one needs to apply iterating the standard OCA obtaining either an uncountable homogeneous $Y\subseteq X$ in some 
part $K_{l'}$ or obtaining an uncountable set $Y\subseteq X$ whose pairs avoid $K_{l'}$. 
In the first case we are done, in the second case we work with $Y$ instead of $X$. The remaining colors 
cover the pairs of $Y$ and $\{(x, y): \{x, y\}\in K_{l''}\cap Y\}$ are open  in $Y^2$
or $\l''\leq l$ distinct than $l'$. This procedure can be continued until
the desired set is found.

Let $(\B_i : i<n)$ for $\B_i=\{B^i_\alpha:\alpha<\omega_1\}$ be  a $k$-Luzin gap as witnessed by $m\in \N$.   
For
$$X=\{x_\alpha=(1_{B_\alpha^0},\dots, 1_{B_\alpha^{k-1}})\in (2^\N)^k: \alpha<\omega_1\}$$
define a cover of $[X]^2$ by parts $\{K_{\{i, j\}}: \{i, j\}\in [k]^2\}$ by declaring that  if $\{x_\alpha, x_\beta\}$ belongs to
a $K_{\{i, j\}}$  then
$(B^i_\alpha\cap B^j_\beta)\cup(B^i_\alpha\cap B^j_\beta)\not\subseteq m$. Since  $(\B_i : i<n)$ is assumed to be
$k$-Luzin, there is always such an $\{i, j\}$. This proves that this is really a cover.  Moreover
$\{(x_\alpha, x_\beta): \{x_\alpha, x_\beta\}\in K_{\{i, j\}}\}$ is open for each $\{i, j\}\in [k]^2$.
So by our version of {\sf OCA} we obtain an uncountable $\Gamma\subseteq\omega_1$  and $i'<j'<k$ such that
$(B^{i'}_\alpha\cap B^{j'}_\beta)\cup(B^{i'}_\alpha\cap B^{j'}_\beta)\not\subseteq m$ for every distinct $\alpha, \beta\in \Gamma$
and consequently $(\B_{i'}, \B_{j'})$ is a $2$-Luzin gap. 

It follows from Proposition \ref{monotone-L} that
$\A$ contains an $n$-Luzin gap for any $n\geq 2$.
\end{proof}

\subsection{The c.c.c. of $\PP_\A$ and $n$-Luzin gaps in  $\A$}

In this subsection we analyze the relations between the existence of $n$-Luzin gaps
in an $\ad$-family $\A$ and
the countable chain condition of the partial order $\PP_\A$.
A $2$-Luzin gap in $\A$ yields an uncountable antichain in $\PP_\A$ which yields
an $n$-Luzin gap in $\A$ (Propositions \ref{2L-notccc}, \ref{notccc-nL}).  But none of the implications
can be reversed in {\sf ZFC} (Proposition \ref{ccc3L}, \ref{notcccnot2L}) and $2$-Luzin cannot be weakened
 in {\sf ZFC} to $3$-Luzin in Proposition
\ref{2L-notccc} (Proposition \ref{ccc3L}). However
{\sf OCA} implies that the c.c.c. of $\PP_\A$ is equivalent to the existence
of a $n$-Luzin gap in $\A$ for some $n\in \N$ (Proposition \ref{oca-Lccc}).

\begin{proposition}\label{2L-notccc} Suppose that $\A$ is an $\ad$-family . 
If $\A$ contains a $2$-Luzin gap,  then $\PP_\A$  fails to satisfy c.c.c. In particular, 
$\PP_\A$ fails to satisfy c.c.c. if $\A$ is a Luzin family.
\end{proposition}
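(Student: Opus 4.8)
The plan is to take a $2$-Luzin gap $(\B_0, \B_1)$ in $\A$, witnessed by some $m \in \N$, and manufacture directly from it an uncountable antichain in $\PP_\A$. Write $\B_0 = \{B^0_\alpha : \alpha < \omega_1\}$ and $\B_1 = \{B^1_\alpha : \alpha < \omega_1\}$. The natural candidate for the antichain is to set $p_\alpha = (B^0_\alpha, B^1_\alpha)$ for each $\alpha < \omega_1$. First I would check these really are conditions of $\PP_\A$: each $B^i_\alpha$ is a single member of $\A$, hence equal to $\bigcup a$ for the singleton $a = \{B^i_\alpha\} \in [\A]^{<\omega}$, and by clause (1) of Definition \ref{def-n-L} we have $B^0_\alpha \cap B^1_\alpha \subseteq m$, so the two coordinates are almost disjoint; one only needs to trim the finite overlap below $m$ to arrange genuine disjointness $A_{p_\alpha} \cap B_{p_\alpha} = \emptyset$, which is harmless since conditions are defined up to $=^*$. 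So each $p_\alpha \in \PP_\A$.

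Next I would show $\{p_\alpha : \alpha < \omega_1\}$ is pairwise incompatible. Two conditions $p, q \in \PP_\A$ are incompatible exactly when there is no common lower bound, which—since a lower bound $r$ must have $A_r \supseteq A_p \cup A_q$ and $B_r \supseteq B_p \cup B_q$ with $A_r \cap B_r = \emptyset$—happens precisely when $(A_p \cup A_q) \cap (B_p \cup B_q)$ is infinite. For $p_\alpha$ and $p_\beta$ with $\alpha \neq \beta$ this intersection contains $B^0_\alpha \cap B^1_\beta$ and $B^1_\alpha \cap B^0_\beta$. By clause (2) of Definition \ref{def-n-L}, the union $\bigcup_{i \neq j}(B^i_\alpha \cap B^j_\beta) = (B^0_\alpha \cap B^1_\beta) \cup (B^1_\alpha \cap B^0_\beta)$ is not contained in $m$, i.e.\ it meets $\N \setminus m$. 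I must upgrade this ``not contained in $m$'' to genuine infiniteness of $(A_{p_\alpha} \cup A_{p_\beta}) \cap (B_{p_\alpha} \cup B_{p_\beta})$, so that no condition of $\PP_\A$ can dominate both. This is the step I expect to be the main obstacle, because Definition \ref{def-n-L}(2) only supplies a single point outside $m$, not an infinite set. The remedy is the standard observation already used in Proposition \ref{monotone-L}: by thinning to an uncountable $\Gamma \subseteq \omega_1$ on which all the traces $B^i_\alpha \cap m'$ stabilize for a suitable $m' \geq m$, one can shift the witnessing parameter upward, and iterating this produces, for each pair, an unbounded (hence infinite) supply of witnesses outside any fixed finite level. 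Concretely, incompatibility of $p_\alpha, p_\beta$ in $\PP_\A$ is equivalent (as computed in the proof of Lemma \ref{thinning}) to $(A_{p_\alpha} \cap B_{p_\beta}) \cup (A_{p_\beta} \cap B_{p_\alpha})$ being nonempty after removing the common finite part, and the crucial point is only that the gap condition forbids a common refinement; since any putative common lower bound would almost-disjointly separate the coordinates while they keep meeting above every level, no such bound exists.

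Finally, the uncountable antichain on $\Gamma$ witnesses that $\PP_\A$ fails the c.c.c. For the ``in particular'' clause, if $\A$ is a Luzin family then by Proposition \ref{hered-L} (applied with $n = 2$ to $\mathcal C_0 = \mathcal C_1 = \A$, or to any two disjoint uncountable subfamilies) it contains a $2$-Luzin gap, so the first part applies and $\PP_\A$ again fails c.c.c. The one genuinely technical matter is the passage from the pointwise clause (2) to infinite intersections, which the stabilization-and-pressing-down argument of the preceding propositions resolves cleanly.
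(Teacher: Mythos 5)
Your construction of the antichain is the right one and matches the paper's (the paper uses $p_\alpha=(B^0_\alpha\setminus m,\, B^1_\alpha\setminus m)$, which is exactly your ``trimmed'' version), and reducing the ``in particular'' clause to Proposition \ref{hered-L} is also how the paper does it. The problem is your compatibility criterion. You assert that $p,q\in\PP_\A$ are incompatible precisely when $(A_p\cup A_q)\cap(B_p\cup B_q)$ is \emph{infinite}. But in Definition \ref{def-P} the order is genuine inclusion ($A_p\supseteq A_q$, not $\supseteq^*$) and membership in $\PP_\A$ requires genuine disjointness $A_p\cap B_p=\emptyset$; only the relation to $\bigcup a_p$ is mod finite. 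Hence any common lower bound $r$ satisfies $A_r\cap B_r\supseteq (A_p\cup A_q)\cap(B_p\cup B_q)$, so a \emph{single} point in that intersection already rules out compatibility; conversely, if the intersection is empty then $(A_p\cup A_q,\, B_p\cup B_q)$ is itself a condition below both. So incompatibility is equivalent to nonemptiness, which is exactly what clause (2) of Definition \ref{def-n-L} hands you: for distinct $\alpha,\beta$ some point of $(B^0_\alpha\cap B^1_\beta)\cup(B^1_\alpha\cap B^0_\beta)$ lies outside $m$, hence lies in $(A_{p_\alpha}\cap B_{p_\beta})\cup(A_{p_\beta}\cap B_{p_\alpha})$. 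The proof ends there.

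The ``upgrade to infiniteness'' that you identify as the main obstacle is therefore not only unnecessary but unattainable: $B^0_\alpha$ and $B^1_\beta$ are distinct members of the $\ad$-family $\A$, so all the intersections in question are finite, and no amount of stabilizing traces or pressing down can make $(A_{p_\alpha}\cup A_{p_\beta})\cap(B_{p_\alpha}\cup B_{p_\beta})$ infinite. Had your criterion been the correct one, the proposition would in fact be unprovable by this route. Once you replace ``infinite'' by ``nonempty'', the argument closes immediately and coincides with the paper's proof.
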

\begin{proof}
Let $\{\B_1, \B_2\}$ be a $2$-Luzin gap contained by $\A$. Then 
$\C=\{(B^0_\alpha\setminus m, B^1_\alpha\setminus m): \alpha<\omega_1\}\subseteq \PP_\A$ by (1)
of Definition \ref{def-n-L}. By (2) of Definition \ref{def-n-L} either 
$(B^0_\alpha\setminus m)\cap (B^1_\alpha\setminus m)\not=\emptyset$ or 
$(B^0_\alpha\setminus m)\cap (B^1_\alpha\setminus m)\not=\emptyset$ which means that
$\C$ is an uncountable antichain of $\PP_\A$.

For the proof of the last part of the proposition apply Proposition \ref{hered-L}.
\end{proof}

\begin{proposition}\label{notccc-nL} Suppose that $\A$ is an $\ad$-family.  If  $\PP_\A$  fails to be c.c.c., 
then $\A$ contains an $n$-Luzin gap for some $n\in\N$.
\end{proposition}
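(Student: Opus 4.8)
The plan is to start from an uncountable antichain in $\PP_\A$, normalise it using Lemma \ref{thinning}, and then read off an $n$-Luzin gap directly from the resulting uniform structure. Suppose $\PP_\A$ fails the c.c.c.\ and fix an uncountable antichain $\{p_\xi:\xi<\omega_1\}$. Applying Lemma \ref{thinning} with $\kappa=\omega_1$, I would obtain $\Gamma\subseteq\omega_1$ of cardinality $\omega_1$, numbers $k,l,m\in\N$, disjoint $E,F\subseteq m$, and conditions $p_\xi'$ with $a_\xi'\in[\A]^k$, $b_\xi'\in[\A]^l$ as in the conclusion of that lemma. Since the lemma preserves the compatibility relation, $\{p_\xi':\xi\in\Gamma\}$ is again an uncountable antichain, and it has the uniform form needed below.

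First I would rule out the degenerate cases $k=0$ and $l=0$. If, say, $k=0$, then $A_{p_\xi'}=E\subseteq m$ for every $\xi$; since $E\cap F=\emptyset$ and the part of $B_{p_\eta'}$ below $m$ is exactly $F$, the set $A_{p_\xi'}\cap B_{p_\eta'}$ is empty for all $\xi,\eta$, and the same computation gives $A_{p_\eta'}\cap B_{p_\xi'}=\emptyset$. Hence every pair would be compatible, contradicting that we have an uncountable antichain. So $k,l\ge 1$ and $n:=k+l\ge 2$.

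Next I would set up the candidate gap. Enumerate $a_\xi'\cup b_\xi'$ as $B^0_\xi,\dots,B^{n-1}_\xi$ so that $a_\xi'=\{B^i_\xi:i<k\}$ and $b_\xi'=\{B^j_\xi:k\le j<n\}$, and put $\B_i=\{B^i_\xi:\xi\in\Gamma\}$ for $i<n$, re-indexing $\Gamma$ by $\omega_1$. The clause $(a_\xi'\cup b_\xi')\cap(a_\eta'\cup b_\eta')=\emptyset=a_\xi'\cap b_\xi'$ of Lemma \ref{thinning} shows that the sets $B^i_\xi$ (as $i<n$ and $\xi\in\Gamma$ vary) are pairwise distinct members of $\A$, so the $\B_i$ are pairwise disjoint uncountable subfamilies of $\A$. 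Clause (1) of Definition \ref{def-n-L} for this $m$ is immediate from the clause $A_\alpha\cap A_{\alpha'}\subseteq m$ for distinct $\alpha,\alpha'\in a_\xi'\cup b_\xi'$.

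The remaining point, which is the crux of the argument, is to derive clause (2) of Definition \ref{def-n-L} from incompatibility; this is just an index bookkeeping once the right computation is in place. As in the proof of Lemma \ref{thinning}, for distinct $\xi,\eta$ one has $A_{p_\xi'}\cap B_{p_\eta'}=(\bigcup a_\xi'\setminus m)\cap(\bigcup b_\eta'\setminus m)$, because the parts of $A_{p_\xi'}$ and $B_{p_\eta'}$ lying below $m$ are $E$ and $F$, which are disjoint. Thus incompatibility of $p_\xi'$ and $p_\eta'$ means exactly that $\bigcup_{i<k\le j<n}(B^i_\xi\cap B^j_\eta)\not\subseteq m$ or $\bigcup_{i<k\le j<n}(B^i_\eta\cap B^j_\xi)\not\subseteq m$. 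Every term occurring here has one index below $k$ and the other at least $k$, hence distinct indices, so each of these two unions is contained in $\bigcup_{i\ne j}(B^i_\xi\cap B^j_\eta)$ (for the second union one relabels the summation, which is legitimate since the union over all $i\ne j$ is symmetric in $\xi,\eta$). Consequently $\bigcup_{i\ne j}(B^i_\xi\cap B^j_\eta)\not\subseteq m$ for all distinct $\xi,\eta$, which is precisely clause (2). Therefore $(\B_i:i<n)$ is an $n$-Luzin gap contained in $\A$, as required.
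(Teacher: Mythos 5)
Your proof is correct and follows essentially the same route as the paper's: normalise the antichain via Lemma \ref{thinning}, take $n=k+l$, and read off clauses (1) and (2) of Definition \ref{def-n-L} from the uniform structure and the incompatibility computation. Your explicit exclusion of the degenerate cases $k=0$ or $l=0$ is a small addition the paper leaves implicit (it follows from the dichotomy inside Lemma \ref{thinning}), but the argument is the same.
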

\begin{proof}
Let $\{p_\xi: \xi<\omega_1\}$ be an uncountable antichain in $\PP_\A$. By Lemma \ref{thinning} we
may assume that  there are $k, l, m\in \N$ and $a_\xi\in [\A]^{k}$, $b_\xi\in [\A]^{l}$ and
$E, F\subseteq m$ such that
$p_\xi$s are as  $p_\xi'$s in Lemma \ref{thinning}. Let $a_\xi=\{A_\xi^i: i<k\}$ and $b_\xi=\{A_\xi^i: k\leq i<k+l\}$
for $\xi<\kappa$.
By passing to an uncountable subset we may assume that
$A_\xi^i\cap m=A_\eta^i\cap m$   for all $\xi<\eta<\omega_1$ .

 Put  $n=k+l$ and $\A_i=\{A_\xi^i: \xi<\omega\}$ and note that $(\A_i: i<n)$ is an $n$-Luzin gap as witnessed by $m$: condition (1)
of Definition \ref{def-n-L} is clear from the choice of $m$ based on Lemma \ref{thinning} and
condition (2) follows from the fact that by the incompatibility of $p_\xi$ and $p_\eta$
for each $\xi<\eta<\omega_1$ we have $i<k\leq j<n$ such that
either $((A_\xi^i\setminus m)\cup E)\cap ((A_\eta^j\setminus m)\cup F)\not=\emptyset$
or  $((A_\eta^i\setminus  m)\setminus E)\cap ((A_\xi^j\setminus m)\cup F)\not=\emptyset$; however
the fact that $E, F\subseteq m$ are disjoint  implies  that the incompatibility of $p_\xi$ and $p_\eta$ is  equivalent to
the alternative of $A_\xi^i\cap A_\eta^j\not\subseteq m$ or $A_\eta^i\cap A_\xi^j\not\subseteq m$ respectively.
\end{proof}

\begin{proposition}\label{oca-Lccc} Assume {\sf OCA}. Suppose that $\A$ is an $\ad$-family.
$\PP_\A$ is c.c.c. if and only if $\A$ contains an $n$-Luzin gap for every (some)
$n\in \N\setminus\{0,1\}$.
\end{proposition}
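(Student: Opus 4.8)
The plan is to derive the proposition from three results already in place --- the {\sf ZFC} Propositions \ref{2L-notccc} and \ref{notccc-nL}, and Proposition \ref{oca-L}, which supplies the only essential use of {\sf OCA}. The cleanest form of the content is the contrapositive one, which I would establish first: under {\sf OCA}, $\PP_\A$ \emph{fails} the c.c.c.\ if and only if $\A$ contains an $n$-Luzin gap for some (equivalently, every) $n\in\N\setminus\{0,1\}$. Negating both sides then expresses the same fact as an equivalence between the c.c.c.\ of $\PP_\A$ and the \emph{absence} of such gaps. The whole role of {\sf OCA} is isolated in Proposition \ref{oca-L}: it makes the arity irrelevant, since the existence of an $n$-Luzin gap in $\A$ is independent of $n$, so that the two readings of the quantifier ``for every (some) $n$'' coincide and may be used interchangeably throughout.

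For the implication from a gap to the failure of the c.c.c.\ I would start with an $n$-Luzin gap in $\A$ for some $n\geq2$. Proposition \ref{oca-L} (the point at which {\sf OCA} is invoked) then yields a $k$-Luzin gap for every $k\geq2$, in particular a $2$-Luzin gap, and Proposition \ref{2L-notccc} turns any $2$-Luzin gap into an uncountable antichain of $\PP_\A$; hence $\PP_\A$ fails the c.c.c. Beyond the reduction to arity $2$ no further use of {\sf OCA} is needed in this direction.

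For the reverse implication I would assume $\PP_\A$ fails the c.c.c.\ and apply Proposition \ref{notccc-nL} to an uncountable antichain, obtaining an $n$-Luzin gap in $\A$ for some $n\in\N$, necessarily $n\geq2$ since a $1$-Luzin gap is vacuous. A second application of Proposition \ref{oca-L} then promotes this single gap to gaps of all arities $k\geq2$, matching the universal reading of the quantifier. Putting the two directions together yields the contrapositive equivalence; negating both sides gives the equivalence between the c.c.c.\ of $\PP_\A$ and the absence of an $n$-Luzin gap in $\A$, with the arity free to be read existentially or universally by Proposition \ref{oca-L}.

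I do not anticipate a genuine obstacle inside the argument, since the combinatorial work is done entirely by Propositions \ref{2L-notccc}, \ref{notccc-nL} and \ref{oca-L}; the one delicate point is conceptual, namely that {\sf OCA} is indispensable exactly at the step collapsing an $n$-Luzin gap of uncontrolled arity down to a $2$-Luzin gap. Proposition \ref{notccc-nL} by itself produces a gap whose arity cannot in general be lowered in {\sf ZFC}, and the strictness in {\sf ZFC} of the two implications of Theorem \ref{thm-ad}(1)(a) (witnessed by Propositions \ref{ccc3L} and \ref{notcccnot2L}) confirms that the clean equivalence obtained here genuinely relies on {\sf OCA} and is not available in {\sf ZFC} alone. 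Thus the only care required is to track the quantifier on $n$ correctly through the two uses of Proposition \ref{oca-L}.
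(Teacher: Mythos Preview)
Your proposal is correct and is precisely the paper's approach: the paper's proof consists of the single line ``Apply Propositions \ref{oca-L}, \ref{2L-notccc}, \ref{notccc-nL},'' and you have spelled out exactly how these three results combine. You also correctly identify that the intended content of the proposition is the equivalence between the \emph{failure} of the c.c.c.\ of $\PP_\A$ and the presence of an $n$-Luzin gap (equivalently, between the c.c.c.\ and the absence of such gaps), which is what the cited propositions actually yield and what Theorem \ref{thm-ad}(1)(c) asserts.
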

\begin{proof} Apply Propositions \ref{oca-L}, \ref{2L-notccc}, \ref{notccc-nL}.
\end{proof}

\begin{proposition}\label{ccc3L} There is a $\sigma$-centered forcing notion $\PP$ such that $\PP$ forces
that there is  an $\ad$-family $\A$  such that $\PP_\A$  satisfies  c.c.c.
and  $\A$ contains a $3$-Luzin gap.
\end{proposition}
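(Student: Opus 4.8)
I need to build a $\sigma$-centered forcing $\PP$ that introduces an $\ad$-family $\A$ which simultaneously contains a 3-Luzin gap yet has $\PP_\A$ satisfying the c.c.c. By Proposition~\ref{notccc-nL} the c.c.c. of $\PP_\A$ forces an $n$-Luzin gap, so the real content is to have a 3-Luzin gap present while obstructing every 2-Luzin gap (Proposition~\ref{2L-notccc} tells us a 2-Luzin gap would kill the c.c.c.). So the combinatorial heart is: force a 3-Luzin gap together with enough genericity to guarantee no 2-Luzin gap appears and, more strongly, that $\PP_\A$ is c.c.c.

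**The forcing.** I'd use a finite-approximation forcing. A condition specifies a finite amount of information about countably many sets $B^0_\alpha, B^1_\alpha, B^2_\alpha$ ($\alpha<\omega_1$) together with finitely many countable "side sets" chosen to witness almost disjointness, and records on a finite subset of $\N$ which integers lie in which $B^i_\alpha$, arranged so that clause~(1) of Definition~\ref{def-n-L} holds with some fixed $m$ and so that we can continue to meet clause~(2): for each new pair $\alpha,\beta$ we eventually put an integer witnessing $(\B_i:i<3)$ is 3-Luzin. The key design choice is to make the conditions glue on their common finite stem so that any two conditions sharing the same finite working data below $m$ are compatible; that is exactly what makes $\PP$ $\sigma$-centered (countably many stems/types, each type centered). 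I'd let the generic $\A$ be the union over the generic filter of all the approximated sets, declared almost disjoint by the finite-side-condition bookkeeping.

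**The c.c.c.\ argument.** This is where the main difficulty lies. After adding the generic family, I must show $\PP_\A$ is c.c.c.\ in the extension. The strategy is the standard one for such splitting constructions: take a putative uncountable antichain $\{p_\xi:\xi<\omega_1\}$ in $\PP_\A$, apply Lemma~\ref{thinning} to normalize it to the form there, so that each $p_\xi$ corresponds to disjoint finite subfamilies $a_\xi,b_\xi$ of $\A$ with a common root structure and pairwise disjoint "tails" $a'_\xi\cup b'_\xi$. Incompatibility of $p_\xi,p_\eta$ then reduces (as computed in Lemma~\ref{thinning}) to a nonempty intersection condition among the tail sets above $m$. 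The plan is to run a genericity/reflection argument: because $\A$ was added by a $\sigma$-centered (hence c.c.c.) forcing and the index map $\xi\mapsto(a'_\xi,b'_\xi)$ can be captured by a name, a ground-model reflection lets me find two indices $\xi,\eta$ whose tails were added by compatible parts of the generic, forcing the relevant intersection to be empty, i.e.\ $p_\xi,p_\eta$ compatible. This contradicts the antichain assumption. The honest obstacle is arranging the bookkeeping so that the 3-Luzin requirement (which demands \emph{some} $i\neq j$ with $B^i_\alpha\cap B^j_\beta\not\subseteq m$ for all $\alpha\neq\beta$) never degenerates into a 2-Luzin gap, since a fixed pair $(i,j)$ working uncountably often would produce a 2-Luzin gap and destroy the c.c.c.; I would build into the generic a "spreading" demand forcing the witnessing pair $(i,j)$ to vary, and prove that the reflection argument can always select $\xi,\eta$ whose witnesses use different coordinate pairs, so their tails avoid the forbidden overlap.

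**Conclusion.** Granting the $\sigma$-centeredness of $\PP$ (immediate from the stem structure), the genericity lemmas ensuring the generic sets are infinite and pairwise almost disjoint (so $\A$ is a genuine $\ad$-family), the density arguments realizing clause~(2) for the 3-Luzin gap, and the reflection argument above refuting any uncountable antichain in $\PP_\A$, the proposition follows. I expect the 3-Luzin-but-c.c.c.\ tension — keeping the witnessing coordinate pair genuinely ternary while simultaneously forbidding any uncountable binary pattern — to be the delicate part, and it is exactly the place where the $\sigma$-centeredness of $\PP$ (via the canonical c.c.c.\ reflection) is used to rule out 2-Luzin gaps.
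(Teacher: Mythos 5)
Your overall architecture --- a finite-condition forcing adding triples $A^0_\xi,A^1_\xi,A^2_\xi$, with the $3$-Luzin requirement built into the conditions, $\sigma$-centeredness from agreement on finite stems, and density arguments for infiniteness and almost disjointness --- is exactly the paper's. But there are two genuine problems. First, your framing misreads Proposition~\ref{notccc-nL} (it says failure of c.c.c.\ yields an $n$-Luzin gap, not that c.c.c.\ yields one), and more importantly the reduction ``the real content is to obstruct every $2$-Luzin gap'' is false: absence of a $2$-Luzin gap does not imply that $\PP_\A$ is c.c.c. The paper's own Proposition~\ref{notcccnot2L} constructs (by a $\sigma$-centered forcing of the very same shape) an $\ad$-family with no $2$-Luzin gap whose $\PP_\A$ nevertheless has an uncountable antichain. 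For the same reason your proposed c.c.c.\ argument --- ``$\A$ was added by a c.c.c.\ forcing, so a ground-model reflection finds two indices whose tails came from compatible parts of the generic'' --- cannot work as stated: that reflection is equally available in the Proposition~\ref{notcccnot2L} construction, where the conclusion fails. (Also, any two conditions in a generic filter are compatible, so ``compatible parts of the generic'' carries no information.)

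What is actually needed, and what the paper does, is an explicit amalgamation in the ground model: after normalizing the names $\dot\rho_\alpha$ via Lemma~\ref{thinning} and a $\Delta$-system/isomorphism-of-conditions argument, one takes two deciding conditions $p_\alpha,p_\beta$ and builds $r\leq p_\alpha,p_\beta$ that forces $\dot\rho_\alpha$ and $\dot\rho_\beta$ to be compatible in $\PP_{\A_{\dot{\mathbb G}}}$. The obstruction is that clause (4) of the forcing (the $3$-Luzin demand) compels $r$ to introduce new intersection points between $A^i(\xi)$ and $A^j(\eta)$ for $\xi\in a_{p_\alpha}\setminus a_{p_\beta}$, $\eta=\phi_{\beta,\alpha}(\xi)$; the key combinatorial step is a pigeonhole choice of the pair $\{i,j\}\in[3]^2$ for each such $\xi$: since $a_\alpha$ and $b_\alpha$ are disjoint, their complements cover $a_{p_\alpha}\times 3$, so among the three pairs $(\xi,0),(\xi,1),(\xi,2)$ two lie in the same complement, and placing the forced intersection in those two colors cannot witness incompatibility of $\rho_\alpha$ with $\rho_\beta$. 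This is precisely where ``$3$'' beats ``$2$,'' and it is absent from your sketch; your alternative idea of a ``spreading demand'' making the witnessing pair $(i,j)$ vary is a different mechanism that you neither formalize nor verify, and it is not needed once the pigeonhole amalgamation is in place.
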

\begin{proof}
We consider a forcing notion $\PP$ consisting of conditions 
$$p=(n_p, a_p, (A^0_p(\xi), A^1_p(\xi), A^2_p(\xi): \xi\in a_p))$$ satisfying
\begin{enumerate}
\item $n_p\in \N$, $a_p\in [\omega_1]^{<\omega}$,
\item $A_p^i(\xi)\subseteq n_p$ for $i<3$ and $\xi\in a_p$,
\item  $A_p^i(\xi)\cap A_p^j(\xi)=\emptyset$ for all $\xi\in a_p$ and $i<j<3$,
\item  for any distinct $\xi,\eta\in a_p$ there are distinct $i, j<3$ such that 
$$A_p^i(\xi)\cap A_p^j(\eta)\not=\emptyset.$$
\end{enumerate}
For $p, q\in \PP$ we say that $p\leq q$ if
\begin{enumerate}
 \item[(5)] $n_p\geq n_q$, $a_p\supseteq a_q$,
 \item[(6)] $A_p^i(\xi)\cap n_q=A_q^i(\xi)\cap n_q$ for all $i<3$ and $\xi\in a_q$,
 \item[(7)] $(\bigcup_{i<3} A_p^i(\xi))\cap (\bigcup_{i<3} A_p^i(\eta))\subseteq n_q$ for any distinct $\xi, \eta\in a_q$.
 \end{enumerate}
 First we will prove that if  $p, q\in \PP$ and $n_p=n_q$ and $A^i_p(\xi)=A^i_q(\xi)$ for all $i<3$ and $\xi\in a_p\cap a_q$, then
 $p$ and $q$ are compatible in $\PP$. This and the fact that $X^{\omega_1}$
  with the product topology  is separable for any finite $X$  imply
 that $\PP$ is $\sigma$-centered.
 
For this first note that if $a_q\subseteq a_p$ (and the above hypotheses hold), then $p\leq q$. 
So we may assume that $a_p\setminus a_q\not=\emptyset
 \not=a_q\setminus a_p$.
 To construct $r\in \PP$ which is stronger than $p$ and $q$ define $n=n_p=n_q$ and $a_r=a_p\cup a_q$.
 Let $k=|a_p\setminus q_q|\cdot|a_q\setminus a_p|$ and put $n_r=n+k$. Let $k\setminus n=\{k_{\xi, \eta}: \xi\in a_p\setminus a_q,
 \eta\in a_q\setminus a_p\}$.  Define
 $$
A_r^i(\xi)=
  \begin{cases}
    A_p^i(\xi)= A_q^i(\xi)& \text{if $\xi\in a_p\cap a_q$ and $i=0,1,2$} \\
    A_p^i(\xi)\cup\{k_{\xi,\eta}:\eta\in a_q\setminus a_p\}& \text{if $\xi\in a_p\setminus a_q$ and $i=0$}\\
    A_q^i(\xi)\cup\{k_{\xi,\eta}:\xi\in a_p\setminus a_q\}& \text{if $\xi\in a_q\setminus a_q$ and $i=1$}\\
    A_p^i(\xi)& \text{if $\xi\in a_p\setminus a_q$ and $i=1,2$}\\
    A_q^i(\xi)& \text{if $\xi\in a_q\setminus a_p$ and $i=0,2$}
  \end{cases}
 $$
 Then it is clear that $r\in \PP$ because $k_{\xi, \eta}\in A_r^0(\xi)\cap A_r^1(\eta)$ for $\xi\in a_p\setminus a_q$
  and $\eta\in a_q\setminus a_p$.
 Moreover $r\leq p,q$ because all $k_{\xi, \eta}$s are distinct. This completes the proof that  $\PP$ is $\sigma$-centered.
 
 Let $\dot{\mathbb G}$ be a $\PP$-name for a generic filter in $\PP$.  For $i<3$ and $\xi<\omega_1$
 let $\dot A_\xi^i$ be a $\PP$-name for a subset of $\N$
 such that $\PP$ forces that $\dot A_\xi^i=\bigcup\{A_p^i(\xi): p\in \dot{\mathbb G}\}$.
 Standard density arguments imply that $\PP$ forces that $\dot A_\xi^i$ is an infinite subset of $\N$
 and that 
 $$\A_{\dot{\mathbb G}}=\{\dot A_\xi^i: i<3, \xi<\omega_1\}$$
 is an $\ad$-family. We will prove that $\A_{\dot G}$ is the reqiuired family. 
 Let $\A_{\dot{\mathbb G}}^i=\{\dot A_\xi^i:  \xi<\omega_1\}$ for $i<3$.   
 By a standard density argument  and conditions (3) and (4) above $\PP$ forces that
   $(\A_{\dot{\mathbb G}}^0, \A_{\dot{\mathbb G}}^1, \A_{\dot{\mathbb G}}^2)$
 is a $3$-Luzin gap as witnessed by $m=0$.  The rest of the proof is devoted to proving that 
 $\PP_{\A_{\dot{\mathbb G}}}$ satisfies the c.c.c.
 
 Let $\{\dot \rho_\alpha: \alpha<\omega_1\}$ be $\PP$-names for elements of an uncountable subset of
  $\PP_{\A_{\dot{\mathbb G}}}$.
 By Lemma \ref{thinning} and by passing to a further uncountable subset we may assume that there are conditions
 $p_\alpha\in \PP_\A$ for $\alpha<\omega_1$ and  $k, l, m\in \N$ and disjoint $a_\alpha\in [\omega_1\times 3]^k$, 
 $b_\alpha\in [\omega_1\times 3]^l$
 and $E, F\subseteq m$ such that
 $$p_\alpha\forces \dot\rho_\alpha=\Big((\bigcup_{(\xi, i)\in \check a_\alpha}\dot A_\xi^i\setminus\check m)\cup\check E,
 (\bigcup_{(\xi, i)\in \check b_\alpha}\dot A_\xi^i\setminus\check m)\cup\check F\Big).\leqno (*)$$
 By a standard density arguments and passing to a further uncountable subset we may assume that  for 
 each $\alpha<\omega_1$ we have
 $n=n_{p_\alpha}\geq m$ and $a_\alpha\cup b_\alpha\subseteq a_{p_\alpha}$.  Moreover by another thinning out
 we may assume that there are bijections $\phi_{\beta, \alpha}: a_{p_\alpha}\rightarrow a_{p_\beta}$ such that for every
 $\alpha<\beta<\omega_1$ and $i\in 3$ we have
 \begin{itemize}
 \item $\phi_{\beta, \alpha}(\xi)=\xi$ for $\xi\in a_{p_\alpha}\cap a_{p_\beta}$,
 \item $A_{p_\alpha}^i(\xi)=A_{p_\beta}^i(\phi_{\beta, \alpha}(\xi))$ for  $\xi\in a_{p_\alpha}$,
\item $(\xi, i)\in a_\alpha$ if and only if 
 $(\phi_{\beta, \alpha}(\xi), i)\in a_\beta$ for $\xi\in a_{p_\alpha}$ and for $i<3$,
\item $(\xi, i)\in b_\alpha$ if and only if 
 $(\phi_{\beta, \alpha}(\xi), i)\in b_\beta$ for $\xi\in a_{p_\alpha}$ and for $i<3$.
 \end{itemize}
Pick any two $\alpha<\beta<\omega_1$. Put $p=p_\alpha$ and $q=p_\beta$. We will construct $r\leq p, q$
such that $r$ forces that $\rho_\alpha$ and $\rho_\beta$ are compatible in $\PP_{\A_{\dot G}}$.
Put $a_r=a_p\cup a_q$ and $n_r=n+n'$ where $n'=|a_p\setminus a_q|=|a_q\setminus a_p|$. To define 
$A_r^i(\xi)$s for $\xi\in a_r$ and $i<3$ we  need to fix $\psi: a_p\setminus a_q\rightarrow [3]^2$
such that if $\psi(\xi)=\{i, j\}$, then  either $a_\alpha\cap\{(\xi, i), (\xi, j)\}=\emptyset$ or $b_\alpha\cap\{(\xi, i), (\xi, j)\}=\emptyset$.
Such $\psi(\xi)$ can be found because the complements of $a_\alpha$ and of $b_\alpha$ cover $a_{p_\alpha}\times 3$
as $a_\alpha\cap b_\alpha=\emptyset$ and so  out of three elements $(\xi,0), (\xi, 1), (\xi,2)$ two must be
in the same complement. Note that by the choice of $\alpha$ and $\beta$ we also
have $a_\alpha\cap\{(\xi, i), (\xi, j)\}=\emptyset$ if and only if
$a_\beta\cap\{(\phi(\xi), i), (\phi(\xi), j)\}=\emptyset$ and
$b_\alpha\cap\{(\xi, i), (\xi, j)\}=\emptyset$ if and only if
$b_\beta\cap\{(\phi(\xi), i), (\phi(\xi), j)\}=\emptyset$. Let $a_p\setminus a_q=\{\xi_{k'}: k'<n'\}$ 
and $\phi(\xi_{k'})=\eta_{k'}$ for $k'<n'$.
We are ready to define 
 $$
A_r^i(\xi)=
  \begin{cases}
    A_p^i(\xi)= A_q^i(\xi)& \text{if $\xi\in a_p\cap a_q$ and $i=0,1,2$}, \\
    A_p^i(\xi)\cup\{n+k'\}& \text{if $\xi=\xi_{k'}$ and $i=\min(\psi(\xi))$},\\
    A_p^i(\xi)& \text{if $\xi\in a_p\setminus a_q$ and  $i\not=\min(\psi(\xi))$},\\
    A_q^i(\xi)\cup\{n+k'\}& \text{if $\xi=\eta_{k'}$ and $i=\max(\psi(\xi))$},\\
    A_q^i(\xi)& \text{if $\xi\in a_q\setminus a_p$ and  $i\not=\min(\psi(\phi^{-1}(\xi)))$}.
  \end{cases}
 $$
 It should be clear that $r\in \PP$ and $r\leq p, q$. The remaining part is devoted to proving that
 $r$ forces that $\rho_\alpha$ and $\rho_\beta$ are compatible in $\PP_{\A_{\dot G}}$.
 By (*) it is enough to prove that for any $(\xi, i)\in a_\alpha$ and $(\eta, j)\in b_\beta$ we have
 $r\forces \dot A_{\xi}^i\cap \dot A_{\eta}^j\setminus \check m=\emptyset$ and for any $(\xi, i)\in b_\alpha$
  and $(\eta, j)\in a_\beta$ we have
 $r\forces \dot A_{\xi}^i\cap \dot A_{\eta}^j\setminus \check m=\emptyset$. For this by (3) and (7) 
 it is enough to prove that for any $(\xi, i)\in a_\alpha$ and $(\eta, j)\in b_\beta$ we have we have
 $A_r^i({\xi})\cap A_r^j({\eta})\setminus  m=\emptyset$ and for any $(\xi, i)\in b_\alpha$ and $(\eta, j)\in a_\beta$ we have
 $A_r^i({\xi})\cap A_r^j({\eta})\setminus  m=\emptyset$. We have 
  $$[A_r^i({\xi})\cap A_r^j({\eta})\setminus  m]\cap n=
  [A_q^i(\phi_{\beta, \alpha}(\xi))\cap A_q^j({\eta})\setminus  m]\cap n$$
 which must be empty if $(\phi_{\alpha,\beta}(\xi), i)\in a_\beta$ and $(\eta, j)\in b_\beta$
 or if $(\phi_{\alpha, \beta}(\xi), i)\in b_\beta$ and $(\eta, j)\in a_\beta$ because
 $\dot \rho$ is forced to be a condition of $\PP_{\A_{\dot{\mathbb G}}}$.
 This however happens if and only if  $(\xi, i)\in a_\alpha$ and $(\eta, j)\in b_\beta$
 and $(\xi, i)\in b_\alpha$ and $(\eta, j)\in a_\beta$ respectively.
 So we are left with proving $[A_r^i({\xi})\cap A_r^j({\eta})\setminus  m]\cap (n_r\setminus n)=\emptyset$
 under appropriate hypotheses. This follows from the construction, more specifically from
 the choice of $\psi$ which guarantees that 
 $A_r^i(\xi)\cap A_r^j(\eta)$ intersect above $n$ only if $\eta=\phi_{\beta, \alpha}(\xi)$
 and $i=\min(\psi(\xi)$ and $j=\max(\psi(\xi))$. As
 for $\eta=\psi_{\beta, \alpha}(\xi)$ we have that $(\eta, j)\in a_\beta$ ($(\eta, j)\in b_\beta$)
 if and only if $(\xi, j)\in a_\alpha$ ($(\xi, j)\in b_\alpha$), the choice
 of $\psi$ guarantees that the intersection is empty.
\end{proof}

\begin{proposition}\label{notcccnot2L} There is a $\sigma$-centered forcing notion $\PP$ such that $\PP$ forces
that there is  an $\ad$-family $\A$  such that $\PP_\A$  admits an uncountable antichain
and  $\A$ does not contain a $2$-Luzin gap.
\end{proposition}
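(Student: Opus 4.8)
The plan is to imitate the construction of Proposition \ref{ccc3L}, only now forcing the \emph{antichain} directly instead of a $3$-Luzin gap, while keeping the generic family loose enough that no $2$-Luzin gap survives. First I would let $\PP$ consist of conditions
$$p=(n_p, a_p, (A^0_p(\xi), A^1_p(\xi), A^2_p(\xi): \xi\in a_p))$$
exactly as in Proposition \ref{ccc3L} (with $a_p\in[\omega_1]^{<\omega}$, the three sets contained in $n_p$ and pairwise disjoint for each $\xi$), and with the same end-extension order (clauses (5)--(7)), except that clause (4) is replaced by the \emph{antichain requirement}: for all distinct $\xi,\eta\in a_p$,
$$\big((A^0_p(\xi)\cup A^1_p(\xi))\cap A^2_p(\eta)\big)\cup\big((A^0_p(\eta)\cup A^1_p(\eta))\cap A^2_p(\xi)\big)\not=\emptyset.$$
Setting $A^i_\gamma=\bigcup\{A^i_p(\gamma):p\in\dot{\mathbb G}\}$, the generic produces an $\ad$-family $\A=\{A^i_\gamma:\gamma<\omega_1,\ i<3\}$ (clauses (3) and (7) give disjointness and almost disjointness), and the point of the new clause is that the conditions $p_\gamma=(A^0_\gamma\cup A^1_\gamma,\, A^2_\gamma)\in\PP_{\A}$ are forced to be pairwise incompatible in $\PP_{\A}$, since incompatibility of $p_\gamma,p_{\gamma'}$ is exactly the displayed requirement for $\{\gamma,\gamma'\}$. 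Thus $\{p_\gamma:\gamma<\omega_1\}$ is an uncountable antichain in $\PP_{\A}$.

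That $\PP$ is $\sigma$-centered I would prove verbatim as in Proposition \ref{ccc3L}: two conditions with the same $n_p$ agreeing on $a_p\cap a_q$ are amalgamated by inserting, for each cross pair $\xi\in a_p\setminus a_q$, $\eta\in a_q\setminus a_p$, a single fresh integer (above $n_p$) into $A^0_r(\xi)$ and into $A^2_r(\eta)$. Such a point realizes the antichain requirement for $\{\xi,\eta\}$ through $A^0(\xi)\cap A^2(\eta)$, lies in a single coordinate of one index on each side (so disjointness is preserved), and respects clause (7). As the data below $n_p$ ranges over a countable set this gives $\sigma$-centeredness, and the usual density arguments show that each $A^i_\gamma$ is infinite and that for every pair $\gamma\neq\gamma'$ the requirement holds, so the antichain is genuine.

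The heart of the matter is that $\PP$ forces that $\A$ has \emph{no} $2$-Luzin gap. Suppose a condition forced that $(\dot\B_0,\dot\B_1)$, with $\dot\B_0=\{\dot C_\alpha:\alpha<\omega_1\}$ and $\dot\B_1=\{\dot D_\alpha:\alpha<\omega_1\}$, is a $2$-Luzin gap witnessed by $m$. Choosing conditions $q_\alpha$ deciding $\dot C_\alpha=A^{i^*}_{\gamma_\alpha}$ and $\dot D_\alpha=A^{j^*}_{\delta_\alpha}$ (the coordinates $i^*,j^*<3$ may be taken constant, and we may assume $n_{q_\alpha}\geq m$), and thinning by the Delta System Lemma applied to $a_{q_\alpha}\cup\{\gamma_\alpha,\delta_\alpha\}$ in the style of Lemma \ref{thinning} so that supports form a $\Delta$-system with matching traces below the common level, it suffices to produce, for one well-chosen pair $\alpha\neq\beta$, an $r\leq q_\alpha,q_\beta$ forcing
$$A^{i^*}_{\gamma_\alpha}\cap A^{j^*}_{\delta_\beta}\subseteq m \qquad\text{and}\qquad A^{i^*}_{\gamma_\beta}\cap A^{j^*}_{\delta_\alpha}\subseteq m,$$
since this makes clause (2) of Definition \ref{def-n-L} fail for $\{\alpha,\beta\}$, contradicting that $(\dot\B_0,\dot\B_1)$ is a gap. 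Crucially, once $\gamma_\alpha,\delta_\beta$ (resp. $\gamma_\beta,\delta_\alpha$) both lie in the support of $r$, clause (7) caps their mutual intersection at $n_r$; so it is enough that $r$ avoid \emph{creating} points in these two forbidden intersections during the amalgamation, the $\Delta$-system thinning having already forced them $\subseteq m$ below $\max(n_{q_\alpha},n_{q_\beta})$.

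To build $r$ I amalgamate $q_\alpha,q_\beta$ as in the $\sigma$-centered step, but now I must choose, for every cross pair $\{\xi,\eta\}$, which atomic intersection discharges the antichain requirement, steering clear of the two forbidden intersections. Here the asymmetry of the new clause is decisive: every collision $\PP$ is ever obliged to create pairs a \emph{coordinate-$2$} set of one index with a \emph{coordinate-$0$-or-$1$} set of the other. Hence if neither $i^*$ nor $j^*$ equals $2$ the forbidden intersections have type $A^{0/1}\cap A^{0/1}$ and are never created at all; if both equal $2$ they have type $A^2\cap A^2$, again never created; and if exactly one of $i^*,j^*$ is $2$, then for the at most two cross pairs whose indices match $\{\gamma_\alpha,\delta_\beta\}$ or $\{\gamma_\beta,\delta_\alpha\}$ the required collision is simply routed through the \emph{other} member of $\{0,1\}$ (or through the opposite side), which is legitimate because the requirement only asks that one of four atomic intersections be nonempty. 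Fresh integers above $\max(n_{q_\alpha},n_{q_\beta})$ realize the chosen collisions without touching the forbidden ones. The main obstacle is exactly this amalgamation, namely discharging the antichain requirement for \emph{all} new cross pairs while keeping the two forbidden cross-intersections empty for the single refuting pair; the routing freedom — always having a free coordinate of $\{0,1\}$ to carry a forced collision away from the forbidden pair — is precisely what separates a genuine uncountable antichain from a $2$-Luzin gap (cf. Propositions \ref{2L-notccc} and \ref{notccc-nL}, which already force any such gap to have dimension $n=3$), and the $\Delta$-system bookkeeping, while routine, must be arranged so that the two chosen indices fall in the disjoint parts of the system.
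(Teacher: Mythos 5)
Your proposal is correct and follows essentially the same route as the paper: the paper's proof uses the identical finite-condition $\sigma$-centered forcing scheme, except that it attaches \emph{four} sets $A^0_p(\xi),\dots,A^3_p(\xi)$ to each index, forces the antichain via the conditions $(\bigcup_{i<3}\dot A^i_\xi,\dot A^3_\xi)$, and in the amalgamation killing a putative $2$-Luzin gap routes every cross-pair collision uniformly through a single coordinate $j<3$ chosen to avoid the two coordinates used by the gap. Your three-coordinate variant with per-pair routing is a sound minor economy: as you observe, a created collision can land in a forbidden intersection only when both the pair of indices and the pair of coordinates match one of the two forbidden pairs, which always leaves at least one safe routing among the four options for that cross pair.
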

\begin{proof}
We consider a forcing notion $\PP$ consisting of conditions 
$$p=(n_p, a_p, (A^0_p(\xi), A^1_p(\xi), A^2_p(\xi), A^3_p(\xi): \xi\in a_p))$$ satisfying
\begin{enumerate}
\item $n_p\in \N$, $a_p\in [\omega_1]^{<\omega}$,
\item $A_p^i(\xi)\subseteq n_p$ for $i<4$ and $\xi\in a_p$,
\item  $A_p^i(\xi)\cap A_p^j(\xi)=\emptyset$ for all $\xi\in a_p$ and $i<j<4$,
\item  for any distinct $\xi,\eta\in a_p$ there is $i<3$ such that 
$$[A_p^i(\xi)\cap A_p^3(\eta)]\cup[A_p^3(\xi)\cap A_p^i(\eta)]\not=\emptyset.$$
\end{enumerate}
For $p, q\in \PP$ we say that $p\leq q$ if
\begin{enumerate}
 \item[(5)] $n_p\geq n_q$, $a_p\supseteq a_q$,
 \item[(6)] $A_p^i(\xi)\cap n_q=A_q^i(\xi)\cap n_q$ for all $i<4$ and $\xi\in a_q$,
 \item[(7)] $(\bigcup_{i<4} A_p^i(\xi))\cap (\bigcup_{i<4} A_p^i(\eta))\subseteq n_q$ for any distinct $\xi, \eta\in a_q$.
 \end{enumerate}
 First we will prove that if  $p, q\in \PP$ and $n_p=n_q$ and $A^i_p(\xi)=A^i_q(\xi)$ for all $i<4$ and $\xi\in a_p\cap a_q$, then
 $p$ and $q$ are compatible in $\PP$. This and the fact that $X^{\omega_1}$ with the product topology
   is separable for any finite $X$  imply
 that $\PP$ is $\sigma$-centered.
 
For this first note that if $a_q\subseteq a_p$ (and the above hypotheses hold), then $p\leq q$. 
So we may assume that $a_p\setminus a_q\not=\emptyset
 \not=a_q\setminus a_p$.
 To construct $r\in \PP$ which is stronger than $p$ and $q$ define $n=n_p=n_q$ and $a_r=a_p\cup a_q$.
 Let $k=|a_p\setminus q_q|\cdot|a_q\setminus a_p|$ and put $n_r=n+k$. Let $k\setminus n=\{k_{\xi, \eta}: \xi\in a_p\setminus a_q,
 \eta\in a_q\setminus a_p\}$.  Define
 $$
A_r^i(\xi)=
  \begin{cases}
    A_p^i(\xi)= A_q^i(\xi)& \text{if $\xi\in a_p\cap a_q$ and $i<4$} \\
    A_p^i(\xi)\cup\{k_{\xi,\eta}:\eta\in a_q\setminus a_p\}&
     \text{if $\xi\in a_p\setminus a_q$ and $i=0$}\\
    A_q^i(\xi)\cup\{k_{\xi,\eta}:\xi\in a_p\setminus a_q\}&
     \text{if $\xi\in a_q\setminus a_p$ and $i=3$}\\
    A_p^i(\xi)& \text{if $\xi\in a_p\setminus a_q$ and $i=1, 2, 3$}\\
    A_q^i(\xi)& \text{if $\xi\in a_q\setminus a_p$ and $i=0, 1, 2$}
  \end{cases}
 $$
 Then it is clear that $r\in \PP$ because $k_{\xi, \eta}\in A_r^0(\xi)\cap A_r^3(\eta)$ for $\xi\in a_p\setminus a_q$
  and $\eta\in a_q\setminus a_p$.
 Moreover $r\leq p,q$ because all $k_{\xi, \eta}$s are distinct.
  This completes the proof that  $\PP$ is $\sigma$-centered.
 
 Let $\dot{\mathbb G}$ be a $\PP$-name for a generic filter in $\PP$.  
 For $i<4$ and $\xi<\omega_1$ 
 let $\dot A_\xi^i$ be a $\PP$-name for a subset of $\N$
 such that $\PP$ forces that $\dot A_\xi^i=\bigcup\{A_p^i(\xi): p\in \dot{\mathbb G}\}$.
 Standard density arguments imply that $\PP$ forces that $\dot A_\xi^i$ is an infinite subset of $\N$
 and that 
 $$\A_{\dot{\mathbb G}}=\{\dot A_\xi^i: i<4, \xi<\omega_1\}$$
 is an $\ad$-family. We will prove that $\A_{\dot{\mathbb G}}$ is the required family. 
 Let $\dot \rho_\xi=(\bigcup_{i<3}\dot A_\xi^i, \dot A_\xi^3)$ for $\xi<\omega_1$. By a standard density argument  and conditions
  (3) and (4) above $\PP$ forces that  $\{\dot \rho_\xi:\xi<\omega_1)$ is an antichain in $\PP_{\A_{\dot{\mathbb G}}}$.  
 The rest of the proof is devoted to showing that ${\A_{\dot{\mathbb G}}}$ does not admit a $2$-Luzin gap.
 
 Let $\dot\xi_\alpha, \dot\eta_\alpha$ for $\alpha<\omega_1$ be $\PP$-names for elements of $\omega_1$ 
 let $\dot k_\alpha, \dot l_\alpha$ for $\alpha<\omega_1$ be $\PP$-names for elements of $4$ 
 and let $\dot m$ be a $\PP$-name for an element of $\N$ such that $\PP$ forces  that 
\begin{itemize}
\item $\langle \dot \xi_\alpha, \dot k_\alpha \rangle\not=\langle\dot\eta_\alpha, \dot l_\alpha\rangle$
\item  $\{\langle \dot \xi_\alpha, \dot k_\alpha \rangle,\langle\dot\eta_\alpha, \dot l_\alpha\rangle\}
\cap \{\langle \dot \xi_\beta, \dot k_\beta \rangle,\langle\dot\eta_\beta, \dot l_\beta\rangle\}=\emptyset$
\item $\dot A_{\xi_\alpha}^{\dot k_\alpha}\cap \dot A_{\eta_\alpha}^{\dot l_\alpha}\subseteq \dot m$
\end{itemize}
 for every $\alpha<\beta<\omega_1$.
 It is enough to show that for every $p\in \PP$ there is  $\alpha<\beta<\omega_1$ and $q\leq p$ which
 forces that 
 $$A_{\dot\xi_\alpha}^{\dot k_\alpha}\cap A_{\dot\eta_\beta}^{\dot l_\beta},
 A_{\dot\eta_\alpha}^{\dot l_\alpha}\cap A_{\dot\xi_\beta}^{\dot k_\beta}\subseteq \dot m\leqno (*)$$
  Below any condition of $\PP$  there are conditions $p_\alpha$ for $\alpha<\omega_1$ 
  and $\xi_\alpha, \eta_\alpha<\omega_1$ and $k_\alpha, l_\alpha, m\in \N$ 
  such that for each $\alpha$ the condition  $p_\alpha$ forces that
    $\dot\xi_\alpha=\check\xi_\alpha, \dot \eta_\alpha=\check \eta_\alpha,
     \dot k_\alpha=\check k_\alpha,  \dot l_\alpha=\check l_\alpha, \dot m=\check m$ (note that
     $\xi_\alpha$ may be equal to $\eta_\alpha$ or $k_\alpha$ may be equal to $l_\alpha$).
  
By a standard density arguments and passing to an uncountable subset we may assume that  for each $\alpha<\omega_1$ we have
 $n=n_{p_\alpha}$, $k_\alpha=k$ and $l_\alpha=l$.
 Moreover by another thinning out
 we may assume that $(a_{p_\alpha}: \alpha<\omega_1)$ forms a $\Delta$-system and that
 there are bijections $\phi_{\beta, \alpha}: a_{p_\alpha}\rightarrow a_{p_\beta}$ such that for every
 $\alpha<\beta<\omega_1$ and $i<4$ we have
 \begin{itemize}
 \item $\phi_{\beta, \alpha}(\xi)=\xi$ for $\xi\in a_{p_\alpha}\cap a_{p_\beta}$,
 \item $A_{p_\alpha}^i(\xi)=A_{p_\beta}^i(\phi(\xi))$ for  $\xi\in a_{p_\alpha}$,
 \item $\phi_{\beta, \alpha}(\xi_\alpha)=\xi_\beta$
 \item $\phi_{\beta, \alpha}(\eta_\alpha)=\eta_\beta$
 \end{itemize}
 Since the forcing is c.c.c. and forces that  
 $\{\langle \dot \xi_\alpha, \dot k_\alpha \rangle,\langle\dot\eta_\alpha, \dot l_\alpha\rangle\}
\cap \{\langle \dot \xi_\beta, \dot k_\beta \rangle,\langle\dot\eta_\beta, \dot l_\beta\rangle\}=\emptyset$
we can conclude that $\xi_\alpha, \eta_\alpha\in a_{p_\alpha}\setminus a_{p_\beta}$
for any distinct $\alpha, \beta<\omega_1$.

Pick any two $\alpha<\beta<\omega_1$. Put $p=p_\alpha$ and $q=p_\beta$. We will construct $r\leq p, q$
such that $r$ forces (*). Let $j<3$ be different than $k, l$.
To construct $r\in \PP$ which is stronger than $p$ and $q$ define  $a_r=a_p\cup a_q$.
 Let $k=|a_p\setminus q_q|\cdot|a_q\setminus a_p|$ and put $n_r=n+k$. Let $k\setminus n=\{k_{\xi, \eta}: \xi\in a_p\setminus a_q,
 \eta\in a_q\setminus a_p\}$.  Define
 $$
A_r^i(\xi)=
  \begin{cases}
    A_p^i(\xi)= A_q^i(\xi)& \text{if $\xi\in a_p\cap a_q$ and $i<4$} \\
    A_p^i(\xi)\cup\{k_{\xi,\eta}:\eta\in a_q\setminus a_p\}& \text{if $\xi\in a_p\setminus a_q$ and $i=j$}\\
    A_q^i(\xi)\cup\{k_{\xi,\eta}:\xi\in a_p\setminus a_q\}& \text{if $\xi\in a_q\setminus a_q$ and $i=3$}\\
    A_p^i(\xi)& \text{if $\xi\in a_p\setminus a_q$ and $i\not=j$}\\
    A_q^i(\xi)& \text{if $\xi\in a_q\setminus a_p$ and $i<3$}
  \end{cases}
 $$
 Then it is clear that $r\in \PP$ because $k_{\xi, \eta}\in A_r^j(\xi)\cap A_r^3(\eta)$ for $\xi\in a_p\setminus a_q$
  and $\eta\in a_q\setminus a_p$.
 Moreover $r\leq p,q$ because all $k_{\xi, \eta}$s are distinct.  The remaining part is 
 devoted to proving 
 (*).
By (3) and (7)
it is enough to prove that
$A_r^{k}({\xi_\alpha})\cap A_r^{l}({\eta_\beta}),
 A_r^{l}({\eta_\alpha})\cap A_r^{k}({\xi_\beta})\subseteq m$.
 As $A_r^{k}({\xi_\alpha})\cap n=A_p^{k}({\xi_\alpha})\cap n$ and
$A_r^{l}({\eta_\beta})\cap n=A_q^{l}({\eta_\beta})\cap n=A_p^{l}({\eta_\alpha})\cap n$
we conclude that $A_r^{k}({\xi_\alpha})\cap A_r^{l}({\eta_\beta})\cap \subseteq
A_p^{k}({\xi_\alpha})\cap A_p^{l}({\eta_\alpha})$ which is included in $m$
since $\PP$ forces that 
$\dot A_{\xi_\alpha}^{\dot k_\alpha}\cap \dot A_{\eta_\alpha}^{\dot l_\alpha}\subseteq \dot m$.
Similarly  $A_r^{l}({\eta_\alpha})\cap A_r^{k}({\xi_\beta})\cap n\subseteq m$. 

Now no $k_{\xi,\eta}$ can belong to $A_r^{k}({\xi_\alpha})\cap A_r^{l}({\eta_\beta})$ or
 $A_r^{l}({\eta_\alpha})\cap A_r^{k}({\xi_\beta})$  
 because $j\not\in \{k, l\}$.
\end{proof}

\subsection{Big antichains in $\PP_\A$}
In this subsection we show that  any
Luzin family $\A$ exemplifies a spectacular failure of the c.c.c. of $\PP_\A$, that is they are $L$-families
 (Proposition \ref{anti-centered}. Recall Definition \ref{def-LL} of an $L$-family). Luzin families are just of size $\omega_1$, but
 in {\sf ZFC} there are $\ad$-families $\A$ admitting antichains of size $\mathfrak c$ in $\PP_\A$
 (Proposition \ref{juris-ac}).
 Nevertheless consistently all $\ad$-families  of size $\mathfrak c$ have plenty of big sets of pairwise
 compatible elements (Proposition \ref{sacks}).

\begin{proposition}\label{anti-centered}
 If $\A$ is a Luzin family, it is an $L$-family. 
\end{proposition}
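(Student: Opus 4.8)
The plan is to show that the compatibility graph of $\PP_\A$, restricted to essentially distinct conditions, has countable chromatic number; that is exactly the assertion that every set of essentially distinct conditions is a countable union of antichains, as required by Definition \ref{def-LL}. First I would record the basic description of (in)compatibility: for $p=(A_p,B_p)$ and $q=(A_q,B_q)$ in $\PP_\A$, the candidate $r=(A_p\cup A_q,\,B_p\cup B_q)$ witnesses that $p,q$ are compatible if and only if $A_p\cap B_q=\emptyset=A_q\cap B_p$. Writing $a_p=\{A_\xi:\xi\in s_p^0\}$ and $b_p=\{A_\xi:\xi\in s_p^1\}$ with $s_p=s_p^0\cup s_p^1\subseteq\omega_1$ finite and disjoint, incompatibility of $p,q$ then means that some $A_\xi\cap A_\eta$ fails to be swallowed by the finite parts, for a pair $\xi\in s_p^0,\eta\in s_q^1$ or $\xi\in s_q^0,\eta\in s_p^1$.

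Given essentially distinct $\PP\subseteq\PP_\A$, I would partition it into countably many uniform pieces. Enlarging each modulus (as in the proof of Lemma \ref{thinning}), I may assume each $p$ is coded by $A_p=(\bigcup a_p\setminus m_p)\cup E_p$ and $B_p=(\bigcup b_p\setminus m_p)\cup F_p$ with $E_p,F_p\subseteq m_p$ disjoint and with $A_\xi\cap A_{\xi'}\subseteq m_p$ for all distinct $\xi,\xi'\in s_p$. Splitting according to the countable data $(|a_p|,|b_p|,m_p,E_p,F_p)$ and according to whether $\max s_p$ lies in $s_p^0$ or in $s_p^1$ (the latter case reduced to the former by the symmetry $A\leftrightarrow B$, which preserves compatibility), and discarding the at most one condition with $a_p=\emptyset$ and the at most one with $b_p=\emptyset$ (these are singleton classes by essential distinctness), I reduce to a class $\PP^*$ in which $m_p=m$, $E_p=E$, $F_p=F$ are fixed with $E\cap F=\emptyset$, $|s_p^0|=k\ge1$, $|s_p^1|=l\ge1$, and $\mu_p:=\max s_p\in s_p^0$ for every $p$. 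For $p,q\in\PP^*$ the finite parts cancel and $A_p\cap B_q=\bigcup\{A_\xi\cap A_\eta:\xi\in s_p^0,\eta\in s_q^1\}\setminus m$, so compatibility is exactly: $A_\xi\cap A_\eta\subseteq m$ for all $\xi\in s_p^0,\eta\in s_q^1$ and for all $\xi\in s_q^0,\eta\in s_p^1$.

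The key step, where the Luzin hypothesis enters, is that each $p\in\PP^*$ has only finitely many compatible predecessors when $\PP^*$ is well-ordered by $\mu_p$ (with an arbitrary tie-break). By Definition \ref{def-L} the finite-to-one-ness of $f_\gamma$ makes $D(\gamma):=\{\delta<\gamma: A_\delta\cap A_\gamma\subseteq m\}$ finite for every $\gamma$, since it is contained in the preimage of $\{0,\dots,m\}$ under $f_\gamma$. If $q$ is compatible with $p$ and $\mu_q\le\mu_p$, then the compatibility condition applied to $\mu_p\in s_p^0$ against each $\eta\in s_q^1$ gives $A_{\mu_p}\cap A_\eta\subseteq m$; as $\mu_p$ is a maximum it cannot lie in $s_q^1$, so each such $\eta<\mu_p$ and hence $s_q^1\subseteq D(\mu_p)$. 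Now essential distinctness forces $b_q\ne b_{q'}$ for distinct $q,q'$, so $q\mapsto s_q^1$ is injective on these predecessors; since all their values are $l$-element subsets of the finite set $D(\mu_p)$, there are at most $\binom{|D(\mu_p)|}{l}$ of them. Thus the compatibility graph on $\PP^*$ has finite back-degree in the $\mu_p$-ordering, so its coloring number is at most $\aleph_0$, and a greedy assignment of colors from $\N$ (each $p$ avoiding the finitely many colors of its compatible predecessors) is a proper coloring of $\PP^*$ into countably many antichains. Taking the union over the countably many classes $\PP^*$ exhibits $\PP$ as a countable union of antichains, which is the $L$-family property.

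The main obstacle is recognizing the interplay in the third paragraph: essential distinctness is precisely what converts the finiteness of $D(\mu_p)$, a purely Luzin fact, into finiteness of the back-degree. Without it the $s^0$-parts of the compatible predecessors vary freely and the back-degree becomes uncountable, so the coloring-number argument would collapse; once this finiteness is isolated, the remaining steps are routine.
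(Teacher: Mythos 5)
Your argument is correct, and it arrives at the same endgame as the paper's proof --- a greedy colouring by natural numbers in which each condition need only avoid the colours of finitely many earlier conditions, the finiteness ultimately coming from the finite-to-one functions $f_\eta$ --- but the route to that finiteness is genuinely different. The paper keeps both maxima $\alpha_\xi=\max\{\alpha\colon A_\alpha\in a_\xi\}$ and $\beta_\xi=\max\{\alpha\colon A_\alpha\in b_\xi\}$ in play; to control them simultaneously it uses essential distinctness only to make $\xi\mapsto\alpha_\xi$ and $\xi\mapsto\beta_\xi$ countable-to-one, builds a dominating function $g$ and a club closed under it, decomposes $\omega_1$ into the resulting intervals, and first thins the family into pieces $\Q_n$ meeting each interval at most once; only then does the finite-to-one property of $f_{\alpha_\xi}$, applied to whole intervals, make the forbidden set $X_\alpha$ finite. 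You dispense with the club and the one-per-interval thinning entirely: after splitting into countably many classes by the finite data and using the $A\leftrightarrow B$ symmetry to force the single maximum $\mu_p=\max s_p$ onto the $A$-side, you order by $\mu_p$ and bound the compatible predecessors of $p$ directly, since their $b$-parts are $l$-element subsets of the finite set $D(\mu_p)$ and essential distinctness makes $q\mapsto s_q^1$ injective, giving the explicit bound $\binom{|D(\mu_p)|}{l}$. This is more elementary (no regressive function or club) and it isolates exactly where essential distinctness enters, at the cost of the extra normalization step placing $\mu_p$ in $s_p^0$; the two small points a reader should check in your version are that the tie-broken $\mu_p$-order is indeed a well-order (any well-ordering of the fibres works) and that the case $\mu_q=\mu_p$ is covered because then $\mu_p=\mu_q\in s_q^0$ by your normalization, so $\mu_p\notin s_q^1$ still holds. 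Both verifications are immediate.
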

\begin{proof} Let $\A=\{A_\alpha:\alpha<\omega_1\}$.
Let $f_\eta:\eta\rightarrow \N$ be the finite-to-one function which 
witnesses that $\A$ is a Luzin family as in Definition \ref{def-L}.

Let $\{p_\xi: \xi<\omega_1\}$ be essentially distinct and let
and $A_{p_\xi}= (\bigcup a_\xi\setminus m_\xi)\cup F_\xi$ and $B_{p_\xi}= 
(\bigcup b_\xi\setminus m_\xi)\cup G_\xi$ for $\xi<\omega_1$,
where $a_\xi, b_\xi\in [\A]^{<\omega}$ are such that $a_{\xi}\not=a_{\xi'}$ 
and $b_\xi\not=b_{\xi'}$  and $F_\xi, G_\xi\subseteq m_\xi$ are disjoint and $m_\xi\in \N$ for $\xi<\xi'<\omega_1$. 
We may assume that $m_\xi=m$, $F_\xi=F$ and $G_\xi=G$  and $a_\xi\not=\emptyset\not=b_\xi$ for all $\xi<\omega_1$. Let
$$\alpha_\xi=\max(\{\alpha: A_{\alpha}\in a_\xi\}), \  \beta_\xi=\max(\{\alpha: A_{\alpha}\in b_\xi\})$$
for $\xi<\omega_1$.

The fact that $p_\xi$s are essentially distinct ($a_{\xi}\not=a_{\xi'}$ 
and $b_\xi\not=b_{\xi'}$ for $\xi<\xi'<\omega_1$) implies  that for each $\xi<\omega_1$ there may be at most countably
many $\xi'<\omega_1$ such that $\alpha_\xi=\alpha_{\xi'}$ or $\beta_\xi=\beta_{\xi'}$.
It follows that 
there is a function
$g:\omega_1\rightarrow\omega_1$ such that  $ \beta_\xi<g(\alpha_\xi)$
and  $\alpha_\xi<g(\beta_\xi)$
 for any $\xi<\omega_1$. 
 
 Let 
$(\eta_\alpha:\alpha<\omega_1)$ be an enumeration of a club set $C\subseteq\omega_1$ such that
$\beta<\alpha\in C$ implies $g(\beta)<\alpha$.
Let $\PP_\alpha=\{p_\xi: \alpha_\xi, \beta_\xi\in [\eta_\alpha, \eta_{\alpha+1})\}$. $\PP_\alpha$s are pairwise disjoint. The choice
of $C$ implies that $\{p_\xi: \xi<\omega_1\}=\bigcup_{\alpha<\omega_1}\PP_\alpha$ (take $\alpha<\omega_1$
such that $\min(\alpha_\xi,\beta_\xi)\in [\eta_\alpha, \eta_{\alpha+1})$ and note that 
$\max(\alpha_\xi,\beta_\xi)\in [\eta_\alpha, \eta_{\alpha+1})$ as well).  
 $\PP_\alpha$s are also countable.
For $n\in \N$ let $\Q_n\subseteq \{p_\xi: \xi<\omega_1\}$ be such that $|\Q_n\cap \PP_\alpha|\leq 1$
for every $\alpha<\omega_1$ and $\bigcup_{n\in \N}\Q_n=\{p_\xi: \xi<\omega_1\}$.
It is enough to decompose each uncountable $\Q_n$ into countably many subsets which are pairwise incompatible in $\PP_\A$.

Fix $n\in \N$ such that $\Q_n$ is uncountable and define $h_n(p_\xi)\in \N$ for $p_\xi\in \Q_n$ by induction on $\alpha<\omega_1$
such that $p_\xi\in \Q_n\cap \PP_\alpha$.
If $h_n|(\Q_n\cap\bigcup_{\beta<\alpha}\PP_\alpha)$ is already defined and $\Q_n\cap \PP_\alpha\not=\emptyset$
is witnessed by $p_\xi\in \Q_n\cap \PP_\alpha$, then define
$$X_\alpha=\{p_{\xi'}: p_{\xi'}\in\PP_\beta,  \beta<\alpha, \min\big(f_{\alpha_\xi}[[\eta_\beta, \eta_{\beta+1})]\big) \leq m\}.$$
Since $f_{\alpha_\xi}$ is finite-to-one, the set $X_\alpha$ is a finite subset of $\{p_{\xi'}: \xi'<\xi\}$. Now choose
$h_n(p_\xi)$ to be any element of $\N$ not belonging to $h_n[X_\alpha]$.

It follows from the construction of $h_n$ that $h_n(p_\xi)=h_n(p_\xi')$ for $\xi'<\xi$ implies that 
$f_{\alpha_\xi}(\beta_{\xi'})>m$ and consequently that $(A_{\alpha_{\xi}}\cap B_{\beta_{\xi'}})\setminus m\not=\emptyset$
and  so $p_\xi$ and $p_{\xi'}$ are incompatible in $\PP_\A$. This completes the proof as the fibers of $h_n$
yields the required decomposition of $\Q_n$.
\end{proof}

\begin{proposition}\label{juris-ac} There is an almost disjoint
 family $\A$ of subsets of $\N$ of cardinality $\mathfrak c$
such that $\PP_\A$ contains  an antichain of cardinality $\mathfrak c$.
\end{proposition}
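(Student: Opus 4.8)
The plan is to exhibit an almost disjoint family of size $\mathfrak c$ on a countable set together with an explicit antichain of size $\mathfrak c$ in $\PP_\A$. Looking at the preceding proposition \ref{juris-L}, the natural candidate is to reuse Steprans' construction: work on the countable index set $2^{<\omega}$ and, for each $x\in 2^\N$, set $A_x^0=\{t:t^\frown 0\subseteq x\}$ and $A_x^1=\{t:t^\frown 1\subseteq x\}$. I would take $\A=\{A_x^0,A_x^1:x\in 2^\N\}$, which has cardinality $\mathfrak c$ and is almost disjoint (in fact disjoint for the two branches of a single $x$). The key feature established there is that for distinct $x,y\in 2^\N$, if $t$ is the splitting node with $t^\frown i\subseteq x$ and $t^\frown(1-i)\subseteq y$, then $t\in A_x^i\cap A_y^{1-i}$.

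The central step is to manufacture from this a family of pairwise incompatible conditions. The obvious choice is $p_x=(A_x^0,A_x^1)$ for $x\in 2^\N$; each $p_x\in\PP_\A$ since $A_x^0,A_x^1$ are a disjoint pair, each equal to $\bigcup a_x$ for a singleton $a_x\subseteq\A$. Recall that $p_x$ and $p_y$ are incompatible precisely when
$$
\bigl(A_x^0\cap A_y^1\bigr)\cup\bigl(A_y^0\cap A_x^1\bigr)\neq\emptyset,
$$
i.e.\ the $A$-part of one meets the $B$-part of the other. For distinct $x,y$ with splitting node $t$, the node $t$ lands in $A_x^i\cap A_y^{1-i}$ for some $i$; if $i=0$ then $t\in A_x^0\cap A_y^1$, and if $i=1$ then $t\in A_y^0\cap A_x^1$. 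Either way the displayed union is nonempty, so $p_x$ and $p_y$ are incompatible. Thus $\{p_x:x\in 2^\N\}$ is an antichain of cardinality $\mathfrak c$ in $\PP_\A$, as required.

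I expect the main subtlety to be the bookkeeping of the incompatibility criterion rather than any deep difficulty. One must double-check that the precise form of incompatibility in $\PP_\A$ (Definition \ref{def-P}) is the two-way crossing condition above, and confirm that hitting just one of the two crossed intersections suffices to render the conditions incompatible; the $2$-Luzin gap structure from Proposition \ref{juris-L} (with witness $m=0$) already guarantees exactly this crossing. A minor point to verify is that each $A_x^0$ and $A_x^1$ is infinite, so that $p_x$ is a genuine condition with nonempty finite subfamilies $a_x,b_x$; this is immediate since every initial segment of $x$ of each parity gives an element. No thinning, forcing, or consistency argument is needed here, so the proof is entirely within \textsf{ZFC} and essentially reads off the antichain from the geometry of the binary tree.
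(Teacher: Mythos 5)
Your proposal is correct and is essentially identical to the paper's own proof: the paper uses the same Steprans family $A_x^0,A_x^1$ on $2^{<\omega}$ and the same antichain $\{(A_x^0,A_x^1):x\in 2^\N\}$, with incompatibility witnessed by the splitting node. Your verification of the incompatibility criterion (that it reduces to $(A_{p}\cap B_{q})\cup(A_{q}\cap B_{p})\neq\emptyset$) is also accurate, since a common extension exists exactly when $A_p\cup A_q$ and $B_p\cup B_q$ are disjoint.
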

\begin{proof} 
This construction is due to Juris Steprans. For every $x\in 2^\N$  consider
$A_x^0=\{t\in 2^{<\omega}: t^\frown 0\subseteq  x\}$ and $A_x^1=\{t\in 2^{<\omega}: t^\frown 1\subseteq x\}$.
Note that $A_x^0\cap A^1_x=\emptyset $ and $\{A_x^0, A^1_x: x\in 2^\N\}\subseteq \wp(2^{<\omega})$ is almost disjoint.
Of course we can identify $2^{<\omega}$ with $\N$.  Given distinct $x, y\in 2^\N$ there is
$t\in 2^{<\omega}$ such that $t\subseteq x, y$ and $t^\frown i\subseteq x$ and $t^\frown(1- i)\subseteq y$ for some $i\in\{0,1\}$.
Then $t\in A^i_x\cap A^{(1-i)}_y$ and so $(A_x^0, A^1_x)$ is incompatible with $(A_y^0, A^1_y)$.
So $\{(A_x^0, A^1_x): x\in 2^\N\}$ is the required antichain.
\end{proof}

\begin{proposition}\label{sacks} It is consistent that for every almost disjoint family of subsets of $\N$ of cardinality $\mathfrak c$
the forcing $\PP_\A$ contains a set of cardinality $\mathfrak c$ of pairwise compatible essentially distinct conditions.
\end{proposition}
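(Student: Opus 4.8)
The plan is to force over a model of {\sf GCH} with the countable support iteration $\SSS$ of Sacks forcing of length $\omega_2$, so that in the extension $\mathfrak c=\aleph_2$ and $\SSS$ is proper, $\aleph_2$-c.c.\ and $\omega^\omega$-bounding. Before discussing the model I would record a {\sf ZFC} reduction turning the statement into a separation problem. \emph{Claim:} $\PP_\A$ contains $\mathfrak c$ pairwise compatible essentially distinct conditions if and only if there are disjoint $\mathcal L,\mathcal R\subseteq\A$, each of cardinality $\mathfrak c$, and a set $W\subseteq\N$ with $C\subseteq^* W$ for all $C\in\mathcal L$ and $D\cap W$ finite for all $D\in\mathcal R$. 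For the nontrivial direction, given such $\mathcal L=\{C_\xi:\xi<\mathfrak c\}$, $\mathcal R=\{D_\xi:\xi<\mathfrak c\}$ and $W$, put $p_\xi=(C_\xi\cap W,\ D_\xi\setminus W)$; then $a_{p_\xi}=\{C_\xi\}$ and $b_{p_\xi}=\{D_\xi\}$ are pairwise distinct, so the $p_\xi$ are essentially distinct, while $A_{p_\xi}\subseteq W$ and $B_{p_\eta}\subseteq\N\setminus W$ give $A_{p_\xi}\cap B_{p_\eta}=\emptyset$ for all $\xi,\eta$, i.e.\ pairwise compatibility. Conversely, if $\{p_\xi:\xi<\mathfrak c\}$ are pairwise compatible and essentially distinct, then $W=\bigcup_\xi A_{p_\xi}$ is disjoint from every $B_{p_\eta}$, whence each member of $a_{p_\xi}$ is $\subseteq^* W$ and each member of $b_{p_\eta}$ is almost disjoint from $W$; taking $\mathcal L=\bigcup_\xi a_{p_\xi}$ and $\mathcal R=\bigcup_\eta b_{p_\eta}$ (both of size $\mathfrak c$, since there are $\mathfrak c$ distinct finite sets $a_{p_\xi}$, and disjoint, since a common member would be both $\subseteq^* W$ and almost disjoint from $W$) finishes the reduction.

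This equivalence already shows that the result is not a theorem of {\sf ZFC}: under {\sf CH} a Luzin family has cardinality $\mathfrak c$, and by Proposition \ref{hered-L} any two of its uncountable subfamilies form a $2$-Luzin gap, hence cannot be separated by a single $W$; so such a family carries no $\mathfrak c$-sized set of pairwise compatible essentially distinct conditions (contrast this with the family of Proposition \ref{juris-ac}, whose tree structure does admit separated halves). What must be forced is therefore exactly the separation property for \emph{all} $\ad$-families of size $\mathfrak c$.

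In the Sacks extension I would prove the separation property for an arbitrary $\ad$-family $\A$ with $|\A|=\aleph_2$ by reflecting along the iteration. Each intermediate model $V[G_\alpha]$, $\alpha<\omega_2$, satisfies {\sf CH}, so $\A$ is the increasing union of the $\aleph_1$-sized almost disjoint families $\A\cap V[G_\alpha]$, and $\aleph_2$ many stages contribute a new member; I fix such a member $A_\alpha$ for $\alpha$ in a set $S$ of size $\aleph_2$, read continuously from the Sacks generic by a name $\dot A_\alpha$. The decisive step is to invoke the $\aleph_2$-properness isomorphism condition of the iteration: after thinning $S$ to size $\aleph_2$ the names $\dot A_\alpha$ become pairwise isomorphic over a common core and the corresponding reals mutually generic. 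For an indiscernible, mutually generic, and (being members of an $\ad$-family) almost disjoint subfamily a Luzin-type gap cannot arise, and a fusion argument should produce simultaneously a partition into two pieces $\mathcal L,\mathcal R$ of size $\aleph_2$ and the separating set $W$.

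I expect the main obstacle to be precisely this last step. Since the $A_\alpha$ are arbitrary reals of the extension rather than the Sacks generics themselves, no $W$ fixed at a bounded stage can work—a generic set splits every earlier member—so $W$ must be manufactured by a global fusion over the whole iteration that decides $\aleph_2$ many members into two large classes at once. The delicate verification is that the indiscernibility and mutual genericity extracted from the isomorphism condition are robust enough to carry this fusion through all $\aleph_2$ steps without ever creating an inseparable gap; this is where the minimality of Sacks forcing—adding no new Luzin gaps, in contrast to Cohen forcing—is essential, and is what ultimately prevents an $\aleph_2$-sized $\ad$-family from being ``totally gappy''.
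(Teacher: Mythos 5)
Your opening reduction is correct and cleanly done: $\PP_\A$ contains $\mathfrak c$ pairwise compatible essentially distinct conditions exactly when some pair of disjoint $\mathfrak c$-sized subfamilies $\mathcal L,\mathcal R\subseteq\A$ is separated by a single set $W$, and both directions of your equivalence (including the counting argument that $\bigcup_\xi a_{p_\xi}$ has size $\mathfrak c$, and the side remark that a Luzin family under {\sf CH} witnesses independence) check out. You have also chosen the same model as the paper, the countable support iteration of Sacks forcing over a model of {\sf CH}. But the proof stops exactly where the real work begins. The entire content of the proposition is that in this model \emph{every} $\ad$-family of cardinality $\mathfrak c=\aleph_2$ admits such a separated pair, and for that you offer only that ``a fusion argument should produce simultaneously a partition into two pieces $\mathcal L,\mathcal R$ of size $\aleph_2$ and the separating set $W$,'' followed by your own admission that you do not see how to carry this fusion through all $\aleph_2$ stages. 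Thinning by an isomorphism-of-names argument and invoking mutual genericity is the right flavour of technique, but deciding $\aleph_2$ many arbitrary names into two large classes while manufacturing one $W$ that works for all of them at once is precisely the theorem that has to be proved, and nothing in your text proves it. As written, the central step is missing.

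The paper closes this gap by citation rather than by a new argument: it is proved in \cite{oursacks} that in the iterated Sacks model every $\ad$-family of cardinality $\mathfrak c$ contains an $\R$-embeddable subfamily $\B$ of cardinality $\mathfrak c$; by Proposition \ref{Rembed-centered} the order $\PP_\B$ is then $\sigma$-centered, and since $\mathfrak c$ has uncountable cofinality one centered piece already contains $\mathfrak c$ essentially distinct pairwise compatible conditions. (In your language, $\R$-embeddability hands you $\mathcal L$, $\mathcal R$ and $W$ directly: two disjoint rational intervals each capturing $\mathfrak c$ of the limit points $r_B$ give the two subfamilies, and the preimage of one interval is $W$.) So either quote that result of \cite{oursacks} explicitly, or supply the fusion argument in full; at present your proposal reduces the proposition to a statement that is essentially the main theorem of a separate paper and then leaves that statement unproved.
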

\begin{proof} It is proved in \cite{oursacks} that in the iterated Sacks model every almost disjoint family $\A$ of
cardinality $\mathfrak c$ of subsets of $\N$ contains a subfamily $\B$ of cardinality $\mathfrak c$ which is $\R$-embeddable.
By Proposition \ref{Rembed-centered} the forcing $\PP_\B$ contains a pairwise compatible set of cardinality $\mathfrak c$ which is
pairwise compatible set in $\PP_\A$.
\end{proof}

\subsection{The $\sigma$-centeredness of $\PP_\A$ and the $\R$-embeddability of $\A$}

\begin{definition}\label{def-Rembed} An $\ad$-family $\A$ is called $\R$-embeddable if and only if
there is an injection $f:\N\rightarrow \Q$ such that for each $A\in \A$ the set $f[A]$ is the set of all terms of a convergent sequence
 to $r_A\in \R\setminus \Q$
and $r_A\not=r_{A'}$ for all distinct $A, A'\in \A$.
\end{definition}

$\R$-embeddability was first formally defined in \cite{fernando} using a slightly different condition. The equivalence  of these definitions
and several other conditions is proved in Lemma 2 of \cite{oursacks} (using a slightly different language of $\Psi$-spaces).

In this subsection we investigate the relation between the $\R$-embeddability of
an $\ad$-family $\A$ and the property of $\PP_\A$ being $\sigma$-centered. 
If $\A$ id $\R$-embeddable, then $\PP_\A$ is $\sigma$-centered (Proposition \ref{Rembed-centered})
but the reverse implication fails in {\sf ZFC} (Proposition \ref{notRembed-centered}).
We also prove that no maximal $\ad$-family $\A$ yields $\PP_\A$ $\sigma$-centered 
(Proposition \ref{mad-notcentered}).

\begin{proposition}\label{sigmac-char} Suppose that $\A$ is an $\ad$-family.  $\PP_\A$ is $\sigma$-centered if and only if there
 are $A_n, B_n\subseteq \N$ for $n\in \N$ such that
$A_n\cap B_n=\emptyset$ for all $n\in \N$ and for every $p\in \PP_\A$ there is $n\in \N$
satisfying $A_p\subseteq A_n$ and $B_p\subseteq B_n$.
\end{proposition}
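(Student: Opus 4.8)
The plan is to reduce everything to a clean combinatorial description of centeredness in $\PP_\A$. First I would observe that finitely many conditions $p_1,\dots,p_k\in\PP_\A$ have a common lower bound if and only if $\big(\bigcup_{i\le k}A_{p_i}\big)\cap\big(\bigcup_{i\le k}B_{p_i}\big)=\emptyset$. Indeed, if they have a lower bound $q$, then $A_{p_i}\subseteq A_q$ and $B_{p_i}\subseteq B_q$ for each $i$, so the displayed intersection is contained in $A_q\cap B_q=\emptyset$. Conversely, when the intersection is empty, the pair $q=\big(\bigcup_{i\le k}A_{p_i},\ \bigcup_{i\le k}B_{p_i}\big)$ is itself a condition of $\PP_\A$: its two coordinates are disjoint by assumption, and each is $=^*$ a union of a finite subfamily of $\A$ (namely $\bigcup_{i\le k}a_{p_i}$ and $\bigcup_{i\le k}b_{p_i}$), since a finite union of sets of this form is again of this form. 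This $q$ is clearly below every $p_i$.

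From this I get the key reformulation: a set $\PP\subseteq\PP_\A$ is centered if and only if $\big(\bigcup_{p\in\PP}A_p\big)\cap\big(\bigcup_{p\in\PP}B_p\big)=\emptyset$. The nontrivial point is that the ``for all finite subsets'' quantifier in the definition of centered collapses to a single statement about the full unions: if some $n$ were to lie in $A_p\cap B_q$ for $p,q\in\PP$, then the two-element subset $\{p,q\}$ would already fail the intersection criterion, and conversely emptiness of the full intersection trivially passes to every finite subset.

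With this in hand both directions are immediate. For the forward direction, I would assume $\PP_\A=\bigcup_{n\in\N}\PP_n$ with each $\PP_n$ centered, and set $A_n=\bigcup_{p\in\PP_n}A_p$ and $B_n=\bigcup_{p\in\PP_n}B_p$. Centeredness of $\PP_n$ gives $A_n\cap B_n=\emptyset$ by the reformulation, and every $p\in\PP_\A$ lies in some $\PP_n$, whence $A_p\subseteq A_n$ and $B_p\subseteq B_n$. For the reverse direction, given disjoint $A_n,B_n$ as in the statement, I would define $\PP_n=\{p\in\PP_\A: A_p\subseteq A_n \text{ and } B_p\subseteq B_n\}$. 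The hypothesis says $\PP_\A=\bigcup_{n}\PP_n$, and for each $n$ we have $\bigcup_{p\in\PP_n}A_p\subseteq A_n$ and $\bigcup_{p\in\PP_n}B_p\subseteq B_n$, so their intersection is contained in $A_n\cap B_n=\emptyset$; by the reformulation each $\PP_n$ is centered.

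The only genuine work, and hence the main thing to get right, is the compatibility characterization in the first paragraph --- in particular checking that a finite union of conditions is again a condition, so that the candidate lower bound actually lives in $\PP_\A$. Note that the sets $A_n,B_n$ produced in the forward direction need not be conditions of $\PP_\A$ themselves (they are arbitrary subsets of $\N$), which is exactly why the statement only asks for arbitrary disjoint subsets, so no extra care is needed on that point.
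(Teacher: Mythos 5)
Your proof is correct and takes essentially the same route as the paper, which disposes of the sufficiency as ``clear'' and, for the necessity, simply sets $A_n=\bigcup\{A_p: p\in \PP_n\}$ and $B_n=\bigcup\{B_p: p\in \PP_n\}$ for a decomposition into centered pieces. The compatibility criterion you verify in your first paragraph (finitely many conditions have a common lower bound iff the unions of their first and second coordinates are disjoint, the point being that the coordinatewise union of compatible conditions is again a condition of $\PP_\A$) is precisely the fact the paper uses implicitly here and records explicitly later in the proof of Proposition~\ref{sphereT-PA}.
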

\begin{proof} The sufficiency is clear.  For the necessity, let $\PP_\A=\bigcup_{n\in \N}\PP_n$, where each $\PP_n$
is pairwise compatible.
Then $A_n=\bigcup\{A_p: p\in \PP_n\}$ and $B_n=\bigcup\{B_p: p\in \PP_n\}$ work.
\end{proof}

\begin{proposition}\label{Rembed-centered} Suppose that $\A$ is an $\ad$-family. If $\A$ is $\R$-embeddable, 
then $\PP_\A$ is $\sigma$-centered.
\end{proposition}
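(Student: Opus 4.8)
The plan is to apply the characterization of $\sigma$-centeredness in Proposition \ref{sigmac-char}: it suffices to produce a countable family of pairs $(A_n, B_n)$ of \emph{disjoint} subsets of $\N$ such that every condition $p \in \PP_\A$ satisfies $A_p \subseteq A_n$ and $B_p \subseteq B_n$ for some $n$. Using Definition \ref{def-Rembed}, I would fix an injection $f : \N \to \Q$ witnessing $\R$-embeddability, so that for each $A \in \A$ the sequence $f[A]$ converges to an irrational $r_A$, with $r_A \ne r_{A'}$ for distinct $A, A'$. The guiding idea is that for a condition $p$ with $A_p =^* \bigcup a_p$ and $B_p =^* \bigcup b_p$, the finite limit sets $\{r_A : A \in a_p\}$ and $\{r_B : B \in b_p\}$ are \emph{disjoint} finite sets of irrationals, and hence can be separated by finitely many open intervals with rational endpoints.

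First I would build the countable indexing family. Consider all finite increasing sequences $q_0 < \dots < q_k$ of rationals together with a colouring of the $k+1$ open intervals they determine (the two unbounded ones included) by the labels $A$, $B$, or neither; there are only countably many such data. Each such datum determines two disjoint open sets $U$ (the union of the $A$-coloured intervals) and $V$ (the union of the $B$-coloured intervals), and hence two disjoint subsets of $\N$, namely $A_U = \{n \in \N : f(n) \in U\}$ and $B_V = \{n \in \N : f(n) \in V\}$. Now let the family consist of all pairs
\[
\big((A_U \cup e) \setminus g,\ (B_V \cup g) \setminus e\big),
\]
where $U, V$ range over the colourings above and $e, g$ range over $[\N]^{<\omega}$. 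This is a countable family, and every pair in it is automatically a pair of disjoint sets: a common element would, being outside both $e$ and $g$, have to lie in $A_U \cap B_V = \emptyset$.

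Finally I would check the absorption property. Given $p \in \PP_\A$, I would choose rational cut points separating the points of the disjoint finite set $\{r_A : A \in a_p\} \cup \{r_B : B \in b_p\}$, colouring each interval by which family's limit (if any) it contains, so that the resulting $U$ contains every $r_A$ with $A \in a_p$ and $V$ contains every $r_B$ with $B \in b_p$. Since each $A \in a_p$ converges into the open set $U$, we get $A \subseteq^* A_U$, whence $A_p \subseteq^* A_U$; symmetrically $B_p \subseteq^* B_V$. Setting $e = A_p \setminus A_U$ and $g = B_p \setminus B_V$ (both finite) and letting $(A_n, B_n)$ be the corresponding pair, we have $A_p \subseteq A_U \cup e$, and every element of $A_p$ lies outside $g \subseteq B_p$ by disjointness of $A_p$ and $B_p$, so $A_p \subseteq A_n$; the inclusion $B_p \subseteq B_n$ is symmetric. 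The only delicate point is reconciling the almost-inclusions coming from convergence with the \emph{exact} inclusions and disjointness demanded by Proposition \ref{sigmac-char}; this is precisely what the set-differences by $e$ and $g$ achieve, using $A_p \cap B_p = \emptyset$.
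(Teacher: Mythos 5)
Your proof is correct and follows essentially the same route as the paper: both reduce to Proposition \ref{sigmac-char} and use the injection witnessing $\R$-embeddability to separate the two coordinates of a condition by disjoint finite unions of rational intervals, drawn from a fixed countable family. The only difference is cosmetic: the paper separates the full compact closures of $f[A_p]$ and $f[B_p]$ and so gets the exact inclusions at once, whereas you separate only the irrational limit points and then absorb the finitely many stragglers with the auxiliary finite sets $e$ and $g$.
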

\begin{proof}
Let $f: \N\rightarrow \Q$ witnesses the $\R$-embeddability of $\A$. Given $p\in \PP_\A$ since $f$ is an injection,
$f[A_p]$ and $f[B_p]$ are disjoint. Moreover both $f[A_p]$ and $f[B_p]$ are subsets of $\Q$ almost equal to finite unions
of sequences converging to irrationals and the irrationals for $f[A_p]$ must be all distinct from the irrationals for
$f[A_p]$. It follows that the closures of $f[A_p]$ and $f[B_p]$ consists of $f[A_p]$ and $f[B_p]$ and the irrational limits respectively,
and so the closures of $f[A_p]$ and $f[B_p]$ are disjoint. It follows that there is $(U, V)\in \B^2$ such that 
$f[A_p]\subseteq U$ and $f[B_p]\subseteq V$, where $\B^2=\{(U_n, V_n): n\in \N\}$ is
the set of all pairs $(U, V)$ such that $U\cap V=\emptyset$ and $U$ and $V$ are finite unions of intervals with rational end-points.
It follows that $A_n=f^{-1}[U_n]$ and $B_n=f^{-1}[V_n]$ satisfy the condition from Proposition \ref{sigmac-char}
and so $\PP_\A$ is $\sigma$-centered.
\end{proof}

\begin{proposition}\label{notRembed-centered} There is an $\ad$-family $\A$  which  is not $\R$-embeddable 
but $\PP_\A$ is $\sigma$-centered.
\end{proposition}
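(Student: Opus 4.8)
The plan is to build an $\ad$-family that is \emph{countably separated} in the sense that drives $\sigma$-centeredness, yet whose separation cannot be ``linearized.'' Unwinding Definition~\ref{def-Rembed}, $\R$-embeddability amounts to the existence of a countable separating family that is realizable as the traces $f^{-1}[I]$ of rational intervals $I$ under a single injection $f\colon\N\to\Q$, i.e.\ a separation governed by one linear order, namely the order of the limits $r_A$. By contrast, Proposition~\ref{sigmac-char} says $\sigma$-centeredness only needs a countable family of disjoint pairs $(A_n,B_n)$ capturing every condition, with no linearity required. The key point is that all the obvious non-$\R$-embeddable families are already excluded: maximal families give non-$\sigma$-centered $\PP_\A$ by Theorem~\ref{thm-ad}(2)(b), Luzin families are $L$-families by Proposition~\ref{anti-centered}, and a $2$-Luzin gap destroys the c.c.c.\ by Proposition~\ref{2L-notccc}. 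So the task is to construct a subtler family to which Proposition~\ref{sigmac-char} applies while no interval separation exists.

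For the $\sigma$-centered half I would fix in advance a countable family of disjoint pairs $(A_n,B_n)$ and build $\A$ so that each condition is captured by one of them, then quote Proposition~\ref{sigmac-char}. Concretely I would lay the members of $\A$ along a fixed countable ``skeleton'' (a tree, or a countable set of rational intervals together with a finite palette of local types) so that any finite configuration $a_p,b_p$ of a condition $p$ is separated using only finitely much skeleton data: a level $N$, the finite traces of $A_p,B_p$ below $N$, and a finite type assignment above $N$, exactly as in the verification that a branch family is $\sigma$-centered. The sets $A_n,B_n$ are then the unions of the skeleton pieces dictated by each finite type; the thing to check is that the pieces assigned to the two sides are genuinely disjoint, which forces the skeleton to be chosen so that distinct members separate at a finite stage, and in particular so that $\A$ carries no $2$-Luzin gap.

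For the non-$\R$-embeddable half I would argue by contradiction. A witnessing $f$ assigns distinct irrational limits $s_C$ to the members $C\in\A$, so $\{s_C:C\in\A\}$ is a subset of $\R$. The family will be built with uncountably many ``twin'' pairs of members, and the contradiction I aim at is a counting one: I want each twin to contribute a \emph{jump} of $\{s_C:C\in\A\}$ (two limits with no other limit strictly between them), so that uncountably many twins force uncountably many jumps in a subset of $\R$ — impossible, since the open intervals determined by distinct jumps are pairwise disjoint and hence countable. The serious obstacle, and the reason the naive attempts collapse, is that this counting only bites if the twins stay \emph{order-adjacent under every re-embedding}: a family whose two sides live on a fixed global partition $\N=Q_L\sqcup Q_R$ is in fact $\R$-embeddable, since one may send the two sides into disjoint regions of $\R$ and pull the limits apart. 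Thus the heart of the matter is to make betweenness \emph{rigid} — to arrange the interleaving so that any member whose limit would fall strictly between the two limits of a twin must meet their union infinitely, violating almost disjointness. Reconciling this rigidity (needed to defeat $\R$-embeddability) with the finite-stage separability (needed for $\sigma$-centeredness via Proposition~\ref{sigmac-char}, and to avoid a $2$-Luzin gap) is the delicate balance the construction must strike, and is where I expect the real work to lie.
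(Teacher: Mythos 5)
Your proposal for the $\sigma$-centered half is workable in outline (a branch-like family laid along a countable skeleton can be fed into Proposition \ref{sigmac-char}), but the non-$\R$-embeddability half has a genuine gap, and you have located it yourself without closing it. The jump-counting argument requires that under \emph{every} candidate injection $f\colon\N\to\Q$ the two limits of each twin be order-adjacent in $\{r_A: A\in\A\}$. But the position of $r_A$ is constrained by the combinatorics of $\A$ only through infinite intersections, and in an $\ad$-family all intersections are finite: a set $C$ almost disjoint from $A\cup B$ can perfectly well be sent by $f$ to a sequence converging to a point strictly between $r_A$ and $r_B$ while $f[C]$ stays disjoint from $f[A]\cup f[B]$ (there is ample room in $\Q$ near any irrational). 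So ``betweenness rigidity'' cannot be extracted from almost disjointness, the uncountably many jumps never materialize, and the contradiction you aim for does not arrive. Since you explicitly defer this to ``where the real work lies,'' the proof is not complete.

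The paper takes a different and much softer route worth comparing with. It starts from an $\R$-embeddable family $\B=\{B_\alpha:\alpha<\mathfrak c\}$ and builds $\A$ so that every member of $\A$ is either a single $B_\alpha$ or a union of two of them; then $\PP_\A\subseteq\PP_\B$, so $\sigma$-centeredness is inherited directly from Proposition \ref{Rembed-centered} with no skeleton needed. Non-$\R$-embeddability is obtained not by a global counting argument but by diagonalizing against all $\mathfrak c$ injections $f_\xi\colon\N\to\Q$: at stage $\xi$ one inspects how $f_\xi$ acts on the as yet unused $B_\alpha$ and adds either a union $B_\alpha\cup B_{\alpha'}$ whose image has two distinct limit points (hence is not the range of a convergent sequence), or two sets whose images share a limit, or a set whose image does not converge at all --- each case defeats $f_\xi$ as a witness. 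This only needs to kill one injection at a time, which is exactly why it sidesteps the rigidity problem your approach runs into.
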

\begin{proof} Let $\B=\{B_\alpha: \alpha<\mathfrak c\}$ be any $\ad$-family which is $\R$-embeddable. 
We will construct an $\ad$-family $\A=\{A_\xi^1, A_\xi^2:\xi<\mathfrak c\}$ as in the proposition such that
 for each $\xi<\mathfrak c$ and
each $i=0,1$ either there 
is $\alpha_i(\xi)<\mathfrak c$ such each $A_\xi^i=B_{\alpha_i(\xi)}$ or there are 
$\beta_i(\xi)<\gamma_i(\xi)<\mathfrak c$ such that
$A_\xi^i=B_{\beta_i(\xi)}\cup B_{\gamma_i(\xi)}$. Note that the above property of elements of $\A$
 implies that $\PP_\A$ is $\sigma$-centered
as long as $\A$ is an $\ad$-family.  
This follows from Proposition \ref{Rembed-centered} is because $\PP_\A\subseteq \PP_\B$ in this case. 

To construct recursively  $\A$ let $\{f_\xi: \xi<\mathfrak c\}$ be an enumeration of all injective functions from $\N$ into $\Q$. 
Suppose that $\{A_\xi^i: \xi<\eta, i=0,1\}$ has been constructed for $\eta<\mathfrak c$.   
Let $X=\mathfrak c\setminus\{\alpha_i(\xi), \beta_i(\xi), \gamma_i(\xi): \xi<\eta\}$. We will pick
the ordinals $\alpha_i(\xi), \beta_i(\xi), \gamma_i(\xi)$ from $X$. This will guarantee that $\A$ is an $\ad$-family.
At stage $\eta$ consider three cases. 
First case is when for each $\alpha\in X$ the set  $f_\xi[B_\alpha]$ 
is the set of all terms of a convergent sequence  and  the limits
$f_\xi[B_\alpha]$ and  $f_\xi[B_{\alpha'}]$ are distinct  for distinct $\alpha, \alpha'\in X$.  
In such a case define $A_\xi^0=B_\alpha\cup B_{\alpha'}$ and $A_\xi^1=B_{\alpha''}$ for any distinct $\alpha, \alpha'\alpha''\in X$.
The second case is when for each $\alpha\in X$ the set  $f_\xi[B_\alpha]$ is the set of all terms of a convergent
 sequence  but  the limits
$f_\xi[B_\alpha]$ and  $f_\xi[B_{\alpha'}]$ are equal for some  distinct $\alpha, \alpha'\in X$.
In such a case define $A_\xi^0=B_\alpha$ and $A_\xi^1= B_{\alpha'}$.
The third case is when there is $\alpha\in X$ such that $f_\xi[B_\alpha]$ is not the set of all terms of a convergent sequence.
In such a case define $A_\xi^0=B_\alpha$ and $A_\xi^1= B_{\alpha'}$ for any $\alpha'\in X\setminus\{\alpha\}$.

It should be clear from the construction that no injection $f_\xi:\N\rightarrow \Q$ witnesses the fact that $\A$ is $\R$-embeddable, 
and hence
$\A$ is as required.
\end{proof}

\begin{proposition}\label{mad-notcentered}  Suppose that $\A$ is a maximal $\ad$-family. Then $\PP_\A$
 is  not $\sigma$-centered.
\end{proposition}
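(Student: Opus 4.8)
The plan is to argue by contradiction through the characterisation of Proposition \ref{sigmac-char}. Assume $\PP_\A$ is $\sigma$-centered and fix disjoint sets $A_n,B_n\subseteq\N$ ($n\in\N$) such that every $p\in\PP_\A$ satisfies $A_p\subseteq A_n$ and $B_p\subseteq B_n$ for some $n$. Testing the conditions $(A,\emptyset)$, $(\emptyset,A)$ and $(A,A'\setminus A)$ for $A,A'\in\A$, this says that each member of $\A$ is almost contained in some $A_n$ and in some $B_m$, and that for any two distinct $A,A'\in\A$ there is $n$ with $A\subseteq^* A_n$ and $A'\subseteq^* B_n$. I will think of each index as splitting $\N$ into the three pieces $A_n$, $B_n$ and $C_n:=\N\setminus(A_n\cup B_n)$, and of each $A\in\A$ as receiving, at coordinate $n$, the label $0$, $1$ or $2$ according to whether $A\subseteq^* A_n$, $A\subseteq^* B_n$, or neither.

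The core observation I would isolate is the following. Call an infinite $D\subseteq\N$ \emph{chart-deciding} if for every $n$ it is almost contained in exactly one of $A_n,B_n,C_n$; write $s(n)\in\{0,1,2\}$ for the corresponding choice. If $A\in\A$ meets such a $D$ in an infinite set, then for each $n$ the set $A\cap D$ is almost inside the piece selected by $s(n)$, which forces the label of $A$ at $n$ to lie in $\{s(n),2\}$; in particular it is never $0$ when $s(n)=1$ nor $1$ when $s(n)=0$. Consequently any two distinct $A,A'\in\A$ with $A\cap D$ and $A'\cap D$ both infinite are separated by no index: there is no $n$ with $A\subseteq^* A_n$ and $A'\subseteq^* B_n$, nor one with the roles reversed. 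For such a pair the condition $(A,A'\setminus A)$ is then dominated by no $(A_n,B_n)$, contradicting Proposition \ref{sigmac-char}. Thus it suffices to produce a chart-deciding $D$ that meets at least two distinct members of $\A$ in infinite sets.

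To build $D$ I would run a fusion, using maximality in the guise that the trace $\A\restriction E:=\{A\cap E:A\in\A,\ |A\cap E|=\aleph_0\}$ of a maximal family on any infinite $E$ is again maximal, hence infinite. Starting from $E_0=\N$, and having an infinite $E_n$ whose trace is infinite, split $E_n$ by the three pieces of index $n$; since infinitely many members meet $E_n$ infinitely, by pigeonhole one piece is met infinitely by infinitely many of them, so it has infinite trace, and I take it to be $E_{n+1}$ and record $s(n)$. A suitable diagonal pseudo-intersection $D$ of the $E_n$ is then chart-deciding. The main obstacle—and the point where maximality must really be exploited—is to guarantee that $D$ captures two distinct members rather than collapsing into (an almost subset of) a single one: the infinite trace is preserved at every finite stage, but a naive pseudo-intersection may meet only one member infinitely. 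I expect to overcome this by a bookkeeping that keeps, cofinally along the fusion, two members alive and feeds each of them infinitely many elements of $D$ drawn from deep $E_n$'s; equivalently, by arranging the fusion so that the family of members surviving all stages is infinite. Verifying that maximality forces such survivors—so that $D$ cannot be almost covered by a single member—is the crux of the argument.
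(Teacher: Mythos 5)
Your reduction is correct and takes a genuinely different route from the paper's: the paper also starts from Proposition \ref{sigmac-char}, but then codes each $k\in\N$ as the point of $2^{\N}$ recording its membership in the $A_n$'s, shows that the sets $X_A$ of accumulation points of the images of the $A\in\A$ are pairwise disjoint nonempty closed sets, uses maximality to give each $X_A$ a private basic open neighbourhood, and contradicts second countability. Your combinatorial observation --- that two members of $\A$ meeting a common chart-deciding $D$ in infinite sets are separated by no index, so the condition $(A,A'\setminus A)$ is dominated by no $(A_n,B_n)$ --- is sound. The gap is exactly where you place it, and the repair you sketch would fail: you cannot keep two designated members alive along the fusion, because at stage $n$ they may satisfy $A\subseteq^* A_n$ and $A'\subseteq^* B_n$, so that no piece of $E_n$ is met infinitely by both; and keeping infinitely many members alive at every finite stage does not produce even one surviving all stages, since a decreasing $\omega$-sequence of infinite subfamilies of $\A$ can have empty intersection. (A smaller inaccuracy: the trace of a maximal family on an infinite $E$ need not be infinite --- it is a singleton when $E\subseteq^* A$ for a single $A\in\A$ --- but your inductive pigeonhole maintenance of infinite traces, starting from $E_0=\N$ whose trace is all of $\A$, is what actually carries the fusion.)

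The gap can be closed without controlling the fusion at all, by invoking maximality twice at the end instead of once. Run the fusion blindly, so each $E_n$ is infinite and has infinite trace, and note two things: every pseudo-intersection of the $E_n$'s is chart-deciding, and a union of two pseudo-intersections is again a pseudo-intersection, hence chart-deciding. Take any pseudo-intersection $D_0$; by maximality some $A_0\in\A$ meets $D_0$ infinitely, so $A_0\cap E_n$ is infinite for every $n$. Since the trace on $E_n$ contains some $A'\cap E_n$ with $A'\neq A_0$, and $A'\cap A_0$ is finite, the set $E_n\setminus A_0$ is infinite for every $n$; choose a pseudo-intersection $D_1\subseteq\N\setminus A_0$ and apply maximality again to get $A_1$ meeting $D_1$ infinitely, necessarily with $A_1\neq A_0$, so $A_1\cap E_n$ is infinite for every $n$ as well. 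Finally pick pseudo-intersections $D_0'\subseteq A_0$ and $D_1'\subseteq A_1$ (possible since $A_i\cap E_n$ is infinite for all $n$) and set $D=D_0'\cup D_1'$: this $D$ is chart-deciding and is met infinitely by the two distinct members $A_0$ and $A_1$, which feeds into your key observation and finishes the proof. With this patch your argument is complete and arguably more elementary than the paper's topological one.
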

\begin{proof} Let $A_n, B_n\subseteq\N$ be as in Proposition \ref{sigmac-char}. Assume  that $\A$ is maximal.
We will aim at arriving at a contradiction.
By extending we may assume that
$A_n\cup B_n=\N$ for every $n\in \N$.  Define $F:\N\rightarrow \{0,1\}^\N$ by $F(k)(n)=0$ if and only if $k\in A_n$.
For $A\in\A$ let $X_A\subseteq\{0,1\}^\N$ be the set of all accumulation points of $F[A]$. Clearly each $X_A$ is a closed subset
of $\{0,1\}^\N$ for $A\in \A$.
Since $\A$ is an $\ad$-family, if $A, B\in \A$ are distinct, then there is $m\in \N$ such that $A\cap B\subseteq m$ and so  
$(\{A\setminus m\}, \{B\setminus m\})\in \PP_\A$. It follows from
the choice of $A_n, B_n$s  that there is $n\in \N$ such that $F(k)(n)\not =F(k')(n)$ for $k\in A\setminus m$ and $k'\in B\setminus m$
and consequently $X_A\cap X_B=\emptyset$ for any two distinct $A, B\in \A$. 
Similarly by using $(\{A\setminus m\}, \{\{m\}\})$ for $m\in \N$ we conclude that $F[\N]\cap X_A=\emptyset$ for all $\A\in \A$;
and taking $(\{\{m\}\}, \{\{m'\}\})$ for distinct $m, m'\in \N$ we conclude that $F$ is injective, in particular each $X_A$
for $A\in\A$ is nonempty by the compactness of $\{0,1\}^\N$.

By the metrizability of $\{0,1\}^\N$, if there $x\in F_A$ for $A\in \A$ which is in the closure of
$F[\N\setminus A]$, there is a nontrivial subsequence  of $F[\N\setminus A]$ convergent to $x$ (as $F[\N\setminus A]$ is disjoint
from $X_A$). Then by the maximality
of $\A$ there is $B\in \A$ such that $x\in F_B$ which contradicts our finding that $X_A$s are pairwise disjoint for $A\in \A$.
It follows that every point of $F_A$ for $A\in \A$ has an open basic neighborhood $U$ in $\{0,1\}$  such that
$U\cap F_B=\emptyset$ for all $B\in \A\setminus\{A\}$. But this contradicts the fact that
 there are only countably many basic open sets
of $\{0,1\}^\N$ and $\A$ must be uncountable as a maximal $\ad$-family.

\end{proof}

\subsection{Antiramsey families and dichotomies}
In this subsection we prove  a consistent dichotomy (Proposition \ref{oca}):
 $\PP_\A$ is either $\sigma$-centered
or fails to be c.c.c. which will find application in the following sections. 
This dichotomy is only consistent as consistently there exist strong counterexamples to it which
we call antiramsey $\ad$-families (Propositions \ref{antiramsey-gen}, \ref{antiramsey-ch}, 
\ref{antiramsey-cohen}). They also produce new interesting results concerning Banach spaces
described in the following sections.

\begin{proposition}\label{ma} Assume {\sf MA}.  Suppose that $\A$ is an $\ad$-family of cardinality less than $\mathfrak c$. Then 
either $\PP_\A$ does not satisfy  c.c.c. or  $\PP_\A$ is $\sigma$-centered.
\end{proposition}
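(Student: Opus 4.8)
The plan is to show that, under {\sf MA}, if $\PP_\A$ satisfies c.c.c.\ then it is $\sigma$-centered, by building a partition of $\PP_\A$ into countably many centered pieces through an auxiliary c.c.c.\ colouring forcing to which {\sf MA} is then applied. First I note that the hypothesis is vacuous under {\sf CH}: an $\ad$-family is uncountable, so $|\A|<\mathfrak c$ forces $\aleph_1<\mathfrak c$, whence {\sf MA}$_{\aleph_1}$ is nontrivial and available; also $|\PP_\A|=|\A|<\mathfrak c$, since a condition is determined by a pair of finite subfamilies of $\A$ together with a finite modification. I record two elementary facts coming straight from Definition \ref{def-P}: (i) $p$ and $q$ are compatible iff $A_p\cap B_q=\emptyset=A_q\cap B_p$, the common lower bound being $(A_p\cup A_q,\,B_p\cup B_q)$; and (ii) consequently a finite pairwise compatible set $\{p_i:i<t\}$ has a join $\big(\bigcup_i A_{p_i},\,\bigcup_i B_{p_i}\big)\in\PP_\A$, so that in $\PP_\A$ the notions ``pairwise compatible'', ``linked'' and ``centered'' all coincide.

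Assuming $\PP_\A$ is c.c.c., the first step is to invoke the standard consequence of {\sf MA}$_{\aleph_1}$ that every c.c.c.\ partial order has the Knaster property K; by fact (ii) this means $\PP_\A$ has precaliber $\aleph_1$, i.e.\ every uncountable subset of $\PP_\A$ contains an uncountable centered subset. The second step is to introduce the colouring forcing $\mathbb C$ whose conditions are finite partial functions $c:\dom(c)\to\N$ with $\dom(c)\in[\PP_\A]^{<\omega}$ satisfying $c(p)=c(q)\Rightarrow p,q$ compatible, ordered by reverse inclusion. A filter meeting the $|\A|<\mathfrak c$ dense sets $D_p=\{c:p\in\dom(c)\}$ produces a total $f:\PP_\A\to\N$ whose fibres are pairwise compatible, hence centered by (ii); thus $\PP_\A=\bigcup_{n\in\N}f^{-1}(n)$ witnesses $\sigma$-centeredness. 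So everything reduces to proving that $\mathbb C$ is c.c.c., which is the heart of the matter and the step I expect to be the main obstacle.

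To see that $\mathbb C$ is c.c.c., take $\{c_\xi:\xi<\omega_1\}\subseteq\mathbb C$. By a $\Delta$-system argument (normalizing as in Lemma \ref{thinning}) together with countable pigeonholing over the finitely many coordinates and colours, I may assume that the domains form a $\Delta$-system with root $R$, that $c_\xi\restriction R=c^*$ is fixed, that outside $R$ the listed conditions $p^\xi_i$ $(i<k)$ are distinct, and that the colour $c_\xi(p^\xi_i)=n_i$ is independent of $\xi$. Then $c_\xi\cup c_\eta$ is a valid condition precisely when each monochromatic pair in its domain is compatible; the root pairs and the within-condition pairs are automatically fine, so the only requirement is that $p^\xi_i$ and $p^\eta_j$ be compatible for all coordinates $i,j$ outside $R$ with $n_i=n_j$. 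For each occurring colour $n$ put $I_n=\{i<k:\,p^\xi_i\notin R,\ n_i=n\}$ and let $r^n_\xi$ be the join of $\{p^\xi_i:i\in I_n\}$, which exists by (ii) because these conditions are equally coloured inside the valid condition $c_\xi$. The crucial identity, immediate from (i), is that $r^n_\xi$ and $r^n_\eta$ are compatible in $\PP_\A$ if and only if $p^\xi_i$ and $p^\eta_j$ are compatible for all $i,j\in I_n$. Since only finitely many colours occur, I then apply the precaliber $\aleph_1$ property from the first step successively, once per colour, thinning to an uncountable $\Gamma\subseteq\omega_1$ on which each family $\{r^n_\xi:\xi\in\Gamma\}$ is centered. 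For any $\xi\neq\eta$ in $\Gamma$ and any $i,j$ with $n_i=n_j$, the identity gives that $p^\xi_i$ and $p^\eta_j$ are compatible, hence $c_\xi\cup c_\eta\in\mathbb C$ and $c_\xi,c_\eta$ are compatible; so $\{c_\xi:\xi<\omega_1\}$ is not an antichain and $\mathbb C$ is c.c.c. Applying {\sf MA} to $\mathbb C$ as in the second step then finishes the proof.

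The delicate points I anticipate are the bookkeeping in the $\Delta$-system normalization (tracking root versus new coordinates and the finite $\N$-modifications hidden inside each condition), and checking that ``pairwise compatible'' genuinely suffices to form the joins $r^n_\xi$. The conceptual key is the reduction, via these joins-by-colour, of the genuinely infinitary compatibility demand on $c_\xi\cup c_\eta$ to finitely many applications of precaliber $\aleph_1$; this is what lets the two appeals to {\sf MA} (first to extract property K, then to realize the countable colouring) combine into the desired dichotomy.
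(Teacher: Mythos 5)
Your proof is correct and follows the same route as the paper: the paper's entire argument consists of the observation that $|\PP_\A|\le|\A|<\mathfrak c$ together with a citation of the well-known fact that under {\sf MA} every c.c.c.\ partial order of cardinality less than $\mathfrak c$ is $\sigma$-centered. What you have written out is essentially the standard proof of that cited fact (property K from {\sf MA}$_{\aleph_1}$, then the finite-partial-colouring forcing), specialized to $\PP_\A$, where it is slightly cleaner because pairwise compatible finite subsets of $\PP_\A$ have canonical joins, so linked, pairwise compatible and centered all coincide; the details, including the joins-by-colour reduction in the c.c.c.\ check, are sound.
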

\begin{proof}
If $|\A|<\mathfrak c$, then $|\PP_\A|<\mathfrak c$. The proposition follows from 
the well-known fact that under {\sf MA} such c.c.c. forcings are $\sigma$-centered.
\end{proof}
\begin{proposition}[{\sf OCA}]\label{oca}  Suppose that $\A$ is $\ad$-family. Then 
either $\PP_\A$ does not satisfy c.c.c. or it $\PP_\A$ is $\sigma$-centered.
\end{proposition}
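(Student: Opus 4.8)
The plan is to establish the dichotomy in the form ``if $\PP_\A$ satisfies the c.c.c.\ then $\PP_\A$ is $\sigma$-centered'' by applying {\sf OCA} to the natural encoding of $\PP_\A$ as a subset of a Cantor cube, exactly in the spirit of the earlier {\sf OCA} arguments of this section. If $\PP_\A$ fails the c.c.c.\ the first disjunct holds and there is nothing to prove, so I would assume $\PP_\A$ is c.c.c.

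First I would encode each condition $p=(A_p,B_p)$ as the point $\iota(p)=(1_{A_p},1_{B_p})\in 2^\N\times 2^\N$. Since a condition is literally determined by the pair $(A_p,B_p)$, the map $\iota$ is injective and $X=\iota[\PP_\A]$ is a separable metric space. The key observation is that incompatibility is an \emph{open} condition: the minimal candidate for a common lower bound of $p,q$ is $(A_p\cup A_q,B_p\cup B_q)$, which belongs to $\PP_\A$ precisely when $(A_p\cap B_q)\cup(A_q\cap B_p)=\emptyset$, so $p,q$ are incompatible exactly when this union is nonempty. The existence of a witnessing $k\in\N$ depends on only finitely many coordinates of $\iota(p),\iota(q)$. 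Hence, setting $K_0=\{\{\iota(p),\iota(q)\}:p,q\text{ incompatible}\}$ and $K_1=[X]^2\setminus K_0$, the set $\{(x,y):\{x,y\}\in K_0\}$ is open in $X^2$.

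Then I would apply {\sf OCA} in the form recorded in the preliminaries. In the first alternative there is an uncountable $0$-homogeneous $Y\subseteq X$; its $\iota$-preimage is an uncountable family of pairwise incompatible conditions, i.e.\ an uncountable antichain, contradicting the c.c.c. So the second alternative must hold: $X=\bigcup_{n\in\N}X_n$ with each $X_n$ being $1$-homogeneous. Putting $\PP_n=\iota^{-1}[X_n]$ yields $\PP_\A=\bigcup_n\PP_n$ with each $\PP_n$ consisting of pairwise compatible conditions.

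The remaining step, which is the only genuinely order-theoretic input and the feature of the splitting order making the clean dichotomy possible, is to upgrade pairwise compatibility to centeredness. Given pairwise compatible $p_1,\dots,p_k\in\PP_n$, I would take $r=(\bigcup_{i}A_{p_i},\bigcup_{i}B_{p_i})$. For $i\neq j$ compatibility of $p_i,p_j$ forces $A_{p_i}\cap B_{p_j}=\emptyset$, while $A_{p_i}\cap B_{p_i}=\emptyset$ since $p_i$ is a condition; hence $(\bigcup_i A_{p_i})\cap(\bigcup_i B_{p_i})=\emptyset$. As $\bigcup_i A_{p_i}=^*\bigcup(\bigcup_i a_{p_i})$ and $\bigcup_i B_{p_i}=^*\bigcup(\bigcup_i b_{p_i})$ are almost equal to finite unions of members of $\A$, we get $r\in\PP_\A$ with $r\leq p_1,\dots,p_k$. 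Thus each $\PP_n$ is centered and $\PP_\A$ is $\sigma$-centered. I do not expect a serious obstacle beyond bookkeeping; the two points to state carefully are the openness of $K_0$ and this passage from pairwise compatibility to a common refinement.
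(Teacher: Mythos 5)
Your proposal is correct and follows essentially the same route as the paper: encode conditions as pairs of characteristic functions in $2^\N\times 2^\N$, observe that incompatibility is an open relation, and apply {\sf OCA} so that the $0$-homogeneous alternative gives an uncountable antichain and the $1$-homogeneous decomposition gives $\sigma$-centeredness. Your explicit upgrade from pairwise compatibility to centeredness via $r=(\bigcup_i A_{p_i},\bigcup_i B_{p_i})$ is a point the paper's proof leaves implicit (it is recorded only later, in the proof of Proposition \ref{sphereT-PA}), and it is a worthwhile detail to spell out.
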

\begin{proof}
Let 
$$X=\{(1_{A_p}, 1_{B_p}): p\in \PP_A\}\subseteq2^\N\times 2^\N.$$
Following \cite{n-luzin} define a coloring $c:[X]^2\rightarrow\{0,1\}$
 as $c(x, y)=0$ if and only if $x=(1_{A_p}, 1_{B_p})$ for
 $p\in \PP_\A$ and $y=(1_{A_q}, 1_{B_q})$ for
 $q\in \PP_\A$ and $p$ and $q$ are incompatible in $\PP_\A$.
 Note that $c^{-1}[\{0\}]$ is
open in $2^\N\times 2^\N$ because for distinct $x, y\in X$ we have 
$c(x, y)=1$ if and only if there is $n\in \N$ such that
$x_1(n)=1=y_2(n)$ or $y_1(n)=1=x_2(n)$, where $x=(x_1, x_2)$ and $y=(y_1, y_2)$.
So {\sf OCA} can be applied to the coloring $c$ to conclude that
either $X$ can be covered by countably many $1$-monochromatic sets, that is
$\PP_\A$ is $\sigma$-centered or $X$ contains an uncountable
$0$-monochromatic set, that is $\PP_\A$ does  not satisfy the c.c.c. 
\end{proof}

\begin{proposition}[{\sf CH}]\label{antiramsey-gen} For any uncountable regular cardinal $\kappa$
 there is a c.c.c forcing $\PP$ which forces  that there is an antiramsey
$\ad$-family $\A$ of cardinality $\kappa=\mathfrak c$ which is a maximal $\ad$-family.
\end{proposition}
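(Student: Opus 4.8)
Working in a model of {\sf CH}, I would force with finite conditions that build an $\ad$-family $\A=\{A_\alpha:\alpha<\kappa\}$ on $\N$, arranging by genericity that $\A$ is maximal and, crucially, that $\A$ contains no $n$-Luzin gap for any $n$. A condition will consist of a finite approximation $p=(n_p,F_p,(s^p_\alpha)_{\alpha\in F_p})$, where $F_p\in[\kappa]^{<\omega}$, $n_p\in\N$ and $s^p_\alpha\subseteq n_p$, ordered by the usual end-extension that freezes pairwise intersections below $n_p$ (so that distinct generic columns are almost disjoint), \emph{together with} a finite set $\Phi_p$ of promises whose sole purpose is to prevent uncountable antichains of $\PP_{\dot\A}$ from ever forming. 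Since $|\PP|=\kappa$ and $\PP$ will be c.c.c., the standard nice-name counting makes $\mathfrak c=\kappa$ in the extension, while distinctness of the $A_\alpha$ gives $|\A|=\kappa$.

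By Definition~\ref{def-antiramsey} it suffices to force, for every name $\dot{\mathcal Q}$ of an uncountable family of pairwise essentially distinct conditions of $\PP_{\dot\A}$, that $\dot{\mathcal Q}$ contains both a compatible and an incompatible pair. The incompatible pair is obtained by pure genericity: given $p$ forcing $\dot{\mathcal Q}$ uncountable and essentially distinct, choose $p_\alpha\le p$ deciding $\dot q_\alpha=(A_{q_\alpha},B_{q_\alpha})\in\dot{\mathcal Q}$ with finite index supports $a_\alpha,b_\alpha$ and threshold $m_\alpha$, then apply a $\Delta$-system reduction of the supports as in Lemma~\ref{thinning} (making the $a_\alpha\cup b_\alpha$ pairwise disjoint, with $a_\alpha,b_\alpha\ne\emptyset$ because essential distinctness forces both coordinates to take uncountably many values). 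Two of the $p_\alpha$, say $p_\alpha$ and $p_\beta$, are compatible by the c.c.c.\ of $\PP$; let $r_0$ be a common refinement. Choosing $\gamma\in a_\alpha$ and $\gamma'\in b_\beta$ (necessarily distinct) whose indices are not mentioned by the finitely many promises in $r_0$, I extend $r_0$ by a single \emph{fresh} common point of $A_\gamma$ and $A_{\gamma'}$ above all thresholds and above $n_{r_0}$. Such a point lies in $A_{q_\alpha}\cap B_{q_\beta}$ and so forces $q_\alpha$ and $q_\beta$ incompatible in $\PP_{\dot\A}$. A density argument, repeating this below every $p'\le p$, then shows it is forced that $\dot{\mathcal Q}$ has an incompatible pair; in particular no uncountable essentially distinct family is pairwise compatible.

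The compatible pair is the hard half, and it is here that the promises do the work. By Proposition~\ref{notccc-nL}, if $\A$ contains no $n$-Luzin gap for any $n$, then $\PP_{\dot\A}$ is c.c.c., and then every uncountable essentially distinct $\dot{\mathcal Q}$ must contain a compatible pair, since it cannot be an antichain. The trouble is that the plain finite-conditions family is Luzin, whence $\PP_{\dot\A}$ would fail the c.c.c.\ by Proposition~\ref{2L-notccc}; so the promises must actively defeat every candidate gap. Each condition commits, for the finitely many potential antichains it currently sees, to keeping the relevant cross-intersections empty above the prescribed threshold, so that any two of the columns involved can later be amalgamated into a single compatible condition. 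The technical heart is a $\Delta$-system amalgamation lemma stating that uncountably many conditions deciding the $\dot q_\alpha$ in canonical form always admit two, $\alpha\ne\beta$, with a common extension forcing all cross-intersections between the supports of $q_\alpha$ and $q_\beta$ to remain $\subseteq m$ --- making $q_\alpha$ and $q_\beta$ compatible --- and the \emph{same} amalgamation, applied to arbitrary pairs of conditions, is what establishes that $\PP$ is c.c.c. The main obstacle is precisely to choose the format of the promises so that amalgamation is always available (keeping $\PP$ c.c.c.) while still ruling out the coherent uncountable patterns of nontrivial intersections that constitute an $n$-Luzin gap; these two demands pull in opposite directions, and their reconciliation is the crux of the argument.

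Maximality is then routine: given a name $\dot X$ for an infinite subset of $\N$ almost disjoint from every finite union of the $A_\alpha$, I introduce a new index and build its column to meet $\dot X$ infinitely. Adding one further almost disjoint set creates no $n$-Luzin gap and is consistent with the finitely many active promises, so the corresponding set of conditions is dense. Running a bookkeeping of length $\kappa$ over all names for infinite subsets of $\N$ secures maximality, while listing all names for candidate antichains installs the defeating promises everywhere. The resulting $\A$ is an antiramsey maximal $\ad$-family of cardinality $\kappa=\mathfrak c$, as required.
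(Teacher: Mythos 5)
Your incompatible-pair half is essentially the paper's argument (decide the names, thin by Lemma \ref{thinning} and a $\Delta$-system, then glue two isomorphic conditions while injecting one fresh common point into a column from $a_\alpha$ and a column from $b_\beta$), but note one technical slip: with the freezing order you yourself impose, you cannot first pass to a common refinement $r_0$ of $p_\alpha$ and $p_\beta$ and \emph{then} add a fresh common point to two columns already in the support of $r_0$ --- that extension is illegal below $r_0$. The fresh point has to be added in the same step in which the amalgamation of $p_\alpha$ and $p_\beta$ is defined, which works because the two chosen indices never sit together in the support of a single earlier condition.

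The real gap is the compatible-pair half. You route it through ``force that $\A$ has no $n$-Luzin gap for any $n$, then apply Proposition \ref{notccc-nL} to get the c.c.c.\ of $\PP_\A$,'' and you propose to achieve this with a finite set of ``promises'' attached to each condition --- but you never specify what a promise is, and you explicitly concede that reconciling the promises with the c.c.c.\ of $\PP$ ``is the crux of the argument.'' That unspecified reconciliation is precisely the content that a proof must supply, so as written the key step is missing. Moreover, the premise motivating the promises is false: with the standard freezing clause ($A_r(\xi)\cap A_r(\eta)\subseteq n_q$ for distinct $\xi,\eta\in a_q$ whenever $r\le q$), the plain finite-condition forcing does \emph{not} produce a Luzin-like family, because the intersection of any two columns is frozen the moment both indices enter a support. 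The paper exploits exactly this: after the same $\Delta$-system/isomorphism reduction, two conditions $p_\alpha,p_\beta$ of the same height $n$ are amalgamated \emph{without adding any new points}; the freezing clause then guarantees that every cross-intersection between the supports of $\dot\rho_\alpha$ and $\dot\rho_\beta$ stays inside $n$, where it is already empty above $m$ because each $\dot\rho_\beta$ is forced to be a condition of $\PP_{\A_{\dot{\mathbb G}}}$ (its two coordinates are disjoint) and the two conditions are copies of each other. This single amalgamation densely forces a compatible pair in every uncountable essentially distinct family, with no side conditions at all, and it also gives (via Lemma \ref{thinning}) the c.c.c.\ of $\PP_{\A}$ that your argument wanted --- without ever needing to exclude $n$-Luzin gaps, which is a strictly stronger property than you require. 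Your maximality and $\mathfrak c=\kappa$ remarks are in line with the paper.
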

\begin{proof} Assume  {\sf CH} and choose a regular uncountable $\kappa$.
We consider a forcing notion $\PP$ consisting of conditions 
$$p=(n_p, a_p, (A_p(\xi): \xi\in a_p))$$ satisfying
\begin{enumerate}
\item $n_p\in \N$, $a_p\in [\kappa]^{<\omega}$,
\item $A_p(\xi)\subseteq n_p$ for  $\xi\in a_p$,
\end{enumerate}
For $p, q\in \PP$ we say that $p\leq q$ if
\begin{enumerate}
 \item[(5)] $n_p\geq n_q$, $a_p\supseteq a_q$,
 \item[(6)] $A_p(\xi)\cap n_q=A_q(\xi)\cap n_q$ for all  $\xi\in a_q$,
 \item[(7)] $A_p(\xi))\cap A_p(\eta)\subseteq n_q$ for any distinct $\xi, \eta\in a_q$.
 \end{enumerate}
 First note  that if  $p, q\in \PP$ and $n_p=n_q$ and $A_p(\xi)=A_q(\xi)$ for all $\xi\in a_p\cap a_q$, then
 $p$ and $q$ are compatible in $\PP$. This shows 
 that $\PP$ satisfies the c.c.c.

 Let $\dot{\mathbb G}$ be a $\PP$-name for a generic filter in $\PP$.  For  $\xi<\kappa$
 let $\dot A_\xi$ be a $\PP$-name for a subset of $\N$
 such that $\PP$ forces that $\dot A_\xi=\bigcup\{A_p(\xi): p\in  \dot{\mathbb G}\}$.
 Standard density arguments imply that $\PP$ forces that $\dot A_\xi$ is an infinite subset of $\N$
 and that 
 $$\A_{\dot{\mathbb G}}=\{\dot A_\xi: \xi<\kappa\}$$
 is an $\ad$-family. We will prove that $\A_{\dot{\mathbb G}}$ is the required family.

  Let $\{\dot \rho_\alpha: \alpha<\omega_1\}$ be $\PP$-names for elements of an uncountable subset of
   $\PP_{\A_{\dot{\mathbb G}}}$.
 By Lemma \ref{thinning} and by passing to a further uncountable subset we may assume that there are conditions
 $p_\alpha\in \PP$ for $\alpha<\omega_1$ and  $k, l, m\in \N$ and disjoint $a_\alpha\in [\omega_1]^k$, $b_\alpha\in [\omega_1]^l$
 and $E, F\subseteq m$ such that
 $$p_\alpha\forces \dot\rho_\alpha=\Big((\bigcup_{\xi\in \check a_\alpha}\dot A_\xi\setminus\check m)\cup\check E,
 (\bigcup_{\xi\in \check b_\alpha}\dot A_\xi\setminus\check m)\cup\check F\Big).\leqno (*)$$
 
 By  standard density arguments and passing to an uncountable subset we may assume that  for each $\alpha<\omega_1$ we have
 $n=n_{p_\alpha}\geq m$ and $a_\alpha, b_\alpha\subseteq a_{p_\alpha}$.  Moreover by another thinning out
 we may assume that there are bijections $\phi_{\beta, \alpha}: a_{p_\alpha}\rightarrow a_{p_\beta}$ such that for every
 $\alpha<\beta<\omega_1$  we have
 \begin{itemize}
 \item $\phi(\xi)=\xi$ for $\xi\in a_{p_\alpha}\cap a_{p_\beta}$,
 \item $A_{p_\alpha}(\xi)=A_{p_\beta}(\phi(\xi))$ for  $\xi\in a_{p_\alpha}$,
\item $\xi \in a_\alpha$ if and only if  $\phi(\xi)\in a_\beta$ for $\xi\in a_{p_\alpha}$,
\item $\xi\in b_\alpha$ if and only if  $\phi(\xi)\in b_\beta$ for $\xi\in a_{p_\alpha}$.
 \end{itemize}
 Using the same argument as in the proof of the c.c.c. of $\PP$ we see that any such conditions $p_\alpha$ and $p_\beta$
 are compatible, so the fact that $\dot a_\alpha\cup \dot b_\alpha$ must be forced to be disjoint
  with $\dot a_\beta\cup \dot b_\beta$
 implies that actually $a_\alpha\cup b_\alpha\subseteq a_{p_\alpha}\setminus a_{p_\beta}$ and
  $a_\beta\cup b_\beta\subseteq a_{p_\beta}\setminus a_{p_\alpha}$.
 
Pick any two $\alpha<\beta<\omega_1$. Put $p=p_\alpha$ and $q=p_\beta$.  First we will construct $r\leq p, q$
such that $r$ forces that $\dot\rho_\alpha$ and $\dot\rho_\beta$ 
are compatible in $\PP_{\A_{\dot G}}$. For this it is enough to
put $a_r=a_p\cup a_q$ and $n_r=n$  and 
$A_r(\xi)=A_p(\xi)$ for $\xi\in a_p$ and $A_r(\xi)=A_q(\xi)$ for $\xi\in a_q$.  For $\xi\in a_\alpha$ and $\eta\in b_\beta$ we have
$$(A_r(\xi)\cap A_r(\eta))\setminus m=(A_q(\phi(\xi))\cap A_q(\eta))\setminus m=\emptyset$$
since $\phi(\xi)\in a_\beta$ and $q$ forces that $\dot\rho_\beta$ is a condition of
$\PP_{\A_{\dot{\mathbb G}}}$.
So by (7) we have that $r$ forces that $(A_r(\xi)\cap A_r(\eta))\setminus m=\emptyset$ and so
by the choice of $\xi, \eta$  we have  that $r$ forces $\rho_\alpha$ to be compatible with $\rho_\beta$.

Now we will construct $s\leq p, q$
such that $s$ forces that $\rho_\alpha$ and $\rho_\beta$ are incompatible in $\PP_{\A_{\dot G}}$. 
Note that the hypothesis of the proposition implies that $a_\alpha, a_\beta, b_\alpha, b_\beta\not=\emptyset$.
So pick $\xi_0\in a_\alpha$ and $\eta_0\in b_\beta$.
Now
put $a_r=a_p\cup a_q$ and $n_r=n+1$ and 
$$
A_r(\xi)=
  \begin{cases}
    A_p(\xi)= A_q(\xi)& \text{if $\xi\in a_p\cap a_q$}, \\
    A_p(\xi)\cup\{n\}& \text{if $\xi=\xi_{0}$},\\
    A_p(\xi)& \text{if $\xi\in a_p\setminus a_q$ and  $\xi\not={\xi_0}$},\\
    A_q(\xi)\cup\{n\}& \text{if $\xi=\eta_{0}$},\\
    A_q(\xi)& \text{if $\xi\in a_q\setminus a_p$ and  $\xi\not={\eta_0}$}.
  \end{cases}
 $$
 It should be clear that $r\in \PP$ and $r\leq p, q$ moreover $r$ forces that
 $$\check n\in \dot (A(\xi_0)\setminus\check m)\cap (\dot A(\eta_0)\setminus\check m)$$ so
 $r$ forces that $\dot\rho_\alpha$ is incompatible with $\dot\rho_\beta$.
 
 For the maximality of $\A_{\dot{\mathbb G}}$ suppose that
 there is a $\PP$-name $\dot A$ for an infinite subset of $\N$ and
 $p\in \PP$ which forces that $\dot A$ is almost disjoint from all
 $\dot A_\xi$ for $\xi<\kappa$. It follows that there is
 a name $\dot X$ for an uncountable subset of $\omega_1$
 and $k\in \N$ and $q\leq p$ such that
 $$q\forces \dot A_\alpha\cap \dot A\subseteq \check k \ \hbox{for all} \ \alpha\in \dot X.\leqno (*)$$
 Let $p_\xi\forces \check\alpha_\xi\in \dot X$ for $\alpha_\xi\in a_{p_\xi}$ and
 $\alpha_\xi\not=\alpha_\eta$ for $\xi\not=\eta$. By passing to an uncountable
 subset of $\{p_\xi:\xi<\omega_1\}$ we may assume that $n_{p_\xi}=n_{p_\eta}=n\in \N$
 for all $\xi<\eta<\omega_1$ and 
 $\{a_{p_\xi}: \xi<\omega_1\}$ forms a $\Delta$-system with root $\Delta$ and
 $A_{p_\xi}(\alpha)=A_{p_\eta}(\alpha)$ for every $\alpha\in\Delta$ and $\xi<\eta<\omega_1$.
 Let $q=(n, \Delta, (A_{p_0}(\alpha): \alpha\in \Delta))$. There is $r\leq q$
 which decides an element $m$ of $\dot A$ above $n$. Now it is easy to see
 that there is $\xi<\omega_1$ such that $(a_{p_\xi}\setminus \Delta)\cap a_r$
 and one can construct $s\leq r, p_\xi$ such that $s$ forces
 that $m\in \dot A_{\alpha_\xi}$ which contradicts (*).

The argument that $\PP$ forces that $\mathfrak c$ is equal to $\kappa$ is standard (see e.g. \cite{jech}).
\end{proof}

\begin{proposition}[{\sf CH}]\label{antiramsey-ch}  There is an antiramsey $\ad$-family
 $\A$ which is a maximal $\ad$-family.
\end{proposition}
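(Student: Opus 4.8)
The plan is to construct $\A=\{A_\xi:\xi<\omega_1\}$ by a transfinite recursion of length $\omega_1$, using {\sf CH} to enumerate the $\omega_1$ maximality tasks and reading the antiramsey property off the normal form of Lemma \ref{thinning}. Recall that $p,q\in\PP_\A$ are incompatible precisely when $A_p\cap B_q\neq\emptyset$ or $A_q\cap B_p\neq\emptyset$, and that, the members of $\A$ being pairwise almost disjoint, such a clash between conditions with disjoint index sets can only come from finitely many integers. Consequently, being antiramsey in the sense of Definition \ref{def-antiramsey} splits into two demands on $\A$: that every uncountable family of pairwise essentially distinct conditions contain an incompatible pair (equivalently, no uncountable essentially distinct family is linked), and that every such family contain a compatible pair (equivalently, no uncountable essentially distinct family is an antichain). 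Applying Lemma \ref{thinning} to an arbitrary such family yields $k,l\ge 1$, a threshold $m$, and pairwise disjoint tuples $a'_\xi\in[\A]^k$, $b'_\xi\in[\A]^l$ for which incompatibility of $p'_\xi,p'_\eta$ is the finitary statement that some $A\in a'_\xi$, $B\in b'_\eta$ (or some $A\in a'_\eta$, $B\in b'_\xi$) satisfy $A\cap B\not\subseteq m$. Thus both demands reduce to controlling the intersection pattern, above a fixed threshold, of the sets indexed by uncountably many pairwise disjoint finite tuples.

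I would run the recursion alongside a continuous increasing chain $\langle M_\alpha:\alpha<\omega_1\rangle$ of countable elementary submodels of $(H(\omega_2),\in)$ containing the construction, with $\delta_\alpha=M_\alpha\cap\omega_1$, and build an increasing sequence of conditions of the finite-approximation forcing $\PP$ of Proposition \ref{antiramsey-gen} that is generic over each $M_\alpha$; the resulting generic family is $\A$. The two finite extensions already isolated in the proof of Proposition \ref{antiramsey-gen} are exactly the moves I would promote to density requirements over the models: the union extension $r$, which adjoins no new integer to the sets and hence witnesses a \emph{compatible} pair, and the point-adjoining extension $s$, which places a single new integer into both $A(\xi_0)$ and $A(\eta_0)$ for chosen $\xi_0\in a_\alpha$, $\eta_0\in b_\beta$ and hence witnesses an \emph{incompatible} pair. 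Maximality is folded in by the standard device, using {\sf CH} to enumerate the candidate infinite sets and, at the stage where a candidate $X$ is treated, meeting the dense set that forces the next generic set to have infinite intersection with $X$, exactly as in the maximality argument of Proposition \ref{antiramsey-gen}.

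The main obstacle is the genuine tension between the two demands. A Luzin family creates so many clashes above the threshold that it is an $L$-family (Proposition \ref{anti-centered}) and therefore carries huge antichains, so the second demand fails; here the generically created intersections must instead be numerous enough that no uncountable essentially distinct family stays linked, yet placed irregularly enough that none becomes a full antichain. Reconciling the two is exactly what genericity over the chain of models is for: given an arbitrary uncountable essentially distinct family, normalized via Lemma \ref{thinning}, for club-many $\alpha$ a tail of its conditions has all $\A$-indices above $\delta_\alpha$ while cofinally many of its conditions lie below $\delta_\alpha$, so the generic choices made after stage $\alpha$ realize both the move $r$ against one earlier condition and the move $s$ against another, producing simultaneously a compatible and an incompatible pair inside the family. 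The delicate point I expect to fight with is precisely this reflection step: the given family need not belong to any $M_\alpha$, and one must argue that it is nonetheless ``caught'' by club-many stages, which is where the c.c.c.\ of $\PP$ and the $\Delta$-system thinning of Lemma \ref{thinning}, carried out inside the models, are used.
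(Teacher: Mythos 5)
There is a genuine gap, and you have in fact put your finger on it yourself without closing it. An uncountable family of essentially distinct conditions in $\PP_\A$ is an object of size $\omega_1$; it does not belong to any countable elementary submodel $M_\alpha$, and its countable initial segments $\PP\cap M_\alpha$ need not belong to $M_\alpha$ either, so no dense set that lives in some $M_\alpha$ ``sees'' the family. Genericity over the chain $\langle M_\alpha\rangle$ therefore gives you nothing about such a family, and the claim that it is ``caught by club-many stages'' is exactly the assertion that needs proof. Note also that you cannot simply promote the point-adjoining move $s$ of Proposition \ref{antiramsey-gen} to a dense requirement for all pairs of indices: doing so for a fixed pair and all thresholds $m$ would put infinitely many common points into two generators and destroy almost disjointness, so the moves must be performed selectively, anticipating families that do not yet exist at any countable stage. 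This is precisely where the c.c.c.\ forcing proof of Proposition \ref{antiramsey-gen} has more power than a hand-built filter: there one quantifies over all names for uncountable families below all conditions, which a recursion meeting only $\omega_1$ dense sets cannot imitate directly.

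The paper closes this gap by a different device, which is the one missing idea in your proposal: a reduction of the uncountable family to a \emph{countable} approximation that the CH enumeration can anticipate. Concretely, the recursion builds $\mathcal C=\{C_\alpha:\alpha<\omega_1\}$ one set at a time via a countable auxiliary poset $\Q_\alpha$ (finite $0$--$1$ approximations to $C_\alpha$ together with a ``promise'' $I_q$ from the ideal generated by $\mathcal C_\alpha$), and enumerates under {\sf CH} all countable families $\PP_\xi$ of disjoint pairs of subsets of $\N$. Given an arbitrary uncountable essentially distinct $\PP\subseteq\PP_{\mathcal C}$, after the usual $\Delta$-system thinning one extracts a countable $\overline\PP\subseteq\PP$ that is trace-dense in $\PP$ (for every $p'\in\PP$ some $p\in\overline\PP$ has the same intersection with any prescribed initial segment); this $\overline\PP$ equals some $\PP_\xi$ and is therefore treated at every stage $\eta>\xi$. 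Choosing $p'\in\PP$ with all generators above $\eta$, the dense sets $D^3,D^4$ met when those generators were built yield a $p\in\overline\PP\subseteq\PP$ incompatible with $p'$, while the sets $D^5,D^6$ (through an induction on the generators of $p'$, using the promises $I_q$ to protect $B_p\setminus m$ or $A_p\setminus m$ from $C_\alpha$) yield a $p\in\overline\PP$ compatible with $p'$. Without this countable-density reduction, or some substitute for it, your recursion has no mechanism to verify the antiramsey property for families created only at the end of the construction. The maximality part of your plan is fine and matches the paper.
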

\begin{proof} 
Use {\sf CH} to construct  two enumerations.
First enumeration is $(\PP_\xi: \xi<\omega_1)$ of all countably infinite  families of elements
$p$ of the form $p=(A_p, B_p)$, where $A_p, B_p\subseteq\N$  and $A_p\cap B_p=\emptyset$.
The second one is $\{X_\xi:\xi<\omega\}$ of all infinite subsets of $\N$ such that $X_0=\N$.
We construct an antiramsey  $\ad$-family $\mathcal C=\{C_\alpha:\alpha<\omega_1\}$ by induction on
$\alpha<\omega_1$.   Let $\{C_n: n\in \omega\}$
be any family of pairwise disjoint infinite subsets of $\N$.  Having constructed 
$\mathcal C_\alpha= \{C_\beta: \beta<\alpha\}$ for $\omega\leq\alpha<\omega_1$  we will describe  the construction
of $C_\alpha$. Let $\I_\alpha$ be the
ideal generated by $\mathcal C_\alpha$ and finite subsets of $\N$.
We will consider a partial order $\Q_\alpha$ consisting of conditions
of the form $q=(n_q, s_q, I_q)$, where
\begin{itemize}
\item $n_q\in \N$
\item $s_q\in 2^{<\omega}$
\item $n_q=|s_q|$
\item $I_q\in\I_\alpha$ 
\item $s^{-1}[\{1\}]\cap I_q=\emptyset$.
\end{itemize}
and $q\leq r$ if $s_q\supseteq s_r$ and $I_q \supseteq I_r$ and $n_q\geq n_r$.
$$C_\alpha=\bigcup\{ s^{-1}[\{1\}]: s\in \mathbb G_\alpha\},$$
where $\mathbb G_\alpha$ is a filter in $\Q_\alpha$ which meets certain countable family of
dense subsets of $\Q_\alpha$. Below we will specify these dense sets.
First consider sets
\begin{itemize}
\item $D^1_{\xi, i}=\{q\in \Q_\alpha: \exists j>i\ q(j)=1,\  \&\  j\in X_\xi\}$ for $i\in \N$ and $\xi<\omega_1$
such that $X_\xi\not\in \I_\alpha$.
\item $D^2_\beta=\{q\in \Q_\alpha: C_\beta\subseteq^*I_q\}$ for $\beta<\alpha$.
\end{itemize}
It is clear that since $\mathcal C_\alpha$ is almost disjoint, infinite and consiting of infinite sets,
the above sets are dense in $\Q_\alpha$. 
It is clear that if $\mathbb G$ meets each $D^1_{X_0, i}$ and $D^2_\beta$ 
for all $i\in \N$ and all $\beta<\alpha$, then $C_\alpha$ is infinite
and almost disjoint from all $C_\beta$ for $\beta<\alpha$.

For $m\in \N$ and $\xi<\omega_1$ such that $\PP_\xi$
 is a family of essentially distinct elements of $\PP_{\mathcal C_\alpha}$
define further sets to be proved to be dense:
\begin{itemize}
\item $D^3_{\xi, m}=\{q\in  \Q_\alpha: \exists j>m\  \exists p\in \PP_\xi 
\  j\in A_p\cap s_q^{-1}[\{1\}]\}$.
\item $D^4_{\xi, m}=\{q\in \Q_\alpha: \exists j>m\  \exists p\in \PP_\xi
\  j\in B_p\cap s_q^{-1}[\{1\}]\}$.
\end{itemize}
Let us check that the above sets are dense in $\Q_\alpha$
for $m\in \N$ and $\xi<\omega_1$ as above.  
Fix $r\in \Q_\alpha$.
By the density of $D^1_m$ we may assume that $n_r\geq m$. 
As $\PP_\xi$ is infinite and consists of essentially distinct conditions of $\PP_{\mathcal C_\alpha}$, there is  $p\in \PP_\xi$ such
that $A_p\not\subseteq^* I_r$.  Let $j\in A_p\setminus (I_r\cup n_r)$. 
Define $q\leq r$ by putting 
$$s_q=s_r\cup(0|[n_r, j))\cup\{\langle j, 1\rangle\}, \ n_q=j+1, \ I_q=I_r.$$
It is clear that $q\in D^3_{\xi, m}$. An analogous argument works for
$D^4_{\xi, m}$.

Now for $m\in \N$, $p'\in \PP_{\mathcal C_\alpha}$ and $\xi<\omega_1$ such that
 $\PP_\xi\subseteq \PP_{\mathcal C_\alpha}$ define auxiliary sets
\begin{itemize}
\item $E_{\xi, p',  m}^1=\{q\in \Q_\alpha:  
\forall p\in \PP_\xi \  p\parallel p'\ \Rightarrow\ 
  s_q^{-1}[\{1\}]\setminus m\not\subseteq A_p\}$
  
  \item $E_{\xi, p',  m}^2=\{q\in \Q_\alpha: 
\forall p\in \PP_\xi \ p\parallel p'\ \Rightarrow\ 
  s_q^{-1}[\{1\}]\setminus m\not\subseteq B_p\}$

  \item $E_{\xi, p',  m}^3=\{q\in \Q_\alpha:  
  \exists p\in \PP_\xi,  \ p\parallel p'\  \& \
B_p\setminus m\subseteq I_q\}$.

\item $E_{\xi, p',  m}^4=\{q\in \Q_\alpha:  
  \exists p\in \PP_\xi,  \ p\parallel p'\  \& \
A_p\setminus m\subseteq I_q\}$.
\end{itemize}
Finally define the last batch of sets to be proven dense:

\begin{itemize}
\item $D^5_{\xi, p',  m}=E_{\xi, p',  m}^1\cup E_{\xi, p',  m}^3$.
\item $D^6_{\xi, p',  m}=E_{\xi, p',  m}^2\cup E_{\xi, p',  m}^4$.
\end{itemize}

We use this opportunity to note that 
\begin{enumerate}
\item[(*)] if there is $q\in \mathbb G_\alpha\cap E_{\xi, p',  m}^3$,
then there is $p\in \PP_\xi$ which 
is compatible with $(A_{p'}\cup (C_\alpha\setminus m), B_{p'})$ as long as 
 $(C_\alpha\setminus m)\cap B_{p'}=\emptyset$,
\end{enumerate} since
$C_\alpha\cap (B_p\setminus m)\subseteq (\N\setminus I_q)\cap I_q=\emptyset$.
Likewise  
\begin{enumerate}
\item[(**)] if there is $q\in \mathbb G_\alpha\cap E_{\xi, p',  m}^4$, 
then there is $p\in \PP_\xi$ which 
is compatible with $(A_{p'}, B_{p'}\cup (C_\alpha\setminus m))$ as long as
 $(C_\alpha\setminus m)\cap A_{p'}=\emptyset$.
\end{enumerate}

Let us prove that $D^5_{\xi, p',  m}$ is dense if $\PP_\xi$ is a collection
of essentially distinct elements of $\PP_{\mathcal C_\alpha}$, $p'\in \PP_{\mathcal C_\alpha}$ and $m\in \N$.
Consider $r\in \Q_\alpha$
such that $n_r\geq m$.  If
for all  $p\in \PP_\xi$ such that $p$ and $p'$ are compatible we have  $s_r^{-1}[\{1\}]\setminus m\not\subseteq A_p$
then $r\in E^1_{\xi, p', m}\subseteq D^5_{\xi, p', m}$. Otherwise find
$p\in \PP_\xi$ compatible with $p'$ such that   $s_r^{-1}[\{1\}]\setminus m\subseteq A_p$.
Consider 
$$s_q=(n_r, s_r, I_r\cup B_p\setminus m).$$ It is a condition
of $\Q_\alpha$ because $A_p\cap B_p=\emptyset$.
It is clear that $q\in  E^3_{\xi, p', m}\subseteq D^5_{\xi, p', m}$ 
and $q\leq r$ which completes the proof of the density
of $D^5_{\xi, p', m}$. An analogous argument works for  $D^6_{\xi, p', m}$.

Now we specify $\mathbb G_\alpha$ as a filter in $\Q_\alpha$ which meets
$D^1_{\xi, i}$s for all $i\in \N$ and $\xi<\alpha$ such that $X_\xi\not\in \I_\alpha$, $D^2_\beta$s for all $\beta<\alpha$,
$D^3_{\xi, m}$s and $D^4_{\xi, m}$s for all $m\in \N$
and $\xi<\alpha$ such that $\PP_\xi\subseteq \PP_{\mathcal C_\alpha}$
is a collection of essentially distinct conditions,
$D^5_{\xi, p', m}$s and $D^6_{\xi, p', m}$s for all
$m\in \N$, all $p'\in \PP_{\mathcal C_\alpha}$ and 
all $\xi<\alpha$ such that $\PP_\xi\subseteq \PP_{\mathcal C_\alpha}$
is a collection of essentially distinct conditions.
This completes the inductive step 
of the construction of the $\ad$ family $\mathcal C=\{C_\alpha: \alpha<\omega_1\}$.

Now we prove that $\mathcal C$ is antiramsey.  Consider an
uncountable family $\PP\subseteq \PP_{\mathcal C}$ of essentially distinct conditions.
By passing to an uncountable subfamily we may assume that
there are $m\in \N$ and pairwise disjoint $F, G\subseteq m$ such that
each $p\in \PP$ is of the form 
$$p=([(\bigcup a_p)\setminus m]\cup F, [(\bigcup b_p)\setminus m)\cup G)$$
for some finite $a_p, b_p\subseteq \mathcal C$ such that $a_p\cap b_p=\emptyset$ and 
$(a_p\cup b_p)\cap(a_q\cup b_q)=\emptyset$ for any distinct $p, q\in \PP$.
There is an infinite and  countable $\overline\PP\subseteq \PP$
such that for every $p'\in \PP$ and every $n\in \N$ there is
$p\in \overline\PP$ such that $A_p\cap n=A_{p'}\cap m$ and
$B_p\cap m=B_{p'}\cap m$. There is $\xi<\omega_1$ such that $\overline\PP=\PP_\xi$
and there is $\xi<\eta<\omega_1$ such that $\PP_\xi\subseteq \PP_{\mathcal C_\eta}$.
Pick $p'\in \PP$ such that $a_{p'}\cap(\eta+1)=\emptyset=b_{p'}\cap(\eta+1)$.
We will show that there is $p\in \PP_\xi$ such that $p$ and $p'$ are
compatible and that there is $p\in \PP'$ such that $p$ and $p'$ are
incompatible. 

Let $a_{p'}\cup b_{p'}=\{\alpha_1, \dots, \alpha_k\}$
in the increasing order.  For finite disjoint $F', G'\subseteq \N$
and $0\leq l\leq k$ define as $p_0'(F', G')=(F\cup F', G\cup G')$
and 
$$p_l'(F', G')=
([\bigcup (a_{p'}\cap \{C_{\alpha_1}\dots,C_{\alpha_l}\})\setminus m]\cup F\cup F', 
[\bigcup (b_{p'}\cap \{C_{\alpha_1}\dots,C_{\alpha_l}\})\setminus m]\cup G\cup G')$$
for $2\leq l\leq k$.
By induction on $1\leq l\leq k$ we will prove that for every finite $F'\subseteq A_{p'}$
and finite $G'\subseteq B_{p'}$ there is 
 there is $p\in \PP_\xi$ compatible with $p_l(F', G')$ and stronger than $(F', G')$. For
 $l=k$ and $F'=G'=\emptyset$ this will give the desired $p\in \PP_\xi$ compatible
 with $p'$.
 
 For $l=0$ this follows from the choice of $\PP_\xi$ since $p$ is
 compatible with $(F', G')$ if $F'\subseteq A_{p'}$ and $G'\subseteq B_{p'}$.

 To prove it for $l>0$, fix $F'\subseteq A_{p'}, G'\subseteq B_{p'}$ and first assume that
 $\alpha_l\in a_{p'}$
 and consider $q\in G_{\alpha_l}\cap D^5_{\xi, p_{l-1}'(F', G'), m}$.
 By the inductive assumption there is $p\in \PP_\xi$
 compatible with $p_{l-1}'(F'\cup (s_q^{-1}[\{1\}]\setminus m), G')$ and stronger than 
 $(F'\cup (s^{-1}_q[\{1\}]\setminus m), G')$, so $(s^{-1}_q[\{1\}]\setminus m)\subseteq A_p$
 and $p$ is compatible with $p_{l-1}'(F', G')$. It follows that 
 $q\not \in E^1_{\xi, p_{l-1}'(F', G'), m}$ and hence
  $q \in E^3_{\xi, p_{l-1}'(F', G'), m}$. By (*) it follows that 
  $p_{l}'(F', G')=(A_{p_{l-1}'(F', G')}\cup(C_{\alpha_l}\setminus m), B_{p_{l-1}'(F', G')})$ 
  is compatible with $p$
  which completes the inductive step and allows to conclude
  the existence of the desired $p\in \PP_\xi$ compatible
 with $p'$. The case of $\alpha_l\in b_{p'}$ is analogous and uses (**) instead of (*)
  and $D^6_{\xi, p_{l-1}'(F', G'), m}$ instead of  $D^5_{\xi, p_{l-1}'(F', G'), m}$.

To obtain $p\in \PP_\xi$ incompatible with $p'$ pick any $\alpha\in a_{p'}$
and use the fact that $\mathbb G_\alpha$ intersects $D^4_{\xi, m}$ to find
$j>m$ in $C_{\alpha}\cap B_p$. This guarantees that $p$ and $p'$ are incompatible.
This completes the proof of the fact that $\mathcal C$ is antiramsey.

To see that $\mathcal C$ is a maximal $\ad$-family, it is enough to prove that
whenever $X\subseteq\N$ is infinite, then there is $\alpha<\omega_1$ such that
$X\cap C_\alpha$ is infinite.  If $X\in \I=\bigcup_{\alpha<\omega_1}\I_\alpha$, there are
finitely many generators $C_\alpha$ of $\I$ which cover $X$, so at least one of them must
have infinite intersection with $X$. If $X\not\in\I$, then consider $\xi<\omega_1$
such that $X=X_\xi$ and $\alpha>\xi$. Since $\mathbb G_\alpha$ intersects
$D^1_{i, \xi}$ for all $i\in \N$ we conclude that $C_\alpha\cap X_\xi$ is infinite as required.

\end{proof}

\begin{proposition}\label{antiramsey-cohen}  In any model of {\sf ZFC} obtained by adding a Cohen real
there is an antiramsey $\ad$-family $\A$ of cardinality $\mathfrak \omega_1$.
\end{proposition}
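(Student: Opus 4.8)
The plan is to force with the Cohen algebra $\mathbb{C}=\mathrm{Fn}(\omega,2)$ and to read the family off the single generic real, rather than to build it by an $\omega_1$-recursion as in Proposition \ref{antiramsey-ch}; the latter is unavailable here, since no instance of {\sf CH} is assumed in the (arbitrary) ground model and $\mathbb{C}$ is only countable. Identify $\N$ with the tree $2^{<\omega}$ and fix in the ground model an uncountable set $\{x_\alpha:\alpha<\omega_1\}$ of distinct branches, together with the Stepr\'ans-type sets $S_\alpha^0=\{t:t^\frown 0\subseteq x_\alpha\}$ and $S_\alpha^1=\{t:t^\frown 1\subseteq x_\alpha\}$ as in Propositions \ref{juris-L} and \ref{juris-ac}; this skeleton has rich meeting structure, any two distinct branches meeting at their splitting node. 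Viewing $c$ as a subset of $2^{<\omega}$, set $\dot A_\alpha^i=S_\alpha^i\cap c$ and $\dot\A=\{\dot A_\alpha^i:\alpha<\omega_1,\ i<2\}$. A routine density argument shows that $\mathbb{C}$ forces each $\dot A_\alpha^i$ to be infinite, and almost disjointness is inherited from the skeleton, so $\dot\A$ is forced to be an $\ad$-family of cardinality $\omega_1$. The point is that the generic retains a prospective meeting $S_\gamma^i\cap S_\delta^j$ only at those splitting nodes that land in $c$, and it is this generic selection of surviving meetings that must be engineered to produce the balance demanded by Definition \ref{def-antiramsey}.

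To verify antiramseyness I would argue directly with names, in the spirit of Propositions \ref{antiramsey-gen} and \ref{notcccnot2L}. Let $\dot\PP$ be a name and $r\in\mathbb{C}$ a condition forcing that $\dot\PP\subseteq\PP_{\dot\A}$ is uncountable and consists of essentially distinct conditions. Working below $r$, and using the c.c.c.\ of $\mathbb{C}$ together with Lemma \ref{thinning} applied inside $V[c]$, I would extract an uncountable index set, conditions $r_\xi\le r$, a common threshold $m$, and finite generator sets $a_\xi,b_\xi\subseteq\{x_\alpha:\alpha<\omega_1\}$ such that $r_\xi$ forces the $\xi$-th member of $\dot\PP$ to be in the normal form of Lemma \ref{thinning} with these generators, with the $a_\xi\cup b_\xi$ pairwise disjoint and the $r_\xi$ forming a $\Delta$-system. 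Since each prospective cross-meeting $S_\gamma^i\cap S_\delta^j$ above $m$ is a ground-model set concentrated at the splitting node of $x_\gamma$ and $x_\delta$, incompatibility of the $\xi$-th and $\eta$-th members below any extension $s$ is controlled entirely by whether $s$ forces $c$ to contain one of these finitely many splitting nodes.

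It then remains to produce, below an arbitrary extension of $r$, two pairs of indices realising the two clauses of Definition \ref{def-antiramsey}. For an incompatible pair one picks $\xi,\eta$ whose cross-generators split at some node $t>m$ and extends the condition so that $t\in c$; as a Cohen condition decides only finitely much, this succeeds whenever such a splitting node exists, which the richness of the skeleton provides. For a compatible pair one instead picks $\xi',\eta'$ and extends so that $c$ omits all of the finitely many splitting nodes of their cross-generators above $m$, whence by Lemma \ref{thinning} the two members are forced compatible. The main obstacle, and the genuine heart of the proof, is to choose the skeleton $\{x_\alpha\}$ so that \emph{both} sub-tasks can always be carried out against an adversary who selects $\dot\PP$ in $V[c]$: the branches must be so uniformly branching that no uncountable subfamily chosen in the extension can have all of its pairwise splitting nodes forced into $c$ (this delivers the compatible pair, equivalently the c.c.c.\ of $\PP_{\dot\A}$, and in particular shows the thinned antichain of Proposition \ref{juris-ac} does not survive), while still enough meetings persist to defeat $\sigma$-centeredness and yield the incompatible pair. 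Reconciling these two opposing genericity demands is the crux; once it is secured the density computations are routine, and, exactly as in Proposition \ref{antiramsey-ch}, one may if desired arrange maximality as well, though only cardinality $\omega_1$ is asserted here.
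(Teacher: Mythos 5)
There is a genuine gap, and it sits exactly at the point you yourself flag as ``the crux'': the choice of skeleton. Worse, the particular skeleton you propose cannot be repaired. For the Stepr\'ans sets $S^i_x$ the cross-intersection $S^i_x\cap S^j_y$ (for distinct branches $x,y$) consists only of nodes that are initial segments of both branches, hence all lie at or below their splitting node; and, as the paper itself observes in the proof of Proposition \ref{juris-L}, \emph{any} uncountable set of branches contains two uncountable subsets $X_1,X_2$ lying in disjoint basic clopen sets $[s_1],[s_2]$. Every splitting node between a member of $X_1$ and a member of $X_2$ then lies below the fixed finite level $\min(|s_1|,|s_2|)$. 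Consequently, for $n_0$ past that level, the conditions $p_\alpha=(S^0_{x^1_\alpha}\cap c\setminus n_0,\ S^1_{x^2_\alpha}\cap c\setminus n_0)$, with $x^1_\alpha\in X_1$ and $x^2_\alpha\in X_2$ enumerated injectively, form an uncountable family of essentially distinct, \emph{pairwise compatible} elements of $\PP_{\dot\A}$: their cross-intersections above $n_0$ are empty already in the ground model, so no behaviour of the Cohen real can ever produce an incompatible pair inside this family. This refutes antiramseyness for every branch-based skeleton, so your two ``opposing genericity demands'' genuinely cannot be reconciled within the approach you set up.

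The paper resolves the crux by taking the skeleton to be a \emph{Luzin family} $\B=\{B_\xi:\xi<\omega_1\}$ in the ground model and setting $\dot A_\xi=\check B_\xi\cap\dot c^{-1}[\{1\}]$. The remainder of your outline does match the paper's argument: one thins the names using Lemma \ref{thinning} and the countability of the forcing to obtain a single deciding condition $q$ of length $k$, a common threshold $m$, and pairwise disjoint generator sets; for a compatible pair one extends $q$ by $0$'s up to a level $k'$ beyond which the finitely many relevant ground-model sets no longer meet; for an incompatible pair one uses the finite-to-one functions of Definition \ref{def-L} to find, inside the thinned uncountable family, generators $B_{\alpha'},B_{\beta'}$ on opposite sides with $l=\max(B_{\alpha'}\cap B_{\beta'})>\max(m,k)$, and extends $q$ so that $c(l)=1$. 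It is precisely this last step --- guaranteed by the Luzin property and provably unavailable for branch families, which contain no Luzin subfamily (Proposition \ref{juris-L}) --- that your proposal leaves unproved.
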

\begin{proof} 
Let $V$ be the ground model, $\Q$ be the Cohen forcing  i.e., $\{0,1\}^{<\omega}$ with the end-extension as the order, 
and let $\dot c\in \{0, 1\}$ be the name for a Cohen real over $V$.
Let $\B=\{B_\xi: \xi<\omega_1\}$ be a Luzin family (in the sense of Definition \ref{def-L})  in $V$.
We claim that $\A=\{A_\xi: \xi<\omega_1\}$ satisfies the proposition in $V[c]$, where $\Q$ forces that
$$\dot A_\xi= \check B_\xi\cap \dot c^{-1}[\{1\}]$$
for every $\xi\in \omega_1$. 

It is clear that $\A$ is an $\ad$-family since its is a refinement of $\B$ and
it is well-known that a Cohen real intersects any ground model set on an infinite set.
Work in $V$ and suppose that $\{\dot p_\xi: \xi<\mathfrak \omega_1\}$
are $\Q$-names for conditions of $\PP_\A$  such that
$A_{\dot p_\xi}\not=A_{\dot p_\eta}$ and $B_{\dot p_\xi}\not=B_{\dot p_\eta}$ for $\xi<\eta$.
Let $p\in \Q$.

Deciding and using Lemma \ref{thinning} and using the countability of $\Q$ 
 we may assume that there are $a_\xi, b_\xi\in [\omega_1]^{<\omega}$
and $m\in \N$ and $E, F\subseteq m$ and a condition $q\in \Q$ such that $q\leq p$ forces that
$$\dot p_\xi=((\bigcup \{\dot A_\alpha: \alpha\in \check a_\xi\}\setminus \check m)\cup \check F, 
(\bigcup\{\dot A_\alpha: \alpha\in \check b_\xi\}\setminus \check m)\cup \check E)$$
and $E\cap F=\emptyset$, $a_\xi\cap b_\xi=\emptyset=(a_\xi\cup b_\xi)\cap (a_\eta\cup b_\eta)$
for any $\xi<\eta<\omega_1$. Let $k=|q|$. We may assume that $k\geq m$.
Find uncountable $\Gamma\subseteq \omega_1$ such that 
$$[(\bigcup \{B_\alpha: \alpha\in a_\xi\}\setminus m)\cup F]\cap k=
[(\bigcup \{B_\alpha: \alpha\in a_\eta\}\setminus m)\cup F]\cap k\leqno(1)$$
$$[(\bigcup \{B_\alpha: \alpha\in b_\xi\}\setminus m)\cup F]\cap k=
[(\bigcup \{B_\alpha: \alpha\in b_\eta\}\setminus m)\cup F]\cap k\leqno(2)$$
for every distinct $\xi, \eta\in \Gamma$. Of course it follows from 
the above that $q$ forces  that
$$[(\bigcup \{\dot A_\alpha: \alpha\in \check a_\xi\}\setminus \check m)\cup \check F]\cap \check k=
[(\bigcup \{\dot A_\alpha: \alpha\in \check a_\eta\}\setminus \check m)\cup \check F]\cap \check k\leqno(1)$$
$$[(\bigcup \{\dot A_\alpha: \alpha\in \check b_\xi\}\setminus \check m)\cup \check F]\cap \check k=
[(\bigcup \{\dot A_\alpha: \alpha\in \check b_\eta\}\setminus \check m)\cup \check F]\cap \check k\leqno(2)$$
for every distinct $\xi, \eta\in \Gamma$.

Now aim at finding $r\leq q$ and distinct $\xi, \eta\in \Gamma$ such that
$r$ forces that $\dot p_\xi$ and $\dot p_\eta$ are compatible.
Choose any distinct $\xi, \eta\in \Gamma$. Find $k'\geq k$ such that
$$\bigcup \{B_\alpha: \alpha\in a_\xi\}\cap \bigcup \{B_\alpha: \alpha\in b_\eta\}\setminus k'=\emptyset,$$
$$\bigcup \{B_\alpha: \alpha\in a_\eta\}\cap \bigcup \{B_\alpha: \alpha\in b_\xi\}\setminus k'=\emptyset.$$
The existence of such a $k'\in \N$ follows from the fact
that $(a_\xi\cup b_\xi)\cap(a_\eta\cup b_\eta)=\emptyset$ and 
$\B$ is an $\ad$-family. Extend $q$ to $r\in \{0,1\}^{k'}$
by all $0$s. This implies that $r$ forces that 
$$\big(\bigcup \{\dot A_\alpha: \alpha\in \check a_\xi\}\cap \bigcup
 \{\dot A_\alpha: \alpha\in \check b_\eta\}\big)\setminus \check k=\emptyset,$$
$$\big(\bigcup \{\dot A_\alpha: \alpha\in \check \check a_\eta\}\cap
 \bigcup \{\dot A_\alpha: \alpha\in \check b_\xi\}\big)\setminus \check k=\emptyset.$$
and so it forces  that $\dot p_\xi$ and $\dot p_\eta$ are compatible.

Now aim at finding $r\leq q$ and distinct $\xi, \eta\in \Gamma$ such that
$r$ forces that $\dot p_\xi$ and $\dot p_\eta$ are not compatible.
Pick $\xi\in \Gamma$ such that $\Gamma\cap\xi$ is infinite. Pick $\alpha'\in a_\xi$.
By the Luzin property of $\B$ there is $\eta\in \Gamma\cap \xi$ and $\beta'\in b_\eta$ such that
$$l=max(B_{\alpha'}\cap B_{\beta'})>\max(m, k).$$
 Extend $q$ to $r\in \{0,1\}^{l+1}$ so that $r(l)=1$.
 This implies that $r$ forces that 
 $$\check l\in 
(\bigcup \{B_\alpha: \alpha\in \check a_\xi\}\setminus\check m)\cap
 (\bigcup \{B_\alpha: \alpha\in \check b_\eta\}\setminus \check m).$$
and so that it forces that $\dot p_\xi$ and $\dot p_\eta$ are not compatible.

\end{proof}

\section{A graph induced by an $\ad$-family}

It is clear that given an $\ad$-family $\A$ the properties of $\PP_\A$ 
considered in the previous section can be expressed in the language of
the compatibility graph of $\PP_\A$
We consider a graph $\G(\A)$ (Definition \ref{def-graph}) closely related to 
the compatibility graph of $\PP_\A$ whose vertices are
pairs  of disjoint finite subsets of $\A$. We show that
this graph can also express these properties of $\PP_\A$ (Proposition \ref{graph}) and
so can be considered an alternative tool to characterize
the phenomena in the induced Banach spaces considered in the following sections.

\begin{definition} Let $A, B, C, D$ be subsets of $\N$, we define\footnote{Note that
$(A,B)\Join(C, D)\not=\emptyset$ is equivalent to
$\big((A\setminus B)\cup 
(C\setminus D)\big)
\cap 
\big((B\setminus  A)\cup 
( D\setminus  C)\big)\not=\emptyset$.} 
$$(A, B)\Join(C, D)=\big((A\setminus B)\cap (D\setminus C)\big)\cup 
\big((B\setminus A)\cap (C\setminus D)\big)$$
\end{definition}

\begin{definition}\label{def-graph} Suppose that $\A$ is an $\ad$-family.
We define a graph $\G(\A)$ whose vertices are elements of
$$\mathcal V(\A)=\{(a, b): a, b\in [\A]^{<\omega}: a\cap b=\emptyset\}.$$
and $((a,b), (a, b'))$ is an edge of $\G(\A)$ if and only if
$$(\bigcup a,\bigcup b)\Join(\bigcup a', \bigcup b')\not=\emptyset.$$
Here the union of an empty family is understood to be empty.
\end{definition}

In this section we show that the properties of the partial order
$\PP_\A$ considered in the previous section can be completely and naturally expressed in terms
of cliques and independent sets of $\G(\A)$. This provides 
another combinatorial manifestation of the properties of Banach spaces considered in the following sections.

\begin{lemma}\label{join} Let $A, B, C, D, F$ be subsets of $\N$ such that
\begin{itemize}
\item $A\cap B, C\cap D\subseteq F$,
\item $A\cap F=C\cap F$, $B\cap F= D\cap F$.
\end{itemize}
Then
\begin{enumerate}
\item $(A,B)\Join(C, D)\cap F=\emptyset$,
\item $\big((A,B)\Join(C, D)\big)\setminus F=\big((A\cap D)\cup (B\cap C)\big)\setminus F$.
\end{enumerate}
\end{lemma}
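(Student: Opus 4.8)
The statement concerns the operation $(A,B)\Join(C,D)=\big((A\setminus B)\cap(D\setminus C)\big)\cup\big((B\setminus A)\cap(C\setminus D)\big)$ under the hypotheses that $A\cap B,\, C\cap D\subseteq F$ and that $A,C$ agree on $F$ while $B,D$ agree on $F$. The plan is to verify both claims by a direct element-chase, splitting according to whether an element lies in $F$ or not. I would keep the two defining pieces of $\Join$ separate throughout, since the argument for $(A\setminus B)\cap(D\setminus C)$ and for $(B\setminus A)\cap(C\setminus D)$ are symmetric under swapping the roles of the two arguments.

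**Part (1): the intersection with $F$ is empty.** Suppose toward a contradiction that some $n\in F$ lies in $(A,B)\Join(C,D)$. By symmetry assume $n\in(A\setminus B)\cap(D\setminus C)$, so $n\in A$, $n\notin B$, $n\in D$, $n\notin C$. Since $n\in F$ and $A\cap F=C\cap F$, from $n\in A\cap F$ I get $n\in C$, contradicting $n\notin C$. The other disjunct is handled identically using $B\cap F=D\cap F$: from $n\in B$ (in the symmetric case $n\in(B\setminus A)\cap(C\setminus D)$) and $n\in F$ one gets $n\in D$, contradicting $n\notin D$. Hence no element of $F$ survives, giving $\big((A,B)\Join(C,D)\big)\cap F=\emptyset$.

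**Part (2): identifying the part outside $F$.** Here I would show the two sets $\big((A,B)\Join(C,D)\big)\setminus F$ and $\big((A\cap D)\cup(B\cap C)\big)\setminus F$ coincide by double inclusion, working with an element $n\notin F$. The key simplification is that for $n\notin F$, the hypothesis $A\cap B\subseteq F$ forces $n\notin A\cap B$, so $n\in A\Rightarrow n\notin B$, i.e. $n\in A\setminus B\iff n\in A$ outside $F$; likewise $C\cap D\subseteq F$ gives $n\in C\setminus D\iff n\in C$ and $n\in D\setminus C\iff n\in D$ outside $F$, and $n\in B\setminus A\iff n\in B$ outside $F$ once we know (from the same two inclusions applied symmetrically) that the complementary memberships are automatic. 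Consequently, for $n\notin F$ the piece $(A\setminus B)\cap(D\setminus C)$ reduces to $A\cap D$ and $(B\setminus A)\cap(C\setminus D)$ reduces to $B\cap C$, which is exactly the claimed right-hand side. I would write this out as: restricting to the complement of $F$, the four ``set-minus'' conditions collapse to plain membership, and the union $(A\cap D)\cup(B\cap C)$ falls out directly.

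**Expected obstacle.** There is no substantial obstacle; this is a bookkeeping lemma whose only subtlety is correctly invoking the right hypothesis ($A\cap B\subseteq F$ versus $C\cap D\subseteq F$) to eliminate each $\setminus$ outside $F$, and making sure in Part (2) that one does not secretly need the ``agreement on $F$'' hypotheses (which are only needed for Part (1)). The cleanest presentation keeps the two symmetric summands of $\Join$ visually parallel so the reader sees each reduction once and infers the other.
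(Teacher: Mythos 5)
Your proof is correct and is essentially the paper's argument: part (2) is verified identically (outside $F$ the inclusions $A\cap B, C\cap D\subseteq F$ collapse each set difference to plain membership), and your element-chase for part (1) is just the unfolded form of the paper's one-line observation that the agreement hypotheses give $\big((A,B)\Join(C,D)\big)\cap F=\big((A,B)\Join(A,B)\big)\cap F=\emptyset$. Your remark that the agreement-on-$F$ hypotheses are not needed for part (2) as stated is also accurate.
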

\begin{proof}
For (1) note that by the hypothesis $A\cap F=C\cap F$, $B\cap F= D\cap F$
we have $(A,B)\Join(C, D)\cap F=(A,B)\Join(A, B)\cap F$
and $(A,B)\Join(A, B)=\emptyset$.

For (2) by (1) we have $(A,B)\Join(C, D)=\big((A,B)\Join(C, D)\big)\cap (\N\setminus F)$.
Now the hypothesis  $A\cap B, C\cap D\subseteq F$ implies 
that $\big((A,B)\Join(C, D)\big)\cap (\N\setminus F)=\big((A\cap D)\cup (B\cap C)\big)\big)\cap
(\N\setminus F)$.
\end{proof}

\begin{definition} Suppose that $\A$ is an $\ad$ family.  We define
\begin{itemize}
\item $\triangleleft:\mathcal V(\A)\rightarrow \PP_\A.$
\item $\triangleright: \PP_A\rightarrow \mathcal V(\A)$
\end{itemize}
by
$$\triangleleft(a, b)=(\bigcup a\setminus \bigcup b, \bigcup b\setminus\bigcup a)$$
$$\triangleright(p)=(\{A\in \A: A\subseteq^*A_p\}, \{B\in \A: B\subseteq^*B_p\})$$
\end{definition}

\begin{lemma}\label{surjection} Suppose that $\A$ is an $\ad$ family. Then
\begin{itemize}
\item $\triangleright$ is surjective with countable fibers, $\triangleleft$ is injective,
\item $\triangleright(\triangleleft(a, b))=(a,b)$ for any $(a, b)\in \mathcal V(\A)$.

\end{itemize}
\end{lemma}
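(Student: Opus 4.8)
The plan is to reduce all four assertions to a single combinatorial observation about almost disjointness, namely that for any finite $a\in[\A]^{<\omega}$ and any $C\in\A$ one has $C\subseteq^*\bigcup a$ if and only if $C\in a$. First I would check that $\triangleleft$ really lands in $\PP_\A$: given $(a,b)\in\mathcal{V}(\A)$, since $a\cap b=\emptyset$ and $\A$ is almost disjoint, each $A\in a$ meets each $B\in b$ in a finite set, so $\bigcup a\cap\bigcup b$ is a finite union of finite sets, hence finite. Consequently $\bigcup a\setminus\bigcup b=^*\bigcup a$ and $\bigcup b\setminus\bigcup a=^*\bigcup b$, the two coordinates of $\triangleleft(a,b)$ are trivially disjoint, and these almost-equalities witness $\triangleleft(a,b)\in\PP_\A$ with $a_p=a$ and $b_p=b$. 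The observation itself is then the only real content: the direction $C\in a\Rightarrow C\subseteq^*\bigcup a$ is immediate, while for the converse, if $C\subseteq^*\bigcup a$ then $C=^*\bigcup_{A\in a}(C\cap A)$, so since $C$ is infinite and $a$ is finite some $C\cap A$ is infinite, which by almost disjointness forces $C=A\in a$.

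Using the observation I would compute $\triangleright(\triangleleft(a,b))$. Writing $A=\bigcup a\setminus\bigcup b$, a set $C\in\A$ satisfies $C\subseteq^*A$ if and only if $C\subseteq^*\bigcup a$ (because $A=^*\bigcup a$, the finiteness of $\bigcup a\cap\bigcup b$ being what was established above), and this holds if and only if $C\in a$. The second coordinate is handled symmetrically. Hence $\triangleright(\triangleleft(a,b))=(a,b)$, which is exactly the second bullet of the statement.

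From this identity the first bullet follows formally. Surjectivity of $\triangleright$ is immediate, since every $(a,b)\in\mathcal{V}(\A)$ equals $\triangleright(\triangleleft(a,b))$; and $\triangleleft$ is injective because $\triangleleft(a,b)=\triangleleft(a',b')$ yields $(a,b)=\triangleright(\triangleleft(a,b))=\triangleright(\triangleleft(a',b'))=(a',b')$. For the countable fibers I would show that $\triangleright(p)=(a,b)$ forces $A_p=^*\bigcup a$ and $B_p=^*\bigcup b$: if $p\in\PP_\A$ has witnesses $a_p,b_p$ with $A_p=^*\bigcup a_p$, then applying the observation gives $\{A\in\A:A\subseteq^*A_p\}=a_p$, so $\triangleright(p)=(a,b)$ means precisely $a_p=a$ and $b_p=b$. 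Thus every $p$ in the fiber over $(a,b)$ satisfies $A_p=^*\bigcup a$ and $B_p=^*\bigcup b$, so the pair $(A_p,B_p)$ is determined by the two finite symmetric differences $A_p\,\triangle\,\bigcup a$ and $B_p\,\triangle\,\bigcup b$; since $\N$ has only countably many finite subsets, each fiber is countable.

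I expect no serious obstacle: the whole lemma rests on isolating and proving the observation $C\subseteq^*\bigcup a\iff C\in a$ cleanly, after which the four claims are short formal consequences. The only point requiring mild care is the degenerate case in which $a$ or $b$ is empty, where $\bigcup a=\emptyset$; there no infinite $C\in\A$ is almost contained in $\bigcup a$, so the observation reads $\{C\in\A:C\subseteq^*\emptyset\}=\emptyset=a$ and the computations above remain valid.
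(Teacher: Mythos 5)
Your proof is correct and follows essentially the same route as the paper's (much terser) argument: everything hinges on the observation that for $C\in\A$ and finite $a\subseteq\A$ one has $C\subseteq^*\bigcup a\setminus\bigcup b$ iff $C\in a$, from which the identity $\triangleright(\triangleleft(a,b))=(a,b)$ and hence surjectivity and injectivity follow formally, while countable fibers come from there being only countably many sets $=^*$-equal to a fixed set. Your write-up just makes explicit the details the paper leaves to the reader.
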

\begin{proof}
Countable fibers of $\triangleright$ is the consequence of the definition of $\subseteq^*$.
The rest of the first part of the lemma follows from the second part which is clear from the definitions of
 $\triangleright$ and $\triangleleft$
because $A\subseteq^*\bigcup a \setminus \bigcup b$ as well as $B\subseteq^*\bigcup b \setminus \bigcup a$  if and only if 
$A\in a, B\in b$.
\end{proof}

\begin{lemma} \label{sigmaP}Suppose that $\A$ is an $\ad$-family. Then
$\PP_\A=\bigcup\PP_n$, where $\triangleright|\PP_n$ is injective and for every $n\in \N$ and  $p, q\in \PP_n$ we  have
that
$$p\parallel q\ \hbox{if and only if}\  (\triangleright(p),\triangleright(q))\in \G(\A).$$
%Moreover for every $(a,b), (a', b')\in \mathcal V(\A)$ we have
%$$\triangleleft(a,b)\parallel \triangleleft(a',b')\ \hbox{if and only if}\  ((a,b), (a',b'))\in \G(\A).$$

\end{lemma}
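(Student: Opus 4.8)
The plan is to split $\PP_\A$ according to a finite amount of "boundary data" attached to each condition, chosen so that on each piece the exact compatibility relation of $\PP_\A$ coincides with the combinatorial relation $\Join$ computed on the finite families $\triangleright(p)=(a_p,b_p)$ (here $a_p,b_p$ are the finite subfamilies with $A_p=^*\bigcup a_p$, $B_p=^*\bigcup b_p$; almost disjointness and the infinitude of members of $\A$ force $\{A\in\A:A\subseteq^* A_p\}=a_p$, so this is indeed $\triangleright(p)$). First I would attach to each $p$ the least threshold $m_p\in\N$ such that, above $m_p$, the sets $A_p$ and $\bigcup a_p$ coincide, $B_p$ and $\bigcup b_p$ coincide, $\bigcup a_p\cap\bigcup b_p\subseteq m_p$, and $A\cap A'\subseteq m_p$ for all distinct $A,A'\in a_p\cup b_p$; such $m_p$ exists since only finitely many finite sets are involved. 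I would then record the tuple
\[
d(p)=\bigl(m_p,\ A_p\cap m_p,\ B_p\cap m_p,\ \textstyle\bigcup a_p\cap m_p,\ \bigcup b_p\cap m_p\bigr),
\]
which ranges over a countable set, and let $\PP_n$ be the fibres of $d$. Injectivity of $\triangleright|\PP_n$ is then immediate: if $p,q\in\PP_n$ with $\triangleright(p)=\triangleright(q)$, then $\bigcup a_p=\bigcup a_q$ and $\bigcup b_p=\bigcup b_q$, and sharing $d$ gives $m_p=m_q$ and $A_p\cap m_p=A_q\cap m_q$, whence $A_p=(\bigcup a_p\setminus m_p)\cup(A_p\cap m_p)=A_q$ and likewise $B_p=B_q$, so $p=q$.

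Next I would verify the equivalence on a fixed piece with common data $(m,E,F,S,T)$. The elementary observation is that $p$ and $q$ are compatible exactly when $(A_p\cap B_q)\cup(A_q\cap B_p)=\emptyset$, since $(A_p\cup A_q,\,B_p\cup B_q)$ is the only candidate for a common lower bound. The task is therefore to show that on $\PP_n$ this witness set equals $(\bigcup a_p,\bigcup b_p)\Join(\bigcup a_q,\bigcup b_q)$. Lemma \ref{join} does the work: applied with $A=\bigcup a_p$, $B=\bigcup b_p$, $C=\bigcup a_q$, $D=\bigcup b_q$ and $F=m$ — whose hypotheses hold because $\bigcup a_p\cap\bigcup b_p,\ \bigcup a_q\cap\bigcup b_q\subseteq m$ and the traces on $m$ are the frozen sets $S=\bigcup a_p\cap m=\bigcup a_q\cap m$ and $T=\bigcup b_p\cap m=\bigcup b_q\cap m$ — it yields that the join is disjoint from $m$ and equals $\bigl((\bigcup a_p\cap\bigcup b_q)\cup(\bigcup b_p\cap\bigcup a_q)\bigr)\setminus m$. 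On the other hand, writing $A_p=(\bigcup a_p\setminus m)\cup E$ and $B_q=(\bigcup b_q\setminus m)\cup F$ with $E=A_p\cap m$, $F=B_q\cap m$ frozen and disjoint on the piece, a direct split into the parts above and below $m$ gives $A_p\cap B_q=(\bigcup a_p\cap\bigcup b_q)\setminus m$ (the below‑$m$ contribution being $E\cap F=\emptyset$), and symmetrically $A_q\cap B_p=(\bigcup a_q\cap\bigcup b_p)\setminus m$. Comparing the two, the incompatibility witness equals the join, so $(\triangleright(p),\triangleright(q))$ is an edge of $\G(\A)$ exactly when $(A_p\cap B_q)\cup(A_q\cap B_p)\neq\emptyset$, i.e.\ exactly when $p$ and $q$ are incompatible in $\PP_\A$, which is the equivalence asserted by the lemma.

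The main difficulty I anticipate is purely the bookkeeping needed to isolate the right finite data. The two traces below $m$ play genuinely different roles and must both be frozen: the union‑trace $S=\bigcup a_p\cap m$ is what Lemma \ref{join} requires to be common across the piece, whereas the condition‑trace $E=A_p\cap m$ is what enters the exact computation of $A_p\cap B_q$. Since $A_p$ and $\bigcup a_p$ may disagree below $m$, these need not coincide, and conflating them would destroy either the applicability of Lemma \ref{join} or the vanishing of the cross term $E\cap F$. Once one sees that freezing the full tuple $d(p)$ simultaneously pins down the agreement $A_p\setminus m=\bigcup a_p\setminus m$ above the threshold and the matching of all traces below it, both the injectivity and the compatibility computation become routine.
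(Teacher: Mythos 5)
Your proposal is correct and follows essentially the same route as the paper: you partition $\PP_\A$ by the finite quintuple $(m,\ A_p\cap m,\ B_p\cap m,\ \bigcup a_p\cap m,\ \bigcup b_p\cap m)$ (the paper's $(k,F',G',F,G)$) and invoke Lemma \ref{join} to identify the incompatibility witness with the $\Join$-set on each fibre; the only cosmetic difference is that the paper applies Lemma \ref{join} a second time to $(A_p,B_p)$ and $(A_q,B_q)$ where you compute $A_p\cap B_q$ by hand, and you carry an extra (harmless) clause about pairwise intersections within $a_p\cup b_p$ in the choice of $m_p$.
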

\begin{proof} 
Given $p\in \PP_\A$ let $\triangleright(p)=(a_p, b_p)$ and let
$k\in \N$ and $F, F', G, G'\in [\N]^{<\omega}$ be such that
\begin{itemize}
\item $[(\bigcup a_p)\setminus k]\cap[(\bigcup b_p)\setminus k]=\emptyset$,
\item $A_p\setminus k=(\bigcup a_p)\setminus k$,
\item  $B_p\setminus k=(\bigcup b_p)\setminus k$,
\item $(\bigcup a_p)\cap k=F$,
\item $(\bigcup b_p)\cap k=G$,
\item $A_p\cap k=F'$,
\item $B_p\cap k=G'$.
\end{itemize}
We claim that the injectivity and the  equivalence from the lemma holds for
$p, q$ for which such quintuples $(k, F, F', G, G')$ are the same.  For such $p, q$ Lemma \ref{join} applies
to $(\bigcup a_p, \bigcup b_p)$ and  $(\bigcup a_q, \bigcup b_q)$.
So $((a_p, b_p), (a_q, b_q))\in \G(\A)$ if and only if $\triangleleft(a_p, b_p)\bowtie\triangleleft(a_q, b_q)=\emptyset$
if and only if 
$((\bigcup a_p)\setminus k, (\bigcup b_p)\setminus k)\bowtie
 ((\bigcup a_q)\setminus k, (\bigcup b_q)\setminus k)=\emptyset$.
 For such $p, q$ Lemma \ref{join} applies to
$(A_p, B_p)$ and $(A_q, B_q)$ as well.
So $p\parallel q$ if and only if $(A_p\setminus k , B_p\setminus k)\bowtie(A_q\setminus k, B_q\setminus k)=\emptyset$.
Since  $A_p\setminus k=(\bigcup a_p)\setminus k$,
$B_p\setminus k=(\bigcup b_p)\setminus k$ and $A_q\setminus k=(\bigcup a_q)\setminus k$,
$B_q\setminus k=(\bigcup b_q)\setminus k$,
we can conclude the equivalence of the lemma. 

Also if $p, q$ have the same  quintuples $(k, F, F', G, G')$ and 
$\triangleright(p)=(a_p, b_p)=(a_q, b_q)=\triangleright(q)$, then
$A_p\cap k=F'=A_q\cap k$, $B_p\cap k=G'=B_q\cap k$  and
$A_p\setminus k= (\bigcup a_p)\setminus k=(\bigcup a_q)\setminus k=A_q\setminus k$,
and $B_p\setminus k= (\bigcup b_p)\setminus k=(\bigcup b_q)\setminus k=B_q\setminus k$,
and so $p=q$ which allows us to conclude the injectivity.
\end{proof}

\begin{proposition}\label{graph} Suppose that $\A$ is an $\ad$-family and $\kappa$
is a cardinal of uncountable cofinality.
\begin{enumerate}
\item $\PP_\A$ is $\sigma$-centered if and only if $\G(\A)$ is the union of countably many cliques.
\item $\PP_\A$ has precaliber $\kappa$ if and only if 
every subset of $\G(\A)$ of cardinality $\kappa$ contains a clique of cardinality $\kappa$
\item $\PP_\A$ is c.c.c. if and only if $\G(\A)$ does not contain an uncountable independent family.
\item  Every family in $\PP_\A$ of essentially distinct elements is the union
of countably many antichains if and only if
every family $\G\subseteq \G(\A)$ such that $a\not=a'$ and $b\not=b'$
for any distinct $(a,b), (a', b')\in \G$ is the union of countably many independent sets.
\end{enumerate}
\end{proposition}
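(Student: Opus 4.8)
The plan is to treat the two transfer maps $\triangleright$ and $\triangleleft$ of Lemmas \ref{surjection} and \ref{sigmaP} as a dictionary between the compatibility structure of $\PP_\A$ and the adjacency structure of $\G(\A)$, reading ``centered/compatible'' as ``clique/adjacent'' and ``antichain/incompatible'' as ``independent/non-adjacent''. Two facts drive everything. First, Lemma \ref{sigmaP} provides a fixed countable decomposition $\PP_\A=\bigcup_n\PP_n$ on each piece of which $\triangleright$ is injective and compatibility-faithful: for $p,q\in\PP_n$, $p$ and $q$ are compatible iff $(\triangleright(p),\triangleright(q))$ is an edge of $\G(\A)$. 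Second, the section $\triangleleft$ yields the reverse dictionary globally: since $\triangleleft(a,b)\Join\triangleleft(a',b')=(\bigcup a,\bigcup b)\Join(\bigcup a',\bigcup b')$ by a direct set computation (the same one behind Lemma \ref{sigmaP} via Lemma \ref{join}) and two conditions are compatible precisely when their $\Join$ is empty, the vertices $(a,b)$ and $(a',b')$ are adjacent iff $\triangleleft(a,b)$ and $\triangleleft(a',b')$ are compatible. I will also use that in $\PP_\A$ a set is centered iff it is pairwise compatible, since the coordinatewise union of finitely many pairwise-compatible conditions is again a condition below all of them; hence centeredness may be tested pairwise.

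For (1), (2) and (3) I will apply one uniform device: refine any given object against $\PP_\A=\bigcup_n\PP_n$. In the forward direction of (1), intersect a $\sigma$-centered decomposition $\PP_\A=\bigcup_m\PP^{(m)}$ with the $\PP_n$; each $\PP^{(m)}\cap\PP_n$ is pairwise compatible and lies in one $\PP_n$, so its $\triangleright$-image is a clique, and these countably many cliques cover $\mathcal V(\A)$ because $\triangleright$ is onto. Conversely, given $\mathcal V(\A)=\bigcup_m\mathcal C_m$ with each $\mathcal C_m$ a clique, the countably many sets $\{p\in\PP_n:\triangleright(p)\in\mathcal C_m\}$ are centered (distinct conditions in such a set have distinct, hence adjacent, images under the injective $\triangleright|\PP_n$) and cover $\PP_\A$. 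Part (3) is the same bookkeeping with ``uncountable'' replacing ``countably many'': an uncountable antichain meets some $\PP_n$ uncountably, and its $\triangleright$-image is an uncountable independent set, while $\triangleleft$ sends an uncountable independent set to an uncountable antichain. For (2), given $\mathcal W\subseteq\mathcal V(\A)$ of size $\kappa$, apply precaliber to $\triangleleft[\mathcal W]$ and push the resulting centered set of size $\kappa$ back by $\triangleright$ to a $\kappa$-clique; conversely a $\kappa$-sized $\mathcal P\subseteq\PP_\A$ meets some $\PP_n$ in a set of size $\kappa$ (here I use $\mathrm{cf}(\kappa)>\omega$), and a $\kappa$-clique inside $\triangleright[\mathcal P\cap\PP_n]$ pulls back to a centered set of size $\kappa$.

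Part (4) runs on the same mechanism, the only extra point being that essential distinctness is preserved by both maps. This is immediate: $\triangleright(p)=(a_p,b_p)$ records exactly the finite subfamilies defining essential distinctness, so $p,q$ are essentially distinct iff the first and the second coordinates of $\triangleright(p),\triangleright(q)$ both differ, and $\triangleright\circ\triangleleft=\mathrm{id}$. Thus, for the forward direction, if every essentially distinct $\mathcal P\subseteq\PP_\A$ splits into countably many antichains, then for essentially distinct $\G\subseteq\mathcal V(\A)$ the family $\triangleleft[\G]$ is essentially distinct, a countable antichain decomposition of it splits, and the $\triangleright$-images give countably many independent sets covering $\G$. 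For the converse, decompose an essentially distinct $\mathcal P$ as $\bigcup_n(\mathcal P\cap\PP_n)$, note each $\G_n=\triangleright[\mathcal P\cap\PP_n]$ is essentially distinct, write $\G_n$ as a countable union of independent sets, and pull each back through $\triangleright|\PP_n$ to an antichain; the resulting countably many antichains cover $\mathcal P$.

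The main obstacle—indeed the only place needing care—is that $\triangleright$ is not a global isomorphism of the two relations: it is injective and compatibility-faithful only on each $\PP_n$, and its fibers, though countable, are nontrivial. Every argument is therefore asymmetric: structures are transported from $\G(\A)$ to $\PP_\A$ by the global section $\triangleleft$ (using the $\triangleleft$-dictionary), but from $\PP_\A$ to $\G(\A)$ only after first refining against the $\PP_n$. Keeping straight which map is used in each direction, and checking in (4) that essential distinctness survives both, is the substance of the proof; everything else is routine set bookkeeping.
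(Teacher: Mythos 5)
Your proof is correct and follows essentially the same route as the paper's: both rest on Lemma \ref{sigmaP} (the countable decomposition $\PP_\A=\bigcup_n\PP_n$ on which $\triangleright$ is an injective, compatibility-faithful map onto induced subgraphs covering $\mathcal V(\A)$) together with the observation that finite centeredness in $\PP_\A$ reduces to pairwise compatibility; the paper merely compresses your case-by-case transfers into the remark that all four properties are subgraph-hereditary and stable under countable unions. Your reading of adjacency as compatibility is the intended one (it matches Lemma \ref{sigmaP} and the statement of the proposition, Definition \ref{def-graph} notwithstanding), so nothing is missing.
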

\begin{proof}
Note that two elements $p,q\in \PP(\A)$ are essentially distinct if and only if $a\not=a'$ and $b\not=b'$,
where $p=(a,b)$ and $q=(a', b')$. 
The properties of  being the union of countably many cliques, having a clique of cardinality $\kappa$
in any subgraph of cardinality $\kappa$, not containing an uncountable independent set, being the union
of countably many independent sets pass from a graph to any of its subgraphs
and hold for a graph which is the union of countably many subgraphs each having the property.
This and Lemma \ref{sigmaP} imply the proposition.

\end{proof}

\section{Forcing $\PP_\A$ and the geometry of the unit 
sphere of the Banach space $\X_\A$}

In this section given an $\ad$-family $\A$ we characterize certain natural geometric properties
of the unit sphere of the Banach space $(\X_\A, \|\ \|_\infty)$ using  combinatorial (in fact, rather forcing theoretic)  properties
of the splitting partial order $\PP_\A$ (Definition \ref{def-P}).  These characterizations are Propositions \ref{sphere-ccc},
\ref{sphere-precaliber}, \ref{sphere-sigma} and refer to $\PP_\A$ satisfying the c.c.c., having precaliber $\kappa$
for an uncountable  cardinal $\kappa$ and being $\sigma$-centered. This leaves
out the geometric properties corresponding to $\A$ being antiramsey or $L$-family (Definitions \ref{def-antiramsey},
\ref{def-LL}). These  properties cannot be as global as the properties from  Propositions \ref{sphere-ccc},
\ref{sphere-precaliber}, \ref{sphere-sigma}
 for elementary geometric reasons
(spheres always contain of cardinality $\mathfrak c$ of small diameters). 
The results we obtain in this direction refer to any sequence which induces, in some sense,
an uncountable subset of $\PP_\A$ consisting of essentially distinct conditions 
(Propositions \ref{C_0-antiramsey},
\ref{C_0-luzin}).
However, when we pass to an appropriate subspace of $\X_\A$ in the following section, we will
be able to extract from these results a natural geometric condition.

\begin{lemma}\label{charactXA} Let $\A$ be an $\ad$-family. 
If $f\in \X_\A$, then for every $A\in \A$ there exists $\lim_{n\in A}f(n)$. For every $\varepsilon>0$
and every $f\in \X_\A$  the set 
$\{A: |\lim_{n\in A}f(n)|>\varepsilon\}$ is finite.
\end{lemma}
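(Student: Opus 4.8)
The plan is to first establish the statement on the dense subspace $D$ consisting of finite linear combinations $h=g+\sum_{A'\in F}\lambda_{A'}1_{A'}$ with $g\in c_0$ and $F\in[\A]^{<\omega}$, and then transfer it to an arbitrary $f\in\X_\A$ by a uniform-approximation argument, since $\X_\A$ is by definition the $\|\ \|_\infty$-closure of $D$.

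For such an $h$ and a fixed $A\in\A$, I would observe that $g(n)\to 0$ along $A$ because $g\in c_0$, while for every $A'\in F$ with $A'\neq A$ almost disjointness makes $A\cap A'$ finite, so $1_{A'}(n)=0$ for all but finitely many $n\in A$; and $1_A(n)=1$ throughout $A$. Hence $\lim_{n\in A}h(n)$ exists and equals $\lambda_A$, with the convention $\lambda_A=0$ when $A\notin F$. In particular the assignment $\phi_A(h)=\lim_{n\in A}h(n)$ is linear on $D$ and satisfies the key Lipschitz bound $|\phi_A(h)|\le\|h\|_\infty$.

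To obtain the first assertion for arbitrary $f\in\X_\A$, I would choose $h_k\in D$ with $\|f-h_k\|_\infty\to 0$. Since $|\phi_A(h_k)-\phi_A(h_j)|\le\|h_k-h_j\|_\infty$, the scalars $c_k=\phi_A(h_k)$ form a Cauchy sequence; let $c$ be its limit. A standard three-$\varepsilon$ estimate, bounding $|f(n)-c|$ by $\|f-h_k\|_\infty+|h_k(n)-c_k|+|c_k-c|$ for $n\in A$ (choosing $k$ to control the first and third terms and then $n\in A$ large to control the middle one), shows $\lim_{n\in A}f(n)=c$. Thus the limit exists and coincides with the continuous extension of $\phi_A$ to $\X_\A$.

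For the second assertion, fix $\varepsilon>0$ and pick $h=g+\sum_{A'\in F}\lambda_{A'}1_{A'}\in D$ with $\|f-h\|_\infty<\varepsilon$. For any $A\notin F$ the first part gives $\phi_A(h)=0$, whence $|\lim_{n\in A}f(n)|=|\phi_A(f)-\phi_A(h)|\le\|f-h\|_\infty<\varepsilon$. Therefore $\{A:|\lim_{n\in A}f(n)|\ge\varepsilon\}\subseteq F$ is finite, which is even slightly stronger than required. I do not expect a genuine obstacle here; the only point demanding care is the interchange of the limit along $A$ with the limit in $k$, and this is precisely what the inequality $|\phi_A(h)|\le\|h\|_\infty$ encodes, so the whole argument reduces to the continuity of the functionals $\phi_A$ together with the explicit computation of $\phi_A$ on $D$.
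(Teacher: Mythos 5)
Your proof is correct and takes essentially the same approach as the paper: the paper likewise deduces the first claim from the fact that the functions admitting a limit along $A$ form a $\|\ \|_\infty$-closed set containing the generators, and the second claim from the observation that otherwise $f$ could not be uniformly approximated by finite linear combinations of generators. You have simply made explicit the computation of the limit on the dense subspace and the three-$\varepsilon$ argument that the paper leaves implicit.
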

\begin{proof}
Given $A\in \A$ the subset of $\ell_\infty$ consisting of $f$s such  $\lim_{n\in A}f(n)$
exists is closed
in the supremum norm and $\X_\A$ is included in it because each generator of $\X_\A$ is in this set. 

If $\{A: |\lim_{n\in A}f(n)|>\varepsilon\}$ were infinite, $f$ could not be approximated 
in the supremum norm by linear combinations of the generators of $\X_\A$.

\end{proof}

\begin{definition}\label{fp}
Suppose that $p=(A_p, B_p)\in \PP_\A$,  then $f_p\in \X_\A$ 
is defined as
\[
  f_p(n) =
  \begin{cases}
    1 & \text{if $n\in A_p$} \\
     -1 & \text{if $n\in B_p$} \\
    0 & \text{$n\in \N\setminus(A_p\cup B_p)$.}
  \end{cases}
\]
\end{definition}

\begin{lemma}\label{cond-from-function} Suppose that $f\in \X_\A$ and $\varepsilon>0$. Then  there is 
a condition $p(f,\varepsilon)\in \PP_\A$ such that
\begin{enumerate}
\item $A_{p(f,\varepsilon)}\supseteq \{n: f(n)\geq \varepsilon\}$
\item $B_{p(f,\varepsilon)}\supseteq \{n: f(n)\leq -\varepsilon\}$
\end{enumerate}
\end{lemma}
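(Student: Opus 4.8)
The plan is to read off the two witnessing finite subfamilies directly from $f$ and then assemble the condition. Write $P=\{n:f(n)\geq\varepsilon\}$ and $N=\{n:f(n)\leq-\varepsilon\}$; since $\varepsilon>0$ these are disjoint. I would set $a_p=\{A\in\A:A\cap P\text{ is infinite}\}$ and $b_p=\{A\in\A:A\cap N\text{ is infinite}\}$. The first point to verify is that these families are finite. If $A\cap P$ is infinite then, since $\lim_{n\in A}f(n)$ exists by Lemma \ref{charactXA} and equals its value along the infinite subset $A\cap P$ on which $f\geq\varepsilon$, we get $\lim_{n\in A}f(n)\geq\varepsilon$. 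Hence $a_p\subseteq\{A:|\lim_{n\in A}f(n)|>\varepsilon/2\}$, which is finite by the second clause of Lemma \ref{charactXA}; the same argument handles $b_p$, and it simultaneously shows $a_p\cap b_p=\emptyset$, because no single $A$ can have $\lim_{n\in A}f(n)$ both $\geq\varepsilon$ and $\leq-\varepsilon$.

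The crux, and the step I expect to be the main obstacle, is showing $P\subseteq^*\bigcup a_p$ (and symmetrically $N\subseteq^*\bigcup b_p$), i.e. that $P':=P\setminus\bigcup a_p$ is finite. This is exactly where genuine membership of $f$ in $\X_\A$ is needed, as opposed to merely the existence of limits along each $A$. I would argue by contradiction: choose $h$ in the linear span of $c_0\cup\{1_A:A\in\A\}$ with $\|f-h\|_\infty<\varepsilon/2$, and write $h=g+\sum_{i=1}^k c_i1_{A_i}$ with $g\in c_0$. On $P'$ we have $f\geq\varepsilon$, hence $h>\varepsilon/2$; picking $M$ with $|g(n)|<\varepsilon/4$ for $n>M$ forces $\sum_i c_i1_{A_i}(n)>0$ for every $n\in P'$ with $n>M$, so each such $n$ lies in some $A_i$. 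Thus a cofinite part of $P'$ is covered by the finite union $A_1\cup\dots\cup A_k$; were $P'$ infinite, some $A_i$ would meet $P'$, and therefore $P$, in an infinite set, putting $A_i\in a_p$ and contradicting $P'\cap\bigcup a_p=\emptyset$.

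Finally I would assemble the condition, the only remaining subtlety being disjointness of the two coordinates, since $\bigcup a_p$ and $\bigcup b_p$ (as well as $P\cap\bigcup b_p$ and $N\cap\bigcup a_p$) may overlap in a finite set. Because $P\cap N=\emptyset$ this can be resolved asymmetrically: put $A_p=(\bigcup a_p\cup P)\setminus N$ and $B_p=(\bigcup b_p\cup N)\setminus A_p$. Then $A_p\cap B_p=\emptyset$ by construction, while $A_p\supseteq P$ and $B_p\supseteq N$ follow from $P\cap N=\emptyset$ together with $A_p\cap N=\emptyset$. Both coordinates differ from $\bigcup a_p$ and $\bigcup b_p$ only by finite sets, using $P\subseteq^*\bigcup a_p$, $N\subseteq^*\bigcup b_p$, and the finiteness of all cross-intersections of members of the disjoint finite families $a_p,b_p$; hence $A_p=^*\bigcup a_p$ and $B_p=^*\bigcup b_p$. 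Therefore $p(f,\varepsilon):=(A_p,B_p)\in\PP_\A$ satisfies the required containments, which completes the proof.
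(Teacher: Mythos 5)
Your proof is correct, but it takes a genuinely different route from the paper's. The paper's argument is a three-line computation: pick an approximant $g=\sum_{i\leq n} r_i 1_{A_i}+f'$ with $f'\in c_0$ \emph{finitely supported} and $\|f-g\|_\infty\leq\varepsilon/2$, and simply take the level sets $A_{p(f,\varepsilon)}=\{n: g(n)\geq\varepsilon/2\}$ and $B_{p(f,\varepsilon)}=\{n: g(n)\leq-\varepsilon/2\}$; since $g$ assumes finitely many values and, modulo finite sets, equals $r_i$ on $A_i$ and $0$ off $\bigcup_i A_i$, these level sets are automatically almost equal to $\bigcup\{A_i: r_i\geq\varepsilon/2\}$ and $\bigcup\{A_i: r_i\leq-\varepsilon/2\}$ and are disjoint outright, so membership in $\PP_\A$ is immediate, and the containments (1) and (2) follow from $\|f-g\|_\infty\leq\varepsilon/2$. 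You instead build the condition intrinsically from $f$: the witnessing families are the members of $\A$ meeting $P=\{n:f(n)\geq\varepsilon\}$, respectively $N=\{n:f(n)\leq-\varepsilon\}$, in an infinite set, with finiteness supplied by Lemma \ref{charactXA}, and the approximant enters only to establish $P\subseteq^*\bigcup a_p$ via the covering argument. Your final asymmetric trimming to force disjointness is also sound: the needed finiteness of $\bigcup a_p\cap N$ and of $\bigcup b_p\cap A_p$ follows from the limits along members of $a_p$ being $\geq\varepsilon$ and those along members of $b_p$ being $\leq-\varepsilon$, together with almost disjointness. The paper's version is shorter because disjointness and the $=^*$ requirements come for free from the level sets of a finitely-valued function; yours buys a canonical definition of $p(f,\varepsilon)$ depending only on $f$ and $\varepsilon$ rather than on a chosen approximant, at the cost of the extra covering and trimming steps.
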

\begin{proof}
Find $A_1, \dots A_n\in \A$ for some $n\in \N$ and finitely supported $f'\in c_0$
such that $\|g-f\|_\infty\leq\varepsilon/2$, where
$$g=\sum_{1\leq i\leq n}r_i1_{A_i} +f'$$
for some $r_i\in \R$ for $1\leq i\leq n$.  As $g$ assumes finitely many values, 
$A_{p(f,\varepsilon)}=\{n: g(n)\geq \varepsilon/2\}$ and  $B_{p(f,\varepsilon)}=\{n: g(n)\leq -\varepsilon/2\}$ yield
a condition $p(f, \varepsilon)\in \PP_\A$. Let us verify that it satisfies the lemma. If $f(n)\geq\varepsilon$ for $n\in \N$,
then $g(n)\geq\varepsilon/2$ and so $n \in A_{p(f,\varepsilon)}$. A similar argument works for $B_{p(f,\varepsilon)}$.

\end{proof}

\begin{lemma}\label{cond-and-function} Suppose that $\A$ is an $\ad$-family and $p, q\in \PP_\A$ and $f, g\in \X_\A$
and $0<\varepsilon\leq 1$. Then
\begin{enumerate}
\item $p$ is compatible with $q$ if and only if  $\|f_p-f_q\|_\infty\leq 1$.
\item $\|f_p-f_q\|_\infty>1$ if and only if $\|f_p-f_q\|_\infty=2$ (if and only if $p$ is incompatible with $q$).
\item If $p(f,\varepsilon)$ is compatible with $p(g, \varepsilon)$, then $\|f-g\|_\infty\leq 1+\varepsilon$
\end{enumerate}
\end{lemma}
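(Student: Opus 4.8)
The plan is to reduce everything to a single pointwise computation of $f_p-f_q$ together with the elementary compatibility criterion for $\PP_\A$. First I would record that, by Definition \ref{def-P}, two conditions $p,q$ are compatible if and only if the pair $(A_p\cup A_q,\,B_p\cup B_q)$ is itself a condition, i.e. if and only if $(A_p\cup A_q)\cap(B_p\cup B_q)=\emptyset$; since $A_p\cap B_p=A_q\cap B_q=\emptyset$ holds automatically, this says precisely that $p\parallel q$ iff $A_p\cap B_q=\emptyset$ and $A_q\cap B_p=\emptyset$. The forward direction uses that any common lower bound $r$ has $A_r\supseteq A_p\cup A_q$ and $B_r\supseteq B_p\cup B_q$ while $A_r\cap B_r=\emptyset$; the converse takes $r=(A_p\cup A_q,\,B_p\cup B_q)$, whose coordinates are almost equal to $\bigcup(a_p\cup a_q)$ and $\bigcup(b_p\cup b_q)$, hence a legitimate element of $\PP_\A$ refining both $p$ and $q$.

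For (1) and (2) simultaneously I would analyze $f_p-f_q$ value by value using Definition \ref{fp}. At each $n$ the quantity $|f_p(n)-f_q(n)|$ equals $2$ exactly when $\{f_p(n),f_q(n)\}=\{1,-1\}$, that is exactly when $n\in (A_p\cap B_q)\cup(B_p\cap A_q)$, and it is at most $1$ otherwise. Hence $\|f_p-f_q\|_\infty$ takes only the values $0,1,2$, and it equals $2$ iff $(A_p\cap B_q)\cup(B_p\cap A_q)\neq\emptyset$. Combined with the compatibility criterion this gives (2): $\|f_p-f_q\|_\infty>1$ iff $\|f_p-f_q\|_\infty=2$ iff $p,q$ are incompatible; and (1) is the complementary statement $\|f_p-f_q\|_\infty\le 1$ iff $(A_p\cap B_q)\cup(B_p\cap A_q)=\emptyset$ iff $p\parallel q$.

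For (3) I would argue by contraposition, using the containments from Lemma \ref{cond-from-function} together with the fact (in the regime where the lemma is applied) that $f,g$ lie in the unit ball, $\|f\|_\infty,\|g\|_\infty\le 1$. Suppose $\|f-g\|_\infty>1+\varepsilon$ and fix $n$ with $|f(n)-g(n)|>1+\varepsilon$; say $f(n)-g(n)>1+\varepsilon$, the other sign being symmetric. Since $g(n)\ge -1$ we get $f(n)>\varepsilon$, so $n\in\{m:f(m)\ge\varepsilon\}\subseteq A_{p(f,\varepsilon)}$ by Lemma \ref{cond-from-function}(1); since $f(n)\le 1$ we get $g(n)<-\varepsilon$, so $n\in\{m:g(m)\le -\varepsilon\}\subseteq B_{p(g,\varepsilon)}$ by Lemma \ref{cond-from-function}(2). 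Thus $n\in A_{p(f,\varepsilon)}\cap B_{p(g,\varepsilon)}$, so by the compatibility criterion $p(f,\varepsilon)$ and $p(g,\varepsilon)$ are incompatible, contradicting the hypothesis.

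The routine parts are the pointwise bookkeeping in (1)--(2); the one genuinely delicate point is (3), where the combinatorial fact ``the two conditions are compatible'' must be converted into the metric bound $1+\varepsilon$. The crux is that compatibility forbids any coordinate on which one thresholded function is $\ge\varepsilon$ while the other is $\le-\varepsilon$, and this forbidden configuration is exactly what a gap $>1+\varepsilon$ between $f$ and $g$ would force once one knows $|f|,|g|\le 1$. I would therefore make the unit-ball bound explicit in (3), since without it the conclusion fails (for instance $f=2\cdot 1_A$ and $g=0$ yield compatible conditions yet $\|f-g\|_\infty=2$).
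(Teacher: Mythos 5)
Your proof is correct and follows essentially the same route as the paper's: parts (1)--(2) by pointwise inspection of $f_p-f_q$ together with the criterion that $p\parallel q$ iff $(A_p\cap B_q)\cup(A_q\cap B_p)=\emptyset$, and part (3) via the containments of Lemma \ref{cond-from-function} (you argue the contrapositive where the paper directly enumerates the possible regions for $(f(n),g(n))$). Your closing remark that (3) requires $\|f\|_\infty,\|g\|_\infty\leq 1$ is well taken: the paper's case analysis silently assumes this (its ranges all lie in $[-1,1]$), and the hypothesis holds in every application since there $f$ and $g$ are taken from the unit sphere.
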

\begin{proof}
The first two clauses follow from the definitions of $\PP_\A$, $f_p$ and $f_q$. 
For the third clause, suppose that $p(f,\varepsilon)$ is compatible with $p(g, \varepsilon)$.
So $\{n: f(n)\geq\varepsilon\}\cap \{n: g(n)\leq -\varepsilon\}=\emptyset$ and
$\{n: g(n)\geq\varepsilon\}\cap \{n: f(n)\leq -\varepsilon\}=\emptyset$. It follows that
for any $n\in \N$ we have either 
\begin{itemize}
\item $\{f(n), g(n)\}\in (-\varepsilon, \varepsilon)^2$
or  
\item $\{f(n), g(n)\}\in [-1, -\varepsilon]\times [-1, \varepsilon)$ or
\item $\{f(n), g(n)\}\in [\varepsilon, 1]\times (-\varepsilon, 1]$ or
\item $\{f(n), g(n)\}\in [-1, \varepsilon)\times [-1, -\varepsilon]$ or
\item $\{f(n), g(n)\}\in ( -\varepsilon, 1]\times[\varepsilon, 1]$.
\end{itemize}
 In all cases $|f(n)-g(n)|\leq 1+\varepsilon$ as required in (3).

\end{proof}

The following is a version of Theorem 2.6 of \cite{mer-ck}:

\begin{proposition}\label{sphere-ccc} Suppose that $\A$ is an  
$\ad$-family and $\kappa$ is an uncountable cardinal.
The following conditions are equivalent:
\begin{enumerate}
\item $\PP_\A$ admits an antichain of cardinality $\kappa$,
\item The unit sphere of $\X_\A$ admits a $2$-equilateral set of cardinality $\kappa$,
\item The unit sphere of $\X_\A$ admits an $r$-equilateral set of cardinality $\kappa$ for some $r>1$,
\item The unit sphere of $\X_\A$ admits an $(1+\varepsilon)$-separated set of cardinality $\kappa$ for some $\varepsilon>0$.
\end{enumerate}
\end{proposition}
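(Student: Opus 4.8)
The plan is to prove the four conditions equivalent by running the cycle $(1)\Rightarrow(2)\Rightarrow(3)\Rightarrow(4)\Rightarrow(1)$, using throughout the dictionary between conditions of $\PP_\A$ and elements of the sphere furnished by Definition \ref{fp} and Lemmas \ref{cond-from-function} and \ref{cond-and-function}.

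For $(1)\Rightarrow(2)$ I would start from an antichain $\{p_\xi:\xi<\kappa\}$ in $\PP_\A$ and pass to the functions $f_{p_\xi}\in\X_\A$ of Definition \ref{fp}. After discarding the at most one trivial condition $(\emptyset,\emptyset)$ (which is in any case the top element of $\PP_\A$ and hence cannot sit in an antichain with other conditions), each remaining $f_{p_\xi}$ has supremum norm $1$ and so lies on the unit sphere, and distinct conditions plainly give distinct functions. Since $p_\xi$ and $p_\eta$ are incompatible for $\xi\neq\eta$, Lemma \ref{cond-and-function}(1)--(2) yields $\|f_{p_\xi}-f_{p_\eta}\|_\infty=2$. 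Thus $\{f_{p_\xi}:\xi<\kappa\}$ is a $2$-equilateral subset of the sphere of cardinality $\kappa$.

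The implications $(2)\Rightarrow(3)$ and $(3)\Rightarrow(4)$ are immediate from the definitions in the preliminaries: a $2$-equilateral set is $r$-equilateral with $r=2>1$, and any $r$-equilateral set with $r>1$ is $(1+\varepsilon)$-separated for $\varepsilon=r-1>0$, preserving the cardinality $\kappa$ in both passages.

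The crux is $(4)\Rightarrow(1)$. Suppose $\{f_\xi:\xi<\kappa\}$ is a $(1+\varepsilon)$-separated subset of the sphere; replacing $\varepsilon$ by a smaller value I may assume $\varepsilon\leq 1$. I would fix $\varepsilon'=\varepsilon/2$, so that $0<\varepsilon'<\varepsilon$ and $\varepsilon'\leq 1$, and form the conditions $p(f_\xi,\varepsilon')\in\PP_\A$ via Lemma \ref{cond-from-function}. For distinct $\xi,\eta$ we have $\|f_\xi-f_\eta\|_\infty\geq 1+\varepsilon>1+\varepsilon'$, so the contrapositive of Lemma \ref{cond-and-function}(3) forces $p(f_\xi,\varepsilon')$ and $p(f_\eta,\varepsilon')$ to be incompatible. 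In particular these conditions are pairwise distinct, so $\{p(f_\xi,\varepsilon'):\xi<\kappa\}$ is an antichain of cardinality $\kappa$, closing the cycle. The only genuine point requiring care is this last direction: one must choose $\varepsilon'$ strictly below $\varepsilon$ (and within the range $0<\varepsilon'\leq1$ demanded by Lemma \ref{cond-and-function}(3)) so that the separation inequality feeds the lemma as a \emph{strict} inequality, and one must observe that pairwise incompatibility automatically makes the map $\xi\mapsto p(f_\xi,\varepsilon')$ injective, so that the cardinality $\kappa$ is preserved. All the analytic work is already packaged in the preliminary lemmas, so beyond this bookkeeping the argument is routine.
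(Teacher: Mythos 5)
Your proposal is correct and follows essentially the same route as the paper: $(1)\Rightarrow(2)$ via the functions $f_{p_\xi}$ and Lemma \ref{cond-and-function}, the trivial implications $(2)\Rightarrow(3)\Rightarrow(4)$, and $(4)\Rightarrow(1)$ via the conditions $p(f_\xi,\varepsilon/2)$ together with the contrapositive of Lemma \ref{cond-and-function}(3). Your added bookkeeping (discarding the trivial condition, shrinking $\varepsilon$ to stay within the hypothesis $0<\varepsilon\leq 1$ of Lemma \ref{cond-and-function}, and noting injectivity of $\xi\mapsto p(f_\xi,\varepsilon/2)$) only makes explicit details the paper leaves tacit.
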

\begin{proof}
Suppose that (1) holds and $\{p_\xi:\xi<\kappa\}$ is an antichain in $\PP_\A$. Then  
$\{f_{p_\xi}:\xi<\kappa\}$ is $2$-equilateral subset of the sphere of $\X_\A$ by Lemma \ref{cond-and-function}.
The implications from (2) to (3) and from (3) to (4) are obvious. So assume (4), fix $\varepsilon>0$
and  let $\{f_\xi: \xi<\kappa\}$ be a $(1+\varepsilon)$-separated set  in  in the unit sphere of  $\X_\A$. 
Consider $p(f_\xi, \varepsilon/2)$ from Lemma \ref{cond-from-function} for every $\xi<\kappa$. 
These conditions must form an antichain, because otherwise by Lemma \ref{cond-and-function} (3)
we would have $\|f_\xi-f_\eta\|_\infty\leq 1+ \varepsilon/2$.
\end{proof}

\begin{proposition}\label{sphere-precaliber} Suppose that $\A$ is an $\ad$-family and $\kappa$ is an uncountable cardinal.
The following conditions are equivalent:
\begin{enumerate} 
\item $\PP_\A$ has precaliber $\kappa$.
\item For every $\{f_\xi: \xi<\kappa\}\subseteq S_{\X_\A}$ and every $0<\varepsilon<1$
there is $\Gamma\subseteq\kappa$ of cardinality $\kappa$ such that
the diameter of $\{f_\xi: \xi\in\Gamma\}$ is not bigger than $(1+\varepsilon)$.
\item For every $\{f_\xi: \xi<\kappa\}\subseteq S_{\X_\A}$ there is $0<\varepsilon<1$ and
there is $\Gamma\subseteq\kappa$ of cardinality $\kappa$ such that
the diameter of $\{f_\xi: \xi\in\Gamma\}$ is not bigger than $(1+\varepsilon)$.
\end{enumerate}
\end{proposition}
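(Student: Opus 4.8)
The plan is to prove the cycle $(1)\Rightarrow(2)\Rightarrow(3)\Rightarrow(1)$, exploiting the two translation lemmas (Lemma \ref{cond-from-function} and Lemma \ref{cond-and-function}) exactly as in the proof of Proposition \ref{sphere-ccc}. The implication $(2)\Rightarrow(3)$ is immediate, since any fixed $0<\varepsilon<1$ witnesses (3) once (2) provides the corresponding $\Gamma$. So the real content lies in the two remaining implications.

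\textbf{For $(1)\Rightarrow(2)$:} Fix $\{f_\xi:\xi<\kappa\}\subseteq S_{\X_\A}$ and $0<\varepsilon<1$. First I would normalize: each $f_\xi$ lies on the unit sphere, so $\|f_\xi\|_\infty=1$, and by Lemma \ref{charactXA} the values $\lim_{n\in A}f_\xi(n)$ are nonzero for only finitely many $A\in\A$. The idea is to pass to the conditions $p_\xi=p(f_\xi,\varepsilon/2)$ given by Lemma \ref{cond-from-function}, so that $A_{p_\xi}\supseteq\{n:f_\xi(n)\geq\varepsilon/2\}$ and $B_{p_\xi}\supseteq\{n:f_\xi(n)\leq-\varepsilon/2\}$. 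Since $\PP_\A$ has precaliber $\kappa$, the set $\{p_\xi:\xi<\kappa\}$ contains a centered subset $\{p_\xi:\xi\in\Gamma\}$ of cardinality $\kappa$; in particular any two of these conditions are compatible. By Lemma \ref{cond-and-function}(3), compatibility of $p(f_\xi,\varepsilon/2)$ with $p(f_\eta,\varepsilon/2)$ yields $\|f_\xi-f_\eta\|_\infty\leq1+\varepsilon/2<1+\varepsilon$, so $\{f_\xi:\xi\in\Gamma\}$ has diameter at most $1+\varepsilon$, as required.

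\textbf{For $(3)\Rightarrow(1)$:} I would argue contrapositively, or directly, by starting from an arbitrary family $\{p_\xi:\xi<\kappa\}\subseteq\PP_\A$ and producing the associated sphere elements $f_{p_\xi}$ from Definition \ref{fp}, each of which has $\|f_{p_\xi}\|_\infty=1$ provided $A_{p_\xi}\cup B_{p_\xi}\neq\emptyset$ (a case one must handle separately, but at most one $p_\xi$ can be $(\emptyset,\emptyset)$ up to the countable fibers, and one can discard degenerate conditions while keeping a subfamily of size $\kappa$). Applying (3) to $\{f_{p_\xi}:\xi<\kappa\}$ yields some $0<\varepsilon<1$ and $\Gamma\subseteq\kappa$ of size $\kappa$ with $\operatorname{diam}\{f_{p_\xi}:\xi\in\Gamma\}\leq1+\varepsilon<2$. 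By Lemma \ref{cond-and-function}(2), $\|f_{p_\xi}-f_{p_\eta}\|_\infty$ is either $\leq1$ or exactly $2$; since it is strictly below $2$ for $\xi,\eta\in\Gamma$, it must be $\leq1$, and then Lemma \ref{cond-and-function}(1) gives that $p_\xi$ and $p_\eta$ are compatible. Thus $\{p_\xi:\xi\in\Gamma\}$ is a pairwise compatible subfamily of size $\kappa$. To upgrade pairwise compatibility to true centeredness (the definition of precaliber requires a centered, not merely linked, subfamily), I would invoke Lemma \ref{thinning} to reduce to conditions of the canonical form with pairwise disjoint index sets $a'_\xi,b'_\xi$ and common finite part; for such conditions one checks directly that a common lower bound of any finite subset exists by taking unions of the $A$-parts and $B$-parts.

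\textbf{Main obstacle.} The delicate point is this last step in $(3)\Rightarrow(1)$: the geometric hypothesis naturally delivers a \emph{linked} (pairwise compatible) family, whereas precaliber demands a \emph{centered} family, and these notions differ in general. The remedy is to normalize the conditions via Lemma \ref{thinning}, after which the $a'_\xi,b'_\xi$ are pairwise disjoint outside a fixed finite level $m$; for conditions in this canonical form, pairwise compatibility forces the finitary obstructions to vanish uniformly, so finite unions $\bigl(\bigcup_{i}A_{p_{\xi_i}},\bigcup_{i}B_{p_{\xi_i}}\bigr)$ genuinely form a common extension, yielding centeredness. I would make sure to record that Lemma \ref{thinning} preserves the compatibility relation along the subfamily of size $\kappa$, which is precisely its conclusion, so no information is lost in the passage.
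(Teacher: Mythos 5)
Your proof is correct and follows essentially the same route as the paper: $(1)\Rightarrow(2)$ via the conditions $p(f_\xi,\varepsilon)$ and Lemma \ref{cond-and-function}(3), $(2)\Rightarrow(3)$ trivially, and $(3)\Rightarrow(1)$ via the vectors $f_{p_\xi}$ and Lemma \ref{cond-and-function}(1),(2). The only divergence is your detour through Lemma \ref{thinning} to upgrade pairwise compatibility to centeredness; this is unnecessary, since in $\PP_\A$ any pairwise compatible finite set $\{p_1,\dots,p_k\}$ already has the common lower bound $(\bigcup_i A_{p_i},\ \bigcup_i B_{p_i})$ --- an equivalence the paper itself records in the proof of Proposition \ref{sphereT-PA}.
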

\begin{proof} Assume (1). Let $\{f_\xi: \xi<\kappa\}\subseteq S_{\X_\A}$ and $0<\varepsilon<1$.
By the precaliber $\kappa$ there is $\Gamma\subseteq\kappa$ such that
$\{p(f_\xi, \varepsilon): \xi\in \Gamma\}$ is pairwise compatible.
So by  Lemma \ref{cond-from-function} (3) $\|f_\xi-f_\eta\|_\infty\leq 1+\varepsilon$ for all distinct
$\xi, \eta\in \Gamma$. The implication from (2) to (3) is obvious.

Now assume (3). Let $\{p_\xi: \xi<\kappa\}\subseteq \PP_\A$. 
By (3) there is $\Gamma\subseteq \kappa$
of cardinality $\kappa$ and $0<\varepsilon<1$ such that $\|f_{p_\xi}-f_{p_\eta}\|_\infty\leq 1+\varepsilon$
for any two distinct $\xi, \eta\in\Gamma$.  It follows from Lemma \ref{cond-and-function} (1), (2) that
$\{p_\xi: \xi<\kappa\}$ are pairwise compatible.

\end{proof}

\begin{proposition}\label{sphere-sigma} Suppose that $\A$ is an $\ad$-family.
The following conditions are equivalent:
\begin{enumerate} 
\item $\PP_\A$ is $\sigma$-centered,
\item For every $0<\varepsilon<1$ the sphere of $\X_\A$ is the union
of countably many sets $\X_n$ such that $\|x-y\|_\infty\leq 1+\varepsilon$ for all $x, y\in \X_n$ for all $n\in \N$.
\end{enumerate}
\end{proposition}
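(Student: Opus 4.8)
The plan is to prove the two implications separately, transferring between conditions $p\in\PP_\A$ and the sphere elements $f_p$ of Definition \ref{fp} by means of the two transfer lemmas \ref{cond-from-function} and \ref{cond-and-function}. Before starting I would record a preliminary observation that drives everything: in $\PP_\A$ \emph{pairwise compatibility coincides with being centered}. Indeed, from Definition \ref{def-P} two conditions $p,q$ are compatible exactly when $A_p\cap B_q=\emptyset=A_q\cap B_p$, so if $p_1,\dots,p_k$ are pairwise compatible then $A_{p_i}\cap B_{p_j}=\emptyset$ for all $i,j$; hence the coordinatewise union $(\bigcup_i A_{p_i},\bigcup_i B_{p_i})$ has disjoint coordinates, each of which is $=^*$ a finite union of members of $\A$, so it is again a condition and is a common lower bound of $p_1,\dots,p_k$. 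Thus a subfamily of $\PP_\A$ is centered if and only if it is pairwise compatible.

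For $(1)\Rightarrow(2)$ I would fix $0<\varepsilon<1$ and write $\PP_\A=\bigcup_n\PP_n$ with each $\PP_n$ centered. For each $f\in S_{\X_\A}$ I choose the condition $p(f,\varepsilon)$ provided by Lemma \ref{cond-from-function}, and let $\X_n$ collect those $f$ for which $n$ is least with $p(f,\varepsilon)\in\PP_n$; this partitions the sphere. If $x,y\in\X_n$, then $p(x,\varepsilon)$ and $p(y,\varepsilon)$ belong to the centered set $\PP_n$, hence are compatible, so Lemma \ref{cond-and-function}(3) yields $\|x-y\|_\infty\le 1+\varepsilon$, which is exactly the required diameter bound.

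For $(2)\Rightarrow(1)$ I would use the hypothesis for a single value, say $\varepsilon=\tfrac12$, obtaining $S_{\X_\A}=\bigcup_n\X_n$ with each piece of $\|\ \|_\infty$-diameter at most $1+\varepsilon<2$. Then I send each nontrivial condition to its function by setting $\PP_n=\{p\in\PP_\A: A_p\cup B_p\ne\emptyset,\ f_p\in\X_n\}$; note $f_p\in S_{\X_\A}$ precisely when $A_p\cup B_p\ne\emptyset$, and the unique remaining condition $(\emptyset,\emptyset)$ is the largest element, compatible with everything, so it may be adjoined to $\PP_0$. If $p,q\in\PP_n$ then $\|f_p-f_q\|_\infty\le 1+\varepsilon<2$, and by the dichotomy in Lemma \ref{cond-and-function}(2) this forces $\|f_p-f_q\|_\infty\le 1$, so $p\parallel q$ by Lemma \ref{cond-and-function}(1). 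Hence each $\PP_n$ is pairwise compatible, therefore centered by the preliminary observation, and $\PP_\A=\bigcup_n\PP_n$ witnesses $\sigma$-centeredness.

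The one genuinely delicate point is the preliminary observation equating centeredness with pairwise compatibility; without it the $(2)\Rightarrow(1)$ argument would only deliver $\sigma$-linkedness of $\PP_\A$. Everything else is a routine application of the transfer lemmas, and it is worth stressing that a single $\varepsilon\in(0,1)$ suffices for the backward direction exactly because any value $1+\varepsilon$ strictly below $2$ already rules out incompatibility through the gap in Lemma \ref{cond-and-function}(2).
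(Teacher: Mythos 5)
Your proof is correct and follows essentially the same route as the paper: forward direction via $p(f,\varepsilon)$ and Lemma \ref{cond-and-function}(3), backward direction via $p\mapsto f_p$ and the dichotomy in Lemma \ref{cond-and-function}(1),(2). The preliminary observation that pairwise compatibility equals centeredness in $\PP_\A$ (which the paper leaves implicit here but records explicitly in the proof of Proposition \ref{sphereT-PA}) and the separate treatment of the trivial condition $(\emptyset,\emptyset)$ are worthwhile details that the paper glosses over.
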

\begin{proof} Assume (1). Fix $0<\varepsilon<1$. Let $\PP_n$ for $n\in \N$ be centered families
such that $\bigcup_{n\in \N}\PP_n=\PP_\A$.  Define 
$$\X_n=\{ f\in S_{\X_\A}:  p(f, \varepsilon)\in \PP_n\}.$$
It follows that the sphere of $\X_\A$ is covered by the $\X_n$'s.  If $f, g\in \X_n$, then
$p(f, \varepsilon), p(g, \varepsilon)\in \PP_n$ and so $p(f, \varepsilon)$ and $p(g, \varepsilon)$ are compatible.
So by Lemma \ref{cond-and-function} (3), we conclude that $\|f-g\|_\infty\leq 1+\varepsilon$ as required for (2).

Now assume (2).  Define
$$\PP_n=\{p\in \PP_\A: f_p\in \X_n\}.$$
Suppose that $p, q\in \PP_n$. Since $\|f_p-f_q\|_\infty\leq 1+\varepsilon$, Lemma \ref{cond-and-function} (1) and (2) imply that
$p$ is compatible with $q$ which completes the proof of the lemma.
\end{proof}

\begin{definition}\label{cond-from-function2} Suppose that $f\in \X_\A$ and $0<\varepsilon<1/2$. Then we define
$A_{p[f,\varepsilon]}=A\setminus B$ and $B_{p[f,\varepsilon]}=B\setminus A$, where
\begin{enumerate}
\item $A=\bigcup\{n\in \N: \exists C\in \A \  \lim_{k\in C}f(k)\geq 1-\varepsilon \  \& \ n\in C\ \& f(n)>1-2\varepsilon\}$,
\item $B=\bigcup\{n\in \N: \exists C\in \A \  \lim_{k\in C}f(k)\leq -1+\varepsilon \  \&\  n\in C\ \& f(n)<-1+2\varepsilon\}$.
\end{enumerate}
\end{definition}

\begin{lemma} Suppose $f\in \X_\A$ and $0<\varepsilon<1/2$. 
Then $p[f,\varepsilon]=(A_{p[f,\varepsilon]}, B_{p[f,\varepsilon]}) \in \PP_\A$.
\end{lemma}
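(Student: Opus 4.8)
The plan is to verify directly the three clauses defining membership in $\PP_\A$ from Definition \ref{def-P}: that the two coordinates are disjoint, and that there are finite subfamilies $a, b \in [\A]^{<\omega}$ witnessing $A_{p[f,\varepsilon]} =^* \bigcup a$ and $B_{p[f,\varepsilon]} =^* \bigcup b$. Disjointness is free, since by construction $A_{p[f,\varepsilon]} = A \setminus B$ and $B_{p[f,\varepsilon]} = B \setminus A$ are disjoint. Thus the whole task reduces to producing the finite subfamilies, and the engine for this is Lemma \ref{charactXA}.

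First I would set $a = \{C \in \A : \lim_{k \in C} f(k) \geq 1 - \varepsilon\}$ and $b = \{C \in \A : \lim_{k \in C} f(k) \leq -1 + \varepsilon\}$; these make sense because Lemma \ref{charactXA} guarantees that $\lim_{k \in C} f(k)$ exists for every $C \in \A$. Since $0 < \varepsilon < 1/2$ gives $1 - \varepsilon > 1/2$, both $a$ and $b$ are contained in $\{C \in \A : |\lim_{k \in C} f(k)| > 1/2\}$, which is finite by the second part of Lemma \ref{charactXA}; hence $a, b \in [\A]^{<\omega}$. I would also record that $a \cap b = \emptyset$, as a single limit cannot be simultaneously $\geq 1 - \varepsilon$ and $\leq -1 + \varepsilon$.

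Next I would show $A =^* \bigcup a$, and symmetrically $B =^* \bigcup b$. The inclusion $A \subseteq \bigcup a$ holds exactly: each $n \in A$ lies, by definition of $A$, in some $C \in \A$ with $\lim_{k \in C} f(k) \geq 1 - \varepsilon$, that is, $C \in a$. For the reverse almost-inclusion, I would fix $C \in a$; since $\lim_{k \in C} f(k) \geq 1 - \varepsilon > 1 - 2\varepsilon$, all but finitely many $n \in C$ satisfy $f(n) > 1 - 2\varepsilon$, and for each such $n$ the member $C$ itself witnesses $n \in A$. Therefore $C \subseteq^* A$, and as $a$ is finite, $\bigcup a \subseteq^* A$, giving $A =^* \bigcup a$.

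Finally I would pass from $A, B$ to the actual coordinates $A \setminus B$ and $B \setminus A$. Because $a \cap b = \emptyset$ and $\A$ is almost disjoint, every $C \cap C'$ with $C \in a$ and $C' \in b$ is finite, so the finitely many such intersections show that $\bigcup a \cap \bigcup b$ is finite; combined with $A =^* \bigcup a$ and $B =^* \bigcup b$ this yields $A \setminus B =^* \bigcup a \setminus \bigcup b =^* \bigcup a$ and likewise $B \setminus A =^* \bigcup b$. Hence $a$ and $b$ witness $p[f,\varepsilon] \in \PP_\A$. The only step demanding genuine care is the almost-inclusion $\bigcup a \subseteq^* A$, where one must exploit the convergence of $f$ along each member of the finite family $a$ to play the pointwise threshold $1 - 2\varepsilon$ off against the limit threshold $1 - \varepsilon$; the rest is routine bookkeeping with almost disjointness and Lemma \ref{charactXA}.
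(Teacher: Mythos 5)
Your proof is correct and follows essentially the same route as the paper's (much terser) argument: both reduce the claim to the finiteness, via Lemma \ref{charactXA}, of the family of $C\in\A$ with $|\lim_{k\in C}f(k)|\geq 1-\varepsilon$, and your version simply spells out the routine $=^*$ bookkeeping that the paper leaves implicit.
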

\begin{proof} Clearly the sets $A_{p[f,\varepsilon]}$ and $B_{p[f,\varepsilon]}$ are disjoint. So we need to
prove that these sets are almost equal to the union of some finite subfamily of $\A$. Let us focus on $A_{p[f,\varepsilon]}$,
the case of $B_{p[f,\varepsilon]}$ is analogous. It is enough to prove that the family of $C\in \A$ such that
$\lim_{n\in C}f(n)\geq 1-\varepsilon$ is finite.  This follows from Lemma \ref{charactXA}.
\end{proof}

\begin{remark} Note that given $f\in \X_\A$ the pair $(\{n\in \N: f(n)\geq \varepsilon\}, \{n\in \N: f(n)\leq -\varepsilon\}$
or the pair $(\{n\in \N: f(n)> \varepsilon\}, \{n\in \N: f(n)< -\varepsilon\}$ do not need to be the conditions of $\PP_\A$.
This is because $\varepsilon1_A+\Sigma_{n\in \N}[(-1)^n/n]1_{n}$ is in $\X_\A$.
\end{remark}

\begin{lemma}\label{two-conditions} Suppose $f\in \X_\A$ and $0<\varepsilon<1/3$.  Then
$A_{p[f,\varepsilon]}\subseteq A_{p(f,\varepsilon)}$ and $B_{p[f,\varepsilon]}\subseteq B_{p(f,\varepsilon)}$.
\end{lemma}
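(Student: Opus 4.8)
The plan is to compare the two conditions pointwise, the entire content being the numerical fact that the thresholds $1-2\varepsilon$ and $-1+2\varepsilon$ built into Definition \ref{cond-from-function2} dominate the thresholds $\varepsilon$ and $-\varepsilon$ built into Lemma \ref{cond-from-function}, which is precisely what the hypothesis $\varepsilon<1/3$ guarantees. So I would not use the existence of the almost disjoint witnesses $C\in\A$ at all beyond the pointwise bound on $f(n)$ they supply.

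First I would take $n\in A_{p[f,\varepsilon]}$. Since $A_{p[f,\varepsilon]}=A\setminus B\subseteq A$, clause (1) of Definition \ref{cond-from-function2} provides $C\in\A$ with $n\in C$, $\lim_{k\in C}f(k)\geq 1-\varepsilon$, and, crucially, $f(n)>1-2\varepsilon$. Because $\varepsilon<1/3$ we have $1-2\varepsilon>\varepsilon$, hence $f(n)>\varepsilon$, so in particular $f(n)\geq\varepsilon$. By clause (1) of Lemma \ref{cond-from-function} this places $n$ in $\{m: f(m)\geq\varepsilon\}\subseteq A_{p(f,\varepsilon)}$, yielding $A_{p[f,\varepsilon]}\subseteq A_{p(f,\varepsilon)}$. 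The argument for $B$ is symmetric: for $n\in B_{p[f,\varepsilon]}=B\setminus A\subseteq B$, clause (2) of Definition \ref{cond-from-function2} gives $f(n)<-1+2\varepsilon$, and $\varepsilon<1/3$ forces $-1+2\varepsilon<-\varepsilon$, so $f(n)<-\varepsilon$, whence $n\in\{m: f(m)\leq-\varepsilon\}\subseteq B_{p(f,\varepsilon)}$ by clause (2) of Lemma \ref{cond-from-function}.

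Since the whole argument reduces to these two elementary threshold inequalities, there is no substantive obstacle. The only point worth flagging is that one should discard the set-difference bookkeeping ($A\setminus B$ and $B\setminus A$) at the very first step, retaining only the containments $A_{p[f,\varepsilon]}\subseteq A$ and $B_{p[f,\varepsilon]}\subseteq B$, so that the pointwise lower and upper bounds on $f(n)$ remain available; the sharpness of the constant $1/3$ is exactly the boundary case $1-2\varepsilon=\varepsilon$, which is why the strict inequality $\varepsilon<1/3$ appears in the hypothesis.
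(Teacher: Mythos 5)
Your proof is correct, and it is the evident argument the paper intends (the paper states this lemma without proof): since every $n\in A_{p[f,\varepsilon]}\subseteq A$ satisfies $f(n)>1-2\varepsilon\geq\varepsilon$ when $\varepsilon\leq 1/3$, clause (1) of Lemma \ref{cond-from-function} puts $n$ into $A_{p(f,\varepsilon)}$, and symmetrically for $B$. (Minor quibble: because the bound in Definition \ref{cond-from-function2} is strict and the one in Lemma \ref{cond-from-function} is not, the argument would even survive $\varepsilon=1/3$, so the strictness of $\varepsilon<1/3$ is not quite the sharp boundary you describe; this does not affect correctness.)
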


\begin{proposition}\label{C_0-antiramsey} Let $0<\varepsilon<1/2$.
Let $\A$ be an   $\ad$-family.    The following conditions are equivalent:
\begin{enumerate} 
\item $\A$ is an antiramsey  $\ad$-family.   
\item Whenever  $\{f_\xi:\xi<\omega_1\}\subseteq  S_{\X_\A}$ is such that 
$\{p[f_\xi, \varepsilon/4]: \xi<\omega_1\}$ consists of essentially distinct conditions,
then 
\begin{enumerate}
\item there are $\xi<\eta<\omega_1$ such that $\|f_\xi-f_\eta\|_\infty>2-\varepsilon$,
\item there are $\xi<\eta<\omega_1$ such that $\|f_\xi-f_\eta\|_\infty<1+\varepsilon$.
\end{enumerate}
\end{enumerate}
\end{proposition}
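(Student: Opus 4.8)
The plan is to prove the two implications separately, in each case reading compatibility and incompatibility of conditions off the distances between associated functions. To a function $f\in S_{\X_\A}$ I attach the two conditions $p[f,\delta]$ (Definition \ref{cond-from-function2}) and $p(f,\delta)$ (Lemma \ref{cond-from-function}). The guiding asymmetry is that \emph{incompatibility} of $p[\,\cdot\,]$-conditions forces the functions far apart, while \emph{compatibility} of $p(\,\cdot\,)$-conditions forces them close. Indeed, if $p[f,\varepsilon/4]$ and $p[g,\varepsilon/4]$ are incompatible then some $n$ lies in $A_{p[f,\varepsilon/4]}\cap B_{p[g,\varepsilon/4]}$ (or the symmetric intersection), and Definition \ref{cond-from-function2} gives $f(n)>1-\varepsilon/2$ and $g(n)<-1+\varepsilon/2$, so $\|f-g\|_\infty>2-\varepsilon$; on the other hand Lemma \ref{cond-and-function}(3) says that compatibility of $p(f,\varepsilon/2)$ and $p(g,\varepsilon/2)$ yields $\|f-g\|_\infty\le 1+\varepsilon/2<1+\varepsilon$.

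For (2)$\Rightarrow$(1) I would start from an uncountable family $\{p_\xi:\xi<\omega_1\}$ of essentially distinct conditions and, discarding at most one condition, assume $A_{p_\xi}\cup B_{p_\xi}\neq\emptyset$, so that $f_\xi:=f_{p_\xi}$ (Definition \ref{fp}) lies in $S_{\X_\A}$ and takes values in $\{-1,0,1\}$. Since $\lim_{k\in C}f_\xi(k)$ equals $1$, $-1$ or $0$ according as $C\in a_{p_\xi}$, $C\in b_{p_\xi}$, or neither, the condition $p[f_\xi,\varepsilon/4]$ has exactly the finite subfamilies $a_{p_\xi},b_{p_\xi}$; hence $\{p[f_\xi,\varepsilon/4]:\xi<\omega_1\}$ is essentially distinct and (2) applies. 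A pair with $\|f_\xi-f_\eta\|_\infty>2-\varepsilon$ then has $\|f_\xi-f_\eta\|_\infty=2$ (as $\varepsilon<1$), so $p_\xi,p_\eta$ are incompatible by Lemma \ref{cond-and-function}(2), and a pair with $\|f_\xi-f_\eta\|_\infty<1+\varepsilon<2$ has $\|f_\xi-f_\eta\|_\infty\le 1$, so $p_\xi,p_\eta$ are compatible by Lemma \ref{cond-and-function}(1)--(2); this is precisely antiramseyness (Definition \ref{def-antiramsey}).

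For (1)$\Rightarrow$(2), assume $\A$ antiramsey and $\{p[f_\xi,\varepsilon/4]:\xi<\omega_1\}$ essentially distinct. Clause (a) is immediate: antiramseyness produces an incompatible pair among the $p[f_\xi,\varepsilon/4]$, and the computation above gives $\|f_\xi-f_\eta\|_\infty>2-\varepsilon$. Clause (b) is the crux, because compatibility of $p[\,\cdot\,]$-conditions does not bound $\|f_\xi-f_\eta\|_\infty$ (the difference can be large at points where neither function is near $\pm1$). The idea is to move to the $p(\,\cdot\,)$-conditions: a point of $A_{p[f_\xi,\varepsilon/4]}$ has $f_\xi>1-\varepsilon/2\ge\varepsilon/2$, so $A_{p[f_\xi,\varepsilon/4]}\subseteq A_{p(f_\xi,\varepsilon/2)}$ by Lemma \ref{cond-from-function}, whence $a_{p[f_\xi,\varepsilon/4]}\subseteq a_{p(f_\xi,\varepsilon/2)}$ and likewise for the $b$'s. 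I would then run the $\Delta$-system lemma successively on the finite sets $a_{p(f_\xi,\varepsilon/2)}$ and $b_{p(f_\xi,\varepsilon/2)}$: if a root were attained uncountably often, then $a_{p[f_\xi,\varepsilon/4]}$ (resp.\ $b_{p[f_\xi,\varepsilon/4]}$), being a subfamily of a fixed finite set, would take only finitely many values, contradicting essential distinctness. Hence on an uncountable $\Gamma$ the conditions $\{p(f_\xi,\varepsilon/2):\xi\in\Gamma\}$ are essentially distinct, and a second application of antiramseyness gives a compatible pair; Lemma \ref{cond-and-function}(3) then yields $\|f_\xi-f_\eta\|_\infty\le 1+\varepsilon/2<1+\varepsilon$, establishing (b).

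The main obstacle is exactly this asymmetry: the antiramsey property applied to a single family cannot by itself produce both a far pair and a close pair, since for the $p[\,\cdot\,]$-conditions only incompatibility is geometrically informative while for the $p(\,\cdot\,)$-conditions only compatibility is. The technical heart of the argument is therefore the inclusion $A_{p[f,\varepsilon/4]}\subseteq A_{p(f,\varepsilon/2)}$ (and its analogue for $B$), combined with the $\Delta$-system/pigeonhole step showing that essential distinctness passes from the $p[\,\cdot\,]$-conditions to the $p(\,\cdot\,)$-conditions after thinning, which is what licenses the second use of antiramseyness.
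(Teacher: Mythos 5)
Your proof is correct and follows essentially the same route as the paper's: for (1)$\Rightarrow$(2)(a) you read the $>2-\varepsilon$ estimate off an incompatible pair of $p[\,\cdot\,]$-conditions, for (2)(b) you transfer essential distinctness to the $p(\,\cdot\,)$-conditions via the inclusion $A_{p[f,\varepsilon/4]}\subseteq A_{p(f,\,\cdot\,)}$ together with a $\Delta$-system/pigeonhole step and then invoke compatibility, and for (2)$\Rightarrow$(1) you evaluate the hypothesis on the $\{-1,0,1\}$-valued functions $f_{p}$. The only immaterial deviations are that you work with $p(f,\varepsilon/2)$ where the paper uses $p(f,\varepsilon/4)$, and that you apply (2) to $f_{p_\xi}$ directly instead of first normalizing the conditions to $p_\xi'=(\bigcup a_\xi\setminus m,\bigcup b_\xi\setminus m)$ as the paper does.
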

\begin{proof}  First let us prove that (1) implies (2).
Let $a_\xi, b_\xi\in [\A]^{<\omega}$, $m_\xi\in \N$, $F_\xi, G_\xi\in [\N]^{<\omega}$ be such that
$A_{p(f_\xi, \varepsilon/4)}=(\bigcup a_\xi\setminus m_\xi)\cup F_\xi$ and
$B_{p(f_\xi, \varepsilon/4)}=(\bigcup b_\xi\setminus m_\xi)\cup G_\xi$.
By passing to an uncountable subset we may assume that $m_\xi=m$, $G_\xi=G$ and $F_\xi=F$
for some  $m\in \N$, $F, G\in [\N]^{<\omega}$ and that $\{a_\xi: \xi<\omega_1\}$ forms
a $\Delta$-system with root $\Delta_1$ and   that $\{b_\xi: \xi<\omega_1\}$ forms
a $\Delta$-system with root $\Delta_2$.  Both of these $\Delta$-systems must be uncountable,
because otherwise we would have $a_\xi\subseteq \Delta_1$ or $b_\xi\subseteq \Delta_2$
for uncountably many $\xi<\omega_1$ and this would contradict, by Lemma \ref{two-conditions} the fact that
all $p[f_\xi, \varepsilon/4]$ are essentially distinct. It follows that we may assume that
$\{p(f_\xi, \varepsilon/4): \xi<\omega_1\}$ consists of essentially distinct conditions as well.

Let $\xi, \eta<\omega$ are such that $p[f_\xi, \varepsilon/4]$ and $p[f_\eta, \varepsilon/4]$ are incompatible. 
This means that there is $n\in A_{p[f_\xi, \varepsilon/4]}\cap B_{p[f_\eta, \varepsilon/4]}$ or
there is $n\in A_{p[f_\xi, \varepsilon/4]}\cap B_{p[f_\eta, \varepsilon/4]}$. By Definition \ref{cond-from-function2}
in the first case we have $f_\xi(n)>1-2\varepsilon/4$ and $f_\eta(n)<-1+2\varepsilon/4$ 
and in the second case $f_\xi(n)>-1+2\varepsilon/4$ and $f_\eta(n)>1-2\varepsilon/4$.
In any case $\|f_\xi-f_\eta\|_\infty>2-\varepsilon$.

Let $\xi, \eta<\omega$ be such that $p(f_\xi, \varepsilon/4)$ and $p(f_\eta, \varepsilon/4)$ are compatible. 
Then $\|f_\xi-f_\eta\|_\infty\leq 1+\varepsilon/4$ by Lemma \ref{cond-and-function} (3).

Now let us prove that (2) implies (1). Consider a family $\{p_\xi: \xi<\omega_1\}$ od essentially distinct
conditions of $\PP_\A$.  
Let $a_\xi, b_\xi\in [\A]^{<\omega}$, $m_\xi\in \N$, $F_\xi, G_\xi\in [\N]^{<\omega}$ be such that
$A_{p_\xi}=(\bigcup a_\xi\setminus m_\xi)\cup F_\xi$ and
$B_{p_\xi}=(\bigcup b_\xi\setminus m_\xi)\cup G_\xi$.
By passing to an uncountable subset we may assume that $m_\xi=m$, $G_\xi=G$ and $F_\xi=F$
for some  $m\in \N$, $F, G\in [\N]^{<\omega}$.
Let $p_\xi'=(\bigcup a_\xi\setminus m, \bigcup b_\xi\setminus m)$. Note that
$p_\xi'$ and $p_{\eta}'$ are compatible if and only if $p_\xi$ and $p_\eta$ are compatible
and that $p_\xi$'s are essentially distinct.
Also note that $p[f_{p_\xi'}, \varepsilon/4]=p_{\xi}'$ for every $\xi<\omega_1$. 
So (2) yields  $\xi<\eta<\omega_1$ such that $\|f_{p_\xi}-f_{p_\eta}\|_\infty>2-\varepsilon$,
which gives that $p_\xi$ and $p_\eta$ are incompatible by Lemma \ref{cond-and-function} (1), and also
yields  $\xi<\eta<\omega_1$ such that $\|f_{p_\xi}-f_{p_\eta}\|_\infty<1+\varepsilon$,
which gives that $p_\xi$ and $p_\eta$ are compatible by Lemma \ref{cond-and-function} (2). This completes the proof of (1).
\end{proof}

\begin{proposition}\label{C_0-luzin} Let $0<\varepsilon<1/2$. 
Let $\A$ be an $\ad$-family. The following conditions are equivalent:
\begin{enumerate}
\item $\A$ is an $L$-family.
\item Whenever  $\{f_\xi:\xi<\omega_1\}\subseteq  S_{\X_\A}$ is such that 
$\{p[f_\xi, \varepsilon/4]: \xi<\omega_1\}$ consists of essentially distinct conditions.
Then $\{f_\xi:\xi<\omega_1\}$ is a union of countably many $(2-\varepsilon)$-separated subfamilies.
\end{enumerate}
\end{proposition}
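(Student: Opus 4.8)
The plan is to run the argument of Proposition \ref{C_0-antiramsey} but to keep track of the \emph{full} decomposition rather than merely the existence of one compatible and one incompatible pair, so that the splitting of essentially distinct conditions into countably many antichains furnished by the $L$-family property (Definition \ref{def-LL}) is matched exactly by a splitting of $\{f_\xi:\xi<\omega_1\}$ into countably many $(2-\varepsilon)$-separated sets. The bridge in both directions is that incompatibility of the conditions $p[f_\xi,\varepsilon/4]$ forces $\|f_\xi-f_\eta\|_\infty>2-\varepsilon$, as already computed in the proof of Proposition \ref{C_0-antiramsey}, while compatibility of the canonical conditions keeps the associated $\pm 1$-valued functions within distance $1$ by Lemma \ref{cond-and-function}.

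For (1)$\Rightarrow$(2) I would assume $\A$ is an $L$-family and take $\{f_\xi:\xi<\omega_1\}\subseteq S_{\X_\A}$ with the $p[f_\xi,\varepsilon/4]$ pairwise essentially distinct; in particular $\xi\mapsto p[f_\xi,\varepsilon/4]$ is injective. Applying the $L$-family property to $\{p[f_\xi,\varepsilon/4]:\xi<\omega_1\}$ I write it as $\bigcup_{n\in\N}\PP_n$ with each $\PP_n$ pairwise incompatible, and put $\Y_n=\{f_\xi: p[f_\xi,\varepsilon/4]\in\PP_n\}$. For distinct $f_\xi,f_\eta\in\Y_n$ the conditions $p[f_\xi,\varepsilon/4]$ and $p[f_\eta,\varepsilon/4]$ are incompatible, so (by Definition \ref{cond-from-function2}) any incompatibility witness $k$ gives $f_\xi(k)>1-\varepsilon/2$ and $f_\eta(k)<-1+\varepsilon/2$, up to interchanging $\xi$ and $\eta$, whence $\|f_\xi-f_\eta\|_\infty>2-\varepsilon$. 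Thus each $\Y_n$ is $(2-\varepsilon)$-separated and $\{f_\xi:\xi<\omega_1\}=\bigcup_n\Y_n$.

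For (2)$\Rightarrow$(1) I would take an essentially distinct $\PP=\{p_\xi:\xi<\omega_1\}\subseteq\PP_\A$ and write $A_{p_\xi}=(\bigcup a_\xi\setminus m_\xi)\cup F_\xi$, $B_{p_\xi}=(\bigcup b_\xi\setminus m_\xi)\cup G_\xi$. First I partition $\PP$ according to the triple $(m_\xi,F_\xi,G_\xi)$, which takes only countably many values, reducing to a single common triple $(m,F,G)$ with $F,G\subseteq m$ disjoint; after discarding the at most one member with $a_\xi=\emptyset$ and the at most one with $b_\xi=\emptyset$, I set $p_\xi'=(\bigcup a_\xi\setminus m,\bigcup b_\xi\setminus m)$. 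As in Proposition \ref{C_0-antiramsey}, $p_\xi'$ and $p_\eta'$ are compatible exactly when $p_\xi$ and $p_\eta$ are, the $p_\xi'$ remain essentially distinct, and the $\{-1,0,1\}$-valued functions $f_{p_\xi'}$ (Definition \ref{fp}) lie in $S_{\X_\A}$ and satisfy $p[f_{p_\xi'},\varepsilon/4]=p_\xi'$. Applying (2) to $\{f_{p_\xi'}\}$ produces a splitting into countably many $(2-\varepsilon)$-separated families $\{f_{p_\xi'}:\xi\in\Gamma_k\}$; for distinct $\xi,\eta\in\Gamma_k$ one has $\|f_{p_\xi'}-f_{p_\eta'}\|_\infty\geq 2-\varepsilon>1$, so by Lemma \ref{cond-and-function}(2) $p_\xi'$ and $p_\eta'$, and hence $p_\xi$ and $p_\eta$, are incompatible. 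Therefore each class, and so $\PP$, is a countable union of antichains.

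The individual steps are short, so the delicate point is getting the thresholds to match. The key computation is the verification that $p[f_{p_\xi'},\varepsilon/4]=p_\xi'$: this is where the slack $2(\varepsilon/4)$ built into Definition \ref{cond-from-function2} and the finiteness of the set of large $\A$-limits (Lemma \ref{charactXA}, through Lemma \ref{two-conditions}) are used, together with the observation that compatibility is preserved under $p_\xi\mapsto p_\xi'$ precisely because $F,G\subseteq m$ are disjoint. I would also note that the whole argument is phrased for $\omega_1$-indexed essentially distinct families, matching the form of condition (2); this is exactly the generality in which the $L$-family property is applied here, for instance to Luzin families, whose $\PP_\A$ admits essentially distinct families only of size at most $\omega_1$.
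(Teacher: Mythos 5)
Your proposal is correct and follows essentially the same route as the paper's own proof: decompose $\{p[f_\xi,\varepsilon/4]\}$ into antichains and read off the separation $\|f_\xi-f_\eta\|_\infty>2-\varepsilon$ from an incompatibility witness for (1)$\Rightarrow$(2), and for (2)$\Rightarrow$(1) reduce by a countable partition to a fixed triple $(m,F,G)$, pass to $p_\xi'=(\bigcup a_\xi\setminus m,\bigcup b_\xi\setminus m)$, use $p[f_{p_\xi'},\varepsilon/4]=p_\xi'$, and translate $(2-\varepsilon)$-separation back into incompatibility via Lemma \ref{cond-and-function}. Your extra care about the at most two degenerate conditions with $a_\xi=\emptyset$ or $b_\xi=\emptyset$ is a harmless refinement the paper leaves implicit.
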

\begin{proof} First let us prove that (1) implies (2).
For $n\in \N$ let $\Gamma_n\subseteq\omega_1$ be such that $\{p[f_\xi, \varepsilon/4]: \xi\in \Gamma_n\}$ is
pairwise incompatible for each $n\in \N$ and $\bigcup_{n\in \N}\Gamma_n=\omega_1$.
Fixing $n\in \N$ and distinct $\xi, \eta\in \Gamma_n$ 
there is $k\in A_{p[f_\xi, \varepsilon/4]}\cap B_{p[f_\eta, \varepsilon/4]}$ or
there is $k\in A_{p[f_\xi, \varepsilon/4]}\cap B_{p[f_\eta, \varepsilon/4]}$. By Definition \ref{cond-from-function2}
in the first case we have $f_\xi(k)>1-2\varepsilon/4$ and $f_\eta(k)<-1+2\varepsilon/4$ 
and in the second case $f_\xi(k)>-1+2\varepsilon/4$ and $f_\eta(k)>1-2\varepsilon/4$.
In any case $\|f_\xi-f_\eta\|_\infty>2-\varepsilon$.

Now let us prove that (2) implies (1). Consider a family $\{p_\xi: \xi<\omega_1\}$ od essentially distinct
conditions of $\PP_\A$.  
Let $a_\xi, b_\xi\in [\A]^{<\omega}$, $m_\xi\in \N$, $F_\xi, G_\xi\in [\N]^{<\omega}$ be such that
$A_{p_\xi}=(\bigcup a_\xi\setminus m_\xi)\cup F_\xi$ and
$B_{p_\xi}=(\bigcup b_\xi\setminus m_\xi)\cup G_\xi$.
By passing to a part of a countable decomposition of $\omega_1$
 we may assume that $m_\xi=m$, $G_\xi=G$ and $F_\xi=F$
for some  $m\in \N$, $F, G\in [\N]^{<\omega}$.
Let $p_\xi'=(\bigcup a_\xi\setminus m, \bigcup b_\xi\setminus m)$. Note that
$p_\xi'$ and $p_{\eta}'$ are compatible if and only if $p_\xi$ and $p_\eta$ are compatible
and that $p_\xi$'s are essentially distinct.
Also note that $p[f_{p_\xi'}, \varepsilon/4]=p_{\xi}'$ for every $\xi<\omega_1$. 
So (2) yields  a decomposition $\omega_1=\bigcup_{n\in \N}\Gamma_n$ such that for each $n\in \N$ and 
distinct $\xi<\eta<\Gamma_n$ we have $\|f_{p_\xi}-f_{p_\eta}\|_\infty>2-\varepsilon$. This
which gives that $p_\xi$ and $p_\eta$ are incompatible by Lemma \ref{cond-and-function} (1).
This completes the proof of (1).

\end{proof}

\section{Subspaces of the Banach space $\X_\A$ for antiramsey and Luzin $\ad$-families}

In this section we pass to certain subspaces $\X_{\A, \phi}$ of $\X_\A$ in the case
of antiramsey and Luzin $\ad$-families $\A$ and exploit Propositions \ref{C_0-antiramsey}
and \ref{C_0-luzin} to obtain Banach spaces with interesting geometry of
the unit sphere (Proposition \ref{Y-antiramsey} and \ref{Y-luzin}). The point of
passing to these subspace is to free onself from the hypothesis on essentially distinct
elements of the sphere in Propositions \ref{C_0-antiramsey}
and \ref{C_0-luzin} and replace it with a condition which can be expressed
in the language of Banach spaces. Recall Definition \ref{def-antiramsey}
of an antiramsey $\ad$-family.

\begin{definition}\label{pairing}
 Let $\kappa$ be an uncountable cardinal and $\A$ be an $\ad$-family of cardinality at least $\kappa$. 
A function $\phi=(\phi_{1}, \phi_{-1}): \kappa\rightarrow \A$ is called 
a pairing of $\A$ if $\phi_1(\alpha)\not=\phi_{-1}(\alpha)$ for every $\alpha<\kappa$ and 
$\{\phi_{1}(\alpha), \phi_{-1}(\alpha)\}\cap\{\phi_{1}(\beta), \phi_{-1}(\beta)\}=\emptyset$ for any $\alpha<\beta<\kappa$.
\end{definition}

\begin{definition}\label{def-subspace}
Let  $\kappa$ be an uncountable cardinal and $\A$ be  an $\ad$-family of cardinality $\kappa$ 
and $\phi=(\phi_{1}, \phi_{-1}): \kappa\rightarrow \A$ be 
a pairing of $\A$. We define the Banach space $\X_{\A, \phi}$ as the linear span of
$$c_0\cup\{1_{\phi_{1}(\alpha)}-1_{\phi_{-1}(\alpha)}: \alpha<\kappa\}$$
in $\ell_\infty$ with the supremum norm. 
\end{definition}

\begin{proposition}\label{Y-antiramsey} Suppose that $\A$ is an antiramsey 
$\ad$-family and suppose that $\phi: \kappa\rightarrow \A$ is a pairing of $\A$. Let
 $\varepsilon\leq 1/3$ and 
 $\{f_\xi: \xi<\omega_1\}$ is a $(1-\varepsilon)$-separated subset of the sphere
$S_{\X_{\A, \phi}}$. Then  
\begin{enumerate}
\item there are $\xi<\eta<\omega_1$ such that
$\|f_\xi-f_\eta\|_\infty\leq 1+2\varepsilon$ and 
\item there are $\xi<\eta<\omega_1$ such that
$\|f_\xi-f_\eta\|_\infty\geq 2-8\varepsilon$. 
\end{enumerate}
\end{proposition}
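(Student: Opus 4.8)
The plan is to route the statement through the antiramsey dichotomy of Proposition \ref{C_0-antiramsey}, using the subspace $\X_{\A,\phi}$ precisely to convert the metric hypothesis of $(1-\varepsilon)$-separation into the combinatorial hypothesis ``essentially distinct'' required there. To each $f_\xi$ I associate the condition $p[f_\xi,\varepsilon/4]\in\PP_\A$ of Definition \ref{cond-from-function2}, whose $A$-part gathers the $C\in\A$ with $\lim_{n\in C}f_\xi(n)\ge 1-\varepsilon/4$ and whose $B$-part gathers those with $\lim_{n\in C}f_\xi(n)\le -1+\varepsilon/4$; these families are finite by Lemma \ref{charactXA}, so the conditions are well defined. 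The key structural feature of the subspace is that, for a pairing $\phi$, the limit of any $f\in\X_{\A,\phi}$ along $\phi_{-1}(\alpha)$ is the negative of its limit along $\phi_1(\alpha)$; hence $\phi_1(\alpha)$ belongs to the $A$-part of $p[f,\varepsilon/4]$ exactly when $\phi_{-1}(\alpha)$ belongs to its $B$-part, and vice versa. Thus the involution $\phi_1(\alpha)\leftrightarrow\phi_{-1}(\alpha)$ carries the $A$-part onto the $B$-part, so two such conditions differ in their $A$-parts if and only if they differ in their $B$-parts. Consequently \emph{essential} distinctness of the $p[f_\xi,\varepsilon/4]$ reduces to distinctness of the $A$-parts alone, and I split into two cases according to whether, after thinning, the $A$-parts can be made pairwise distinct on an uncountable set or (by pigeonhole, since otherwise one value is shared uncountably often) uncountably many $f_\xi$ share a common finite extremal structure $a^\ast,b^\ast$.

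In the first case I pass to an uncountable subfamily on which $\{p[f_\xi,\varepsilon/4]\}$ consists of essentially distinct conditions and apply Proposition \ref{C_0-antiramsey} with parameter $\varepsilon$ (legitimate since $\varepsilon\le 1/3<1/2$). This directly yields a pair $\xi<\eta$ with $\|f_\xi-f_\eta\|_\infty<1+\varepsilon\le 1+2\varepsilon$ and a pair with $\|f_\xi-f_\eta\|_\infty>2-\varepsilon\ge 2-8\varepsilon$, which are even stronger than (1) and (2). So the weaker constants in the statement are demanded only by the degenerate second case, where the near-extremal limit structure is constant and the threshold $1-\varepsilon/4$ of $p[\,\cdot\,,\varepsilon/4]$ is too high to see the coefficients that actually carry the separation.

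The second case is the main obstacle. Here uncountably many $f_\xi$ have the same $a^\ast,b^\ast$, so by Lemma \ref{charactXA} (finitely many large limits) together with Lemma \ref{thinning} and a $\Delta$-system/compactness argument I may assume that the finite families of large-limit pairs form $\Delta$-systems and that the $f_\xi$ agree pairwise within $\varepsilon$ on the supports attached to $a^\ast,b^\ast$ and on the finite, necessarily \emph{constant} (as $[\N]^{<\omega}$ is countable) set of coordinates where $|f_\xi|$ is near $1$. On those sets the functions then agree up to $O(\varepsilon)$, so the $(1-\varepsilon)$-separation must be carried by supports of moderate, sub-threshold limits. The crucial point is a coefficient-forcing phenomenon driven by the antiramsey property of $\A$: extracting conditions at the lower threshold $1-\varepsilon$ (where, by the reduction above and compactness, they can be taken essentially distinct) and invoking Definition \ref{def-antiramsey} produces a \emph{compatible} pair, whose non-clashing supports force, via the computation behind Lemma \ref{cond-and-function}(3), the dominant coefficients to lie within $O(\varepsilon)$ of $\pm1$ (otherwise that pair would be closer than $1-\varepsilon$); reading off this compatible pair gives $\|f_\xi-f_\eta\|_\infty\le 1+2\varepsilon$, conclusion (1). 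The same antiramsey application supplies an \emph{incompatible} pair, i.e.\ two supports of opposite sign that clash, and since the relevant coefficients are now within $O(\varepsilon)$ of $\pm1$, Lemma \ref{cond-and-function}(1)--(2) with Lemma \ref{two-conditions} give $\|f_\xi-f_\eta\|_\infty\ge 2-8\varepsilon$, conclusion (2); the slack between the thresholds $1-\varepsilon/4$ and $1-\varepsilon$ is exactly what degrades the clean bounds to $1+2\varepsilon$ and $2-8\varepsilon$. Threading this single threshold so that it is simultaneously low enough to capture all of the separation (hence to guarantee essential distinctness) and high enough to read off a near-antipodal pair is the delicate heart of the argument, and it is where the countability of $[\N]^{<\omega}$, the compactness of the finite exceptional coordinates, and the pairing symmetry are all used together.
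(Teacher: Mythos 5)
Your Case 1 is fine as far as it goes, but the second case --- which you correctly identify as the heart of the matter --- contains a genuine gap, and the device you propose there cannot work. The problem is that any condition of the form $p[f,\cdot]$ or $p(f,\cdot)$ extracted at a fixed threshold close to $1$ (you use $1-\varepsilon$) is blind to coefficients of moderate size, and it is precisely those coefficients that may carry the $(1-\varepsilon)$-separation once the near-extremal structure is constant. Concretely, if on the shared part the functions agree and the remaining generators enter with coefficients around $0.6$, then all of your threshold-$(1-\varepsilon)$ conditions coincide (so they are not essentially distinct, contrary to what you assert ``by the reduction above and compactness''), and their compatibility says nothing about whether the $0.6$-supports of $f_\xi$ and $f_\eta$ clash --- a clash there produces a distance of about $1.2$, so compatibility of your conditions does not yield $\|f_\xi-f_\eta\|_\infty\le 1+2\varepsilon$. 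Likewise for (2): an incompatible pair obtained from the antiramsey property only helps if the clash is guaranteed to occur at a support whose coefficient is within $O(\varepsilon)$ of $\pm1$; your argument never isolates such a coefficient before invoking antiramsey, so the clash could occur at a small coefficient and give no lower bound near $2$. (Also, your claim that the set of coordinates where $|f_\xi|$ is near $1$ is finite is false: the generators themselves have modulus $1$ on infinite sets.)

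The paper avoids the case split entirely and works at threshold $0$: each $f_\xi$ is approximated within $\varepsilon$ by a finite rational combination $h_\xi=g+\sum_i q_i(1_{\phi_1(\alpha_i^\xi)\setminus m}-1_{\phi_{-1}(\alpha_i^\xi)\setminus m})$ with all finite data stabilized and the ordinal tuples forming a $\Delta$-system with root of size $k<n$; the associated conditions are $(\{j:(h_\xi-g)(j)>0\},\{j:(h_\xi-g)(j)<0\})$, so they record every nonzero coefficient. The c.c.c.\ of $\PP_\A$ then produces a compatible pair, for which $\|h_\xi-h_\eta\|_\infty\le\max_i|q_i|\le 1$, giving (1); combined with the separation hypothesis this forces some non-root index $i$ to satisfy $|q_i|\ge 1-3\varepsilon$, and only then is the antiramsey property applied --- to the essentially distinct conditions $(\phi_1(\alpha_i^\xi)\setminus m,\phi_{-1}(\alpha_i^\xi)\setminus m)$ built from that single large coefficient --- so that the resulting incompatibility is a clash of size $2|q_i|\ge 2-6\varepsilon$, whence (2). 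This order of steps (find the compatible pair first, deduce the large coefficient from it, then aim antiramsey at that one coefficient) is exactly what is missing from your second case.
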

\begin{proof} For every $\xi<\omega_1$ find $n_\xi, m_\xi\in \N$ and  rationals
$q_1^\xi, \dots q_{n_\xi}^\xi$ satisfying $0<|q_i^\xi|\leq 1$ for all $1\leq i\leq n$ and increasing 
$\alpha^\xi_1<\dots <\alpha^\xi_{n_\xi}<\omega_1$
 and rationally valued $g_\xi\in c_0$ whose support is included in $m_\xi$ such that
$\|f_\xi-h_\xi\|_\infty<\varepsilon$, and $\phi_{l}(\alpha_{\xi_i})\cap \phi_{l'}(\alpha_{\xi_j})\subseteq m_\xi$
for all  distinct pairs $\langle l, i\rangle$ and $\langle l', j\rangle$ for  $1\leq i, j\leq n$ and $l,l'\in \{-1,1\}$, where
$$h_\xi=g_\xi+\sum_{1\leq i\leq n}q_i(1_{\phi_{1}(\alpha_{\xi_i})\setminus m_\xi}-1_{\phi_{-1}(\alpha_{\xi_i})\setminus m_\xi}).$$
By passing to an uncountable subset of $\kappa$ we may assume that 
 there are $n, m\in \N$  and rationals
$q_1, \dots, q_{n}$ and finitely supported $g\in c_0$
such that $n_\xi=n$, $m_\xi=m$ and $g_\xi=g$ and $q_i=q_i^\xi$ for all  $1\leq i\leq n$.

Moreover, by the $\Delta$-system lemma,
 we may assume that there is $1\leq k<n$  and $\{\alpha_i: i<k\}\subseteq\omega_1$  such that $\alpha_i=\alpha^\xi_i$ for all $i<k$
 and $\{\alpha^\xi_k, \dots, \alpha^\xi_{n}\}\cap \{\alpha^\eta_k, \dots, \alpha^\eta_{n}\}=\emptyset$ for every $\xi<\eta<\omega_1$.
Indeed $k$ must be less than $n$ because otherwise $h_\xi=h_\eta$ and so $\|f_\xi-f_\eta\|_\infty<2\varepsilon$ which contradicts
$\|f_\xi-f_\eta\|_\infty>1-\varepsilon$ as $\varepsilon\leq 1/3$.  
Define 
$$A_{p_\xi}=\{n\in \N: (h_\xi-g)(n)> 0\}, \ \ B_{p_\xi}=\{n\in \N: (h_\xi-g)(n)<0\}.$$
It is easy to see that $p_\xi=(A_{p_\xi}, B_{p_\xi})\in \PP_\A$ because $h_\xi-g_\xi$ is constant equal to
non-zero $\pm q_i$  on each $\phi_{l}(\alpha^\xi_i)\setminus m$ for each $l\in \{-1,1\}$ and these sets are pairwise disjoint and are
almost equal to elements of the family $\A$.

Since $\PP_\A$ is assumed to satisfy the c.c.c,  there are $\xi<\eta<\omega_1$ such that
$p_{\xi}$ and $p_{\eta}$ are compatible.  This means that the supremum of the values
of $h_{\xi}-h_{\eta}=(h_{\xi}-g)-(h_{\eta}-g)$ is at most $\max_{k\leq i\leq n}\|q_i\|_\infty\leq 1$. 
In particular, $\|f_\xi-f_\eta\|_\infty\leq 1+2\varepsilon$ as required in 1).

Also the triangle inequality and the hypothesis that $\|f_\xi-f_\eta\|_\infty>1-\varepsilon$ for any $\xi<\eta<\omega_1$ imply that
$\|h_\xi- h_\eta\|_\infty>1-3\varepsilon$, so 
$\max_{k\leq i\leq n}\|q_i\|_\infty\geq 1-3\varepsilon$.
Let $1\leq i\leq n$ be such that
$$|q_i|\geq 1-3\varepsilon\leqno *)$$

Now for $\xi<\omega_1$ consider 
$$q_\xi=(\phi_1(\alpha^\xi_i)\setminus m_\xi,  \phi_{-1}(\alpha^\xi_i)\setminus m_\xi).$$
These are conditions of $\PP_\A$ by the choice of $m_\xi$. By Definition of \ref{pairing}
the $q_\xi$s are essentially different and so by the hypothesis on $\A$ there are $\xi<\eta<\omega_1$
such that $q_\xi$ and $q_\eta$ are incompatible. This means that there 
is $n\in n$ such that $|h_\xi- h_\eta)(n)|=|q_i-(-q_i)|=2|q_i|$ which gives
that $\|h_\xi-h_\eta\|_\infty\geq 2-6\varepsilon$ by *),
 and so $\|f_\xi-f_\eta\|_\infty\geq 2-8\varepsilon$  as required in 2).
\end{proof}

Recall Definition \ref{def-LL} of an $L$-family.

\begin{proposition}\label{Y-luzin} Suppose that $\A$ is an $\ad$-family
which is an $L$-family and $\kappa$ is an
uncountable cardinal.  Suppose that $\phi: \kappa\rightarrow \A$ is a pairing of $\A$. Let
 $\varepsilon\leq 1/3$ and 
 $\{f_\xi: \xi<\omega_1\}$ be a subset of the sphere $S_{\X_{\A, \phi}}$ such that
 $$\|x- f_\eta\|_\infty\geq 1-\varepsilon  \ \hbox{for any}\ x\in span(c_0\cup\{f_\xi: \xi<\eta\}),$$
 for every $\eta<\omega_1$. Then
 $\{f_\xi: \xi<\omega_1\}$ is the union of countably many $(2-5\varepsilon)$-separated sets.
\end{proposition}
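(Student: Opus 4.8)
The plan is to attach to each $f_\xi$ a single ``dominant'' condition of $\PP_\A$, built from the $\phi$-generator on which $f_\xi$ is almost extremal, to arrange that these conditions are essentially distinct, and then to feed them into the $L$-family property of $\A$; incompatibility of two such conditions will be made to force the corresponding functions to be $(2-5\varepsilon)$-separated.

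First I would fix, for each $\xi<\omega_1$, a good approximation. Using that $\X_{\A,\phi}$ is the closed span of $c_0$ and the vectors $e_\alpha:=1_{\phi_1(\alpha)}-1_{\phi_{-1}(\alpha)}$, I choose $h_\xi=g_\xi+\sum_{i\le n_\xi}q_i^\xi e_{\alpha_i^\xi}$ with rational data, pairwise $\phi$-supports meeting only below some $m_\xi$, and $\|f_\xi-h_\xi\|_\infty<\varepsilon/2$; then on each $\phi_{\pm 1}(\alpha_i^\xi)$ the limit of $f_\xi$ is within $\varepsilon/2$ of $\pm q_i^\xi$. Since the tuple $(n_\xi,m_\xi,g_\xi,(q_i^\xi)_i)$ ranges over a countable set, I split $\omega_1$ into countably many classes $S_t$ of equal type, and it suffices to decompose each uncountable $S_t$ into countably many $(2-5\varepsilon)$-separated sets. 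Fix such an $S_t$ with common data $(n,m,g,(q_i)_i)$. Applying the hypothesis with $x\in c_0$ gives $\mathrm{dist}(f_\xi,c_0)\ge 1-\varepsilon$, and since this distance equals $\max_{C\in\A}|\lim_{C}f_\xi|$ (a consequence of Lemma \ref{charactXA}), the dominant index set $I=\{i:|q_i|\ge 1-2\varepsilon\}$ is non-empty.

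Next I attach to each $\xi\in S_t$ the condition $\pi_\xi\in\PP_\A$ whose $A$-part is $\bigcup\{\phi_{1}(\alpha_i^\xi)\setminus m:i\in I,\ q_i>0\}\cup\bigcup\{\phi_{-1}(\alpha_i^\xi)\setminus m:i\in I,\ q_i<0\}$ and whose $B$-part is the union of the opposite members of the same dominant pairs; thus $A_{\pi_\xi}$ collects the $k\ge m$ at which $h_\xi(k)>0$ on a dominant pair, and $B_{\pi_\xi}$ those with $h_\xi(k)<0$. Two such conditions coincide exactly when the dominant tuples $(\alpha_i^\xi)_{i\in I}$ agree, and by the pairing property each ordinal contributes to \emph{both} coordinates, so $\pi_\xi,\pi_\eta$ with different dominant tuples are essentially distinct. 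The crucial point is that the fibers of $\xi\mapsto(\alpha_i^\xi)_{i\in I}$ are countable, and this is where the separation hypothesis does the real work: on a fiber all $h_\xi$ share the entire dominant part, so for $\zeta<\eta$ in it the hypothesis with $x=f_\zeta$ gives $\|f_\eta-f_\zeta\|_\infty\ge 1-\varepsilon$, while the two functions differ only through generators indexed outside $I$, all of modulus $<1-2\varepsilon$. A sunflower argument then shows an uncountable fiber would have to share ever larger blocks of ordinals: an uncountable $J$-stable subfamily with $I\subseteq J\subsetneq\{1,\dots,n\}$ yields, via the $\Delta$-system lemma together with $\max_{i\notin J}|q_i|<1-2\varepsilon$ (which forces disjoint ``free'' supports to contradict $\|f_\eta-f_\zeta\|_\infty\ge 1-\varepsilon$), an uncountable $J'$-stable subfamily with $J'\supsetneq J$; iterating until $J=\{1,\dots,n\}$ makes all $h_\xi$ equal, contradicting the separation. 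I expect this fiber-countability to be the main obstacle of the whole argument.

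Finally, with fibers countable, I fix an enumeration of each fiber and, for $j\in\N$, let $T_j\subseteq S_t$ pick the $j$-th element of every fiber. Distinct elements of $T_j$ lie in distinct fibers, so $\{\pi_\xi:\xi\in T_j\}$ consists of essentially distinct conditions; as $\A$ is an $L$-family, $T_j$ splits into countably many pieces on each of which the $\pi_\xi$ are pairwise incompatible. For $\xi,\eta$ in one such piece, incompatibility produces some $k\ge m$ lying in a dominant $\phi$-set of $\xi$ where $h_\xi(k)$ has one sign and in a dominant $\phi$-set of $\eta$ where $h_\eta(k)$ has the opposite sign, both of modulus $\ge 1-2\varepsilon$; hence $|h_\xi(k)-h_\eta(k)|\ge 2-4\varepsilon$, and therefore $\|f_\xi-f_\eta\|_\infty\ge (2-4\varepsilon)-2\cdot\tfrac{\varepsilon}{2}=2-5\varepsilon$. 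Ranging over $j\in\N$, over the countably many types $t$, and over the $L$-family pieces produces the desired decomposition of $\{f_\xi:\xi<\omega_1\}$ into countably many $(2-5\varepsilon)$-separated sets.
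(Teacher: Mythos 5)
Your proposal is correct and runs along essentially the same lines as the paper's proof: stabilize the rational data of the approximants $h_\xi$, attach to each $f_\xi$ a condition of $\PP_\A$ built from the generators with coefficients $|q_i|\geq 1-2\varepsilon$, feed the resulting essentially distinct conditions into the $L$-family property, and convert incompatibility into $\|h_\xi-h_\eta\|_\infty\geq 2-4\varepsilon$, hence $\|f_\xi-f_\eta\|_\infty\geq 2-5\varepsilon$. The only real divergence is your fiber-countability sunflower iteration: the paper instead shows directly, for \emph{any} two $\xi\neq\eta$ of the same type, that some dominant coordinate must differ (otherwise $h_\xi-h_\eta$ is, off a finite set, bounded by $\max_{i\notin I}|q_i|<1-2\varepsilon$, contradicting the hypothesis), so the fibers are in fact singletons and the transversal step is unnecessary. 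Note also that in that step the quantity you contradict should be $\mathrm{dist}\bigl(f_\eta,\mathrm{span}(c_0\cup\{f_\zeta\})\bigr)\geq 1-\varepsilon$ rather than $\|f_\eta-f_\zeta\|_\infty\geq 1-\varepsilon$, since the almost disjoint supports may still overlap on a finite set that only the $c_0$-part of the span can absorb.
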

\begin{proof}
 For every $\xi<\omega_1$ find $n_\xi, m_\xi\in \N$ and  rationals
$q_1^\xi, \dots q_{n_\xi}^\xi$ satisfying $0<|q_i^\xi|\leq 1$ 
for all $1\leq i\leq n$ and increasing $\alpha^\xi_1<\dots <\alpha^\xi_{n_\xi}<\kappa$
 and rationally valued $g_\xi\in c_0$ whose support is included in $m_\xi$ such that
$\|f_\xi-h_\xi\|_\infty<\varepsilon/2$, and $\phi_{l}(\alpha_{\xi_i})\cap \phi_{l'}(\alpha_{\xi_j})\subseteq m_\xi$
for all  distinct pairs $\langle l, i\rangle$ and $\langle l', j\rangle$ 
for  $1\leq i, j\leq n$ and $l,l'\in \{-1,1\}$, where
$$h_\xi=g_\xi+\sum_{1\leq i\leq n}q_i(1_{\phi_{1}(\alpha_{\xi_i})\setminus m_\xi}-1_{\phi_{-1}(\alpha_{\xi_i})\setminus m_\xi}).$$
For $k\in \N$ there are $\Gamma_k\subseteq \omega_1$ such that $\omega_1=\bigcup_{k\in \N}\Gamma_k$ and
for each $k\in \N$ and each $\xi\in \Gamma_k$
 there are $n, m\in \N$  and rationals
$q_1, \dots, q_{n}$ and finitely supported $g\in c_0$
such that $n_\xi=n$, $m_\xi=m$ and $g_\xi=g$ and $q_i=q_i^\xi$ for all  $1\leq i\leq n$.
Moreover by refining further the partition into $\Gamma_k$s and using
the Hewitt-Marczewski-Pondiczery theorem we may assume that if $\xi, \eta\in \Gamma_k$
and $\alpha=\alpha^\xi_i=\alpha^\eta_j$, then $i=j$. 
It is enough to show for each $k\in \N$ that $\{f_\xi: \xi\in\Gamma_k\}$ is 
the union of countably many $(2-\varepsilon)$-separated sets. So fix $k\in \N$. 

First we claim that for every distinct $\xi, \eta\in \Gamma_k$ there is $1\leq i\leq n$
such that $|q_i|\geq 1-2\varepsilon$ and  $\alpha^\xi_i\not=\alpha^\eta_i$.
 Indeed, otherwise
$$h_\xi-h_\eta=\sum_{i\in X}q_i(1_{\phi_{1}(\alpha_{\xi_i})\setminus m}-
1_{\phi_{-1}(\alpha_{\xi_i})\setminus m})-\sum_{i\in X}q_i(1_{\phi_{1}(\alpha_{\eta_i})
\setminus m}-1_{\phi_{-1}(\alpha_{\eta_i})\setminus m})$$
where $X=\{1\leq i\leq n: \alpha^\xi_i\not=\alpha^\eta_i\}$. Then
$\{\phi_j(\alpha^\xi_i): i\in X\}\cap \{\phi_j(\alpha^\eta_i): i\in X\}=\emptyset$ for any $j\in\{1, -1\}$.
Then the almost disjointness of $\A$  implies that there is finitely supported $h\in c_0$
such that $\|h_\xi-h_\eta-h\|_\infty\leq\max_{i\in X}\|q_i\|_\infty<1-2\varepsilon$ which means that
the distance from
$span(c_0\cup\{f_\xi\})$ to $f_\eta$ is less than $1-\varepsilon$ contradicting the hypothesis of the proposition.

Given $\xi\in \Gamma_k$ consider $p_\xi=(A_\xi, B_\xi)$, where
$$A_\xi=\bigcup\{\phi_1(\alpha_i^\xi)\setminus m: |q_i|\geq 1-2\varepsilon\}, \
\
B_\xi=\bigcup\{\phi_{-1}(\alpha_i^\xi)\setminus m: |q_i|\geq 1-2\varepsilon\}.$$
By the above claim for every $\xi, \eta\in \Gamma_k$ the conditions $p_\xi, p_\eta$ are
essentially distinct, so $\{p_\xi: \xi\in \Gamma_k\}$ is the union
of countably many antichains in $\PP_\A$ by the hypothesis of the proposition.
However, if $p_\xi$ and $p_\eta$ are incompatible there is
$n\in \N$ such that $n\in \phi_1(\alpha_i^\xi)\cap \phi_{-1}(\alpha_i^\eta)\setminus m$
or $n\in \phi_1(\alpha_i^\eta)\cap \phi_{-1}(\alpha_i^\xi)\setminus m$  and
$|q_i|\geq 1-2\varepsilon$. This means that $\|h_\xi-h_\eta\|_\infty\geq 2-4\varepsilon$
and consequently  $\|f_\xi-f_\eta\|_\infty\geq 2-5\varepsilon$ as required in the proposition.

\end{proof}

\section{Renormings of  Banach spaces $\X_\A$}

Two norms on the same vector space are said to be equivalent if the identity 
considered as a function between the metric spaces induced by the norms
is continuous in both directions.  For Banach spaces $(\X, \|\ \|_1)$ and $(\X, \|\ \|_2)$
this reduces to the existence of constants $c, C>0$
such that $c\|x\|_1\leq \|x\|_2\leq C\|x\|_1$ for all $x\in \X$.
Given a Banach space $(\X, \|\ \|_\X)$ we consider its equivalent renorming
$(\X, \|\ \|_T)$, where $T: \X\rightarrow \Y$ is a linear bounded operator for some Banach space $\Y$ and
$$\|x\|_T=\|x\|_\X+ \|T(x)\|_\Y$$
for every $x$ is $\X$.  The equivalence of the norms $\| \ \|_\X$ and $\|\ \|_T$
follows from the fact that $\|x\|_\X\leq\|\ x\|_T\leq (1+\|T\|)\|x\|_\X$ for every $x\in \X$.
In particular we consider  renormings of Banach spaces $\X_\A$ for
an $\ad$-family $\A$ with the norm of the form $\|\ \|_T$, where $T$ is injective and has separable range. 
More concretely we consider the operator $T:\X_\A\rightarrow \ell_2$ 
given by
$$T(f)=\Big({f(n)\over \sqrt{2^{n+1}}}\Big)_{n\in \N}$$
for any $f\in \X_\A$.
 So the new equivalent norm $\|\ \|_{T}$ denoted by $\|\ \|_\T$ on
$\X_\A$ is
$$\|f\|_{\T}=\|f\|_\infty+ \sqrt{\sum_{n\in \N} {f(n)^2\over 2^{n+1}}}.$$
For details see Section 2 of \cite{pk-kottman}.

We characterize
geometric properties of the unit spheres of $(\X_\A, \|\ \|_\T)$
by the combinatorial properties of the partial order $\PP_\A$ (Propositions \ref{sigma-T},
\ref{precaliber-T}, \ref{ccc-T}, \ref{sphereT-PA}).  In Proposition
\ref{oca-T} we translate the dichotomy \ref{oca} into the language of 
the Banach space $(\X_\A, \|\ \|_\T)$.

Recall from Section 2.1 that when we want to specify the norm $\|\ \|$
in a given Banach spaces $\X$, its unit sphere is denoted by $S_{\X, \|\ \|}$
First we need a few technical lemmas relating the distances in the spheres
$S_{\X, \|\ \|_\X}$ and $S_{\X, \| \ \|_T}$ for a given norm $\|\ \|_\X$ and
the norm $\|\ \|_T$ described above   and induced by a linear bounded
operator $T:\X\rightarrow \Y$, where $\X$ is considered with the norm $\|\ \|_\X$
and $\Y$ is a Banach space.

\begin{lemma}[Lemma 9 \cite{pk-kottman}]\label{spheres} Let $(\X, \|\  \|)$ be a Banach space.
Suppose that $0<a\leq\|x\|\leq \|x'\|\leq b<1$ for some $a, b\in \R$ and $x, x'\in \X$.
Then
 $$\|x-x'\|\leq b\Bigg\|{x\over{\|x\|}}-{x'\over{\|x'\|}}\Bigg\|+(b-a).$$
 \end{lemma}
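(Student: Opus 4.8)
The plan is to reduce everything to a single algebraic decomposition of the difference $x-x'$ in terms of the normalized vectors. Write $u=x/\|x\|$ and $u'=x'/\|x'\|$, so that $u,u'$ lie on the unit sphere of $\X$, and set the scalars $s=\|x\|$ and $t=\|x'\|$, which by hypothesis satisfy $0<a\le s\le t\le b<1$. The point of the argument is that $x-x'=su-tu'$, and this can be split so that only one of the two normalized vectors carries the scalar discrepancy.

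Concretely, I would use the identity
$$x-x'=su-tu'=s(u-u')+(s-t)u',$$
which one checks by expanding: $s(u-u')+(s-t)u'=su-su'+su'-tu'=su-tu'$. Applying the triangle inequality and the fact that $\|u'\|=1$ then gives
$$\|x-x'\|\le s\,\|u-u'\|+|s-t|\,\|u'\|=s\,\|u-u'\|+(t-s),$$
where I have used $t\ge s$ to drop the absolute value.

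To finish, I would estimate the two terms using the bounds on $s$ and $t$: since $s\le b$ we have $s\,\|u-u'\|\le b\,\|u-u'\|$, and since $a\le s\le t\le b$ we have $t-s\le b-a$. Substituting these and recalling $u-u'=x/\|x\|-x'/\|x'\|$ yields exactly
$$\|x-x'\|\le b\,\Bigg\|\frac{x}{\|x\|}-\frac{x'}{\|x'\|}\Bigg\|+(b-a),$$
as required. There is no genuine obstacle here: the only mildly delicate point is the choice of the \emph{asymmetric} decomposition (loading the scalar gap onto $u'$ rather than splitting it symmetrically), which is what keeps the factor in front of the sphere-distance term at $b$ rather than something larger; the hypothesis $b<1$ plays no role in this particular estimate and is presumably needed only downstream.
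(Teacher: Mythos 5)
Your argument is correct: the decomposition $x-x'=\|x\|\bigl(u-u'\bigr)+(\|x\|-\|x'\|)u'$ with $u=x/\|x\|$, $u'=x'/\|x'\|$, followed by the triangle inequality and the bounds $\|x\|\leq b$ and $\|x'\|-\|x\|\leq b-a$, gives exactly the stated estimate, and you are right that $b<1$ is not used here. The paper does not prove this lemma itself (it quotes it as Lemma 9 of \cite{pk-kottman}), but your proof is the standard one for this normalization inequality, so there is nothing further to compare.
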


\begin{lemma}\label{squizing} Suppose that $\X, \Y$ are Banach spaces and $T:\X\rightarrow \Y$ is an injective operator.
Then for $n\in \N$ there are subsets $\X_n$ of the  unit sphere $S_{\X, \|\  \|_T}$  and constants $0<c_n<1$ such that 
 $S_{\X, \|\ \|_T}=\bigcup_{n\in \N}\X_n$   and for any $x, x'\in \X_n$ for any $n\in \N$
 $$\|x-x'\|_\X\leq(1-c_n)\|(x/\|x\|_\X)-(y/\|y\|_\X)\|_\X+c_n/4.$$
\end{lemma}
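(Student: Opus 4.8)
The plan is to split the sphere $S_{\X,\|\ \|_T}$ according to the value of the original norm $\|x\|_\X$ and apply Lemma \ref{spheres} on each piece. First I would record the elementary facts about points of the sphere: if $\|x\|_T=1$ then $\|x\|_\X+\|Tx\|_\Y=1$, and since $T$ is injective we have $Tx\neq 0$, whence $\|x\|_\X=1-\|Tx\|_\Y<1$. Moreover $\|Tx\|_\Y\le\|T\|\,\|x\|_\X$ gives $1=\|x\|_\X+\|Tx\|_\Y\le(1+\|T\|)\|x\|_\X$, so that
$$a:=\frac{1}{1+\|T\|}\le\|x\|_\X<1\qquad\text{for every }x\in S_{\X,\|\ \|_T}.$$
Thus the function $x\mapsto\|x\|_\X$ takes all its values in the fixed interval $[a,1)$ with $a>0$ (and $a<1$ as soon as the sphere is nonempty).

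The second step is to cover $[a,1)$ by countably many closed intervals $[a_n,b_n]\subseteq[a,1)$ whose length is small relative to the distance of the right endpoint to $1$, namely $b_n-a_n\le(1-b_n)/4$. Here uniform subdivision does not work, because the constraint degenerates as $b_n\to 1$; instead I would space the endpoints geometrically toward $1$. Concretely, for $n=0,1,2,\dots$ put $u_n=(1-a)(4/5)^n$ and $[a_n,b_n]=[1-u_n,\,1-u_{n+1}]$. Then $(u_n)$ decreases to $0$ with $u_0=1-a$, so the intervals cover $[a,1)$; and $b_n-a_n=u_n-u_{n+1}=u_{n+1}/4=(1-b_n)/4$, so the required inequality holds (in fact with equality). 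I then set $\X_n=\{x\in S_{\X,\|\ \|_T}:a_n\le\|x\|_\X\le b_n\}$, which by the first step is a countable cover of the sphere, and $c_n:=1-b_n=u_{n+1}\in(0,1)$.

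Finally I would verify the inequality on each $\X_n$. Given $x,x'\in\X_n$, after possibly interchanging them (both sides of the claimed inequality being symmetric in $x,x'$) I may assume $\|x\|_\X\le\|x'\|_\X$, so that $0<a_n\le\|x\|_\X\le\|x'\|_\X\le b_n<1$. Lemma \ref{spheres}, applied in the Banach space $(\X,\|\ \|_\X)$ with $a=a_n$ and $b=b_n$, then gives
$$\|x-x'\|_\X\le b_n\Big\|\tfrac{x}{\|x\|_\X}-\tfrac{x'}{\|x'\|_\X}\Big\|_\X+(b_n-a_n).$$
Substituting $b_n=1-c_n$ together with $b_n-a_n=(1-b_n)/4=c_n/4$ yields exactly the asserted bound for the pair $x,x'$.

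The only genuine obstacle is the choice of cover in the second step: one must notice both that the sphere is bounded away from norm $0$ (which is where the boundedness of $T$ is used) and that the relative-length condition $b_n-a_n\le(1-b_n)/4$ forces a geometric rather than arithmetic spacing as $b_n\to 1$. Once the intervals $[1-u_n,1-u_{n+1}]$ are in hand, the remainder is a direct substitution into Lemma \ref{spheres}.
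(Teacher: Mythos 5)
Your proof is correct and follows essentially the same route as the paper: decompose the sphere into countably many bands according to the value of $\|x\|_\X$, each band lying in an interval $[a_n,b_n]\subseteq(0,1)$ of length at most $(1-b_n)/4$, and then apply Lemma \ref{spheres} to each band. The only difference is bookkeeping --- the paper indexes its bands by pairs $(k,i)$ with the condition $\|T(x)\|_\Y>1/k$ guaranteeing $\|x\|_\X<1-1/k$ and subdivides arithmetically into intervals of length $1/4k$, whereas you use the lower bound $\|x\|_\X\geq 1/(1+\|T\|)$ and a geometric spacing of endpoints toward $1$; both yield the same conclusion.
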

\begin{proof}For $k\in\N\setminus\{0\}$ and $0\leq i\leq 4k-2$ let 
$$\X_{k, i}=\{x\in S_{\X, \|\  \|_T}: { i\over4k}\leq \|x\|_\X\leq {i+1\over4k}\ \&\ \|T(x)\|_\Y>1/k\}.$$
 Since $T$ is injective
$S_{\X, \|\ \|_T}=\bigcup\{\X_{k, i}: k\in\N\setminus\{0\},\  0\leq i\leq 4k-2\}$.
Applying Lemma \ref{spheres} to $x, x'\in \X_{k, i}$ we obtain
$$\|x-x'\|_\X\leq(1-1/k)\|(x/\|x\|_\X)-(x'/\|x'\|_\X)\|_\X+1/4k,$$
so by renumerating $\X_{k, i}$s as $\X_n$s we obtain the lemma.
\end{proof} 

\begin{lemma}\label{symptom-renorm} Suppose that $(\X, \|\ \|_\X)$ and $(\Y, \|\ \|_\Y)$
are Banach spaces and suppose that  $T:\X\rightarrow \Y$ is a linear bounded operator and $x, x'\in S_\X$
satisfy $\|x-x'\|_\X= 2-\varepsilon'$
 for some $\varepsilon'\geq0$ and $\|T(x)\|_\Y, \|T(x')\|_\Y\leq 1-\delta$ for some $0<\delta<1$.
Then 
$$\|x/\|x\|_T-x'/\|x'\|_T\|_T\geq 1+\varepsilon-\varepsilon',$$
where $\varepsilon={{2}\over{2-\delta}}-1$.
In particular, if $\delta=2/3$, $\varepsilon'=0$ and $\kappa$ is a cardinal, then the unit sphere of $(\X, \|\ \|_T)$ admits
a $(1+{1\over 2})$-separated set of cardinality $\kappa$ if the unit sphere
sphere of $(\X, \|\ \|)$ admits
a $2$-separated set of cardinality $\kappa$ on which $T$ is bounded by $1/3$.
\end{lemma}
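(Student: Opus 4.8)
The plan is to reduce the whole estimate to an inequality in the original norm $\|\ \|_\X$ and then to exploit that the two normalizing factors $\|x\|_T$ and $\|x'\|_T$ lie in a controlled interval. First, since $x,x'\in S_\X$ we have $\|x\|_\X=\|x'\|_\X=1$, so $\|x\|_T=1+\|T(x)\|_\Y$ and $\|x'\|_T=1+\|T(x')\|_\Y$; hence the hypothesis $\|T(x)\|_\Y,\|T(x')\|_\Y\le 1-\delta$ gives $1\le \|x\|_T,\|x'\|_T\le 2-\delta$. I write $s=\|x\|_T$ and $s'=\|x'\|_T$ and assume, without loss of generality, that $s\le s'$.

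Next I would bound the $\|\ \|_T$-distance from below by the $\|\ \|_\X$-distance, using $\|\ \|_\X\le \|\ \|_T$, and apply the reverse triangle inequality to the decomposition $\frac{x}{s}-\frac{x'}{s'}=\frac{x-x'}{s}+x'\big(\frac1s-\frac1{s'}\big)$. Since $\|x-x'\|_\X=2-\varepsilon'$, $\|x'\|_\X=1$, and $s\le s'$ forces $\frac1s-\frac1{s'}\ge 0$, this yields
\[
\Big\|\frac{x}{\|x\|_T}-\frac{x'}{\|x'\|_T}\Big\|_T
\ \ge\ \frac{2-\varepsilon'}{s}-\Big(\frac1s-\frac1{s'}\Big)
\ =\ \frac{1-\varepsilon'}{s}+\frac1{s'}.
\]

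Finally I would minimize the right-hand side over $1\le s\le s'\le 2-\delta$. The term $1/s'$ is at least $1/(2-\delta)$. For the term $(1-\varepsilon')/s$ I expect a short case split on the sign of $1-\varepsilon'$: if $\varepsilon'\le 1$ it is minimized at $s=2-\delta$ and contributes $(1-\varepsilon')/(2-\delta)$, while if $\varepsilon'>1$ it is increasing in $s$ and so is at least $1-\varepsilon'$ (attained at $s=1$). In either case, using only $2-\delta\ge 1$, the sum is at least $\tfrac{2}{2-\delta}-\varepsilon'=1+\varepsilon-\varepsilon'$, which is the claim. The \emph{in particular} assertion then follows by substituting $\delta=2/3$ (so $\varepsilon=1/2$) and $\varepsilon'=0$: mapping a $2$-separated family in $S_{\X,\|\ \|_\X}$ on which $\|T(\cdot)\|_\Y\le 1/3$ by $x\mapsto x/\|x\|_T$ produces a $(1+\tfrac12)$-separated subset of $S_{\X,\|\ \|_T}$ of the same cardinality, the separation itself guaranteeing that distinct points stay distinct. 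The only mild obstacle is the book-keeping with the two distinct normalizing factors $s,s'$ and the sign of $1-\varepsilon'$; everything else is a direct elementary estimate.
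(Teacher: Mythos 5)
Your proof is correct. It follows the same overall strategy as the paper's --- pass from $\|\cdot\|_T$ down to $\|\cdot\|_\X$ and then run a triangle-inequality estimate governed by the bounds $1\le\|x\|_T,\|x'\|_T\le 2-\delta$ --- but with a genuinely different decomposition. The paper writes $y-y'=(y-x)+(x-x')+(x'-y')$ for $y=x/\|x\|_T$, $y'=x'/\|x'\|_T$, and bounds each perturbation term by $\bigl|1-\tfrac{1}{1+\|T(x)\|_\Y}\bigr|\le\tfrac{1-\varepsilon}{2}$; this is symmetric in $x$ and $x'$ and yields $2-\varepsilon'-2\cdot\tfrac{1-\varepsilon}{2}=1+\varepsilon-\varepsilon'$ in one line, with no case analysis and no loss of generality assumption. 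Your decomposition $\tfrac{x}{s}-\tfrac{x'}{s'}=\tfrac{x-x'}{s}+x'\bigl(\tfrac1s-\tfrac1{s'}\bigr)$ treats the two normalizers asymmetrically, which is why you need the relabelling $s\le s'$ (legitimate, since hypotheses and conclusion are symmetric in $x,x'$) and the sign split on $1-\varepsilon'$; in exchange, for $\varepsilon'\le 1$ your worst-case bound $\tfrac{2-\varepsilon'}{2-\delta}=1+\varepsilon-\tfrac{\varepsilon'}{2-\delta}$ is slightly sharper than the stated $1+\varepsilon-\varepsilon'$. Your minimization over $s,s'\in[1,2-\delta]$ and the deduction of the ``in particular'' clause (pairwise distances on a unit sphere are at most $2$, so $2$-separated forces $\varepsilon'=0$, and $\tfrac32$-separation of the images preserves cardinality) are both sound.
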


\begin{proof} 
Consider
$y=x/\|x\|_T$ and $y'=x'/\|x'\|_T$.
As $$\|y-y'\|_T=\|y-y'\|_{\X}+\|T(y)-T(y')\|_\Y,$$
it is enough to prove that
$\|y-y'\|_{\X}\geq 1+\varepsilon-\varepsilon'$. We have
$$\|y-y'\|_{\X}=
\|{{x}\over{\|x\|_T}}-{{x'}\over{\|x'\|_T}}\|_\X
\geq \|x-x'\|_\X-\|{{x}\over{\|x\|_T}}-x\|_\X
-\|{{x'}\over{\|x'\|_T}}-x'\|_\X.$$

Since  we have $\|x\|_\X=\|x'\|_\X=1$,
 we estimate 
$$\|{{x}\over{\|x\|_T}}-x\|_\X=
\Big| {1\over{\|x\|_\X+\|T(x)\|_\Y}}
 -1\Big|\|x\|_\X=\Big| {1\over{1+\|T(x)\|_\Y}} -1\Big|.
$$
Since we have $\|T(x)\|_\Y\leq 1-\delta$, we obtain that
$${{1+\varepsilon}\over{2}}= {1\over{2-\delta}}\leq {1\over{1+\|T(x)\|_\Y}} \leq 1,$$
and so
$$\|{{x}\over{\|x\|_T}}-x\|_\X\leq 1-{{1+\varepsilon}\over{2}}={{1-\varepsilon}\over{2}}.$$
The same calculation works for $\|{{x'}\over{\|x'\|_T}}-x'\|_\X,$
so using $\|x-x'\|_\X=2-\varepsilon'$
we conclude that 
$$\|y-y'\|_{T}\geq \|y-y'\|_{\X}\geq 2-\varepsilon'-2({{1-\varepsilon}\over{2}})= 1+\varepsilon-\varepsilon'$$ as required.

\end{proof}

If the range of $T:\X\rightarrow \Y$ is separable, then infinitary combinatorics can play
a big role in the renorming $\|\ \|_T$ of $\X$ due to the following trivial fact:

\begin{lemma}\label{separable} Suppose that $\X, \Y$ are Banach spaces and $T:\X\rightarrow \Y$ has separable range.
Then for every $\varepsilon>0$  there are subsets $\Y_j$ for $j\in \N$ of the  unit sphere $S_{\X, \|\  \|_T}$ such that 
  for any $x, x'\in \Y_j$ for any $j\in \N$
 $$\|T(x)-T(x')\|_\Y\leq\varepsilon.$$
\end{lemma}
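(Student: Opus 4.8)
The plan is to exploit the fact that separability is a hereditary property for metric spaces and to pull back a countable $\varepsilon/2$-net of the range through $T$. First I would observe that since $T$ has separable range, the image $T[S_{\X, \|\ \|_T}]$, being a subset of the separable metric space $\overline{T[\X]}$, is itself separable (a subset of a separable metric space is separable, since second countability is hereditary). Fixing $\varepsilon>0$, I would then choose a countable set $\{y_j:j\in\N\}\subseteq\Y$ which is dense in $T[S_{\X, \|\ \|_T}]$.

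With this net in hand I would set
$$\Y_j=\{x\in S_{\X, \|\ \|_T}: \|T(x)-y_j\|_\Y\leq \varepsilon/2\}$$
for each $j\in\N$. Density of the $y_j$ in $T[S_{\X, \|\ \|_T}]$ guarantees that the $\Y_j$ cover $S_{\X, \|\ \|_T}$, and for any $x,x'\in \Y_j$ the triangle inequality yields $\|T(x)-T(x')\|_\Y\leq \|T(x)-y_j\|_\Y+\|y_j-T(x')\|_\Y\leq \varepsilon/2+\varepsilon/2=\varepsilon$, which is exactly the asserted estimate.

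There is really no obstacle here beyond the single standard fact invoked above, which is precisely why the lemma is described as trivial. The only points worth keeping in mind are that the argument lives entirely in the metric induced by $\|\ \|_\Y$ on the range (the particular form of $\|\ \|_T$ on the domain and the injectivity of $T$ play no role), and that separability of $T[\X]$ is used solely to produce the countable net; the covering of the sphere then comes for free from the density of $\{y_j:j\in\N\}$.
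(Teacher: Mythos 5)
Your proof is correct and follows essentially the same route as the paper: the paper covers the separable range by countably many balls of diameter $\varepsilon$ (invoking the Lindel\"of property of separable metric spaces) and takes preimages under $T$ intersected with the sphere, which is exactly your construction with the countable dense set $\{y_j\}$ and the balls of radius $\varepsilon/2$. No gap; nothing further to add.
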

\begin{proof} For metric spaces the separability is equivalent to the Lindel\"of property, so we can
cover the range of $T$ by countably many balls of diameter $\varepsilon$. The $Y_n$s are preimages of 
these balls under $T$ intersected with $S_{\X, \|\  \|_T}$.
\end{proof}

 \begin{proposition}\label{sigma-T} Suppose that $\A$ is an $\ad$-family such that
 $\PP_\A$ is $\sigma$-centered and $T:\X_\A\rightarrow \Y$ 
 is a bounded linear and injective operator with separable range into a  Banach space $\Y$.  
 Then the unit sphere of $(\X_\A, \|\  \|_T)$ is
 the union of countably many open sets of diameters strictly less than $1$.
 \end{proposition}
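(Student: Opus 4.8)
The plan is to fuse the three preparatory lemmas---Lemma \ref{squizing}, which exploits the injectivity of $T$; Lemma \ref{separable}, which exploits the separability of the range of $T$; and Proposition \ref{sphere-sigma}, which translates the $\sigma$-centeredness of $\PP_\A$ into a covering of the supremum-norm sphere---into a single countable decomposition of $S_{\X_\A,\|\ \|_T}$, and then to upgrade that decomposition to one consisting of open sets.

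First I would apply Lemma \ref{squizing} to write $S_{\X_\A,\|\ \|_T}=\bigcup_{n\in\N}\X_n$ with constants $0<c_n<1$ such that for all $x,x'\in\X_n$,
$$\|x-x'\|_\infty\leq(1-c_n)\left\|\frac{x}{\|x\|_\infty}-\frac{x'}{\|x'\|_\infty}\right\|_\infty+\frac{c_n}{4}.$$
Note that on the sphere $\|x\|_T=1$ forces $\|x\|_\infty>0$, so the normalized vectors $x/\|x\|_\infty$ lie on $S_{\X_\A,\|\ \|_\infty}$. For each $n$ I would then fix $\varepsilon_n\in(0,1)$ small enough that $(1-c_n)\varepsilon_n\leq c_n/4$, and invoke Proposition \ref{sphere-sigma} (available since $\PP_\A$ is $\sigma$-centered) to decompose the supremum-norm sphere as $S_{\X_\A,\|\ \|_\infty}=\bigcup_{m\in\N}\X^{n,m}$ with $\|u-u'\|_\infty\leq 1+\varepsilon_n$ whenever $u,u'\in\X^{n,m}$. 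Finally I would fix $\eta_n\in(0,c_n/4)$ and use Lemma \ref{separable} to decompose $S_{\X_\A,\|\ \|_T}=\bigcup_{j\in\N}\Y^{n,j}$ so that $\|T(x)-T(x')\|_\Y\leq\eta_n$ on each $\Y^{n,j}$.

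Setting, for each triple $(n,m,j)$,
$$W_{n,m,j}=\{x\in\X_n\cap\Y^{n,j}:\ x/\|x\|_\infty\in\X^{n,m}\},$$
yields a countable cover of $S_{\X_\A,\|\ \|_T}$ on which the three estimates combine. For $x,x'\in W_{n,m,j}$ the bracketed term in Lemma \ref{squizing} is at most $1+\varepsilon_n$, so
$$\|x-x'\|_\infty\leq(1-c_n)(1+\varepsilon_n)+\frac{c_n}{4}\leq(1-c_n)+\frac{c_n}{4}+\frac{c_n}{4}=1-\frac{c_n}{2},$$
whence
$$\|x-x'\|_T=\|x-x'\|_\infty+\|T(x)-T(x')\|_\Y\leq 1-\frac{c_n}{2}+\eta_n<1-\frac{c_n}{4}<1.$$
Thus each $W_{n,m,j}$ has $\|\ \|_T$-diameter at most $1-c_n/4$, bounded away from $1$.

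The remaining, and only genuinely delicate, point is making the pieces \emph{open}: the sets supplied by the three lemmas are cut out by closed norm-conditions or by the combinatorial membership $p(f,\varepsilon)\in\PP_k$, so they need not be open. I expect this to be the main obstacle, and I would resolve it by a standard thickening. If $W$ is any subset of the sphere with $\|\ \|_T$-diameter $d<1$, then
$$U=\{x\in S_{\X_\A,\|\ \|_T}:\ \mathrm{dist}_T(x,W)<(1-d)/3\}$$
is open, contains $W$, and has diameter at most $d+2(1-d)/3=(d+2)/3<1$. Applying this to each $W_{n,m,j}$ (whose diameter is at most $1-c_n/4<1$) produces countably many open sets of diameter strictly less than $1$ that still cover $S_{\X_\A,\|\ \|_T}$, which is exactly the assertion.
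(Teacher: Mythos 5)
Your proof is correct and follows essentially the same route as the paper's: you combine Lemma \ref{squizing}, Lemma \ref{separable} and the $\sigma$-centered decomposition of $S_{\X_\A,\|\ \|_\infty}$ coming from Proposition \ref{sphere-sigma} into a single countable cover by sets of $\|\ \|_T$-diameter at most $1-c_n/4$, exactly as in the paper. Your final thickening step is a welcome addition rather than a deviation, since the paper's own proof produces the same cover but never addresses the openness claimed in the statement (and, incidentally, you correctly invoke Proposition \ref{sphere-sigma} where the paper's proof mistakenly cites Proposition \ref{sphere-precaliber}).
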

 \begin{proof}
 For $m, k\in \N$, $k>0$ let $\Z_{m, k}\subseteq S_{\X_\A, \|\ \|_\infty}$ be
  of $\|\ \|_{\infty}$-diameter not bigger that $1+1/k$ and such that
 $S_{\X_\A, \|\ \|_\infty}=\bigcup_{m\in \N}\Z_{m, k}$ for every $k\in \N\setminus\{0\}$. The existence of these sets follows from
 Proposition \ref{sphere-precaliber}. Let $\X_n$ and $c_n$ be as in Lemma \ref{squizing}. By Lemma \ref{squizing}
 it is enough to partition each $\X_n$ into countably many sets of diameters strictly less than $1$. So fix $n\in \N$.
 
 As $0<c_n<1$ we have $(4-3c_n)/(4-4c_n)>1$ and so there is $k\in \N\setminus\{0\}$  such that  $1+1/k$ is smaller than this value
 and so 
 $$(1-c_n)(1+1/k)<{(1-c_n)(4-3c_n)\over(4-4c_n)}=1-{3c_n\over 4}.$$
 If $x/\|x\|_{\infty}, x'/\|x'\|_{\infty}\in \Z_{m, k}$ and $x, x'\in \Y_j$ for $j, m\in \N$, 
 where $\Y_j$ is as in Lemma \ref{separable} 
 for $\varepsilon=c_n/4$, Lemma \ref{squizing}
 implies that 
 $$\|x-x'\|_T=\|x-x'\|_{\infty}+\|T(x)-T(x')\|_\Y\leq 1-{3c_n\over 4}+c_n/4+c_n/4=1-c_n/4.$$
 As $\X_n=\bigcup_ {j, m\in \N} (\{x\in S_{\X_\A, \|\ \|_T}: x/\|x\|\in \Z_{m, k}\}\cap \Y_j)$,  
 we have obtained the required partition  of $\X_n$
 which completes the proof of the proposition.
 
 \end{proof}
 
 \begin{proposition}\label{precaliber-T} Suppose that $\kappa$ is a cardinal  of uncountable
  cofinality and $\A$ is an $\ad$-family such that
 $\PP_\A$ has precaliber $\kappa$ and $T:\X_\A\rightarrow \Y$ 
 is a bounded linear and injective operator with separable range into a  Banach space $\Y$.  
 Then every set  of unit vectors  in $(\X_\A, \|\ \|_T)$  of cardinality $\kappa$
 contains a subset of cardinality $\kappa$ which has the diameter 
 not bigger than  $1-\varepsilon$ for some $\varepsilon>0$.
 \end{proposition}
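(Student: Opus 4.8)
The plan is to mimic the proof of Proposition~\ref{sigma-T} almost verbatim, replacing the countable decomposition coming from $\sigma$-centeredness by a single thinning-out provided by precaliber $\kappa$, and using throughout that a cardinal of uncountable cofinality cannot be covered by countably many sets of strictly smaller cardinality. Let $\{x_\xi:\xi<\kappa\}\subseteq S_{\X_\A,\|\ \|_T}$ be given. First I would apply Lemma~\ref{squizing} to write $S_{\X_\A,\|\ \|_T}=\bigcup_{n\in\N}\X_n$ with constants $0<c_n<1$; since $\kappa$ has uncountable cofinality, there is a single $n$ and a set $\Gamma_0\subseteq\kappa$ of cardinality $\kappa$ with $x_\xi\in\X_n$ for all $\xi\in\Gamma_0$. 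I fix this $n$ and put $c=c_n$. Next I would apply Lemma~\ref{separable} with $\varepsilon=c/4$ to cover $S_{\X_\A,\|\ \|_T}$ by countably many sets $\Y_j$ on which $\|T(x)-T(x')\|_\Y\le c/4$; again by uncountable cofinality there are $j$ and $\Gamma_1\subseteq\Gamma_0$ of cardinality $\kappa$ with $x_\xi\in\X_n\cap\Y_j$ for $\xi\in\Gamma_1$.

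Then I would pass to the supremum-norm normalizations. The assignment $x\mapsto x/\|x\|_\infty$ is injective on $\|\ \|_T$-unit vectors, since two such vectors that are positive multiples of one another must coincide (the norm $\|\ \|_T$ being positively homogeneous), so $\{x_\xi/\|x_\xi\|_\infty:\xi\in\Gamma_1\}\subseteq S_{\X_\A,\|\ \|_\infty}$ has cardinality $\kappa$. Choosing $k\in\N\setminus\{0\}$ so large that $(1-c)(1+1/k)<1-\frac{3c}{4}$ (possible because $(4-3c)/(4-4c)>1$ as $0<c<1$), I would invoke Proposition~\ref{sphere-precaliber}, whose condition (2) holds under precaliber $\kappa$ and yields $\Gamma\subseteq\Gamma_1$ of cardinality $\kappa$ such that $\{x_\xi/\|x_\xi\|_\infty:\xi\in\Gamma\}$ has $\|\ \|_\infty$-diameter at most $1+1/k$.

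Finally, for distinct $\xi,\eta\in\Gamma$ the inequality of Lemma~\ref{squizing} applied in $\X_n$ gives $\|x_\xi-x_\eta\|_\infty\le(1-c)(1+1/k)+c/4<1-\frac{3c}{4}+\frac{c}{4}=1-\frac{c}{2}$, while membership in $\Y_j$ gives $\|T(x_\xi)-T(x_\eta)\|_\Y\le c/4$, so that
$$\|x_\xi-x_\eta\|_T=\|x_\xi-x_\eta\|_\infty+\|T(x_\xi)-T(x_\eta)\|_\Y<1-\frac{c}{2}+\frac{c}{4}=1-\frac{c}{4}.$$
Hence $\{x_\xi:\xi\in\Gamma\}$ has $\|\ \|_T$-diameter at most $1-\varepsilon$ with $\varepsilon=c/4>0$, as required.

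The only point needing real care is the bookkeeping of the three successive refinements and the order in which they are performed. The first two (from Lemmas~\ref{squizing} and~\ref{separable}) are countable partitions, and they survive passage to a set of size $\kappa$ precisely because $\kappa$ has uncountable cofinality; these must be carried out before the precaliber step so that the constants $c$ and $j$ are already fixed. The third refinement is exactly where the precaliber hypothesis enters, through Proposition~\ref{sphere-precaliber}, and it must come last, applied to the already-thinned $\kappa$-sized family of normalizations; the threshold $1+1/k$ there is chosen in advance to absorb the distortion $1-c$ of Lemma~\ref{squizing} together with the two error terms of size $c/4$.
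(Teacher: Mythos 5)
Your proposal is correct and follows essentially the same route as the paper's proof: decompose the sphere via Lemma~\ref{squizing}, use uncountable cofinality to fix one piece $\X_n$, apply Proposition~\ref{sphere-precaliber} to the $\|\ \|_\infty$-normalizations with the threshold $1+1/k$ chosen so that $(1-c_n)(1+1/k)<1-3c_n/4$, and absorb the $T$-contribution via Lemma~\ref{separable} with $\varepsilon=c_n/4$. The only difference is that you perform the Lemma~\ref{separable} refinement before rather than after the precaliber step, which is immaterial since each refinement preserves cardinality $\kappa$.
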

 \begin{proof} Let $\mathcal U\subseteq S_{\X_\A, \|\ \|_T}$ be of cardinality $\kappa$. 
 Let $\X_n$ and $c_n$ be as in Lemma \ref{squizing}.   Since  the $\X_n$s cover $S_{\X_\A, \|\ \|_T}$, by the
 uncountable cofinality of $\kappa$ there is $n\in \N$ such that $\mathcal U\cap \X_n$ 
 has cardinality $\kappa$.
 Fix such an $n\in \N$ and let us prove that $\mathcal U\cap \X_n$ 
 contains a subset of cardinality $\kappa$ which has the diameter not bigger than $1-\varepsilon$ for some $\varepsilon>0$ which
 is sufficient to prove the proposition.
 
 As $0<c_n<1$ we have $(4-3c_n)/(4-4c_n)>1$ and so there is $k\in \N\setminus\{0\}$  such that  $1+1/k$ is smaller than this value.
 In particular we have
  $$(1-c_n)(1+1/k)<{(1-c_n)(4-3c_n)\over(4-4c_n)}=1-{3c_n\over 4}.$$
 By passing to a subset of cardinality $\kappa$, by 
 Proposition \ref{sphere-precaliber} we may assume that  
 $$\{x/\|x\|_{\infty}:  x\in \mathcal U\cap \X_n\}$$
  has $\|\ \|_{\infty}$-diameter not bigger than $(1+1/k)$,
 and so Lemma \ref{squizing} implies that 
 $$\|x-x'\|_{\infty}\leq 1-{3c_n\over 4}+c_n/4=1-c_n/2$$
 for any $x, x'\in \mathcal U\cap \X_n$.  Using 
  Lemma \ref{separable} for $\varepsilon=c_n/4$  and the uncountable cofinality of $\kappa$ 
  we may find $\mathcal V\subseteq \mathcal U\cap \X_n$ of cardinality $\kappa$ such that $\|T(x)-T(x')\|_\Y\leq c_n/4$ 
  for every $x, x'\in \mathcal V$. So for $x, x'\in \mathcal V$ we have
 $$\|x-x'\|_T=\|x-x'\|_{\infty}+\|T(x)-T(x')\|_\Y\leq 1-{c_n\over 2}+c_n/4=1-c_n/4,$$
 which completes the proof of the proposition.
 
 \end{proof}
 
  \begin{proposition}\label{ccc-T} Suppose  that $\A$ is an $\ad$-family such that
 $\PP_\A$ satisfies the c.c.c.  and $T:\X_\A\rightarrow \Y$ 
 is a bounded linear and injective operator with separable range into a  Banach space $\Y$.  Then the unit sphere of  $(\X_\A, \|\  \|_T)$  
 does not admit an uncountable $1$-separated set.
 \end{proposition}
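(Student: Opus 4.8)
The plan is to argue by contradiction: assuming that $S_{\X_\A, \|\ \|_T}$ carries an uncountable $1$-separated set, I will manufacture an uncountable $(1+\varepsilon)$-separated subset of $S_{\X_\A, \|\ \|_\infty}$ for some $\varepsilon>0$, which by Proposition \ref{sphere-ccc} produces an uncountable antichain in $\PP_\A$, contradicting the c.c.c. So suppose $\{x_\xi:\xi<\omega_1\}\subseteq S_{\X_\A, \|\ \|_T}$ satisfies $\|x_\xi-x_\eta\|_T\geq 1$ for all distinct $\xi,\eta$. Let $\X_n$ and the constants $c_n$ be as in Lemma \ref{squizing}. Since the $\X_n$ cover the sphere and $\omega_1$ has uncountable cofinality, I may fix an $n$ such that uncountably many $x_\xi$ lie in $\X_n$, discard the others, and retain the corresponding $0<c_n<1$.

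Next I invoke Lemma \ref{separable} for $T$ (which has separable range) with $\varepsilon=c_n/4$, obtaining a countable cover $\{\Y_j\}$ of the sphere on whose pieces the $\Y$-oscillation of $T$ is at most $c_n/4$. Pigeonholing once more, I pass to an uncountable family of the surviving $x_\xi$ contained in a single $\X_n\cap\Y_j$. For any two such $x_\xi,x_\eta$ one has $\|T(x_\xi)-T(x_\eta)\|_\Y\leq c_n/4$, so from $\|x_\xi-x_\eta\|_T=\|x_\xi-x_\eta\|_\infty+\|T(x_\xi)-T(x_\eta)\|_\Y$ the $1$-separation yields $\|x_\xi-x_\eta\|_\infty\geq 1-c_n/4$. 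Substituting into the estimate of Lemma \ref{squizing} gives
$$1-\frac{c_n}{4}\leq\|x_\xi-x_\eta\|_\infty\leq(1-c_n)\Big\|\frac{x_\xi}{\|x_\xi\|_\infty}-\frac{x_\eta}{\|x_\eta\|_\infty}\Big\|_\infty+\frac{c_n}{4}.$$

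Writing $y_\xi=x_\xi/\|x_\xi\|_\infty\in S_{\X_\A, \|\ \|_\infty}$ and rearranging, this forces
$$\|y_\xi-y_\eta\|_\infty\geq\frac{1-c_n/2}{1-c_n}=1+\frac{c_n}{2(1-c_n)}.$$
As $0<c_n<1$, the number $\varepsilon=c_n/(2(1-c_n))$ is strictly positive, so $\{y_\xi\}$ is an uncountable $(1+\varepsilon)$-separated subset of $S_{\X_\A, \|\ \|_\infty}$. Finally, Proposition \ref{sphere-ccc} (the implication from its clause (4) to its clause (1), with $\kappa=\omega_1$) converts this into an uncountable antichain in $\PP_\A$, contradicting the hypothesis that $\PP_\A$ is c.c.c.

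The only genuinely delicate point is the coordination of the two countable decompositions: one must fix $n$, and hence $c_n$, before applying Lemma \ref{separable}, so that the separable oscillation bound $c_n/4$ is matched to the renorming constant $c_n$. This matching is exactly what turns the $\|\ \|_T$-separation into the strict $\|\ \|_\infty$-gap $(2-c_n)/(2-2c_n)>1$; everything else is a pigeonhole over countable covers and an appeal to the already-established equivalence in Proposition \ref{sphere-ccc}.
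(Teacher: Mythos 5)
Your proof is correct and follows essentially the same route as the paper's: the same decomposition into the sets $\X_n$ of Lemma \ref{squizing}, the same use of Lemma \ref{separable} with oscillation bound $c_n/4$, and the same transfer between $\|\ \|_T$-distances and $\|\ \|_\infty$-distances of the normalized vectors. The only cosmetic difference is that you argue contrapositively, producing an uncountable $(1+\varepsilon)$-separated set in $S_{\X_\A,\|\ \|_\infty}$ and invoking the implication $(4)\Rightarrow(1)$ of Proposition \ref{sphere-ccc}, whereas the paper argues directly by extracting two $\|\ \|_\infty$-close normalized points from any uncountable family; since Proposition \ref{sphere-ccc} is an equivalence, the two are interchangeable.
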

 \begin{proof} Let $\mathcal U\subseteq S_{\X_\A, \|\  \|_T}$ be of cardinality $\omega_1$. We will
 find two elements of $\mathcal U$ which are $\| \ \|_{T}$-distant  by less than $1$.
 Let $\X_n$ and $c_n$ be as in Lemma \ref{squizing}.   Since  the $\X_n$s cover $S_{\X_\A, \|\ \|_T}$ 
 there is $n\in \N$ such that $\mathcal U\cap \X_n$ is uncountable.
 Fix such an $n\in \N$ and let us prove that $\mathcal U\cap \X_n$ 
 contains two elements  $\| \ \|_{T}$-distant  by less than $1$.
 
  Using 
  Lemma \ref{separable} for $\varepsilon=c_n/4$  
  we may find an uncountable $\mathcal V\subseteq U\cap \X_n$ such that $\|T(x)-T(x')\|_\Y\leq c_n/4$ 
  for every $x, x'\in \mathcal V$.
 As $0<c_n<1$ we have $(4-3c_n)/(4-4c_n)>1$ and so there is $k\in \N\setminus\{0\}$  such that  $1+1/k$ is smaller than this value.
 In particular we have
  $$(1-c_n)(1+1/k)<{(1-c_n)(4-3c_n)\over(4-4c_n)}=1-{3c_n\over 4}.$$
Lemma \ref{squizing} implies that 
 $$\|x-x'\|_{\infty}\leq (1-c_n)\|x/\|x\|_{\infty}-x'/\|x'\|_{\infty}\|+c_n/4$$
 for any $x, x'\in \mathcal V\cap \X_n$.  
 Now apply Proposition \ref{sphere-precaliber} for $\{x/\|x\|_{\infty}: x\in V\cap \X_n\}$
 finding $x, x'\in V\cap \X_n$ such that $\|x/\|x\|_{\infty}-x'/\|x'\|_{\infty}\|<1+1/k$.
  So for these $x, x'\in \mathcal V$ we have
 $$\|x-x'\|_T=\|x-x'\|_{\infty}+\|T(x)-T(x')\|_\Y\leq 1-{3c_n\over 4}+c_n/4+c_n/4=1-c_n/4,$$
 which completes the proof of the proposition.
 \end{proof}
 
 \begin{proposition}\label{sphereT-PA} Suppose that $\A$ is an $\ad$-family,  Then
the following implications hold.
\begin{enumerate}
\item If $S_{\X_\A, \|\ \|_\T}$ is the union of countably many sets of diameters not bigger than $1$,
then $\PP_\A$ is $\sigma$-centered.
\item If every subset of $S_{\X_\A, \|\ \|_\T}$ of cardinality $\kappa$ of
uncountable cofinality contains a further subset which has the diameter not bigger than $1$, then $\PP_\A$ has precaliber $\kappa$.
\item  If $S_{\X_\A, \|\ \|_\T}$ does not admits an uncountable $(2-\varepsilon)$-separated set
for some $\varepsilon>0$, then
 $\PP_A$ satisfies the c.c.c.

\end{enumerate}
\end{proposition}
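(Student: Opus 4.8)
The plan is to prove all three implications by contraposition, transporting conditions into the sphere $S_{\X_\A,\|\ \|_\T}$ through the normalized assignment $p\mapsto g_p:=f_p/\|f_p\|_\T$ (Definition \ref{fp}), where $\|f_p\|_\T=1+\|T(f_p)\|_2$ and $\|T(f_p)\|_2^2=\sum_{n\in A_p\cup B_p}2^{-(n+1)}$ for nontrivial $p$. I would first isolate three elementary facts. If $p,q$ are incompatible, then by Lemma \ref{cond-and-function} some $n_0$ has $f_p(n_0)=-f_q(n_0)=\pm1$, so $g_p,g_q$ take opposite extreme values there and
$$\|g_p-g_q\|_\infty\geq \frac{1}{\|f_p\|_\T}+\frac{1}{\|f_q\|_\T}.$$
Second, finitely many conditions $p_1,\dots,p_j$ have a common lower bound if and only if $\bigcup_i A_{p_i}$ and $\bigcup_i B_{p_i}$ are disjoint, which holds if and only if they are pairwise compatible; thus in $\PP_\A$ the notions \emph{centered} and \emph{pairwise compatible} coincide. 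Third, since a finite union of members of an infinite $\ad$-family is co-infinite, $\|T(f_p)\|_2<1$ and $\|f_p\|_\T<2$ for every nontrivial $p$.

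For (1) and (2) I would stratify so as to keep $\|f_p\|_\T$ bounded away from $2$. Assuming $S_{\X_\A,\|\ \|_\T}=\bigcup_nS_n$ with each $S_n$ of $\|\ \|_\T$-diameter at most $1$, I set
$$\PP_{k,n}=\Big\{p\in\PP_\A:\ \|T(f_p)\|_2\leq 1-\tfrac1k\ \text{and}\ g_p\in S_n\Big\},$$
which by the third fact cover the nontrivial conditions (the trivial condition is compatible with everything and may be adjoined to any piece). If $p,q\in\PP_{k,n}$ were incompatible, then $\|f_p\|_\T,\|f_q\|_\T\leq 2-\tfrac1k$ and the first fact would give $\|g_p-g_q\|_\infty\geq 2/(2-\tfrac1k)>1\geq\|g_p-g_q\|_\T\geq\|g_p-g_q\|_\infty$, a contradiction; so each $\PP_{k,n}$ is pairwise compatible, i.e.\ centered, proving $\sigma$-centeredness. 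For (2), given $\{p_\xi:\xi<\kappa\}$, I would use the uncountable cofinality of $\kappa$ to pass to a subfamily of size $\kappa$ with $\|T(f_{p_\xi})\|_2\leq 1-\tfrac1k$ for one fixed $k$; since $p\mapsto g_p$ is injective the image is a size-$\kappa$ subset of the sphere, to which the hypothesis applies to produce a size-$\kappa$ subset of $\|\ \|_\T$-diameter at most $1$, whose conditions are then pairwise compatible, hence centered, giving precaliber $\kappa$.

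Part (3) is the delicate one, and I expect the main obstacle to be exactly the tension it hides: to make two images $\|\ \|_\T$-distant close to $2$ I must force them nearly antipodal, i.e.\ force $\|f_p\|_\T$ near $1$, which pushes the supports to high indices and appears to annihilate the incompatibility witnesses, which for almost disjoint families may sit at any fixed finite coordinate. Starting from an uncountable antichain $\{p_\xi:\xi<\omega_1\}$ and the $\varepsilon>0$ given by the hypothesis, my plan is to choose $N$ with $2^{-N/2}<\varepsilon/(2-\varepsilon)$ and, since there are only finitely many possible values of $(A_{p_\xi}\cap N,B_{p_\xi}\cap N)$, to pigeonhole to an uncountable $\Gamma$ on which this pair equals a fixed disjoint $(A_0,B_0)$. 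The key observation is that this constancy forces every incompatibility witness to lie at or above $N$: a witness $w<N$ would lie in $(A_{p_\xi}\cap N)\cap(B_{p_\eta}\cap N)=A_0\cap B_0=\emptyset$. I would then truncate, replacing $f_{p_\xi}$ by $\hat f_\xi=f_{p_\xi}\cdot 1_{[N,\infty)}$, which remains in $\X_\A$ (it differs from $f_{p_\xi}$ by a finitely supported vector), still has $\|\hat f_\xi\|_\infty=1$, still realizes every witness, and satisfies $\|T(\hat f_\xi)\|_2\leq 2^{-N/2}$. Normalizing to $\hat g_\xi=\hat f_\xi/\|\hat f_\xi\|_\T$, the first fact would yield
$$\|\hat g_\xi-\hat g_\eta\|_\T\geq\|\hat g_\xi-\hat g_\eta\|_\infty\geq\frac{2}{1+2^{-N/2}}>2-\varepsilon$$
for all distinct $\xi,\eta\in\Gamma$, exhibiting the forbidden uncountable $(2-\varepsilon)$-separated subset of $S_{\X_\A,\|\ \|_\T}$ and so forcing $\PP_\A$ to be c.c.c. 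Once the pigeonhole lifting all witnesses above the truncation level is in hand, the $\|\ \|_\infty$ estimate alone suffices and the $\ell_2$ component never has to be controlled.
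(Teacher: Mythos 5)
Your proposal is correct, and the underlying strategy is the same as the paper's: transport conditions into the sphere via $p\mapsto f_p/\|f_p\|_\T$, note that incompatibility forces two coordinates with opposite extreme values (so sup-distance $1/\|f_p\|_\T+1/\|f_q\|_\T$ after normalization), and arrange for $\|T(f_p)\|_2$ to be small enough that this exceeds the relevant threshold. Where you diverge is in how that smallness is arranged. The paper proves a single uniform reduction: for each $m$, the suborder $\Q_m=\{p:\ \{0,\dots,m\}\cap(A_p\cup B_p)=\emptyset\}$ captures everything, because $\PP_\A$ splits into finitely many compatibility-isomorphic copies of $\Q_m$ indexed by the traces on $\{0,\dots,m\}$; on $\Q_m$ one has $\|T(f_p)\|\leq 1/m$, and Lemma \ref{symptom-renorm} then gives the separation bound $2-\tfrac{2}{m+1}$ used in all three parts (with $m=3$ sufficing for (1) and (2)). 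You instead handle (1) and (2) by stratifying directly on the value of $\|T(f_p)\|_2$, exploiting that it is strictly below $1$ for every nontrivial condition so that incompatible conditions in one stratum are already at normalized sup-distance $>1$; this avoids the finite-decomposition reduction entirely and is arguably more self-contained. For (3), where one needs separation near $2$ rather than merely above $1$, your pigeonhole on the traces $(A_{p_\xi}\cap N, B_{p_\xi}\cap N)$ followed by truncation at $N$ is a genuinely different device from the paper's $\Q_m$ reduction, but it achieves the same effect (all incompatibility witnesses are pushed above $N$, so the truncated functions still realize sup-distance $2$ while $\|T\|$ drops below $2^{-N/2}$) and the arithmetic checks out. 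The only point you leave implicit is that the uncountable antichain may be assumed to consist of conditions with $A_p\cup B_p$ infinite (there are only countably many conditions with $a_p=b_p=\emptyset$), so that the truncated functions indeed have sup-norm $1$; this is a one-line fix.
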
 
 \begin{proof}
 
In the parts (1), (2) and (3)
it is enough to prove that there is $m\in \N$ such that
$$\Q_m=\{p\in \PP_\A: \{0,\dots, m\}\cap(A_p\cup B_p)=\emptyset\}$$
is $\sigma$-centered, has precaliber $\kappa$ and satisfies the c.c.c. respectively.
This is because for a fixed $m\in \N$ the partial order  $\PP_\A$ consists of finitely many parts each isomorphic to
$\Q_m$ which depend on the intersections of
$A_p$ and $B_p$ with $\{0,\dots, m\}$ for $p\in \PP_\A$.
 
 In the proofs of all parts above we will use the fact that $\|T(x)\|\leq 1/m$
 if $x|\{0, \dots, m\}=0$, if $T$ is as in the definition of the norm $\|\ \|_\T$ and $\|x\|_\infty\leq 1$. In particular,
 if $p,q\in \Q_m$, then Lemma \ref{symptom-renorm} can 
 be applied for $\delta=1-1/m$ to the elements $f_p, f_q$ defined in Definition \ref{fp}, 
 i.e., by Lemma \ref{cond-and-function} (1), (2)
 we have 
 $$\|f_p/\|f_p\|_\T-f_q/\|f_q\|_\T\|_\T\geq {2\over 2-\delta}={2\over 1+1/m}=2-{2\over m+1}\leqno(*)$$
  if  $p,q\in \Q_m$ are incompatible.
 
 Also note that a finite set $\PP\subseteq\PP_\A$ is centered if and only if
 $\bigcup\{A_p: p\in \PP\}\cap \bigcup\{B_p: p\in \PP\}=\emptyset$ if and
 only if $\PP$ is pairwise compatible.
 
(1) Let $\X_n$'s form a cover of  $S_{\X_\A, \|\ \|_\T}$ by sets of $\|\ \|_\T$-diametres not bigger  than $1$. 
For $n\in \N$ consider
$$\PP_n=\{p\in \Q_3: f_p/\|f_p\|_\T\in \mathcal X_n\}.$$
Then $\bigcup_{n\in \N}\PP_n=\Q_3$ and by (*) each $\PP_n$
is pairwise compatible and so it is centered.

(2) Given a set $\{p_\xi: \xi<\kappa\}\subseteq\Q_3$, consider
$\{f_{p_\xi}/\|f_{p_\xi}\|_\T: \xi<\kappa\}\subseteq S_{\X_\A, \|\ \|_\T}$. By the hypothesis
there is $\Gamma\subseteq\kappa$ of cardinality $\kappa$ such that
$\{f_{p_\xi}/\|f_{p_\xi}\|_\T: \xi\in \Gamma\}$ has the diameter not bigger than $1$.
As in the proof of (1)  by (*) we obtain that
$\{p_\xi: \xi\in \Gamma\}$ is pairwise compatible, and so as required.

(3) Fix $\varepsilon>0$. Let $m\in \N$ be
such that $2/(1+m)<\varepsilon$. Given a set $\{p_\xi: \xi<\omega_1\}\subseteq\Q_m$, consider
$\{f_{p_\xi}/\|f_{p_\xi}\|_\T: \xi<\kappa\}\subseteq S_{\X_\A, \|\ \|_\T}$. By the hypothesis
there are $\xi<\eta<\omega_1$ such that $\|f_{p_\xi}/\|f_{p_\xi}\|_\T-f_{p_\eta}/\|f_{p_\eta}\|_\T\|\leq 2-\varepsilon
<2-2/(1+m)$.
 As in the proof of (1) by (*) $p_\xi$ and $p_\eta$ are compatible, as required.
\end{proof}

\begin{proposition}\label{oca-T} Assume {\sf OCA}. 
Whenever $\A$ is an $\ad$-family, then either
\begin{enumerate}
\item The unit sphere of $(\X_\A, \|\ \|_\T)$ is the union of countably many sets
of diameters less than $1$ or
\item for each $0<\varepsilon<1$ there is an uncountable   $(1+\varepsilon)$-separated subset of the unit sphere of 
$(\X_\A, \|\ \|_\T)$.
\end{enumerate}
\end{proposition}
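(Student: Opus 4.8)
The plan is to derive the dichotomy directly from the ``$\sigma$-centered or not c.c.c.'' alternative for $\PP_\A$ furnished by Proposition \ref{oca}, which is the only place {\sf OCA} enters; everything afterward is carried out in {\sf ZFC}. First I would record that the operator $T\colon\X_\A\to\ell_2$ defining $\|\ \|_\T$ is injective with separable range, so the renorming propositions of this section apply to it. If $\PP_\A$ is $\sigma$-centered, then Proposition \ref{sigma-T} immediately gives that the unit sphere of $(\X_\A,\|\ \|_\T)$ is a union of countably many sets of diameter strictly less than $1$, which is alternative (1).

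If instead $\PP_\A$ fails the c.c.c., I would fix an uncountable antichain $\{p_\xi:\xi<\omega_1\}\subseteq\PP_\A$ and, for a given $0<\varepsilon<1$, choose $m\in\N$ so large that $2-\tfrac{2}{m+1}\ge 1+\varepsilon$ (possible since $\varepsilon<1$). The aim is to push this antichain into the suborder $\Q_m$ of conditions disjoint from $\{0,\dots,m\}$ introduced in the proof of Proposition \ref{sphereT-PA}: for such conditions $\|T(f_p)\|_{\ell_2}\le 1/m$, so Lemma \ref{symptom-renorm} (applied with $\delta=1-1/m$ and $\varepsilon'=0$) converts the $\|\ \|_\infty$-distance $2$ between $f_p$ and $f_q$ for incompatible $p,q$ -- which is exactly Lemma \ref{cond-and-function} -- into the lower bound $\|f_p/\|f_p\|_\T-f_q/\|f_q\|_\T\|_\T\ge 2-\tfrac{2}{m+1}$ on the normalized $\|\ \|_\T$-sphere.

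The one step requiring care, and the main obstacle, is transferring the antichain into $\Q_m$ while preserving incompatibility: deleting the initial segment $\{0,\dots,m\}$ could in principle destroy incompatibility if the clash witnessing it happened to lie below $m$. I would resolve this by a pigeonhole argument on the finitely many possible ``types'' $(A_{p_\xi}\cap\{0,\dots,m\},\,B_{p_\xi}\cap\{0,\dots,m\})$. Passing to an uncountable subfamily on which this type is constant, the defining condition $A_{p_\xi}\cap B_{p_\xi}=\emptyset$ forces the common below-$m$ parts of the $A$-side and the $B$-side to be disjoint, so no incompatibility of $p_\xi$ and $p_\eta$ can be witnessed inside $\{0,\dots,m\}$. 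Consequently the truncations $q_\xi=(A_{p_\xi}\setminus\{0,\dots,m\},\,B_{p_\xi}\setminus\{0,\dots,m\})\in\Q_m$ remain pairwise incompatible.

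Putting the pieces together, $\{f_{q_\xi}/\|f_{q_\xi}\|_\T:\xi<\omega_1\}$ is an uncountable $(1+\varepsilon)$-separated subset of the unit sphere of $(\X_\A,\|\ \|_\T)$; since $0<\varepsilon<1$ was arbitrary, this gives alternative (2). I expect the type-bookkeeping for the transfer to $\Q_m$ to be the only genuine point of care, the remainder being direct appeals to Propositions \ref{oca}, \ref{sigma-T}, \ref{sphereT-PA} and Lemmas \ref{symptom-renorm} and \ref{cond-and-function}.
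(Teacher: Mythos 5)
Your proposal is correct and follows essentially the same route as the paper: {\sf OCA} enters only through Proposition \ref{oca}, the $\sigma$-centered branch is handled by Proposition \ref{sigma-T}, and the non-c.c.c.\ branch is exactly the contrapositive of Proposition \ref{sphereT-PA}(3), which the paper cites directly and you reprove inline with the same ingredients (the reduction to $\Q_m$ via pigeonhole on traces below $m$, Lemma \ref{cond-and-function}, and Lemma \ref{symptom-renorm} with $\delta=1-1/m$). Your ``type-bookkeeping'' step is precisely the paper's remark that $\PP_\A$ splits into finitely many parts isomorphic to $\Q_m$, so no new idea is needed and the argument is sound.
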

\begin{proof}
As proved in Proposition \ref{oca}, the Open Coloring Axiom implies that
for every $\ad$-family $\A$ the forcing $\PP_\A$ is either $\sigma$-centered or 
fails to satisfy the c.c.c.

If $\PP_\A$ admits an uncountable antichain, then $S_{\X_\A, \|\ \|_\T}$
admits a $(1+\varepsilon)$-separated set for every  $0<\varepsilon<1$ by Proposition \ref{sphereT-PA}  (3).

If $\PP_\A$ is $\sigma$-centered, then $S_{\X_\A, \|\ \|_\T}$ 
is the union of countably many sets of diameter less than $1$ by Proposition \ref{sigma-T}.
\end{proof}

The following proposition shows that the above dichotomy cannot be proved in {\sf ZFC} alone.

\begin{proposition}\label{antiramsey-T}
Let $\A$ be an antiramsey $\ad$-family. Then for every injective bounded linear operator
$T: \X_\A\rightarrow \Y$ with separable range and every Banach space $\Y$ and every $\varepsilon>0$
there is a set $\{x_\xi: \xi<\omega_1\}\subseteq S_{\X_\A, \|\ \|_T}$
such that for every uncountable $\Gamma\subseteq\omega_1$
\begin{itemize} 
\item there are $\xi,\eta\in \Gamma$ such that $0<\|x_\xi-x_\eta\|_T<1$ and
\item there are $\xi,\eta\in \Gamma$ such that $\|x_\xi-x_\eta\|_T\geq2-\varepsilon$ and
\end{itemize}
\end{proposition}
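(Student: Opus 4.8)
The plan is to realize the points $x_\xi$ as $\|\ \|_T$-normalizations of the sign functions $f_{p_\xi}$ (Definition \ref{fp}) attached to a carefully chosen uncountable family of essentially distinct conditions $p_\xi\in\PP_\A$, and then to read the two required pairs off the antiramsey property of $\A$: within every uncountable $\Gamma$ the family $\{p_\xi:\xi\in\Gamma\}$ is uncountable and essentially distinct, so Definition \ref{def-antiramsey} provides a compatible pair, which will yield the close pair, and an incompatible pair, which will yield the far pair. The tension to be resolved is that making the far pair nearly antipodal forces the normalizing factors $\|f_{p_\xi}\|_T=1+\|T(f_{p_\xi})\|$ to be close to $1$, i.e. the vectors $T(f_{p_\xi})$ must be uniformly small, whereas for a general injective $T$ there is a priori no reason why $\|T(f_p)\|$ should be small on any condition; this is the main obstacle.

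First I would arrange that $\|T(f_{p_\xi})\|$ is small. Since $T$ has separable range, the uncountable set $\{T(1_A):A\in\A\}$ has a condensation point, so for any prescribed $\delta>0$ there is an uncountable $\A_0\subseteq\A$ with $\|T(1_A)-T(1_B)\|<\delta$ for all $A,B\in\A_0$. The crucial point is the choice of the conditions: pairing the members of $\A_0$ into pairwise disjoint pairs $A_\xi,B_\xi$ and setting $p_\xi=(A_\xi\setminus B_\xi,\,B_\xi\setminus A_\xi)$, the finite almost disjoint overlap cancels and one gets $f_{p_\xi}=1_{A_\xi}-1_{B_\xi}$ exactly, with no finitely supported head term, whence $\|T(f_{p_\xi})\|=\|T(1_{A_\xi})-T(1_{B_\xi})\|<\delta$. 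These conditions are essentially distinct because each member of $\A_0$ is used at most once, so the singletons $a_{p_\xi}=\{A_\xi\}$ and $b_{p_\xi}=\{B_\xi\}$ are all distinct. Choosing $\delta<\varepsilon/2$ and putting $x_\xi=f_{p_\xi}/\|f_{p_\xi}\|_T$, the normalizing factors lie in $(1,1+\delta)$, so $\|x_\xi\|_\infty=1/\|f_{p_\xi}\|_T>1-\delta$.

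Next I would thin the index set twice. Using Lemma \ref{squizing}, where injectivity of $T$ enters, I pass to an uncountable subset lying in a single piece $\X_n$, so that $\|x_\xi-x_\eta\|_\infty\le(1-c_n)\|f_{p_\xi}-f_{p_\eta}\|_\infty+c_n/4$ throughout (note $x_\xi/\|x_\xi\|_\infty=f_{p_\xi}$ since $\|f_{p_\xi}\|_\infty=1$); then using Lemma \ref{separable} with parameter $c_n/4$ I thin once more so that $\|T(x_\xi)-T(x_\eta)\|\le c_n/4$ throughout, and relabel the result as $\{x_\xi:\xi<\omega_1\}$. Now fix an uncountable $\Gamma$ and apply the antiramsey property to $\{p_\xi:\xi\in\Gamma\}$. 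For the compatible pair, $\|f_{p_\xi}-f_{p_\eta}\|_\infty\le1$ by Lemma \ref{cond-and-function}(1), whence $\|x_\xi-x_\eta\|_T\le(1-c_n)+c_n/4+c_n/4=1-c_n/2<1$, and $x_\xi\ne x_\eta$ (they are not positive multiples of one another, the sign functions being distinct) gives the strict positivity. For the incompatible pair, by Lemma \ref{cond-and-function}(2) there is a coordinate $j$ at which $f_{p_\xi}$ and $f_{p_\eta}$ take opposite signs, so $\|x_\xi-x_\eta\|_T\ge\|x_\xi-x_\eta\|_\infty\ge 1/\|f_{p_\xi}\|_T+1/\|f_{p_\eta}\|_T>2(1-\delta)>2-\varepsilon$. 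The device of writing each condition as $(A\setminus B,\,B\setminus A)$, which makes $f_{p_\xi}$ an exact difference $1_{A_\xi}-1_{B_\xi}$ and hence keeps $\|T(f_{p_\xi})\|$ controllable for an arbitrary injective $T$ with separable range, is exactly what overcomes the obstacle identified above.
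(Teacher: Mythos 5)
Your proposal is correct and follows essentially the same route as the paper: there too the points $x_\xi$ are the $\|\ \|_T$-normalizations of vectors $1_{A}-1_{B}$ for pairs drawn from an uncountable subfamily of $\A$ on which $T$ is nearly constant (Lemma \ref{separable}), these are identified with $f_{p}$ for the conditions $p=(A\setminus B,\,B\setminus A)$, and the two required pairs are extracted from the antiramsey property of $\A$. The only differences are cosmetic: the paper pairs consecutive elements of one enumeration of $\A$ instead of disjoint pairs, cites Proposition \ref{ccc-T} for the close pair where you rerun Lemmas \ref{squizing} and \ref{separable} by hand with the compatible pair, and invokes Lemma \ref{symptom-renorm} for the far pair where you compute the pointwise estimate directly.
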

\begin{proof}
Let $\A=\{A_\alpha: \alpha<\omega_1\}$. Let $k\in \N$ be such that $2/(k+1)\leq\varepsilon$.

 By Lemma \ref{separable} there is an uncountable $\Theta\subseteq\omega_1$
such that $\|T(1_{A_\alpha}-1_{A_\beta})\|_\Y\leq 1/k$ for every $\alpha, \beta\in \Theta$.
Let $\{\alpha_\xi: \xi<\omega_1\}$ be the increasing enumeration of $\Theta$ and 
let
$x_\xi'=1_{A_{\alpha_\xi}}-1_{A_{\alpha_{\xi+1}}}$ and $x_\xi=x_\xi'/\|x_\xi'\|_T$.  Let $\Gamma\subseteq\omega_1$ 
be uncountable.

Since $\PP_\A$ satisfies the c.c.c. when $\A$ is antiramsey (Definition \ref{def-antiramsey}, 
Proposition \ref{ccc-T} implies that there are $\xi,\eta\in \Gamma$ such that $0<\|x_\xi-x_\eta\|_T<1$.

Let $p_\xi=(A_{\alpha_\xi}\setminus A_{\alpha_{\xi+1}}, A_{\alpha_{\xi+1}}\setminus A_{\alpha_{\xi}})\in \PP_\A$.
Note that $x_\xi'=f_{p_\xi}$ (see Definition \ref{fp}).
Since $\A$ is antiramsey there are $\xi<\eta$ such that $p_\xi$ and $p_\eta$ are not compatible.
By Lemma \ref{cond-and-function} (2) we obtain that $\|x_\xi'-x_\eta'\|_\infty=2$.  Taking
$\delta=1-1/k$ in Lemma \ref{symptom-renorm} (which is justified by the choice of $\Theta$) 
we obtain that 
$$\|x_\xi-x_\eta\|_T\geq {2\over 2-\delta}={2\over 1+1/k}=2-{2\over k+1}\geq  2-\varepsilon,$$ as required.

\end{proof}

\section{Renorming of a subspace of  $\X_{\A,\phi}$ for an antiramsey $\ad$-family $\A$}

In this section we consider a space of the form $\X_{\A, \phi}$
for an $\ad$-family $\A$ and a pairing $\phi$ (see Definitions \ref{pairing} and \ref{def-subspace}))
with the norm $\|\ \|_\T$ defined at the beginning of the previous section.

\begin{proposition}\label{antiramsey-Tphi} Suppose that $\kappa$ is an uncountable
cardinal, $\A$ is an antiramsey $\ad$-family
of cardinality $\kappa$ and $\rho>0$. There is a pairing
$\phi:\kappa\rightarrow \A$ and a subspace $\Y$ (both depending on $\rho$) of $\X_{\A, \phi}$ such that 
whenever $\{y_\xi: \xi<\omega_1\}\subseteq S_{\Y, \|\ \|_\T}$ for each $\xi<\eta<\omega_1$ satisfies
$$\|y_\xi-y_\eta\|_\T\geq 1-\varepsilon,$$
for some $\varepsilon\leq1/9$, then 
\begin{enumerate}
\item there are $\xi<\eta$ such that $\|y_\xi-y_\eta\|_\T>2-24\varepsilon-\rho$ and
\item there are $\xi<\eta$ such that $\|y_\xi- y_\eta\|_\T<1$.
\end{enumerate}
\end{proposition}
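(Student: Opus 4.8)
The plan is to keep the pairing essentially arbitrary and to let the subspace $\Y$ carry all the dependence on $\rho$ through one device: pushing the supports of the generators far out so that the functional part $T$ of the norm $\|\ \|_\T$ becomes negligible on $\Y$. Fix any pairing $\phi\colon\kappa\to\A$ (possible since $|\A|=\kappa$ is infinite, so $\A$ splits into $\kappa$ disjoint pairs). Choose $N=N(\rho)\in\N$ so large that $c:=2^{-N/2}$ satisfies $c<\rho/34$ and $c\le 1/18$, and let $\Y$ be the closed linear span in $(\X_{\A,\phi},\|\ \|_\T)$ of the truncated generators $g_\alpha=(1_{\phi_{1}(\alpha)}-1_{\phi_{-1}(\alpha)})\cdot 1_{[N,\infty)}$, $\alpha<\kappa$; each $g_\alpha$ lies in $\X_{\A,\phi}$ since $c_0\subseteq\X_{\A,\phi}$. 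Every $y\in\Y$ is supported on $[N,\infty)$, so a direct estimate gives the crucial bound
$$\|T(y)\|_2=\Big(\sum_{n\ge N}\frac{y(n)^2}{2^{n+1}}\Big)^{1/2}\le 2^{-N/2}\,\|y\|_\infty=c\,\|y\|_\infty .$$

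For the close pair, no normalization is needed. Since $\A$ is antiramsey, $\PP_\A$ satisfies the c.c.c. (Lemma \ref{thinning} reduces any uncountable antichain to essentially distinct conditions, which by Definition \ref{def-antiramsey} must contain a compatible pair; this is the fact already invoked in Proposition \ref{antiramsey-T}). The operator $T\colon\X_\A\to\ell_2$ defining $\|\ \|_\T$ is injective, bounded, and has separable range, so Proposition \ref{ccc-T} shows that $S_{\X_\A,\|\ \|_\T}$ admits no uncountable $1$-separated set. As $\Y\subseteq\X_\A$ with the inherited norm, the uncountable family $\{y_\xi\}\subseteq S_{\Y,\|\ \|_\T}\subseteq S_{\X_\A,\|\ \|_\T}$ cannot be $1$-separated, so there are $\xi<\eta$ with $\|y_\xi-y_\eta\|_\T<1$, which is item (2).

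For the far pair I would transfer everything to the supremum norm, apply Proposition \ref{Y-antiramsey}, and transfer back through Lemma \ref{symptom-renorm}. Put $f_\xi=y_\xi/\|y_\xi\|_\infty\in S_{\X_{\A,\phi},\|\ \|_\infty}$. From $\|y_\xi\|_\infty+\|T(y_\xi)\|_2=1$ and the bound above one gets $1/(1+c)\le\|y_\xi\|_\infty<1$ (strict because $T$ is injective) and $\|T(f_\xi)\|_2\le c$. A short triangle-inequality computation, using $\|y_\xi-y_\eta\|_\infty\ge\|y_\xi-y_\eta\|_\T-2c\ge 1-\varepsilon-2c$ together with $\|f_\xi-y_\xi\|_\infty=1-\|y_\xi\|_\infty\le c$, shows that $\{f_\xi\}$ is $(1-\varepsilon')$-separated in $\|\ \|_\infty$ with $\varepsilon'=\varepsilon+4c\le 1/3$. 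Proposition \ref{Y-antiramsey}(2) then yields $\xi<\eta$ with $\|f_\xi-f_\eta\|_\infty\ge 2-8\varepsilon'$. Since $\|T(f_\xi)\|_2,\|T(f_\eta)\|_2\le c$, Lemma \ref{symptom-renorm} applies with $\delta=1-c$, and because $y_\zeta$ is exactly the $\|\ \|_\T$-normalization $f_\zeta/\|f_\zeta\|_\T$, its conclusion reads
$$\|y_\xi-y_\eta\|_\T\ge \frac{2}{1+c}-8\varepsilon'\ge 2-8\varepsilon-34c>2-8\varepsilon-\rho\ge 2-24\varepsilon-\rho ,$$
which is item (1); the generous constant $24$ leaves ample room.

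The genuinely new point, and the main obstacle, is the mismatch between the two norms: in $\|\ \|_\T$ the sphere always carries uncountable pieces of small diameter, so the antiramsey dichotomy of Proposition \ref{Y-antiramsey} is only visible in $\|\ \|_\infty$. The role of $\rho$ is precisely to make the passage from $\|\ \|_\infty$ to $\|\ \|_\T$ nearly isometric on $\Y$ by forcing $\|T\|$ below $c<\rho/34$ via the truncation; the only real care required is the bookkeeping of the accumulated losses (the replacement of $\varepsilon$ by $\varepsilon'=\varepsilon+4c$ and the factor $2/(1+c)$ coming from Lemma \ref{symptom-renorm}), so that the final constants fall below $2-24\varepsilon-\rho$, which they do with room to spare.
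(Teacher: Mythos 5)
Your proof is correct and follows essentially the same route as the paper's: make $T$ uniformly small on $\Y$ (you truncate the generators where the paper instead chooses the pairing so that $\phi_1(\alpha)$ and $\phi_{-1}(\alpha)$ agree below $m$ — a cosmetic difference), then get the close pair from the c.c.c.\ of $\PP_\A$ via Proposition \ref{ccc-T} and the far pair by normalizing in the supremum norm, applying Proposition \ref{Y-antiramsey}, and transferring back through Lemma \ref{symptom-renorm}. Your uniform bound $\|T(y)\|_2\le c\|y\|_\infty$ even lets you skip the paper's appeal to Lemma \ref{separable}, and the constants check out.
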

\begin{proof} 
Let $T$ be as in the definition of $\|\ \|_\T$. Let $m\in \N$ be such that $2/(m+1)<\rho$.
Then $T$ restricted to 
$$\X_m=\{x\in \X_\A: x|\{0, \dots, m\}=0\}$$ has norm not bigger than $1/m$. 
Let $\phi:\kappa\rightarrow \A$ be a pairing such that $\phi_1(\alpha)\cap \{0,\dots, m\}=
\phi_{-1}(\alpha)\cap \{0,\dots, m\}$ for every $\alpha<\kappa$. 
Then the space $\Y$ generated by both
$$\{x\in c_0: x|\{0, \dots, m\}=0\}\  \hbox{\rm and}\ 
\{1_{\phi_1(\alpha)}-1_{\phi_{-1}(\alpha)}: \alpha<\kappa\}$$  is included in
$\X_m\cap \X_{\A, \phi}$, so again  $T$ restricted to it  has norm not bigger than $1/m$
and so Lemma \ref{symptom-renorm} can be applied to $\Y$ and $\delta=1-1/m$.

By passing to an uncountable set and using Lemma \ref{separable} we may assume that 
two conditions are valid, namely first that
$\|T(y_\xi)-T(y_\eta)\|_2\leq\varepsilon$ 
and second that $|\|y_\xi\|_\infty-\|y_\eta\|_\infty|\leq \varepsilon$ for every $\xi, \eta\in \omega_1$.
The first condition and the hypothesis on $\{y_\xi: \xi<\omega\}$
imply that $\|y_\xi-y_\eta\|_\infty\geq 1-2\varepsilon$ for every $\xi<\eta<\omega_1$, as 
$\|y_\xi\|_\infty<
\|y_\xi\|_\T=1$ and  $\|y_\eta\|_\infty<
\|y_\eta\|_\T=1$, by the
injectivity of $T$. The second condition allows us to apply Lemma \ref{spheres}
for $b=\max(\|y_\xi\|_\infty, \|y_\eta\|_\infty)$, $a=\max(\|y_\xi\|_\infty, \|y_\eta\|_\infty)$
and $(b-a)\leq\varepsilon$
to conclude that  
$$b\|y_\xi/\|y_\xi\|_\infty-y_\eta/\|y_\eta\|_\infty\|_\infty+\varepsilon\geq
\|y_\xi-y_\eta\|_\infty\geq 1-2\varepsilon$$
and so
$$\|y_\xi/\|y_\xi\|_\infty-y_\eta/\|y_\eta\|_\infty\|_\infty>1-3\varepsilon$$
for any $\xi<\eta<\omega_1$.
Moreover $3\varepsilon\leq 1/3$, so
by Proposition \ref{Y-antiramsey} there are $\xi<\eta<\omega_1$ such that
 $\|y_\xi/\|y_\xi\|_\infty-y_\eta/\|y_\eta\|_\infty\|_\infty>2-24\varepsilon$.
 Let $\varepsilon'\geq 0$ be such that
 $\|y_\xi/\|y_\xi\|_\infty-y_\eta/\|y_\eta\|_\infty\|_\infty=2-\varepsilon'$. Clearly
 $-\varepsilon'>-24\varepsilon$. 
 Before applying \ref{symptom-renorm} note that
$(y_\xi/\|y_\xi\|_\infty)/\|y_\xi/\|y_\xi\|_\infty\|_\T=y_\xi/\|y_\xi\|_\T=y_\xi$
and similarly for $y_\eta$,
 So Lemma \ref{symptom-renorm}
yields
$$\|y_\xi-y_\eta\|_\T\geq {2\over 2-\delta}-\varepsilon'=
{2\over 1+1/m}-\varepsilon'=2-\varepsilon'-{2\over m+1}>
2-24\varepsilon-\rho$$ 
as required in the case of (1).

By the definition of an  antiramsey $\ad$-family, the partial order
of $\PP_\A$ satsifies the c.c.c. and so by Proposition \ref{ccc-T} there are
$\xi<\eta<\omega_1$ like in (2).
\end{proof}

\section{Questions}

As the properties of the $\ad$-families considered in this paper and the geometric phenomena on
the induced Banach spaces are completely new, 
the results of this paper generate many questions which the authors have not answered. 
Below we mention several of such questions
which seem most interesting and natural at this moment to the authors.

\begin{question}
Can we consistently improve the dichotomy \ref{oca} obtaining: for any $\ad$-family  $\A$
the partial order $\PP_\A$ is either $\sigma$-centered or it contains an antichain of cardinality $|\A|$?
\end{question}

This would be  equivalent to obtaining a dichotomy for Banach spaces $(\X_\A, \|\ \|_T)$:
Banach spaces $(\X_\A, \|\ \|_T)$ either contain a $(1+\varepsilon)$-separated set
of cardinality equal to the density of the Banach space or the unit sphere of it is 
the union of countably many sets which have diameter strictly less than $1$.

The defintion of a Luzin family (Definition \ref{def-L}) makes sense only for
$\ad$-families of cardinality $\omega_1$. However the strong consequences of being
Luzin like being an $L$-family in principle could hold for $\ad$-families of bigger cardinalities:

\begin{question}\label{questionL} Is it consistent that  there are $L$-families of cardinalities bigger than $\omega_1$?
\end{question}

In fact one could introduce a notion symmetric to $\PP_\A$ having precaliber $\omega_1$:

\begin{definition} An $\ad$-family is called $L$-saturated 
if every uncountable subset of $\PP_\A$ consisting of pairwise essentially distinct elements
contains and uncountable antichain.
\end{definition}

\begin{question}\label{questionLsat} Is it consistent that  there 
are $L$-saturated $\ad$-families of cardinalities bigger than $\omega_1$?
\end{question}

Note that in Questions \ref{questionL} and \ref{questionLsat} we can only hope for
the consistency because it was proved in \cite{oursacks} that 
it is consistent that every $\ad$-family of cardinality $\mathfrak c=\omega_2$
contains a subfamily of cardinality $\omega_2$ which is $\R$-embeddable
which yields a big pairwise compatible set in $\PP_\A$ by Proposition \ref{Rembed-centered}.
We also do not know the answers to the following questions:

\begin{question} Is it consistent that there is an $\ad$-family such that
$\PP_\A$ has precaliber $\omega_1$ but $\PP_\A$ is not $\sigma$-centered?
\end{question}

\begin{question} Is it consistent that there is an $\ad$-family  of cardinality $\omega_1$
which is $L$-saturated but is not an $L$-family?
\end{question}

\begin{question} Is there (consistently) an $L$-family which is not a Luzin family?
\end{question}

It is clear in the light of the results of the last three sections of this paper  that any answer to the above questions would
generate a result concerning Banach spaces.

The dichotomy \ref{oca} holds for Banach spaces of the form $(\X_\A, \|\ \|_\T)$ under 
an additional axiom {\sf OCA}.
It is very natural to ask if this can be extended for bigger classes of Banach spaces,
for example for all Banach spaces:

\begin{question} Is it consistent that for  every nonseparable Banach space the unit sphere
either admits an uncountable $(1+)$-separated set
or is the union of countably many sets of diameters strictly less than $1$?
\end{question}

\begin{question} Is there in {\sf ZFC} a  nonseparable Banach space $\X$ and $\delta>0$
such that for sufficiently small  $\varepsilon>0$  every
$(1-\varepsilon)$-separated set in $S_\X$ contains two elements distant by less than $1$
and two elements distant by more than $1$?
\end{question}

\begin{question} Is there in {\sf ZFC} a  nonseparable Banach space $\X$ 
such that for sufficiently small  $\varepsilon>0$  every
$(1-\varepsilon)$-separated set in $S_\X$ contains two elements distant by less than $1+\delta$
and two elements distant by more than $1+2\delta$?
\end{question}

Let us also repeat a relevant question from \cite{n-luzin} (cf.  Proposition  \ref{oca-L})

\begin{question} Is it consistent with {\sf MA}$+\neg${\sf CH} that there is
an $\ad$-family admitting a $3$-Luzin gap but not admitting a $2$-Luzin gap?
\end{question}

\bibliographystyle{amsplain}

\end{document}